\newtheorem{The}{Theorem}[section]
\newtheorem{Cor}[The]{Corollary}
\newtheorem{Lem}[The]{Lemma}
\newtheorem{Con}[The]{Conjecture}
\newtheorem{Pro}[The]{Proposition}
\newtheorem{Clm}[The]{Claim}
\newtheorem{demo}[The]{Demonstration }
\theoremstyle{definition}
\newtheorem{defn}[The]{Definition}
\theoremstyle{remark}
\newtheorem{Rem}[The]{Remark}
\numberwithin{equation}{section}
\theoremstyle{Question}
\newcommand{\I}{\mathbb{I}}
\title{Asymptotic trajectories of KAM torus}
\author{Jianlu Zhang$\dagger$, Chong-Qing Cheng$\ddagger$}
\address{Department of Mathematics, Nanjing University\\Nanjing, China, 210093}
\thanks{$\dagger$Email:\;jellychung1987@gmail.com}
\thanks{$\ddagger$Email:\;chengcq@nju.edu.cn}
\subjclass{Primary 37Jxx; Secondary 37Dxx}
\keywords{Arnold Diffusion, KAM torus, Aubry Mather Theory, Variational Method, Asymptotic Trajectory}
\date{\today}
\begin{document}
\maketitle
\begin{abstract}
  In this paper we construct a certain type of nearly integrable systems of two and a half degrees of freedom:
  \[
  H(p,q,t)=h(p)+\epsilon f(p,q,t),\quad (q,p)\in T^{*}\mathbb{T}^2,t\in \mathbb{S}^1=\mathbb{R}/\mathbb{Z},
  \]
  with a self-similar and weak-coupled $f(p,q,t)$ and $h(p)$ strictly convex. For a given Diophantine rotation vector $\vec{\omega}$, we can find asymptotic orbits towards the KAM torus $\mathcal{T}_{\omega}$, which persists owing to the classical KAM theory, as long as $\epsilon\ll1$ sufficiently small and $f\in C^r(T^{*}\mathbb{T}^2\times\mathbb{S}^1,\mathbb{R})$ properly smooth.

The construction bases on several new approaches developed in \cite{Ch}, where he solved the generic existence of diffusion orbits of a priori stable systems. As an expansion of Arnold Diffusion problem, our result supplies several useful viewpoints for the construction of preciser diffusion orbits.
\end{abstract}

\section{Introduction}
\subsection{Statement of Main Result}

For a nearly integrable systems
\begin{equation}\label{1}
  H(p,q)=h(p)+\epsilon f(q,p),\quad(q,p)\in T^*\mathbb{T}^n,
\end{equation}

KAM theory assures that the set of KAM tori occupies a rather large-measured part in phase space, but it's still a topologically sparse set. It indicates that for a system with a freedom not bigger than two degrees, every orbit will be confined in the `cells' formed by energy surface and KAM tori, and the oscillations of action variables do not exceed a quantity of order $\mathcal{O}(\sqrt{\epsilon})$\cite{Ar1}.
This disproved the ergodic hypothesis formulated by Maxwell and Boltzmann:\\

{\bf For a typical Hamiltonian on a typical energy surface, all but a set of zero measure of initial conditions, have trajectories covering densely this energy surface itself.}\\

However, if the number of degrees of freedom $n$ greater than two, the n-dimensional invariant tori can not divide each (2n-1)-dimensional energy surface into disconnected parts and the action variables of trajectories not laying on the tori are unrestrained. So it's reasonable to modify the ergodic hypothesis and raise:
\begin{Con}\upshape{(Quasi-ergodic Hypothesis\cite{Bi,E})}\label{Q H}
  For a typical Hamiltonian on a typical energy surface, there exists at least one dense trajectory.
\end{Con}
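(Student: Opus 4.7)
Conjecture 1.1 is a longstanding open problem going back to Birkhoff and Ehrenfest, so any plan must be read as a proposed research program rather than a routine proof. The guiding principle, already implicit in the paragraph preceding the statement, is that for $n\geq 3$ the $n$-dimensional KAM tori cannot separate a regular energy surface $\Sigma=H_0^{-1}(e)$, so the obstruction to building a dense orbit is no longer topological but dynamical. The plan is therefore to fix a countable basis $\{U_k\}_{k\geq 1}$ of the topology of $\Sigma$ and produce a single trajectory that visits every $U_k$ in turn; density follows.

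The first step is, for each pair $(U_j,U_k)$, to specify a finite chain of \emph{intermediate targets} given by Aubry or Mather sets at cohomology classes that interpolate between the rotation vectors realized in $U_j$ and those realized in $U_k$. Here one would exploit the generic structure of Mather's $\alpha$-function: along a typical one-parameter family of cohomology classes the Aubry sets vary semicontinuously and are, outside a thin resonant set, either KAM tori or hyperbolic cylinders. On such a \emph{single-resonance} arc the connecting orbits are supplied by a KAM/normal-form picture in which the whiskered cylinders carry transverse homoclinics, exactly the setting of Arnold's original mechanism.

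The next step is to handle the finitely many \emph{double-resonance} crossings along each chain, using the variational mechanism developed in \cite{Ch} and already exploited in the present paper: near a strong resonance one passes to a two-degree-of-freedom mechanical system whose Aubry sets are analyzed by Tonelli/weak-KAM machinery, and one produces pseudo-orbits that are then upgraded to true connecting trajectories through the barrier function. A quantitative shadowing argument would concatenate the local pieces into a single orbit visiting every target in the chain, hence every $U_k$.

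The final, and genuinely hardest, step is to make the set of good Hamiltonians residual. Since "$\gamma$ meets $U_k$" is an open condition in $C^r$, it suffices to prove that each connecting step is generic and that the quantitative constants can be chosen uniformly along an \emph{infinite} chain. I expect this last point to be the main obstacle: the variational shadowing at a double resonance comes with a smallness condition on $\epsilon$ that depends on the target cohomology classes, and a priori this condition can degenerate as one approaches the boundary of the KAM region. A genuine new idea, perhaps a renormalization scheme in the spirit of the self-similar $f$ constructed in this paper, seems necessary to iterate the construction countably often. The present article can be viewed as establishing the base case of such an induction, by realizing one asymptotic orbit towards $\mathcal{T}_\omega$; the hope is that the self-similarity of $f$ allows the same mechanism to be reused at every scale.
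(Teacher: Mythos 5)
The statement you are addressing is a \emph{conjecture}: the paper offers no proof of it, and says explicitly that a definite answer ``is still far from the reach of modern dynamical theory.'' So there is no paper proof to compare against, and your text --- which you yourself frame as ``a proposed research program rather than a routine proof'' --- is not a proof either. As a survey of the standard strategy (transition chains along single resonances, variational crossing of double resonances, shadowing into a countable basis) it is accurate, and the obstacle you single out, namely making the smallness and genericity conditions uniform over an infinite chain, is exactly the right one.

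Still, the decisive gap deserves to be named concretely. In your final step you argue that since ``$\gamma$ meets $U_k$'' is open in $C^r$, it suffices to show each connecting step is generic. But the present paper demonstrates precisely why this Baire-category scheme breaks down: every resonant line imposes a ``cusp removal'' on the space of perturbations, and infinitely many such removals (unavoidable once the orbit must approach non-resonant frequencies, let alone be dense) leave only a porous set $\mathcal{P}^r$ that is \emph{not open} in $C^r$. A countable intersection of such sets need not be residual, so the genericity claim at the heart of your step four is exactly what fails; this is why the paper's main theorem is an existence statement for specially constructed $f$ with self-similar, weak-coupled Fourier coefficients rather than a statement about typical Hamiltonians. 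In addition, a dense trajectory must enter every neighborhood of the surviving KAM tori, which fill a set of positive measure, whereas your chains of Aubry/Mather sets and the shadowing orbits they produce live in the complement of those tori; the present paper handles only the approach to a \emph{single} prescribed torus $\mathcal{T}_\omega$, and even that requires the rigid conditions \textbf{U1}--\textbf{U7}. Your proposal correctly identifies these as the open problems; it does not close them, so the conjecture remains a conjecture.
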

 The first progress towards this direction is made by V. Arnold \cite{Ar2} in 1964. In his paper, he constructed a 2.5 degrees of freedom system which has an unperturbed normally hypobolic invariant cylinder (NHIC) and a `homoclinic overlap' structure. This `homoclinic overlap' structure assures the existence of heteroclinic trajectories towards different lower-dimensional tori located in the NHIC, and along these heteroclinic trajectories the slow action variable of shadowing orbits changes of $\mathcal{O}(1)$ in a rather long time.

 We can simplify this mechanism and raise the following:
\begin{Con}\upshape{(Arnold Diffusion\cite{Ar1})}\label{A D}
Typical integrable Hamiltonian systems with n degrees of freedom $(n\geq2.5)$ is topologically instable: through an arbitrarily small neighborhood of any point there passes a phase trajectory whose slow variables drift away from the initial value by a quantity of order $\mathcal{O}(1)$.
\end{Con}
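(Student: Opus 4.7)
The goal is to produce, through any neighborhood of a given point, an orbit whose slow action variables drift by $\mathcal{O}(1)$. The plan is to combine Arnold's geometric mechanism with the variational technology of Mather, Ma\~{n}\'{e} and Bernard, in the form used in \cite{Ch}. By a variational shadowing lemma it will suffice to construct a long \emph{transition chain}, that is, a finite sequence of invariant sets joined consecutively by heteroclinic orbits, whose projection to the action space has total variation $\mathcal{O}(1)$; the actual $H$-orbit is then recovered as a concatenation of semi-static minimizers.

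First I would set up the skeleton of the chain. In the action space, the resonant web of $h$ decomposes into single-resonance arcs and a discrete set of double resonances. Away from the latter, a resonant normal form reduces $H$ to a slow pendulum and the associated Normally Hyperbolic Invariant Cylinder (NHIC) persists for $\epsilon\ll1$. On each such NHIC, Aubry--Mather theory produces a continuous family of minimal invariant sets parameterized by cohomology class, and for generic $f$ these are hyperbolic periodic orbits whose homoclinic excursions are transverse; the transverse intersections furnish the links of the chain along a single-resonance arc.

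The main obstacle is the crossing of the double resonances. There the averaged system is a mechanical Lagrangian on $\mathbb{T}^2$ with non-trivial potential, the NHIC may degenerate, and the Aubry set can bifurcate. I would use genericity of $f$ to reduce to a finite list of normal forms at each double resonance, and then produce heteroclinic orbits between the incoming and outgoing NHICs by establishing a strict inequality for the two-sided Ma\~{n}\'{e} barrier $h^{\infty}$ at the gluing cohomology classes. Reconciling the various $\epsilon$-scales, namely the thickness of the resonant neighborhoods, the KAM persistence threshold, and the derivative loss inherent in the NHIC theorem, is the delicate bookkeeping that forces the quantitative hypotheses $\epsilon\ll 1$ and $f\in C^{r}$ with $r$ sufficiently large. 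Once the chain is assembled across every double resonance encountered, a standard variational shadowing argument yields an actual orbit of drift $\mathcal{O}(1)$, completing the proof; I expect this double-resonance step, rather than the construction along the single-resonance arcs, to be the bottleneck, as it is historically the point where this programme has encountered its greatest technical resistance.
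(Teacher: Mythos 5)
This statement is labelled a \emph{Conjecture} in the paper, and the paper offers no proof of it: the authors explicitly treat Conjecture \ref{A D} as background, citing \cite{CY1,CY2,D L S,Tr,Ch,Mat1,K Z} for the state of the art, and what they actually prove (Theorems \ref{main1} and \ref{main2}) is a much narrower result --- the existence of an orbit asymptotic to a single KAM torus for a \emph{specially constructed} perturbation with self-similar, weak-coupled Fourier structure, not for a typical one. So there is no proof in the paper against which your argument can be checked, and you should not present a proof of this statement as if it were a theorem of the paper.

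As a piece of mathematics, what you have written is an outline of the known programme (Mather, Cheng--Yan, Bernard, Kaloshin--Zhang), not a proof: each step you assert is itself a major theorem or an open difficulty. Concretely: (i) the claim that for generic $f$ the Aubry sets on the single-resonance cylinders are hyperbolic with transverse (or at least ``incomplete'') homoclinic intersections is the content of the genericity/regularity machinery of \cite{CY1,CY2,Ch}, and the genericity obtained there is only \emph{cusp-residual}, which you do not address even though the word ``typical'' in the statement hinges on it; (ii) ``establishing a strict inequality for the two-sided Ma\~{n}\'{e} barrier at the gluing classes'' near a strong double resonance is precisely the bottleneck you yourself identify --- the present paper circumvents it not by genericity but by \emph{imposing} the weak-coupled condition {\bf U4} and the conditions {\bf C3, C4, U5--U7} on $f$, which is why its conclusion is existential (``we can find $f$'') rather than generic; (iii) the passage from a generalized transition chain to a true orbit requires the locally minimal (type-$h$) and $c$-equivalence (type-$c$) constructions of Sections 5--6, including the annulus of incomplete intersection around the flat of $\alpha$, none of which follows from a ``standard shadowing argument'' in this setting. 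In short, your proposal correctly names the strategy but supplies none of the steps that make it a proof, and the paper itself does not claim those steps either.
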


Celebrated progress has been made through the past twenty years and we can give a quite positive answer to this Conjecture \ref{A D}. For results of {\it a priori} unstable case, the readers can see \cite{CY1,CY2,D L S,Tr} and \cite{Ch,Mat1,K Z} of {\it a priori} stable case. But a definite answer whether Conjecture \ref{Q H} is right or wrong is still far from the reach of modern dynamical theory.\\

One instinctive idea towards Quasi-ergodic Hypothesis is to use the same method in solving Conjecture \ref{A D} to construct trajectories to fill the topologically open-dense complement of KAM tori. So to find asymptotic trajectories of KAM tori is the first difficulty we must overcome. The first exploration was made by R. Duady\cite{Du}:
\begin{The}
  For a fixed Diophantine rotation vector $\vec{\omega}$, there exists a nearly integrable system $H_{\epsilon}(p,q)\big{|}_{(p,q)\in T^{*}\mathbb{T}^3}$ which is $C^{\infty}$-approached to an integrable system $H_0(p)$, such that for any open neighborhood $U_n\big{|}_{n\in\mathbb{N}}$ of $\mathcal{T}_{\omega}$, there exists one trajectory $\gamma_n$ of system $H_{\epsilon}(p,q)$ entering $U_n$ from the place $\mathcal{O}(1)$ far from $\mathcal{T}_{\omega}$.
\end{The}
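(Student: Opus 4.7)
The plan is to construct $H_\epsilon$ inductively as a $C^\infty$ limit of Hamiltonians supported in shrinking neighborhoods of the target torus $\mathcal{T}_\omega$. First I would fix a strictly convex integrable generator $H_0(p)$ for which $\vec{\omega}$ is a regular rotation vector, and apply the classical KAM theorem on $T^*\mathbb{T}^3$ to produce the Diophantine torus $\mathcal{T}_\omega$, together with a symplectic change of coordinates putting the dynamics into the local normal form $\vec{\omega}\cdot I + O(|I|^2)$ in a neighborhood $V_0$ of the torus.

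Next I would build a sequence of nested neighborhoods $V_0\supset V_1\supset V_2\supset\cdots$ collapsing onto $\mathcal{T}_\omega$, together with a sequence of smooth perturbations $\epsilon_n f_n$ each supported in the shell $V_{n-1}\setminus V_{n+1}$ and flat on $\mathcal{T}_\omega$. Every $f_n$ is designed so that the truncated Hamiltonian $H_0+\epsilon_n f_n$ carries a pendulum-like normally hyperbolic invariant cylinder inside its shell, with stable and unstable manifolds extending from $V_{n-1}$ down into $V_{n+1}$ and intersecting transversally the corresponding manifolds of the cylinder at scale $n+1$. Choosing the constants $\epsilon_n$ to decay super-exponentially and setting $\epsilon f=\sum_n\epsilon_n f_n$ makes the total perturbation $C^\infty$-small, while the flatness of each $f_n$ on $\mathcal{T}_\omega$ forces the original KAM torus to persist by the iterative KAM scheme.

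The construction of $\gamma_n$ then proceeds by a shadowing $/$ $\lambda$-lemma argument: starting in $V_0$ at $\mathcal{O}(1)$ distance from $\mathcal{T}_\omega$, I would pick a point on an unstable whisker of the cylinder at scale $0$, follow it to a heteroclinic intersection with the unstable whisker of the cylinder at scale $1$, and iterate. Because the hyperbolic rates are uniform at each scale and the intersections are transverse, the inclination lemma allows a finite shadowing orbit to be constructed that enters $V_n$, hence eventually enters the prescribed open neighborhood $U_n$.

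The main obstacle I expect is the simultaneous preservation of the KAM torus and the creation of whiskered cylinders accumulating on it. Hyperbolicity in the action direction typically conflicts with the Diophantine rigidity demanded by KAM persistence, so the supports of $f_n$ must be rigidly controlled and the amplitudes $\epsilon_n$ must decay faster than any threshold coming from the KAM smallness conditions applied to the local normal form at scale $n$. Balancing these two requirements while still producing a Hamiltonian that is genuinely $C^\infty$-close to $H_0$ is the delicate point, and it is typically handled by a Nash--Moser style interpolation of smoothing operators, as in Duady's original argument.
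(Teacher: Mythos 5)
This statement is not proved in the paper at all: it is Douady's theorem, quoted verbatim from \cite{Du} as motivation, and the authors immediately point out that it yields only Lyapunov instability of $\mathcal{T}_{\omega}$ (the orbits $\gamma_n$ vary with $n$). So there is no in-paper argument to compare against; I can only judge your proposal on its own terms and against the known structure of such constructions.

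Your architecture (nested shells $V_{n-1}\setminus V_{n+1}$, localized perturbations $\epsilon_n f_n$, hyperbolic cylinders in each shell, shadowing via the inclination lemma) is the right skeleton, but the crucial step is asserted rather than established, and the quantitative constraints show the naive version fails. If $f_n$ is a bump supported in a shell of width $\delta_n$, then $\|\epsilon_n f_n\|_{C^k}\sim\epsilon_n\delta_n^{-k}$, so $C^{\infty}$-smallness of $\sum_n\epsilon_n f_n$ forces $\epsilon_n\ll\delta_n^{k}$ for every $k$. A single pendulum-type resonance of amplitude $\epsilon_n$ produces a separatrix loop of width only $\mathcal{O}(\sqrt{\epsilon_n})$ in the action direction, which is then far smaller than $\delta_n$; hence the whiskers of your scale-$n$ cylinder cannot reach the scale-$(n+1)$ cylinder by one heteroclinic jump, and the transversal intersection you ``design'' into $f_n$ does not exist. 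What is actually needed is a long cylinder spanning each shell along a resonance line, a transition chain \emph{along} it, and then a separate mechanism to cross the double resonance where consecutive resonance lines meet --- and since $\omega$ is Diophantine, the resonances available at scale $\delta_n$ have orders $|k_n|\gtrsim\delta_n^{-1/(2+\tau)}\to\infty$, so the usable Fourier coefficients, and with them the hyperbolicity exponents, decay faster than any power; one must verify that the finite chain from $V_0$ to $V_n$ survives this degeneration for each $n$. This is precisely the ``infinitely many resonant lines / infinite cusp-removal'' obstruction the present paper is organized around, and your proposal does not engage with it. Conversely, the difficulty you do flag --- a conflict between hyperbolicity and KAM persistence of $\mathcal{T}_{\omega}$, to be resolved by Nash--Moser interpolation --- is a non-issue in your own setup: each $f_n$ vanishes identically on the neighborhood $V_{n+1}\supset\mathcal{T}_{\omega}$, so the torus remains exactly invariant and no KAM iteration is needed for its persistence.
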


From this theorem, we could deduce that KAM torus is of Lyapunov instability. But as $n\rightarrow\infty$, $\gamma_n$ is different from each other. So his construction is invalid to find asymptotic orbits of KAM torus.\\

We can generalize Arnold's construction of \cite{Ar2} to a certain type of nearly integrable systems, which is known by {\it a priori} unstable ones. This condition actually assures that the existence of NHIC (Normally Hyperbolic Invariant Cylinder) with considerable length. Based on the celebrated theory developed by J. Mather, \cite{CY1} and \cite{CY2} first found the generic existence of diffusion orbits in this case from the variational view. More generalized nearly integrable systems are called {\it a priori} stable systems. We will face new difficulties in solving this case comparing to {\it a priori} {unstable} one:\\
\begin{enumerate}
  \item non-existence of long-length NHIC. Complicated resonance-relationship divides the 1-resonance lines into short segments, so we can't just use the overlap mechanism to find $\mathcal{O}(1)$ diffusion orbits.\\
      \item Coming out of 2-resonance. NHICs corresponding to different 1-resonance segments are separated by the chaotic layer caused by 2-resonance. We need to find trajectories in this layer to connect them (see figure \ref{fig1}).
\end{enumerate}
\begin{figure}
 \centering
  \includegraphics[width=7cm]{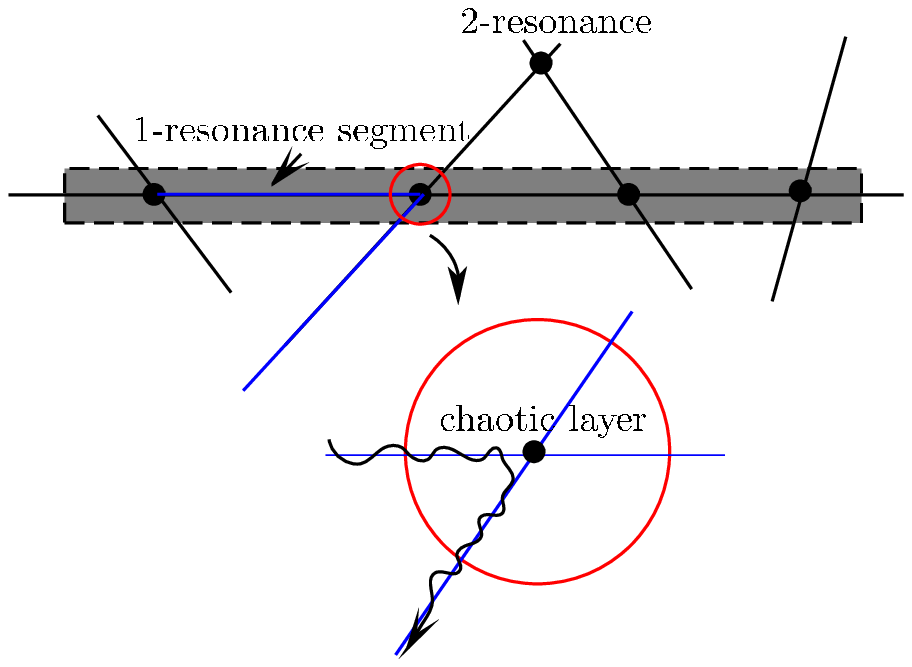}
 \label{fig1}
  \caption{}
\end{figure}
\vspace{10pt}
Since Diophatine vector ${\omega}$ is non-resonant, we have to overcome these two difficulties in finding asympytotic orbits. The first announcement of {\it a priori} stable case was given by J. Mather in 2003. He defined a conception `cusp residue' to measure the `size' of a set in topological space. Later, C-Q. Cheng verified the cusp genericity of diffusion orbits in {\it a priori} stable case. In \cite{Ch}, he proposed a plan to overcome the difficulties caused by 2-resonance:\\

{\bf Around the lowest flat $\mathbb{F}\doteq\{c\in H^1(\mathbb{T}^2,\mathbb{R})\big{|}\alpha_H(c)=\min \alpha_H\}$ there exists an incomplete intersection annulus $\mathbb{A}\subseteq H^1(\mathbb{T}^2,\mathbb{R})$ whose width we could precisely calculate and the Ma\~{n}\'{e} set $\tilde{\mathcal{N}}(c)$ ranges as a broken lamination structure, $\forall c\in\mathbb{A}$. Besides, the cohomology classes corresponding to NHICs could plug into $\mathbb{A}$.}\\

Based on this idea, we can connect different NHICs in this annulus $\mathbb{A}$ with Mather's mechanism diffusion orbits discovered in \cite{Mat2}. These orbits can be connected with the 'Arnold' mechanism diffusion orbits of NHICs and we succeed to construct $\mathcal{O}(1)$ diffusion orbits in {\it a priori} stable case.\\

From now on we only consider the case of $2.5-$degrees of freedom. As a special case of {\it a priori} stable systems, finding asymptotic trajectories of KAM torus will face another new difficulty: infinitely many changes of 1-resonance lines will be involved in. Here we can give a rough explanation on this. From \cite{Ch,Mat1} we know that the `cusp genericity' is caused by the restrictions of hyperbolic strength on different 1-resonant lines. Since $\omega$ is non-resonant, it's unavoidable to face infinitely many 1-resonant lines. These lines cause infinite times `cusp remove' to the perturbed function space $C^r(T^{*}\mathbb{T}^2\times\mathbb{S}^1, \mathbb{R})$ on the contrary. So we only have a `porous' set $\mathcal{P}^{r}\doteq\{f\in C^r(T^{*}\mathbb{T}^2\times\mathbb{S}^1,\mathbb{R})\big{|}\|f\|_{C^r}\leq1\}$ left, of which we have the chance to find asymptotic trajectories. Recall that this set $\mathcal{P}^r$ is not open in $C^r(T^{*}\mathbb{T}^2\times\mathbb{S}^1, \mathbb{R})$!

\begin{The}\label{main1}
For nearly integrable systems written by
\begin{equation}\label{2}
H(p,q,t)=h(p)+\epsilon f(p,q,t),\;(p,q,t)\in T^{*}\mathbb{T}^2\times\mathbb{S}^1,\;\epsilon\ll1,
\end{equation}
here $h(p)$ is strictly convex, $\nabla h(0)=\omega$, and $D^2h(0)$ is strictly positively definite. For a fixed Diophantine vector $\tilde{\omega}=(\omega,1)\in\mathbb{R}^3$, we could find $\epsilon_0=\epsilon_0(\omega,D^2 h(o))$ such that for $\epsilon\leq\epsilon_0$ and $f(q,p,t)\in \mathcal{P}^{r}(r\geq 8)$ with self-similar and weak-coupled structures, of which we can find asymptotic trajectories of KAM torus $\mathcal{T}_{\omega}$.
\end{The}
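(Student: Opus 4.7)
The plan is to realize the asymptotic orbit as a limit of connecting segments produced by Cheng's variational machinery from \cite{Ch}, with the self-similar weak-coupled hypothesis on $f$ serving to enforce, at one stroke, all the countably many cusp-genericity conditions needed for the a priori stable diffusion construction to function on every 1-resonance line accumulating at $\mathcal{T}_\omega$. Classical KAM theory, applied to $\tilde\omega=(\omega,1)$, first guarantees that $\mathcal{T}_\omega$ persists for $\epsilon \leq \epsilon_0(\omega,D^2h(0))$; write $c_\omega\in\mathbb{F}$ for its cohomology class in $H^1(\mathbb{T}^2,\mathbb{R})$.

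Concretely, I would pick a sequence of cohomology classes $\{c_n\}\subset \mathbb{A}$ accumulating on $c_\omega$, alternating between points on 1-resonance segments and points in the intervening 2-resonance strata. For each consecutive pair $(c_n,c_{n+1})$, the broken-lamination structure of $\tilde{\mathcal{N}}(c)$ described in the Cheng program lets me build an Euler--Lagrange orbit segment $\sigma_n$ that starts near $\tilde{\mathcal{M}}(c_n)$ and ends near $\tilde{\mathcal{M}}(c_{n+1})$: on a 1-resonance segment I use the Arnold-type NHIC shadowing of \cite{CY1,CY2}, while across 2-resonance I invoke Mather's mechanism \cite{Mat2} inside the annulus $\mathbb{A}$. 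Concatenating $\sigma_1,\dots,\sigma_N$ by the shadowing lemma of \cite{Ch} yields orbits $\gamma_N$ that start $\mathcal{O}(1)$-away from $\mathcal{T}_\omega$ and enter progressively smaller neighborhoods $U_N$. A standard Tonelli compactness argument on minimizers, combined with the quantitative Diophantine control on the time spent near $\mathcal{T}_\omega$, should let me pass to the limit $N\to\infty$ and extract a single orbit $\gamma_\infty$ that is genuinely asymptotic to $\mathcal{T}_\omega$, not merely a sequence of approximants as in Douady's construction.

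The main obstacle is not any single step but the uniformity required in Step 2 across the infinite family of 1-resonances. Each 1-resonance ordinarily demands a cusp-removal perturbation of $f$, and the union of these removals strips $C^r$ down to the porous set $\mathcal{P}^r$, which is not open; no standard genericity argument then delivers an $f$ for which all the hyperbolicity and Peierls-barrier estimates close simultaneously. This is exactly where the self-similar and weak-coupled structure has to be used constructively: self-similarity should supply a rescaling that maps the dynamics on the $n$-th resonance scale to a universal template, so that a single verification of the cusp-removal condition at one scale propagates to all $n$, while weak coupling should suppress the cross-resonance interference that would otherwise destroy the broken-lamination structure of $\tilde{\mathcal{N}}(c_n)$ for $n$ large. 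Making this rescaling rigorous, and checking that the rescaled Mañé sets retain the structure needed by Cheng's connection lemma, is where I expect the hardest technical work to lie; once that is in hand, the shadowing and limiting arguments are comparatively routine modifications of \cite{Ch}.
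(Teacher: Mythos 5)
Your strategic outline coincides with the paper's: reduce to Cheng's a priori stable machinery, alternate Arnold-type NHIC shadowing along 1-resonance segments with Mather/$c$-equivalence across the 2-resonance annuli, and use the self-similar, weak-coupled structure so that one rescaled "template" verification of the hyperbolicity conditions propagates to all scales (this is exactly the role of the uniform conditions {\bf U1--U7}, {\bf C1--C4} and of the homogenization in the paper). You have also correctly located the central difficulty in the non-openness of $\mathcal{P}^r$ under infinitely many cusp removals.

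However, your final step contains a genuine gap. You propose to fix $f$ once, build finite concatenations $\gamma_N$ reaching smaller and smaller neighborhoods $U_N$, and then extract $\gamma_\infty$ by "Tonelli compactness on minimizers." That is precisely the mechanism that fails in Douady's construction, which the paper cites as the reason his orbits $\gamma_n$ do not converge to an asymptotic orbit: a $C^1$-limit of finite connecting segments need not be a connecting orbit at all (it may collapse onto an Aubry set at some intermediate scale), and no Diophantine control on transit times repairs this. The paper avoids taking a limit of orbits; instead it takes a limit of systems, $H_\infty=H+\sum_{k\ge1}\epsilon_k\Delta f_k$ with $\epsilon_{k+1}\ll\epsilon_k$ and $\Delta f_k\in C^{r+r'}$, so that for the single limiting system the entire infinite chain $\bigcup_i(\Gamma_{i_1}\cup\Gamma_{i_2}\cup\Gamma_{i_3})$ is a generalized transition chain, and the asymptotic orbit is produced in one stroke by the local-connecting-orbit (h-type and c-type) gluing along that chain. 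Relatedly, the rigid self-similar conditions alone do not suffice: the h-type Connecting Lemma requires an \emph{isolated} semi-static heteroclinic orbit in $\mathcal{N}(c,\tilde M)$ at every scale, which is a generic, not a rigid, property; it is secured only by the infinite sequence of "soft" perturbations $\epsilon_k\Delta f_k$ (via the Regularity and Genericity Lemmas), with the decreasing sizes and extra smoothness $r'$ chosen so that the uniform conditions {\bf U*}, {\bf C*} and the annulus thickness survive each perturbation. Your proposal omits this two-tier (rigid skeleton plus soft generic corrections) construction of $f$, and without it the shadowing step you call "comparatively routine" cannot be carried out. You also do not use the normal-form reduction $H=h(p)+p^{\sigma}f(q,t)+\mathcal{O}(p^{\sigma+1})$, which is how the paper makes the effective perturbation decay like $d_m^{\sigma}$ near the torus and closes the smallness estimates at scale $m$.
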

\begin{Rem}
Here the self-similar and weak-coupled structures are proposed to the Fourier coefficients of $f(q,p,t)$. To avoid the collapse of infinitely many times cusp-remove, we should control the speed of decline of hyperbolicity along the resonance lines tend to $\omega$, and then the Fourier coefficients are involved in. Later we will see that the Fourier coefficients corresponding to different resonant lines are independent from each other. We could benefit from this and raise a self-similar structure to simplify our treatment of infinite resonance relationships to finite ones.
\end{Rem}
For a system of a form (\ref{2}), we can ensure the persistence of $\mathcal{T}_{\omega}$ as long as $\epsilon_0$ is sufficiently small. Then the following holds:
\begin{The} \cite{Ch2}\label{change}
There exists a smooth exact symplectic transformation $\mathfrak{T}_f^{\infty}:\mathbb{D}_0\rightarrow\mathbb{D}_0$, where $\mathbb{D}_0\subset T^{*}\mathbb{T}^2\times\mathbb{S}^1$ is a small neighborhood of $\{0\}\times\mathbb{T}^2\times\mathbb{S}^1$. For $\epsilon\leq\epsilon_0$ and under this transformation, we can convert Hamiltonian (\ref{2}) to
\begin{equation}\label{3}
H(p,q,t)=h(p)+\langle p^t,f(q,t)p\rangle + \mathcal{O}(p^3),\quad (q,p,t)\in \mathbb{D}_0,
\end{equation}
here $\langle p^t,f(q,t)p\rangle$ is a quadratic polynomial with $p^t=(p_1,p_2)$, $\nabla h(0)=\omega$, and $D^2h(0)$ is strictly positively definite.
\end{The}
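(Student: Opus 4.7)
The plan is to construct $\mathfrak{T}_f^\infty$ as the infinite composition of near-identity exact symplectic transformations in a KAM-style normal-form iteration around the torus $\mathcal{T}_\omega\simeq\{p=0\}\times\mathbb{T}^2\times\mathbb{S}^1$. Taylor-expanding the perturbation in $p$,
\[
\epsilon f(p,q,t)=a(q,t)+\langle b(q,t),p\rangle+\langle p,c(q,t)p\rangle+\mathcal{O}(|p|^3),
\]
all coefficients are of order $\epsilon$; only the first two are obstructions to the target form (\ref{3}), since the quadratic $c$ supplies the $\langle p^t,f(q,t)p\rangle$ piece and the higher orders are absorbed into $\mathcal{O}(p^3)$.

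For a single step I would use a generating function of the form $S(p',q,t)=\langle p',q\rangle+A(q,t)+\langle B(q,t),p'\rangle$ and match terms to first order in $\epsilon$. This produces the pair of cohomological equations
\[
\omega\cdot\partial_q A+\partial_t A=\bar a-a,\qquad \omega\cdot\partial_q B+\partial_t B=\bar b-b,
\]
where $\bar a,\bar b$ denote the averages over $\mathbb{T}^2\times\mathbb{S}^1$. The Diophantine property of $\tilde\omega=(\omega,1)$ yields zero-mean solutions $A,B$ with the standard small-divisor loss of regularity. The residual constants $\bar a$ (a harmless energy shift) and $\bar b$ (a frequency drift) are absorbed by an $\mathcal{O}(\epsilon\,\|(D^2 h(0))^{-1}\|)$ translation in $p$, possible because $D^2 h(0)$ is strictly positive definite; this translation re-centers the torus onto $\{p=0\}$ and preserves the normalization $\nabla h(0)=\omega$. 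After one step the non-quadratic-in-$p$ part of the perturbation has size $\mathcal{O}(\epsilon^2)$.

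Iterating, the residuals shrink as $\epsilon_n\sim\epsilon^{2^n}$ on a nested chain of neighborhoods of $\{p=0\}$, and the infinite composition converges to $\mathfrak{T}_f^\infty$. The chief obstacle is that the argument must run in the finite-smoothness class $C^r$ rather than the analytic one: each cohomological inversion costs a fixed number of derivatives tied to the Diophantine exponent of $\tilde\omega$, so one has to interleave the Newton quadratic steps with Moser's truncation-and-smoothing procedure, choosing truncation parameters so that the total derivative loss remains inside the budget $r\geq 8$. With this bookkeeping the limit is a smooth exact symplectic map on a common small $\mathbb{D}_0$, and the transformed Hamiltonian takes the form (\ref{3}); the quantitative estimates are the standard KAM-with-a-fixed-torus estimates developed in \cite{Ch2}.
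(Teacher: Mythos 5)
The paper gives no argument here beyond citing Lemma 6.1 of \cite{Ch2}, and your sketch is exactly the standard Kolmogorov-type normal-form iteration that underlies that lemma: kill the $p$-independent and $p$-linear parts of the perturbation by solving cohomological equations along the Diophantine flow of $\tilde\omega$, absorb the averages via an energy shift and a $p$-translation using the invertibility of $D^2h(0)$, and close the quadratic scheme in finite smoothness with Moser smoothing. One small inaccuracy worth fixing: the homological equation for $B$ is not $\omega\cdot\partial_qB+\partial_tB=\bar b-b$ but must also carry the cross term $D^2h(0)\,\partial_qA$ coming from expanding $h(p'+\partial_qA+\cdots)$ to linear order in $p'$; since that term is known once $A$ is solved and is itself $\mathcal{O}(\epsilon)$, it only modifies the right-hand side and does not affect the convergence of the scheme.
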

\begin{proof}
It's a direct cite of Lemma (6.1) in \cite{Ch2} for details.
\end{proof}

Moreover, we can use finite steps of `Birkhoff Normal Form' transformations to raise the order of polynomial and get
\begin{Lem}\label{robust}
There exists another smooth exact symplectic transformation $\mathfrak{R}_f^{\infty}:\mathbb{D}_0\rightarrow\mathbb{D}_0$ under which system (\ref{3}) can be changed into
\begin{equation}\label{4}
H(q,p,t)=h(p)+p^{\sigma}f(q,t)+\mathcal{O}(p^{\sigma+1}),\quad (q,p,t)\in \mathbb{D}_0,
\end{equation}
with a sufficiently large $\sigma\in\mathbb{Z}_{+}$ and $f(q,t)\in C^r(\mathbb{T}^2\times\mathbb{S}^1,\mathbb{R})(r\geq5=2\times2+1)$.
\end{Lem}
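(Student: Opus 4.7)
The plan is to realize $\mathfrak{R}_f^\infty$ as a finite composition $\mathfrak{R}_{\sigma-1}\circ\cdots\circ\mathfrak{R}_2$ of Birkhoff-type steps, each one removing the $(q,t)$-dependence from the lowest homogeneous $p$-degree still carrying it. Writing $H^{(2)}$ for the Hamiltonian (\ref{3}), I would proceed inductively: assume that after $n-2$ steps we have reached
\begin{equation*}
H^{(n)}(q,p,t)=h_n(p)+R_n(q,p,t)+\mathcal{O}(p^{n+1}),
\end{equation*}
with $h_n$ an integrable polynomial of degree $\le n$ in $p$ and $R_n$ a homogeneous polynomial of degree $n$ in $p$ with $(q,t)$-dependent coefficients. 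Pick an auxiliary $\chi_n(q,p,t)$, also homogeneous of degree $n$ in $p$, and put $\mathfrak{R}_n:=\Phi^{1}_{\chi_n}$. Since $\nabla h(0)=\omega$ and the $t$-frequency equals $1$, at the level of the degree-$n$ part one has $\{h,\chi_n\}=-(\omega\cdot\partial_q+\partial_t)\chi_n+\mathcal{O}(p^{n+1})$, so eliminating the $(q,t)$-oscillating part of $R_n$ reduces to the cohomological equation
\begin{equation*}
(\omega\cdot\partial_q+\partial_t)\chi_n=R_n(q,p,t)-\overline{R_n}(p),
\end{equation*}
where $\overline{R_n}(p)$ is the mean of $R_n$ over $\mathbb{T}^2\times\mathbb{S}^1$.

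The Diophantine hypothesis $|k\cdot\omega+l|\ge\gamma|(k,l)|^{-\tau}$ on $\tilde{\omega}=(\omega,1)$ then yields the Fourier-series solution
\begin{equation*}
\chi_n(q,p,t)=\sum_{(k,l)\neq 0}\frac{\hat R_n(k,l;p)}{2\pi i(k\cdot\omega+l)}\,e^{2\pi i(k\cdot q+lt)},
\end{equation*}
which converges with a loss of $\tau+\delta$ derivatives by a standard small-divisor estimate in Hölder scales. After composition, the Hamiltonian updates to
\begin{equation*}
H^{(n+1)}:=H^{(n)}\circ\mathfrak{R}_n=\bigl(h_n+\overline{R_n}\bigr)(p)+R_{n+1}(q,p,t)+\mathcal{O}(p^{n+2}),
\end{equation*}
with $R_{n+1}$ collecting the higher-order contributions from $\{h,\chi_n\}$, $\{R_n,\chi_n\}$ and the Taylor remainder of $\Phi^{1}_{\chi_n}$, each of which gains at least one extra power of $p$ because $\chi_n$ itself vanishes to order $n\ge 2$ at the origin. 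Iterating up to $n=\sigma-1$ and absorbing the successive averages $\overline{R_2},\ldots,\overline{R_{\sigma-1}}$ into a single integrable $h(p)$ produces the form (\ref{4}); the composition $\mathfrak{R}_f^\infty$ is exact symplectic on a possibly slightly shrunk $\mathbb{D}_0$ and is near-identity at $p=0$, hence a well-defined self-diffeomorphism of $\mathbb{D}_0$.

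The main obstacle, and really the only substantive point of verification, is the regularity bookkeeping. Each Birkhoff step costs $\tau+\delta$ derivatives to the small-divisor denominator; since the Diophantine exponent of $\tilde{\omega}\in\mathbb{R}^3$ may be taken $\tau=2$, the threshold $r\ge 5=2\tau+1$ in the statement is exactly what keeps the cohomological solution at one step in a positive-index Hölder class. Because $\sigma$ is finite there is no small-divisor convergence issue and no KAM-type scheme is required; the ambient $C^r$ smoothness with $r\ge 8$ furnished by Theorem \ref{main1} comfortably accommodates the $\sigma-2$ iterations and leaves the resulting $f(q,t)$ in $C^5(\mathbb{T}^2\times\mathbb{S}^1,\mathbb{R})$ as claimed.
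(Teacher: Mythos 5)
Your construction --- a finite composition of time-one Hamiltonian flows $\Phi^1_{\chi_n}$ with $\chi_n$ homogeneous of degree $n$ in $p$, each step solving the cohomological equation $(\omega\cdot\partial_q+\partial_t)\chi_n=R_n-\overline{R_n}$ by Fourier series and the Diophantine condition on $\tilde{\omega}=(\omega,1)$ --- is exactly the ``finite steps of Birkhoff Normal Form'' that the paper invokes without further detail, so in structure your argument supplies the intended proof, and the algebra (the extra power of $p$ gained by $\{h,\chi_n\}$ and $\{R_n,\chi_n\}$ since $\chi_n=\mathcal{O}(p^n)$ with $n\ge 2$, exactness of the composed flows) is correct.

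The genuine gap is in your final regularity count. You correctly note that one Birkhoff step costs the small divisor roughly $1+\tau$ derivatives in $(q,t)$ (plus at least one more, since $\partial_q\chi_n$ enters the Poisson brackets and the flow map), but you then assert that $r\ge 8$ ``comfortably accommodates the $\sigma-2$ iterations.'' It does not: the losses compound, so after $\sigma-2$ steps the $(q,t)$-regularity of the surviving coefficient is of order $r-(\sigma-2)(2+\tau)$, and the paper later forces $\sigma>3r+4\xi+15$, i.e.\ $\sigma\ge 40$ when $r=8$. With those values the coefficient $f(q,t)$ in (\ref{4}) would have left every positive H\"older class long before the iteration terminates. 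To make the lemma honest one must either take the original perturbation much smoother than $C^8$, with $r$ growing at least linearly in $\sigma$ (consistent with the paper's repeated remark that $r$ ``can be chosen properly large''), or track the losses more carefully, noting that only the averages $\overline{R_n}$ (functions of $p$ alone) and the single final degree-$\sigma$ coefficient must end up in $C^5$. As written, your last paragraph asserts a conclusion that your own bookkeeping does not deliver.
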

\begin{Rem}
In the above theorem we omit the small number $\epsilon$ which assures the existence of KAM torus $\mathcal{T}_{\omega}$, since we can restrict diam$\mathbb{D}_0$ much smaller than $\epsilon$. We just need to find asymptotic trajectories in this domain. From now on, we will write $\langle p^t,f(q,t)\underbrace{p\rangle,p\rangle,\cdots,p\rangle}_{\sigma-1}$ as $p^{\sigma} f(q,t)$ for short without confusion.
\end{Rem}
Based on Theorem \ref{change} and Lemma \ref{robust}, we could convert Theorem \ref{main1} into the following
\begin{The}{\bf (Main Result)}\label{main2}
For the system of a form (\ref{4}), we can find proper $f(q,t)\in C^r(\mathbb{T}^2\times\mathbb{S}^1,\mathbb{R})$ with a self-similar and weak-coupled structure of which there exists at least one asymptotic trajectory to the KAM torus $\mathcal{T}_{\omega}$.
\end{The}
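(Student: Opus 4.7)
The plan is to realize the asymptotic orbit as the variational shadow of an infinite concatenation of Mather-type pseudo-orbits, one for each 1-resonant segment in a carefully chosen sequence $\{\Gamma_n\}$ of resonances accumulating at the Diophantine cohomology $c_\omega$ dual to $\mathcal{T}_\omega$. First I enumerate the primitive integer vectors $\{k_n\}\subset\mathbb{Z}^3\setminus\{0\}$ defining these resonances so that $\mathrm{dist}(c_\omega,\Gamma_n)$ decays at the rate $|k_n|^{-\tau}$ dictated by the Diophantine exponent of $\omega$. I then prescribe $f$ through its Fourier coefficients: set $\hat{f}_k=0$ unless $k$ is a multiple of some $k_n$ (this is the \emph{weak-coupling} hypothesis, which kills cross-resonance interactions so that each $\Gamma_n$ can be handled essentially in isolation), and scale the amplitudes of the surviving modes according to a \emph{self-similar} geometric schedule $|\hat{f}_{k_n}|\asymp\lambda^n$ with $0<\lambda<1$ to be tuned below. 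The same $\lambda^n$ will simultaneously govern the hyperbolic strength near $\Gamma_n$, the size of the cusp that must be removed there, and the length of each diffusion segment.

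At each fixed $n$, weak-coupling together with a finite Birkhoff normalization along $\Gamma_n$ yields an averaged pendulum-type system whose cosine potential has size $\lambda^n p^\sigma$. From this, standard Melnikov-type arguments produce a NHIC $\mathcal{C}_n$ with hyperbolic splitting of order $\sqrt{\lambda^n}$, and Aubry--Mather theory on $\mathcal{C}_n$ provides the family of minimal measures parametrized by $c\in\Gamma_n$. The barrier function whose smallness triggers the cusp obstruction in \cite{Ch,Mat1} is itself of order $\lambda^n$, so the admissible modification of $\hat{f}_{k_n}$ required for cusp removal can be carried out on the block of Fourier space indexed by multiples of $k_n$ alone, independently of what has been done at all other $\Gamma_m$. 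This Fourier-block disjointness is the decisive benefit of the weak-coupling assumption: it converts infinitely many cusp-removals into a countable family of independent finite-dimensional perturbations, thereby avoiding the collapse described in the introduction. At each double-resonant crossing between $\Gamma_n$ and $\Gamma_{n+1}$ I invoke the incomplete-intersection-annulus $\mathbb{A}_n$ recalled in the introduction to obtain a Mather chain of $c$-equivalent classes from the endpoint of $\mathcal{C}_n$ to the starting point of $\mathcal{C}_{n+1}$, with width estimated in terms of $\lambda^n$ and $|k_n|^{-\tau}$.

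Concatenating the Arnold chain along each $\mathcal{C}_n$ with the Mather chain across the $n$-th double resonance yields, for every $N$, a finite pseudo-orbit $\pi_N$ starting at a fixed reference point and terminating in an $O(|k_N|^{-\tau})$-neighborhood $U_N$ of $\mathcal{T}_\omega$. The variational shadowing lemma of \cite{CY1,Ch} produces a genuine orbit $\gamma_N$ shadowing $\pi_N$, and a diagonal extraction in the space of action-minimizing curves yields a subsequential limit $\gamma$. Provided the shadowing errors accumulated up to scale $N$ form a summable series, $\gamma$ shadows the entire infinite pseudo-orbit and therefore enters every $U_N$; the local behaviour of minimizers near a KAM torus then upgrades this from recurrent approach to genuine asymptoticity, producing the required orbit of Theorem \ref{main2}.

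The hard part is the last paragraph. Both the hyperbolic strength on $\mathcal{C}_n$ and the width of the $n$-th connecting annulus degenerate as $n\to\infty$, so the shadowing time diverges and the shadowing errors threaten to accumulate. To close the argument I will have to choose the self-similar ratio $\lambda$ and the enumeration $\{k_n\}$ so that the error contribution from stage $n$ is dominated by $\lambda^{cn}$ for some $c>0$, compatibly with the Diophantine exponent $\tau$ and with the inverse hyperbolic scale $\sqrt{\lambda^n}$. I expect this quantitative matching between the self-similar Fourier profile, the Diophantine tail of $\omega$, and the transverse Lyapunov decay at $\mathcal{T}_\omega$ to be the main technical bottleneck, and the place where the smoothness threshold $r\geq 8$ and the normal-form order $\sigma$ of Lemma \ref{robust} will enter as the precise smallness conditions.
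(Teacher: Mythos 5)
Your overall architecture (resonant skeleton accumulating at $\omega$, self-similar Fourier prescription on independent lattices, NHICs plus Aubry--Mather theory along each $1$-resonant segment, an incomplete-intersection annulus at each double resonance, concatenation into a transition chain) is the same as the paper's. But two of your steps contain genuine gaps.

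First, you misread the weak-coupling hypothesis. You set $\hat f_k=0$ unless $k$ is a multiple of some single $k_n$, so that at the double resonance $\Gamma_n\cap\Gamma_{n+1}$ the homogenized model is the \emph{uncoupled} system $\tfrac12\langle Y,Y\rangle+Z_1(X_1)+Z_2(X_2)$. That system is integrable in the relevant block: its minimal homoclinics in the classes $(1,1)$, $(-1,1)$ come in degenerate one-parameter families, the Ma\~n\'e set on $\partial\mathbb{F}$ covers all of $\mathbb{T}^2$, and the incomplete-intersection annulus you invoke in your second paragraph does not exist. The paper's weak coupling keeps the mixed coefficients $f_{l\vec e_1+k\vec e_2}$ ($l,k\neq 0$) \emph{small but nonzero} (conditions {\bf U4}, {\bf C4}, {\bf U7}), precisely so that a Melnikov argument isolates the homoclinics and yields $\mathcal{N}(c)\subsetneqq\mathbb{T}^2$ on an annulus of computable width. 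Relatedly, you conflate two different generic requirements: the rigid nondegeneracy of the resonant averages (which can indeed be arranged block-by-block in Fourier space, and is what {\bf U1}--{\bf U3} do) and the isolation of minimizing heteroclinics needed for the Connecting Lemma, which is \emph{not} a condition on the resonant block alone. The paper handles the latter by a countable sequence of perturbations $\epsilon_k\Delta f_k$ taken in $C^{r+r'}$ with $r'$ large: the extra smoothness forces their Fourier tails to decay faster than the skeleton's, so they preserve every {\bf U}/{\bf C} condition at every finer scale while achieving genericity at stage $k$. This "smaller and smoother" mechanism is the actual resolution of the infinite-cusp-removal problem, and it is absent from your proposal.

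Second, your limiting step is exactly the construction of Douady that the introduction explains does \emph{not} work: producing, for each $N$, an orbit $\gamma_N$ that reaches $U_N$, and then extracting a subsequential limit. A limit of such orbits need not shadow the infinite pseudo-orbit, and even if the limit $\gamma$ enters every $U_N$, "enters every neighborhood" is Lyapunov instability of $\mathcal{T}_\omega$, not asymptoticity; your claim that "the local behaviour of minimizers near a KAM torus upgrades this to genuine asymptoticity" is unsupported and false as stated, since a minimizer may leave each $U_N$ and return. The paper avoids this by fixing the limit system $H_\infty=H+\sum_k\epsilon_k\Delta f_k$ first and then building a \emph{single} generalized transition chain $\bigcup_{i\ge M}(\Gamma_{i_1}\cup\Gamma_{i_2}\cup\Gamma_{i_3})$ for $H_\infty$, whose associated locally minimal connecting orbits are glued into one orbit whose $\omega$-limit follows the Aubry sets along the entire infinite chain. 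You correctly identify the summability of errors as the bottleneck, but you do not supply the mechanism (the infinite chain for a fixed limit system, rather than a limit of finite constructions) that closes it.
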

At last, we sketch out our plan: to find the proper $f(q,t)$, we need a list of `rigid' conditions to be satisfied. Owing to these conditions, we can give a `skeleton' of $f(q,t)$ which is not easy to be destroyed. then we use `soft' generic perturbations to construct diffusion orbits which finally tends to $\mathcal{T}_{\omega}$. Here, `soft' means the perturbations can be chosen arbitrarily small and arbitrarily smooth, which is known from \cite{CY1}, \cite{CY2} and \cite{Ch}.\\

From the proof the readers can see that the frame we made in order to get the asymptotic orbits is `firm' enough and small perturbations can't destroy it. Our method is neither the same with the way V. Kaloshin and M. Saprykina used in \cite{KS}, nor the same with the way P. Calvez and R. Douady used in \cite{CD}(in their papers they considered some close problems with ours). Our new approach benefits us with the chance to find more systems satisfying our demand.\\

We also recall that other two papers related with our result: one is \cite{KZZ} in 2010 and the other is \cite{KMV} in 2004. The latter one considered the asymptotic trajectories of resonant elliptic points, which is different from our situation and only finitely many resonant lines are involved in.\\

This paper is our first step to find preciser diffusion orbits in general systems. There's still a long way to go for the target of giving a rigorous answer to the quasi-ergodic hypothesis, and it's still open to find alternative mechanisms to construct diffusion orbits. Interestingly, T. Tao found an example of cubic defoucusing nonlinear Schr\"{o}dinger equation of which energy transports to higher frequencies in \cite{Tao}. His construction shares some similarities with Arnold Diffusion. Moreover, a self-similar resonant structure with special arithmetic properties is also applied in his construction. Aware of these, we are confident that it must be a hopeful direction to apply our diffusion mechanisms to PDE problems. Recently, M. Guardia and V. Kaloshin have made some progress in this domain\cite{GK}.
\vspace{10pt}
\subsection{Outline of the Proof}

We just need to prove Theorem \ref{main2} which is a special form of Theorem \ref{main1} via a $C^{\infty}$ symplectic transformation. As we can see, system (\ref{4}) is a Tonelli Hamiltonian, of which $\mathcal{T}_{\omega}=\{p=0\}\times\mathbb{T}^2\times\mathbb{S}^1$ is actually unperturbed. 
We could find a skeleton of infinitely many resonant lines $\{\Gamma_i^{\omega}\}_{i=1}^{\infty}$ in the frequency space, 
along which $\Gamma_i^{\omega}$ approaches to Diophantine vector $\omega$ as $i\rightarrow\infty$ (See figure \ref{fig2}). We could divide this skeleton into {\bf 1-resonant lines, transitional segments from 1-resonance to 2-resonance and 2-resonant points} according to different resonant relationships. Each of them we need different mechanisms to deal with.\\
\begin{figure}
  \centering
  \includegraphics[width=5cm]{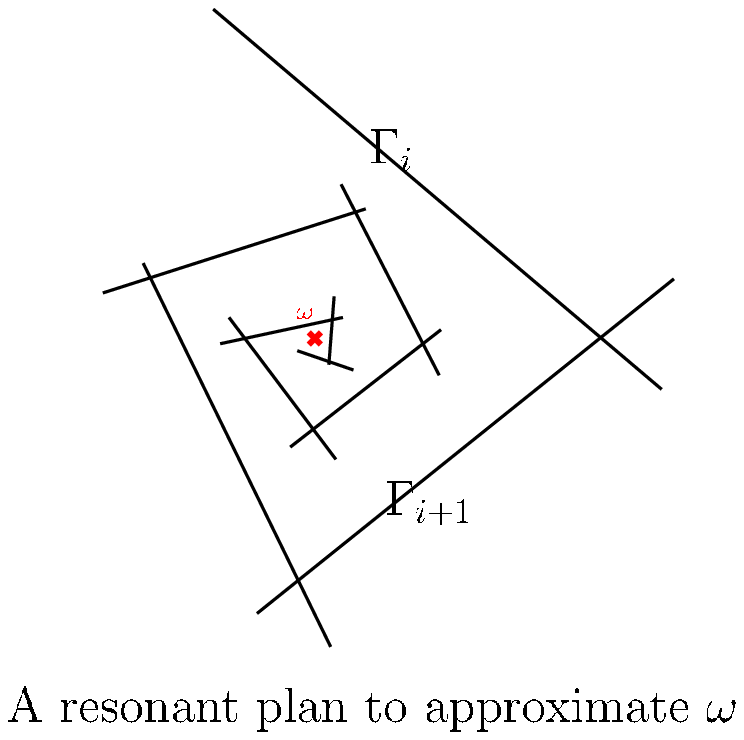}\\
  \caption{}\label{fig2}
\end{figure}

For the former two cases, to supply the NHICs with enough hyperbolicity, we need several `rigid' conditions {\bf U2,3} for the Fourier coefficients of $f(q,t)$ corresponding to the current resonant line $\Gamma_i^{\omega}$ of considerations. For the 2-resonant case $\Gamma_i^{\omega}\cap\Gamma_{i+1}^{\omega}$, a `weak-coupled' structure can be available by properly choosing the mixed Fourier coefficients according to $\Gamma_{i}^{\omega}$ and $\Gamma_{i+1}^{\omega}$. This simplifies the dynamic behaviors of 2-resonance greatly. Also several `rigid' conditions {\bf U$4\rightarrow8$} are needed in this case to generate an incomplete intersection annulus with certain width and to persist the bottom parts of crumpled NHICs.\\

Notice that there exist extra 2-resonant points inside $\Gamma_i^{\omega}$, which we call {\bf sub 2-resonant} points. Since there isn't any transition between different resonant lines at these points, our diffusion orbits just need to cross them and go on along the NHICs according to 1-resonant lines. To achieve this, rigid condition {\bf U3} is needed.\\

Recalls that infinitely many resonant relationships are considered in our case, so these `rigid' conditions have to be uniformly satisfied according to all of these resonant lines. That's why we mark the `rigid' conditions with a letter `U'. It's the cost to avoid the collapse caused by infinite times cusp remove. But the self-similar structure simplifies the complexity and gives us a universal treatment for all the resonant relationships.\\ 

Now we explain why these `rigid' conditions can be satisfied without conflict and give a sketch of the construction. 
We begin with such a Tonelli Hamiltonian:
\vspace{4pt}
\begin{equation}\label{5}
H(q,p,t)=h(p)+p^{\sigma}f(q,t),\quad (q,p,t)\in \mathbb{D}_0.
\end{equation}

$\bullet$ First, we choose a proper resonant plan $\{\Gamma_i^{\omega}\}_{i=1,i\in\mathbb{N}}^{\infty}$ which approximate $\omega$ steadily.\\

$\bullet$ Second, along these resonant lines, we can transform system (\ref{5}) to a Resonant Normal Form $H=h+Z+R$ with finite KAM iterations in a neighborhood of $\{\mathcal{B}(\Gamma_i^{p},\delta_i)\times\mathbb{T}^2\times\mathbb{S}\}_{i=1}^{\infty}$. Here $Z=[f]_{\omega^*}$ is the average term corresponding to the current frequency $\omega^*$. All these conditions and structural demands can all be satisfied by $Z$ as long as the $\mathbb{Z}^3$ sequence of Fourier coefficients $\{f_{(k_1,k_2,k_3)}\in\mathbb{R}\}_{k_i\in\mathbb{Z},i=1,2,3}$ are properly chosen, where $\vec{k}=(k_1,k_2,k_3)\in\Lambda_{\omega^*}$ subspace of $\mathbb{Z}^3$. We can see that different resonant lines will decide different subspaces of $\mathbb{Z}^3$ which are independent from each other. This point is very important for our case.\\

$\bullet$ Third, at the 2-resonance $\Gamma_i^{\omega}\cap\Gamma_{i+1}^{\omega}$, we need to connect different NHICs of their bottoms. A weak-coupled structure can be available by the mixed terms of Fourier coefficients according to $\Gamma_i^{\omega}\cap\Gamma_{i+1}^{\omega}$. This structure doesn't damage the hyperbolicity of NHICs and are strong enough to supply us a chaotic layer with sufficient width, which we called {\bf incomplete intersection annulus} here. In this part we mainly used the same method proposed in \cite{Ch}.\\

It's remarkable that this `weak-coupled' structure reduces the complexity of dynamical behaviors greatly at the 2-resonant domain, which can be considered as an application of Melnikov's approach. We take \cite{Tr2} for a convenient reference. \\

$\bullet$ At last, since all the `rigid' conditions have been satisfied, we can perturb system (\ref{5}) step by step to find diffusion trajectories which extend towards to $\mathcal{T}_{\omega}$ gradually. Based on the matured methods of genericity and regularity developed by \cite{Ch,CY1,CY2} and \cite{L}, the perturbation functions are very `soft', i.e. they can be made arbitrarily small and smooth. Here we use a list of perturbations $\{f_j\}_{j=1,j\in\mathbb{N}}^{\infty}$ to modify system (\ref{5}) and get a system of a form (\ref{4}), for which we indeed get an asymptotic trajectory of $\mathcal{T}_{\omega}$.\\

The paper is organized as follows. In the rest part of this section we will give a sketch of the Mather Theory and Fathi's weak-KAM method, also several properties about global elementary weak-KAM solutions in the finitely covering space\cite{Ch}. In Section 2, we give a resonant plan $\{\Gamma_i^{\omega}\}_{i=1,i\in\mathbb{N}}^{\infty}$ to approximate $\omega$ and get its fine properties. Section 3 supplies a Stable Normal Form with finite KAM iterations which is unified for all the $\Gamma_{i}$. In this part all the `rigid' conditions could be raised naturally. In Section 4, we prove the existence of NHICs in every case separately, 1-resonance, transitional segments from 1-resonance to 2-resonance and then 2-resonance. In the later two cases, a homogenized method is involved. In the 2-resonance case, a weak-coupled structure is used and we could give a precise estimate about the lowest positions where the bottoms of NHICs could persist. Besides, we get the conclusion that Aubry sets just locate on these NHICs. In Section 5, the existence of incomplete intersection annulus of c-equivalence is established and its width can also be precisely estimated. In Section 6, we recall the approach to get two types of locally connecting orbits by modifying the Lagrangian. This part is mainly based on genericity and regularity of \cite{CY1,CY2,L} and \cite{Ch}. With these preliminary works, we get our asymptotic trajectories by a list of `soft' perturbations in Section 7. Therefore, we finish our construction of system (\ref{4}) and get our main conclusion.
\vspace{10pt}
\subsection{Brief introduction to Mather Theory and properties of weak KAM solutions}\label{Mather theory}
$\newline$

In this subsection we will give a profile of the tools we used in this paper: Mather Theory and weak KAM theorem. Recall that the earliest version of weak KAM theory which Fathi gave us in \cite{Fa} mainly concerns the autonomous Lagrangians, but most of the conclusions are available for the time periodic case.

\begin{defn}
Let $M$ be a smooth closed manifold. We call $L(x,\dot{x},t)\in C^r(TM\times\mathbb{S}^1,\mathbb{R})$ $(r\geq2)$ a {\bf Tonelli Lagrangian} if it satisfies the following conditions:
\begin{itemize}
\item {\bf Positive Definiteness:} For each $(x,\dot{x},t)\in TM\times\mathbb{S}^1$, the Lagrangian function is strictly convex in velocity, i.e. the Hessian matrix $\partial_{\dot{x}\dot{x}}L$ is positively definite.
\item {\bf Superlinearity:} L is fiberwise superlinear, i.e. for each $(x,t)\in M\times\mathbb{S}^1$, we have $L/\|\dot{x}\|\rightarrow\infty$ as $\|\dot{x}\|\rightarrow\infty$.
\item {\bf Completeness:} All the solutions of the Euler-Lagrangian (E-L) equation corresponding to $L$ are well-defined for $t\in\mathbb{R}$.
\end{itemize}
\end{defn}

For such a Tonelli Lagrangian $L$, the variational minimal problem of $\gamma\in C^{ac}([a,b],M)$ with fixed end points $\gamma(a)=x$, $\gamma(b)=y$ is well posed as:
\[
A_L(\gamma)=\inf_{\substack{\gamma\in C^{ac}([a,b],M)\\
\gamma(a)=x, \gamma(b)=y}}\int_a^bL(\gamma(t),\dot{\gamma}(t),t)dt.
\]
Then we can see that if $\gamma$ is the critical curve of this variational problem, then it must satisfy the Euler-Lagrangian equation:
\begin{equation}\label{E-L}
\frac{d}{dt}\partial_{\dot{x}}L(\gamma(t),\dot{\gamma}(t),t)=\partial_xL(\gamma(t),\dot{\gamma}(t),t),\quad t\in[a,b].
\end{equation}
We call a curve $\gamma\subset M$ is a {\bf solution of E-L equation} if $\forall a,b\in\mathbb{R}$ with $a<b$, $\gamma(t)$ satisfies (\ref{E-L}) for $t\in[a,b]$. Once $\gamma$ is a solution of $L$, we actually see that $\gamma\in C^r(\mathbb{R},M)$ from the Tonelli Theorem \cite{Mo}. We can define a flow map of $\phi_L^t:TM\times\mathbb{S}^1\rightarrow TM\times\mathbb{S}^1$ by 
\[
\phi_L^t(x,v,s)=(\gamma_{x,v}(t),\dot{\gamma}_{x,v}(t),t+s\mod1)\in TM\times\mathbb{S}^1,\quad\forall t\in\mathbb{R}, s\in\mathbb{S}^1,
\]
where $(\gamma_{x,v}(0),\dot{\gamma}_{x,v}(0),s)=(x,v,s)$ and $\gamma_{x,v}$ is a solution of $L$. Then we can generate a $\phi_L-$invariant probability measure $\mu_{\gamma_{x,v}}$ by $\gamma_{x,v}$ with the following ergodic Theorem
\begin{equation}
\lim_{T\rightarrow\infty}\frac{1}{T}\int_0^TL(\gamma_{x,v}(t),\dot{\gamma}_{x,v}(t),t)dt=\int_{TM\times\mathbb{S}^1}L(x,v)d\mu_{\gamma_{x,v}}.
\end{equation}
We denote the set of all the $\phi_L-$invariant probability measures by $\mathfrak{M}_{inv}(L)$.
\begin{Rem}
Here we need to make a convention once for all. A continuous map $\gamma:I\rightarrow M$ is called a {\bf curve}, where $I\subset\mathbb{R}$ is an interval either bounded or unbounded, open or closed. For the time-dependent case, $\cup_{t\in I}(\gamma(t),t)$ is considered as a curve as well. We call $(\gamma,\dot{\gamma},t):I\rightarrow TM\times\mathbb{R}$ an {\bf orbit}, or a {\bf trajectory}, iff it's invariant under the flow map $\phi_L^t$.
\end{Rem}

Let $\eta_c(x)dx$ be a closed 1-form on $M$, with $[\eta_c]=c\in H^1(M,\mathbb{R})$. Then 
\[
L_c\doteq L(x,v,t)-\langle\eta_c,v\rangle,\quad (x,v,t)\in TM\times\mathbb{S}^1
\]
is also a Tonelli Lagrangian and we can see that any solution of $L$ is also a solution of $L_c$, vise versa. This supplies us with a chance to distinguish measures of $\mathfrak{M}_{inv}(L)$ by cohomology class.\\

We define the $\alpha-$function of $L$ by
\begin{equation}
\alpha_L(c)=-\inf_{\mu\in\mathfrak{M}_{inv}(L)}\int_{TM\times\mathbb{S}^1}L-\eta_cd\mu.
\end{equation}
It's a continuous, convex and super-linear function. We call the minimizer of above definition a {\bf c-minimizing measure}, and the set of all c-minimizing measures can be written by $\mathfrak{M}_{inv}(c)$. It's a convex set of the space of all the probability measures, under the weak* topology. If $\mu_c$ is a extremal point of $\mathfrak{M}_{inv}(c)$, it must be an ergodic measure with its support a minimal set for $\phi_L^t$.\\

For $\mu\in\mathfrak{M}_{inv}(L)$, there exists a $\rho(\mu)\in H^1(M,\mathbb{R})$ according to it and
\[
\langle \rho(\mu),[\eta_c]\rangle=\int_{TM\times\mathbb{S}^1}\eta_cd\mu,
\]
for every closed 1-form $\eta_c$ with $[\eta_c]=c\in H^1(M,\mathbb{R})$. Here  $\langle\,,\,\rangle$ denotes the canonical pairing between homology and cohomology. Then we have the following conjugated function
\begin{equation}
\beta_L(h)=\inf_{\substack{\mu\in\mathfrak{M}_{inv}(L)\\
\rho(\mu)=h}}\int_{TM\times\mathbb{S}^1}Ld\mu,\quad h\in H_1(M,\mathbb{R}).
\end{equation}
It's also continuous, convex, and super-linear \cite{Mat}. Similarly, we can define the set of all the minimizers of above   formula by $\mathfrak{M}_{inv}(h)$. Let $D^{-}\alpha_L(c)$ be the sub-differential set of $\alpha_L(c)$ at $c$ and $D^{-}\beta_L(h)$ be the sub-differential set of $\beta_L(h)$ at $h$. Then we have the following properties
\vspace{10pt}
\begin{itemize}
\item $\beta_L(h)+\alpha_L(c)=\langle c,h\rangle$,$\quad\forall c\in D^{-}\beta_L(h)$, $h\in D^{-}\alpha_L(c)$.\vspace{5pt}
\item $\forall\mu_h\in\mathfrak{M}_{inv}(h)$, we have $\mu_h\in\mathfrak{M}_{inv}(c)$ with $c\in D^{-}\beta_L(h)$.
\vspace{5pt}
\item $\forall\mu_c\in\mathfrak{M}_{inv}(c)$, we have $\mu_c\in\mathfrak{M}_{inv}(\rho(\mu_c))$.\vspace{10pt}
\end{itemize}

The union set of all the c-minimizing measures' support is the so-called {\bf Mather set}, which is denoted by $\widetilde{\mathcal{M}}_L(c)$. Its projection to $M\times\mathbb{S}^1$ is the {\bf projected Mather set} $\mathcal{M}_L(c)$. 
From \cite{Mat} we know that $\pi^{-1}\big{|}_{\mathcal{M}(c)}:M\times\mathbb{S}^1\rightarrow TM\times\mathbb{S}^1$ is a Lipschitz graph, where $\pi$ is the standard projection from $TM\times\mathbb{S}^1$ to $M\times\mathbb{S}^1$. \\

Sometimes, the Mather set is too `small' to handle with, so larger invariant sets should be involved in. We define
\begin{equation}
A_c(\gamma)\big{|}_{[t,t']}=\int_t^{t'}L(\gamma(t),\dot{\gamma}(t),t)-\langle\eta_c(\gamma(t)),\dot{\gamma}(t)\rangle dt+\alpha(c)(t'-t),
\end{equation}
\begin{equation}
h_c((x,t),(y,t'))=\inf_{\substack{\xi\in C^{ac}([t,t'],M)\\
\xi(t)=x\\
\xi(t')=y}}A_c(\xi)\big{|}_{[t,t']},
\end{equation}
where $t,t'\in\mathbb{R}$ with $t<t'$, and
\begin{equation}
F_c((x,\tau),(y,\tau'))=\inf_{\substack{\tau=t\mod1\\
\tau'=t'\mod1}}h_c((x,t),(y,t')),
\end{equation}
where $\tau,\tau'\in\mathbb{S}^1$. Then a curve $\gamma:\mathbb{R}\rightarrow M$ is called {\bf c-semi static} if 
\[
F_c((x,\tau),(y,\tau'))=A_c(\gamma)\big{|}_{[t,t']},
\]
for all $t,t'\in\mathbb{R}$ and $\tau=t\mod1$, $\tau'=t'\mod1$. A semi static curve $\gamma$ is called {\bf c-static} if
\[
A_c(\gamma)\big{|}_{[t,t']}+F_c((\gamma(t'),t'),(\gamma(t),t))=0,\quad\forall t,t'\in\mathbb{R}.
\]
The {\bf Ma\~{n}\'e set} which is denoted by $\widetilde{\mathcal{N}}(c)\subset TM\times\mathbb{S}^1$ is the set of all the c-semi static orbits.
\begin{The}{\bf (Upper semicontinuity\cite{CY1, CY2})}
The set-valued function $(c,L)\rightarrow\widetilde{\mathcal{N}}(c)$ is upper semicontinuous.
\end{The}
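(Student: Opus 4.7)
The plan is to verify upper semicontinuity via the standard sequential criterion: given a sequence $(c_n,L_n)\to(c_0,L_0)$ together with points $(x_n,v_n,t_n)\in\widetilde{\mathcal{N}}(c_n)$ (with respect to $L_n$) converging to $(x_0,v_0,t_0)$, I would show that the orbit of $L_0$ through $(x_0,v_0,t_0)$ belongs to $\widetilde{\mathcal{N}}(c_0)$. The first step is to extract uniform a priori bounds: since each $L_n$ is Tonelli and the family is $C^2$-close to $L_0$, the $L_n$ share a common superlinear modulus on a neighborhood of $(c_0,L_0)$. Standard Mather--Ma\~n\'e estimates then give a uniform velocity bound for every $c_n$-semi-static orbit of $L_n$, and also for any action-minimizing competitor between endpoints constrained in a bounded region. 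Hence all the curves under consideration live in a common compact subset of $TM\times\mathbb{S}^1$.

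With these uniform bounds in hand, continuous dependence of the Euler--Lagrange flow on the initial data and on the Lagrangian yields convergence of the orbit $\gamma_n$ of $L_n$ through $(x_n,v_n,t_n)$ to the orbit $\gamma_0$ of $L_0$ through $(x_0,v_0,t_0)$, uniformly on every compact time interval. For any fixed $t<t'$ this implies $A_{c_n}(\gamma_n)|_{[t,t']}\to A_{c_0}(\gamma_0)|_{[t,t']}$, once one knows that $\alpha_{L_n}(c_n)\to\alpha_{L_0}(c_0)$; this last fact is standard, being a consequence of the continuity and local Lipschitz property of the $\alpha$-function in $(c,L)$ under $C^0$-variation of the Lagrangian.

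The crux is to establish joint continuity of the Ma\~n\'e potential $F_c$ in $(c,L)$. Upper semicontinuity is cheap: any fixed absolutely continuous competitor has its penalized action vary continuously in $(c_n,L_n)$. Lower semicontinuity is the real work: I would take (almost) minimizing competitors $\xi_n$ for $F_{c_n}((x_n(t),\tau),(x_n(t'),\tau'))$, use the uniform superlinearity to extract a weakly convergent subsequence with limit $\xi_0$ connecting the correct limiting endpoints, then apply the classical Tonelli lower semicontinuity of the action to conclude $\liminf A_{c_n}(\xi_n)\geq A_{c_0}(\xi_0)\geq F_{c_0}((x_0(t),\tau),(x_0(t'),\tau'))$. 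Combining this with the defining identity $F_{c_n}(\cdot,\cdot)=A_{c_n}(\gamma_n)|_{[t,t']}$ for the semi-static orbit $\gamma_n$ and the convergence of the right-hand side, one squeezes the limit into the identity $F_{c_0}((\gamma_0(t),\tau),(\gamma_0(t'),\tau'))=A_{c_0}(\gamma_0)|_{[t,t']}$ for all $t<t'$, which is exactly the $c_0$-semi-static condition for $\gamma_0$ under $L_0$.

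The principal obstacle is precisely the uniform compactness of the relevant curves as both $c$ and $L$ vary. One needs that the minimizers realizing $F_{c_n}$ between nearby endpoints cannot escape to infinity in velocity, even though $L_n\neq L_0$; without this there is no way to harvest a convergent subsequence $\xi_n\to\xi_0$ in the lower-semicontinuity argument above. Establishing it requires combining the common superlinear lower bound on the family with an upper bound for $F_{c_n}$ obtained by plugging in a fixed smooth competitor, so that minimizers have uniformly bounded action and hence, by superlinearity, uniformly bounded $L^1$-norm of the velocity. After this ingredient every remaining step is a routine application of the lower semicontinuity of variational integrals and the continuous dependence of the E--L flow.
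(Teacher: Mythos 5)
Your proposal is correct and follows essentially the same route the paper relies on (it cites \cite{CY1,CY2} here and sketches the identical compactness-plus-accumulation-point argument for the analogous maps $L\rightarrow\mathscr{G}(L)$ and $L\rightarrow\mathscr{C}(L)$ in Section 6): uniform a priori velocity bounds give $C^1$-precompactness of the semi-static orbits, and one passes to the limit in the variational characterization. One remark: in the final squeeze only the ``cheap'' direction $\limsup_n F_{c_n}\leq F_{c_0}$ (perturb a fixed near-optimal competitor of bounded time-length to fit the moving endpoints) is actually needed, since $F_{c_0}\leq A_{c_0}(\gamma_0)\big{|}_{[t,t']}$ holds trivially with $\gamma_0$ as its own competitor; the lower semicontinuity of $F$ that you single out as the crux is superfluous, which is fortunate because there the almost-minimizing competitors realizing $F_{c_n}$ need not have uniformly bounded time-lengths, so the compactness you would need for that direction is genuinely delicate.
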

We can similarly define the {\bf Aubry set} by the set of all the c-static orbits, which can be written by $\tilde{\mathcal{A}}(c)$. Then we have 
\[
\widetilde{\mathcal{M}}(c)\subset\tilde{\mathcal{A}}(c)\subset\widetilde{\mathcal{N}}(c).
\]
Note that from now on we can omit the subscripts `inv', `L' for short. We also denote the {\bf projected Ma\~{n}\'e set} by $\mathcal{N}(c)$ and the {\bf projected Aubry set} by $\mathcal{A}(c)$. From \cite{Mat3} we can see that $\pi^{-1}:\mathcal{A}(c)\subset M\times\mathbb{S}^1\rightarrow \tilde{\mathcal{A}}(c)\subset TM\times\mathbb{S}^1$ is also a Lipschitz graph. Let
\begin{equation}
h_c^{\infty}((x,s),(x',s'))=\liminf_{\substack{s=t\mod1\\
s'=t'\mod1\\
t'-t\rightarrow\infty}}h_c((x,t),(x',t')),
\end{equation}
then we have
\[
h_c^{\infty}((x,\tau),(x,\tau))=0,\quad\forall (x,\tau)\in\mathcal{A}(c).
\]
We can further define a pseudo metric on $\mathcal{A}(c)$ by
\[
d_c((x,\tau),(x',\tau'))=h_c^{\infty}((x,\tau),(x',\tau'))+h_c^{\infty}((x',\tau'),(x,\tau))
\]
and then get an equivalent relationship: $(x,t)\sim(x',t')$ implies $d_c((x,t),(x',t'))=0$. Let $\mathcal{A}(c)/\sim$ be the quotient Aubry set, and the element of $\mathcal{A}(c)/\sim$ is called an {\bf Aubry class}, which can be written by $\mathcal{A}_i(c)$. We can see that $\mathcal{A}(c)=\cup_{i\in\Lambda}\mathcal{A}_i(c)$, where $\Lambda$ is an index set. We can define the {\bf Barrier function} between different Aubry classes by
\begin{eqnarray}
B_{c,i,j}(z,r)=h_c^{\infty}((x,t),(z,r))+h_c^{\infty}((z,r),(y,s))-h_c^{\infty}((x,t),(y,s)),\nonumber\\
\quad\quad\quad\quad\quad\forall (x,t)\in\mathcal{A}_i(c),\;(y,s)\in\mathcal{A}_j(c),\;(z,r)\in M\times\mathbb{S}^1.
\end{eqnarray}

\begin{Rem}
In some case, we need to consider the properties of curves in the finite covering space $\bar{M}$. Analogously, we can copy the conceptions of c-semi static curve and c-static curve on it, and take $\mathcal{N}(c,\bar{M})$ and $\mathcal{A}(c,\bar{M})$ as the according sets. We can see that $\mathcal{N}(c)\subsetneq\mathcal{N}(c,\bar{M})$ and $\pi\mathcal{A}(c,\bar{M})=\mathcal{A}(c)$ with $\pi:\bar{M}\rightarrow M$ the projection map. From \cite{Con} we can see that different Aubry class of $\mathcal{A}(c,\bar{M})$ can always be connected by c-semi static curves of $\mathcal{N}(c,\bar{M})$. This point plays a very important role in our diffusion mechanism.
\end{Rem}
In this following part, we'll give a survey about Fathi's weak KAM theory, which can be seen as a Hamiltonian version of Mather theory. 

\begin{defn}
We call a time-periodic system $H(x,p,t):T^*M\times\mathbb{S}^1\rightarrow\mathbb{R}$ {\bf Tonelli Hamiltonian}, if it satisfies the following:
\begin{itemize}
\item {\bf Positive Definiteness:} For each $(x,p,t)\in TM\times\mathbb{S}^1$, the Hamiltonian is strictly convex in momentum, i.e. the Hessian matrix $\partial_{pp}H$ is positive definite.
\item {\bf Superlinearity:} $H$ is fiberwise superlinear, i.e. for each $(x,t)\in M\times\mathbb{S}^1$, we have $H/\|p\|\rightarrow\infty$ as $\|p\|\rightarrow\infty$.
\item {\bf Completeness:} All the solutions of the Hamiltonian equation corresponding to $H$ are well-defined for $t\in\mathbb{R}$.
\end{itemize}
\end{defn}
We can associate to the Hamiltonian $H$ a Tonelli Lagrangian $L:TM\times\mathbb{S}^1\rightarrow\mathbb{R}$ by the Legendre transformation:
\begin{equation}\label{Legendre}
L(x,v,t)=\sup_{p\in T_x^*M}\langle p,v\rangle-H(x,p,t).
\end{equation}
Then $\partial_pH:T^*M\times\mathbb{S}^1\rightarrow TM\times\mathbb{S}^1$ become a diffeomorphism, whose inverse map is given by $\partial_vL:TM\times\mathbb{S}^1\rightarrow T^*M\times\mathbb{S}^1$. Note that the right side of (\ref{Legendre}) get its maximum for $v=\partial_pH(x,p)$. We can see that once $(\gamma,\dot{\gamma}):\mathbb{R}\rightarrow TM$ satisfies the E-L equation, then $(\gamma(t),\partial_vL(\gamma(t),\dot{\gamma}(t),t)):\mathbb{R}\rightarrow T^*M$ must satisfies the Hamiltonian equation of $H$, i.e. $(\gamma(t),p(t))$ is a trajectory of the flow map $\phi_H^t$ with $p(t)=\partial_vL(\gamma(t),\dot{\gamma}(t),t)$.\\

Let $\eta_c$ be a closed 1-form of $M$ with $[\eta_c]=c$, then we can make $L_c\doteq L-\eta_c$ and 
\[
H_c(x,p,t)\doteq H(x,p+\eta_c(x),t),\quad\forall (x,p)\in T^*M,
\]
with $H(x,p,t)$ the corresponding Hamiltonian of $L(x,v,t)$. We can define such a Lax-Oleinik mapping on $C^0(M\times\mathbb{S}^1,\mathbb{R})$ by 
\begin{equation}
T^{-}_{c,t} u(x,s)=\min_{y\in M}[u(y,s-t)+h_c((y,s-t),(x,s))],
\end{equation}
where $u(x,s)$ is a fixed continuous function of $M\times\mathbb{S}^1$. Then we can get the following fixed point of Lax-Oleinik mapping by 
\[
u_c^{-}(x,s)\doteq\liminf_{t\rightarrow\infty}T^{-}_{c,t}u(x,s).
\]
We call this $u_c^{-}$ a {\bf weak KAM solution} of $H$ system\cite{B3}. For a fixed $t\in[0,1]$, we can see that $u_c^{-}(\cdot,t)$ is semi concave with linear modulus of $x\in M$ (SCL($M$)). This is because the uniformly convexity of $H$.
\begin{defn}\cite{Ca}
We say a function $u:M\rightarrow\mathbb{R}$ is {\bf semi concave with linear modulus} if it's continuous and there exists $C\geq0$ such that 
\[
u(x+h)+u(x-h)-2u(x)\leq C\|h\|^2,
\]
for all $x,h\in M$. The constant $C$ is called the {\bf semi concavity constant} of $u$.
\end{defn}
As a SCL(M) function is $C^2-$differentiable almost everywhere, then we have the following
\begin{equation}
\partial_t u_c^{-}(x,t)+H(x,du_c^{-}(x,t)+c,t)=\alpha_L(c),\quad a.e. \;(x,t)\in M\times\mathbb{S}^1.
\end{equation}
Actually, $u_c^{-}(x,t)$ is a {\bf viscosity solution} of above Hamiltonian-Jacobi equation, which can be seen from \cite{Fa}.\\

For a c-semi static orbit $(\gamma(t),\dot{\gamma}(t))$, we can see that $u_c^{-}$ is differentiable at $\gamma(t)$ and $c+du_c^{-}(\gamma(t))=\partial_vL(\gamma(t),\dot{\gamma}(t),t)$. Besides, $(\gamma(t),c+du_c^{-}(\gamma(t)))$ is a Hamiltonian flow of $\phi_H^t(\gamma(0),du_c^{-}(\gamma(0)))$, and
\[
\partial_t u_c^{-}(\gamma(t))+H(\gamma(t),c+du_c^{-}({\gamma}(t),t),t)=\alpha_L(c),\quad\forall t\in\mathbb{R}.
\]
So we can define 
\[
\widetilde{\mathcal{N}}_H(c)=\bigcup_{\substack{\gamma(t)\in\mathcal{N}_L(c)\\
t\in\mathbb{R}}}(\gamma(t),c+du_c^{-}(\gamma(t),t),t)
\]
and
\[
\widetilde{\mathcal{A}}_H(c)=\bigcup_{\substack{\gamma(t)\in\mathcal{A}_L(c)\\
t\in\mathbb{R}}}(\gamma(t),c+du_c^{-}(\gamma(t),t),t)
\]
by the conjugated Ma\~{n}\'e set and Aubry set. Sometimes, we can change the subscript of $\alpha_L(c)$ to $H$, as long as $H$ is conjugated to $L$.\\

On the other side, Let $\tilde{L}(x,v)\doteq L(x,-v)$ be the symmetrical Lagrangian of $L$, then we have a similar Lax-Oleinik mapping $\tilde{T}_{c,t}^{-}$ on $C^0(M\times\mathbb{S}^1,\mathbb{R})$ and $\forall u\in C^0(M\times\mathbb{S}^1,\mathbb{R})$, 
\begin{equation}\label{symmetrical}
\tilde{u}_{c}^{-}(x,s)\doteq\liminf_{t\rightarrow\infty}\tilde{T}^{-}_{c,t}u(x,s)
\end{equation}
exists for all $(x,s)\in M\times\mathbb{S}^1$. It's the weak KAM solution of $\tilde{H}$, which is of the form
\[
\tilde{H}(x,p,t)=\sup_{v\in T_xM}\langle p,v\rangle-\tilde{L}(x,v,t).
\]
Take a special function $w=-u(x,t)\in C^0(M\times\mathbb{S}^1,\mathbb{R})$ into the semi-group and get the inferior limit as (\ref{symmetrical}), then we can see that $u_c^{+}\doteq-\tilde{w}_{c}^{-}$ satisfies
\begin{equation}
\partial_t u_c^{+}(x,t)+H(x,du_c^{+}(x,t)+c,t)=\alpha_L(c),\quad a.e. \;(x,t)\in M\times\mathbb{S}^1.
\end{equation}

\begin{defn}
$(\gamma(t),\dot{\gamma}(t)):\mathbb{R}\rightarrow TM$ is called {\bf backward c-semi static orbit}, if there exists a $T^{-}\in\mathbb{R}$ such that 
\[
F_c((\gamma(t),\tau),\gamma(t'),\tau'))=A_c(\gamma)\big{|}_{[t,t']}, 
\]
holds for all $t<t'\in(-\infty,T^{-}]$. Analogously, $(\gamma(t),\dot{\gamma}(t)):\mathbb{R}\rightarrow TM$ is called {\bf forward c-semi static orbit}, if there exists a $T^{+}\in\mathbb{R}$ such that 
\[
F_c((\gamma(t),\tau),\gamma(t'),\tau'))=A_c(\gamma)\big{|}_{[t,t']}, 
\]
holds for all $t<t'\in[T^{+},\infty)$. Here $\tau=t\mod1$ and $\tau'=t'\mod1$.
\end{defn}
\begin{The}\cite{Fa}
Let $(x,t)\in M\times\mathbb{S}^1$ be a differentiable point of $u_c^{-}$ (or $u_c^{+}$). As the initial condition, $(x,du_c^{-}(x))$ ($(x,du_c^{+}(x))$) will decide a unique trajectory of $H$ by $(x^{-}(t),p^{-}(t)):\mathbb{R}\rightarrow T^*M$, ($(x^{+}(t),p^{+}(t)):\mathbb{R}\rightarrow T^*M$) with $(x^{-}(0),p^{-}(0))=(x,du_c^{-}(x))$ ($(x^{+}(0),p^{+}(0))=(x,du_c^{+}(x))$). The corresponding orbit $(x^{-}(t),\partial_pH(x^{-}(t),p^{-}(t),t)):\mathbb{R}\rightarrow TM$ ($(x^{+}(t),\partial_pH(x^{+}(t),p^{+}(t),t)):\mathbb{R}\rightarrow TM$) is backward c-semi static on $(-\infty,0]$ (forward c-semi static on $[0,+\infty)$).
\end{The}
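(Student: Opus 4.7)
The plan is to exploit the fact that $u_c^{-}$ is a fixed point of the Lax--Oleinik semigroup $T_{c,t}^{-}$, together with the semi-concavity calculus for SCL functions, to build at any differentiability point $(x,t)$ a unique backward calibrated extremal whose projected $H$-trajectory is backward c-semi static. The $u_c^{+}$ case follows by the duality $u_c^{+}=-\tilde u_c^{-}$ applied to the reversed Lagrangian $\tilde L(x,v,t)=L(x,-v,t)$, so it suffices to carry out the argument for $u_c^{-}$.

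Fix a point $(x,t)$ of differentiability. For each $s>0$ the fixed-point identity yields
\[
u_c^{-}(x,t)=\min_{y\in M}\bigl[u_c^{-}(y,t-s)+h_c((y,t-s),(x,t))\bigr]+s\,\alpha_L(c),
\]
and the minimum is attained, by superlinearity of $L$, along a Tonelli extremal $\gamma_s:[t-s,t]\to M$ ending at $x$. Since $u_c^{-}(\cdot,t)$ is semi-concave with linear modulus and differentiable at $x$, its super-differential there collapses to the single covector $du_c^{-}(x,t)$, and the first-variation formula for $A_c$ forces the terminal momentum $\partial_{v}L(x,\dot\gamma_s(t),t)=du_c^{-}(x,t)+c$ independently of $s$. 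Uniqueness of the smooth Hamiltonian flow then makes the family $\{\gamma_s\}$ consistent on overlapping time-domains, producing a unique backward extremal $\gamma^{-}:(-\infty,t]\to M$ with $\partial_{v}L(\gamma^{-}(t),\dot\gamma^{-}(t),t)=du_c^{-}(x,t)+c$. Since the minimization property is hereditary on sub-intervals, $u_c^{-}$ is calibrated along $\gamma^{-}$, i.e.\ $u_c^{-}(\gamma^{-}(t_2),t_2)-u_c^{-}(\gamma^{-}(t_1),t_1)=A_c(\gamma^{-})\big|_{[t_1,t_2]}$ for all $t_1<t_2\le t$; applying the Lax--Oleinik inequality at both endpoints of any sub-interval against an absolutely continuous competitor and then taking the infimum over periodic time-lifts upgrades this calibration to $F_c$-minimality, which is exactly the backward c-semi-static condition on $(-\infty,0]$ after the obvious time-translation.

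The main obstacle is guaranteeing that the backward extension is genuinely global rather than merely piecewise valid for each finite $s$: one needs a uniform bound on the terminal momenta $\partial_{v}L(\gamma_s(\tau),\dot\gamma_s(\tau),\tau)$ as $s\to\infty$, which follows from Fathi's Lipschitz estimate for viscosity subsolutions combined with the uniform semi-concavity constant of $u_c^{-}$ over the compact base $M\times\mathbb{S}^1$. With this bound a diagonal extraction produces a globally defined backward trajectory, and the uniqueness asserted in the theorem is then immediate from ODE-uniqueness for the smooth Hamiltonian vector field. The forward statement for $u_c^{+}$ is obtained symmetrically by running the same argument in the reversed-time Lagrangian system and translating back via $u_c^{+}=-\tilde u_c^{-}$.
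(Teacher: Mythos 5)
The paper gives no proof of this statement at all: it is quoted verbatim from Fathi's weak KAM theory and simply cited as \cite{Fa}. What you have written is therefore not an alternative to the paper's argument but a reconstruction of the standard one, and it is essentially correct. The chain fixed point of the Lax--Oleinik semigroup $\Rightarrow$ existence of a minimizing Tonelli extremal ending at $x$ for each backward time $s$ $\Rightarrow$ the terminal momentum lies in the super-differential of $u_c^{-}(\cdot,t)$ at $x$, which collapses to $\{du_c^{-}(x,t)\}$ by differentiability $\Rightarrow$ all the $\gamma_s$ are restrictions of one backward Hamiltonian orbit $\Rightarrow$ calibration on $(-\infty,t]$ $\Rightarrow$ domination of $u_c^{-}$ against arbitrary competitors (taking the infimum over time-lifts congruent mod $1$) upgrades calibration to $F_c$-minimality, i.e.\ the backward semi-static property --- is exactly Fathi's proof, and the reduction of the $u_c^{+}$ case via $u_c^{+}=-\tilde{u}_c^{-}$ matches the paper's own setup of the symmetric Lagrangian. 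Two small remarks. First, with the paper's conventions the term $\alpha(c)(t'-t)$ is already built into $A_c$ and hence into $h_c$, so your extra $+\,s\,\alpha_L(c)$ in the fixed-point identity double-counts the critical value; this is purely a normalization slip. Second, your ``main obstacle'' paragraph is unnecessary: once the terminal momentum at $(x,t)$ is identified as $du_c^{-}(x,t)+c$, the global backward trajectory is simply the unique solution of the Hamiltonian ODE with that datum, which exists on all of $\mathbb{R}$ by the completeness assumption in the definition of a Tonelli Hamiltonian, so no uniform momentum bound or diagonal extraction is needed --- the consistency of the family $\{\gamma_s\}$ and the uniqueness claim both follow directly from ODE uniqueness, as you in fact observe in your closing sentence.
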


From \cite{Ch} we know, in a proper covering space $\bar{M}$, $\mathcal{A}_H(c,\bar{M})$ may have several classes, even though $\mathcal{A}_H(c)$ is of uniquely class. These different classes of $\mathcal{A}_H(c,\bar{M})$ are disjoint from each other\cite{Mat3}, which can be written by $\mathcal{A}_L^i(c,\bar{M})$, $i\in\Lambda$. Then we can find a sequence of Tonelli Hamiltonians $\{H^i_j:T^*\bar{M}\times\mathbb{S}^1\rightarrow\mathbb{R}\}_{j=1}^{\infty}$ to approximate $H$ under the $C^r-$norm, such that $\mathcal{A}_H^i(c,\bar{M})$ is the unique Aubry class of $H_j^i$. Accordingly, we can find a sequence of weak KAM solutions of $\{u_{c,i,j}^{-}:\bar{M}\times\mathbb{S}^1\rightarrow\mathbb{R}\}_{j=1}^{\infty}$ which converges to a special weak KAM solution $u_{c,i}^{-}$ of system $H$ in $\bar{M}$. That's our {\bf elementary weak KAM solution} of class $\mathcal{A}_i(c,\bar{M})$. Analogously, we get all the elementary weak KAM solutions $\{u_{c,i}^{-}\}_{i\in\Lambda}$.\\

With the help of this definition, we can translate our Barrier function $B_{c,i,j}$ in $\bar{M}\times\mathbb{S}^1$ into a simpler form:
\begin{equation}\label{Barrier}
B_{c,i,j}(z,r)=u_{c,i}^{-}(z,r)-u_{c,j}^{+}(z,r),\quad\forall(z,r)\in\bar{M}\times\mathbb{S}^1.
\end{equation}

\vspace{10pt}
\section{Choose of resonant plan and Fourier properties of functions} \vspace{10pt}

For convenience, we first make a convention on the symbol system once for all. Recall that the system (\ref{5}) is of the form
\[
H(p,q,t)=h(p)+p^{\sigma}f(q,t),\quad(p,q,t)\in \mathbb{D}_0\subseteq T^{*}\mathbb{T}^2\times\mathbb{S},
\]
where $p^{\sigma}f(q,t)$ is actually a polynomial of multi-variables $p=(p_1,p_2)$. Here $h(0)=0$, $\nabla h(0)=\vec{\omega}_0$, $D^2h(0)$ is strictly positively definite and $\|D^2h(0)\|\sim\mathcal{O}(1)$. $f(q,t)\in\mathfrak{B}(0,c_1)\subseteq C^r(\mathbb{T}^2\times\mathbb{S},\mathbb{R})$, where $c_1\sim\mathcal{O}(1)$ is a fixed constant. $\vec{\omega}_0$ is a Diopantine frequency of index $(\tau,C_0)$, i.e. 
\[
\forall \vec{k}\in\mathbb{Z}^2, \;|||\langle \vec{k},\vec{\omega}_0\rangle|||\geq\frac{C_0}{|\vec{k}|^{n-1+\tau}},
\]
where $|\vec{k}|=\max\{|k_1|,|k_2|\},\;\vec{k}=(k_1,k_2)$ and $|||x|||=x-[x]$ is the reminder part of $x$. We add a subscript `0' to the Diophantine frequency $\omega$ to avoid confusion in the following. Notice that $\mathbb{T}=\mathbb{R}/2\pi\mathbb{Z}$ and $\mathbb{S}=\mathbb{R}/\mathbb{Z}$ in our situation.\\
	
We denote the norm $\|\cdot\|_{C^r}$ of $C^r(\mathbb{T}^2\times\mathbb{S},\mathbb{R})$ by $\|f(q,t)\|\doteq\sum_{|\vec{\alpha}|=0}^{r}\|f(q,t)\|_{C^0}$, where $|\vec{\alpha}|\doteq|\alpha_1|+|\alpha_2|+|\alpha_3|$ and $\|\cdot\|_{C^0}$ is the uniform norm.  
	\begin{Lem}\label{Fourier}
		$\forall f(q)\in C^r(\mathbb{T}^3,\mathbb{R})$, $q=(q_1,q_2,q_3)$ we have:
	\begin{enumerate}
	\item $\|f_{\vec{k}}\|_{C^0}\leq (2\pi|\vec{k}|)^{-r}\cdot\|f\|_{C^r}$, here $\vec{k}=(k_1,k_2,k_3)\in\mathbb{Z}^3$ and $|\cdot |$ is denoted as above.
	\item $\kappa_3\doteq\sum_{\vec{k}\in\mathbb{Z}^3}|\vec{k}|^{-3-1}$ is a constant of $\mathcal{O}(1)$, here `3' can be replaced by a dimensional argument $n$.
	\item $R_K f\doteq\sum_{|\vec{k}\geq K|} f_{\vec{k}}\exp^{2\pi i\langle\vec{k},q\rangle}$, then$ \|R_K f\|_{C^2}\leq\kappa_3 K^{-r+3+3}\|f\|_{C^r}$.
	\end{enumerate}
	\end{Lem}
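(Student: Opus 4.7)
The plan is the standard three-step Fourier estimate on $\mathbb{T}^3$: integration by parts for coefficient decay, a lattice-shell count for the summability constant, and term-by-term differentiation of the tail series for the smoothing estimate.

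For part (1), I write $f_{\vec{k}}=\int_{\mathbb{T}^3}f(q)\,e^{-2\pi i\langle\vec{k},q\rangle}\,dq$ and pick an index $j\in\{1,2,3\}$ realizing $|k_j|=|\vec{k}|$. Integrating by parts $r$ times in the single variable $q_j$, with boundary terms vanishing by $\mathbb{T}$-periodicity, replaces $f$ by $\partial_{q_j}^r f$ and pulls out a factor $(2\pi i k_j)^{-r}$. Taking absolute values yields
\[
|f_{\vec{k}}|\leq (2\pi|\vec{k}|)^{-r}\,\|\partial_{q_j}^r f\|_{C^0}\leq(2\pi|\vec{k}|)^{-r}\,\|f\|_{C^r}.
\]
For part (2), I partition $\mathbb{Z}^n\setminus\{0\}$ into sup-norm shells $S_m=\{\vec{k}:|\vec{k}|=m\}$, whose cardinality is $(2m+1)^n-(2m-1)^n=O(m^{n-1})$; hence
\[
\sum_{\vec{k}\neq 0}|\vec{k}|^{-n-1}\leq C\sum_{m\geq 1}m^{n-1}\cdot m^{-n-1}=C\sum_{m\geq 1}m^{-2}<\infty,
\]
which makes $\kappa_n=\mathcal{O}(1)$ and confirms the remark that the dimension can be left as a parameter.

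For part (3), absolute convergence of the differentiated series (guaranteed by (1)--(2) once $r$ is large) legitimises term-by-term differentiation, so for each multi-index with $|\vec\alpha|\leq 2$,
\[
\|\partial^{\vec\alpha}R_K f\|_{C^0}\leq\sum_{|\vec{k}|\geq K}(2\pi|\vec{k}|)^{|\vec\alpha|}\,|f_{\vec{k}}|\leq(2\pi)^{|\vec\alpha|}\|f\|_{C^r}\sum_{|\vec{k}|\geq K}|\vec{k}|^{|\vec\alpha|-r}.
\]
The essential trick is the factorisation $|\vec{k}|^{|\vec\alpha|-r}=|\vec{k}|^{-(n+1)}\cdot|\vec{k}|^{|\vec\alpha|-r+n+1}$: for $r$ large enough that the second exponent is non-positive, the estimate $|\vec{k}|\geq K$ gives $|\vec{k}|^{|\vec\alpha|-r+n+1}\leq K^{|\vec\alpha|-r+n+1}$, and part (2) controls the remaining sum by $\kappa_n$. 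Taking $|\vec\alpha|=2$ and $n=3$ produces the advertised exponent $-r+6$ and the constant $\kappa_3$, and summing over multi-indices $|\vec\alpha|\leq 2$ absorbs a harmless combinatorial constant into $\|f\|_{C^r}$.

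There is no genuine obstacle; everything is textbook Fourier analysis on the torus. The only bookkeeping subtlety is that the paper's norm $\|\cdot\|_{C^r}$ is defined as a sum over multi-indices rather than a maximum, which merely absorbs an $(r,n)$-dependent combinatorial factor into the constants and does not affect either the exponent of $K$ or the final form of the bound.
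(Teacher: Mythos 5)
Your proof is correct and follows essentially the same route as the paper: $r$-fold integration by parts in the coordinate realizing $|\vec k|$ for (1), a shell count of order $m^{n-1}$ for (2), and the factorization $|\vec k|^{|\vec\alpha|-r}=|\vec k|^{-(n+1)}|\vec k|^{|\vec\alpha|-r+n+1}$ combined with (2) for (3). The only cosmetic difference is that the paper additionally notes $r\geq 7$ is needed for the tail bound $K^{-r+6}$ to actually decay, whereas your condition ``second exponent non-positive'' only requires $r\geq 6$; this does not affect the validity of the stated inequality.
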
 
	\begin{proof}
	\begin{enumerate}
	\item Since $f_{\vec{k}}=\frac{1}{2\pi}\int_0^{2\pi}f(q)\exp{-2\pi i\langle\vec{k}, q\rangle}dq$, we can easily get the estimate by r-times integral by parts.
	\item The number of $\vec{k}$ satisfying $|\vec{k}|=l$ is less than $2^3 l^{3-1}$, so $\sum_{k\in\mathbb{Z}^3}|k|^{-3-1}\leq\sum_{l\in\mathbb{N}}2^3 l^{3-1}\frac{1}{l^{3+1}}\leq 8\sum_{l\in\mathbb{N}}\frac{1}{l^2}<\infty$. Same result for $\mathbb{T}^n$ can be get with replacing $3$ by $n$.
	\item We also use a skill of integral by parts: $\|R_K f\|_{C^2}\leq\sum_{|k|\geq K}|k|^{-r+2}\|f\|_{C^r}\leq K^{-r+3+3}\|f\|_{c^r}\sum_{|k|\geq K}|k|^{-3-1} $, then use the result of (2). By the way, $r\geq7$ is necessary and actually we can choose it properly large.
	\end{enumerate}
	\end{proof}
		 {\bf Notice:} For simplicity, we often omit the vector symbol $\vec{\cdot}$.  Later on , we  also need a norm $\|\cdot\|_{C^r, \mathcal{B}(p^{*}, \delta)}$ on some subdomain of  phase space $T^{*}\mathbb{T}^2\times\mathbb{S}$, which could be defined in the same way as above, except the action variables $p=(p_1,p_2)$ added. Sometimes we denote the norm by $\|\cdot\|_{C^r,\mathcal{B}}$ or $\|\cdot\|_{C^r, \delta}$ for short, as long as there's no ambiguity.\\

From the aforementioned Lemma, $\forall f\in C^r([0,1]^3,\mathbb{R})$, there will be a unique $\mathbb{Z}^3$-real number sequence $\{f_{k}\}_{k\in\mathbb{Z}^3}$ corresponding to it and the rate of decay of $|f_{k}|$ has been given by $(1)$ as $|k|\rightarrow\infty$. Conversely, a $\mathbb{Z}^3$-real sequence $\{f_{k}\}_{k\in\mathbb{Z}^3}$ satisfying Lemma \ref{Fourier} will determine a function $f$ in $C^r([0,1]^3,\mathbb{R})$.
\begin{defn}
We denote the space of $\mathbb{Z}^3$-real sequences by $\mathfrak{C}$ and the subspace of $\mathfrak{C}$ which could decide  $C^r$ functions by $\mathfrak{C}^r$.
\begin{enumerate}
\item A linear subspace of $\mathbb{Z}^3$ is called a {\bf Lattice}, which is written by $\Lambda$.  If we can find a group of irreducible base vectors generating $\Lambda$, then $\Lambda$ is called a {\bf greatest Lattice} and is denoted by $\Lambda^{max}$. We denote the space of all Lattices by $\mathfrak{L}$.
\item We call the linear operator $\mathscr{F}$ a {\bf Pickup} of $\mathfrak{C}^r$, if 
\[
\mathscr{F}: \mathfrak{C}^r\times\mathfrak{L}\rightarrow\mathfrak{C}^k\;via\;(f_k,\Lambda)\rightarrow\bar{f}_k,
\]
where $k\in\mathbb{Z}^3$ and $\bar{f}_k=\left\{\begin{array}{cccc}
	                                                                         f_k,k\in\Lambda\\
	                                                                         0,\;k\notin\Lambda
	                                                                         \end{array}\right.$
\item We call the linear operator $\mathscr{G}$  a {\bf Shear}, if
\[
\mathscr{G}: \mathfrak{C}^r\times\mathfrak{L}\times\mathbb{R}\rightarrow\mathfrak{C}^k\;via\;(f_k,\Lambda,K)\rightarrow\bar{f}_{k,K},
\]
where $k\in\mathbb{Z}^3$ and $\bar{f}_{k,K}=\left\{\begin{array}{cccc}
	                                                                        f_k,k\in\Lambda&|k|\geq K,\\
	                                                                         0,\quad else
	                                                                         \end{array}\right.$
	                       
\end{enumerate}
\end{defn}
Now we make use of these definitions to get our resonant plan. Without loss of generality, We can assume $\omega_0=(\omega_{0,1},\omega_{0,2})\in[0,1]\times[0,1]$. Take $l\in\mathbb{N}\setminus\{1\}$, we can get a $l$-partition of $[0,1]^2$, i.e. $l^2$ little squares which are diffeomorphic to $[0,\frac{1}{l}]^2$. We continue this process $m$-times and get squares of length $\frac{1}{l^m}$. We can always pick a proper $\frac{1}{l^m}$-lattice point $\omega_m$ with dist$(\omega_m,\omega_0)\in[\frac{\sqrt{2}}{l^{m+1}},\frac{\sqrt{2}}{l^m}]$ for each step, $m\in\mathbb{N}$. Additionally, we have 
\[
dist(\omega_m,\omega_0)<dist(\omega_n,\omega_0), \;\forall n<m.
\]
The following demonstration will give the readers a straightforward explanation for this:
\begin{demo}
We express $\sqrt{2}$ by its decimal fraction $\sqrt{2}=1.41421\cdots$, then $1.4,1.41,1.414,1.4142,1.41421,\cdots$ will become a candidate sequence of rational numbers. Once we have $\sqrt{2}=1.\underbrace{41421\cdots\ast}_{m-1},\underbrace{0,0,\cdots,0}_{n},\star,\cdots$, where $\ast,\star\in\{1,2,3,4,5,6,7,8,9\}$ and in this demonstration we can assume $\ast=4=\star$, then the $(m-1)-$th number of this sequence should be $1.\underbrace{41421\cdots3}_{m-1}$, the $(m-1+i)-$th one $1.\underbrace{41421\cdots3}_{m-1}\underbrace{99\cdots9}_{i}$, $i\in\{1,\cdots,n\}$ and the $(m+n)-$th one $1.\underbrace{41421\cdots\ast}_{m-1}\underbrace{00\cdots0}_{n}\star\cdots$. With the same rule, we can modify all the places several $0$s come out one by one.
\end{demo}
Now we get a sequence of 2-resonant points $\{\omega_m\}_{m\in\mathbb{N}}$ which approaches $\omega$ step by step. It's a slow but steady approximated process. Accordingly, we can find $(a_m,b_m)\in\mathbb{N}^2$ such that $\omega_m=(\frac{a_m}{l^m},\frac{b_m}{l^m})$. Between $\omega_m$ and $\omega_{m+1}$, along these partition lines, we can find a $\omega_{m,\frac{1}{2}}$ as a medium 2-resonant point (see figure \ref{fig3}), which can be expressed formally by $(\frac{a_m}{l^m},\frac{b_{m+1/2}}{l^{m+1}})$ or $(\frac{a_{m+1/2}}{l^{m+1}},\frac{b_{m}}{l^m})$, where $a_{m+1/2}$, $b_{m+1/2}\in\mathbb{N}$. We could only consider the $\omega_{m+1/2}$ of a former case in this paper.\\

\begin{figure}
\begin{center}
\includegraphics[width=6cm]{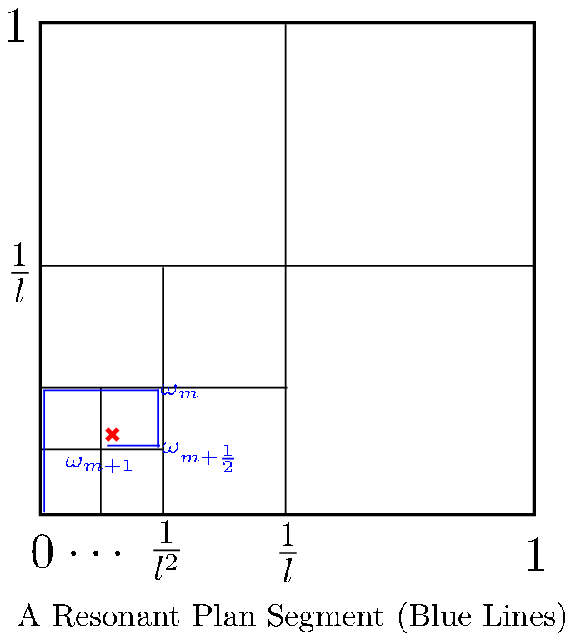}
\caption{ }
\label{fig3}
\end{center}
\end{figure}
Finally, we can connect all these $\{\omega_m,\omega_{m+1/2}\}_{m\in\mathbb{N}}$ along the Lattice lines and get one asymptotic resonant plan $\mathfrak{P}^{\omega}=\cup_{m\in\mathbb{N}}\{\Gamma_m^{\omega}\}$. We call $\Gamma_m^{\omega}\doteq\{\omega_m\stackrel{\Gamma_{m,1}^{\omega}}{\longrightarrow}\omega_{m+1/2}\stackrel{\Gamma_{m,2}^{\omega}}{\longrightarrow}\omega_{m+1}\}$ {\bf one-step transport process} and show its several fine properties.
\begin{defn}
In our case of 2.5 degrees of freedom, time variable will be involved in. Let ${\tilde{\omega}}\doteq(\omega,1)\in\mathbb{R}^3$ be the frequency, $\Lambda_m\doteq span_\mathbb{Z}\{(l^m,0,-a_m),(0,l^m,-b_m)\}$ be the Lattice vertical to $\tilde{\omega}_m$ and $\Lambda_{m+\frac{1}{2}}$ be the one vertical to $\tilde{\omega}_{m+1/2}$. The corresponding maximal Lattices can be denoted by $\Lambda_m^{max}$ and $\Lambda_{m+1/2}^{max}$. $d_m\doteq dist(\omega_m,\omega_0)$ and $d_{m+1/2}$ the distance of $\omega_{m+1/2}$ to $\omega_0$.
\end{defn}
It's obvious that $d_{m+1}\leq d_{m+1/2}\leq d_{m}$ and $d_m\leq\frac{\sqrt{2}}{l^m}$. Recall that $\frac{a_m}{l^m}$, $\frac{b_{m+1/2}}{l^{m+1}}$, $\frac{a_{m+1/2}}{l^{m+1}}$ and $\frac{b_m}{l^m}$ may be reducible, so $\Lambda_m= span_\mathbb{Z}\{(l^m,0,-a_m),(0,l^m,-b_m)\}$ and $\Lambda_{m+1/2}= span_\mathbb{Z}\{(l^m,0,-a_m),(0,l^{m+1},-b_{m+1/2})\}$ are unnecessarily maximal Lattices. Besides, we will face a new difficulty: there may be `stronger' 2-resonant obstructions in $\Gamma_m^{\omega}$,  i.e. $\omega*=(\frac{a_m}{l^m},\frac{c_m}{\iota_m})\in\Gamma_m^{\omega}$ with $l^m\geq\iota_m$, even $l^m\gg\iota_m$. Later we will transform these difficulties into several conditions of $\{f_k\}_{k\in\mathbb{Z}^3}$ and solve them, with the aforementioned `Pickup' and `Shear' operators.
	\begin{Lem}\label{Properties of resonance plan}
	\begin{enumerate}
	\item$ {\Gamma_{m,i}^{\omega}}_{m\in\mathbb{N}}$ are line segments parallel but not collinear with each other, $i=1,2$.
	\item $\Gamma_m^{\omega}\cap\Gamma_{m+1}^{\omega}=\omega_{m+1}$, and $\Gamma_m^{\omega}\cap\Gamma_n^{\omega}=\emptyset$, here $m\in\mathbb{N}$, $n\neq m\pm 1$.
	\item $\omega\neq\omega'$ and they both locate on $\mathfrak{P}^{\omega}$,  then $\Lambda_{\omega}\cap\Lambda_{\omega'}$ is either a one-dimensional Lattice, or $(0,0,0)\in\mathbb{Z}^3$. The former case happens iff $\omega$ lies on the same $\Gamma_{m,i}$ with $\omega'$, $i=1,2$. The latter case happens iff they lie on different $\Gamma_{m,i}$ segments.
	\end{enumerate}
	\end{Lem}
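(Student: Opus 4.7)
The three claims are geometric observations about the L-shaped resonance plan $\mathfrak{P}^\omega$, and I would prove them in order, relying only on the defining relations $\omega_m=(a_m/l^m,b_m/l^m)$, $\omega_{m+1/2}=(a_m/l^m,b_{m+1}/l^{m+1})$, the strict monotonicity $d_{m+1}<d_{m+1/2}<d_m$, and the Diophantine character of $\tilde\omega_0$.

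\medskip

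For part (1) the construction immediately places each $\Gamma_{m,1}^\omega$ on the vertical line $\{\omega_1=a_m/l^m\}$ and each $\Gamma_{m,2}^\omega$ on the horizontal line $\{\omega_2=b_{m+1}/l^{m+1}\}$, so the two families are internally parallel. For non-collinearity I would note that a coincidence $a_m/l^m=a_{m'}/l^{m'}$ with $m\neq m'$ would force the two segments to share a vertical line at some fixed rational first-coordinate $c$, hence to have the same horizontal component $|c-\omega_{0,1}|$ of distance to $\omega_0$; the digit-by-digit construction recorded in the demonstration explicitly avoids this by ensuring that at each step at least one coordinate of $\omega_m$ is refined past any preceding value (the trick for handling runs of zeros in the decimal expansion is exactly this point). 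The same reasoning applies to the horizontal family.

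\medskip

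For part (2), $\omega_{m+1}$ lies in both $\Gamma_m^\omega$ and $\Gamma_{m+1}^\omega$ by construction. To rule out any other intersection I would enumerate the four sub-segment pairings: two vertical or two horizontal sub-segments sit on disjoint parallel lines by (1), while each mixed vertical-horizontal pairing produces at most one candidate intersection point, and the coordinate ordering along each segment together with the nesting $d_{m+2}\leq d_{m+1/2}\leq d_m$ forces the unique such point to be $\omega_{m+1}$. For $|n-m|\geq 2$ I would use a nested-rectangle estimate: $\Gamma_m^\omega$ lies in the closed rectangle $R_m$ with opposite corners $\omega_m,\omega_{m+1}$, every point of which is at distance $\geq d_{m+1}$ from $\omega_0$, while every point of $\Gamma_n^\omega\subset R_n$ lies within $\sqrt{2}\,d_n\leq\sqrt{2}\,d_{m+2}\ll d_{m+1}$ of $\omega_0$; hence $R_m\cap R_n=\emptyset$.

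\medskip

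For part (3) I would attach to each segment its primitive resonance vector $k_{m,1}=(l^m,0,-a_m)$ and $k_{m,2}=(0,l^{m+1},-b_{m+1})$ (after dividing out any common factor). Any $\omega^*\in\Gamma_{m,i}^\omega$ satisfies $\langle k_{m,i},\tilde\omega^*\rangle=0$, so $\mathbb{Z}\cdot k_{m,i}\subset\Lambda_{\omega^*}$, which already gives the easy direction: if $\omega,\omega'$ share a segment, their lattice intersection contains a one-dimensional Lattice. Conversely, if they share no segment then the resonance relations defining $\Lambda_\omega$ and $\Lambda_{\omega'}$ are drawn from distinct members of $\{k_{m,i}\}$, which by (1) are pairwise linearly independent; any nonzero $k\in\Lambda_\omega\cap\Lambda_{\omega'}$ would therefore impose in the limit an additional $\mathbb{Z}$-relation among the coordinates of $\tilde\omega_0$, contradicting the Diophantine hypothesis. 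I expect the main technical obstacle to lie precisely here, in the case where both $\omega,\omega'$ are $2$-resonant points attached to non-consecutive segments so that $\Lambda_\omega,\Lambda_{\omega'}$ both have rank $2$; then a naive linear-algebraic count allows a rank-$1$ accidental coincidence between them, and ruling this out requires the full arithmetic of the approximant denominators provided by the explicit digit-by-digit construction of $\{\omega_m\}$.
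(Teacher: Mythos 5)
The paper gives no proof of this lemma (it is explicitly omitted as ``easily deduced from our construction''), so your proposal can only be measured against the construction itself. For parts (1) and (2) your reading is correct: $\Gamma_{m,1}^{\omega}$ lies on the vertical line $\{\omega_1=a_m/l^m\}$ and $\Gamma_{m,2}^{\omega}$ on the horizontal line $\{\omega_2=b_{m+1}/l^{m+1}\}$, and the whole lemma hinges on these rationals being pairwise distinct within each family. Be aware, though, that this is not a consequence of the stated distance conditions $\frac{\sqrt2}{l^{m+1}}\leq d_m\leq\frac{\sqrt2}{l^m}$ and $d_{m+1}<d_m$; it is an \emph{additional} requirement that must be imposed on the choice of the ``proper'' lattice points, and your phrase ``at least one coordinate is refined'' is too weak — you need \emph{each} coordinate to move at every step, since a step that only refines $\omega_2$ would make $\Gamma_{m,1}$ and $\Gamma_{m+1,1}$ collinear. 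The same caveat applies to your nested-rectangle estimate in (2): the inequality ``every point of $R_m$ is at distance $\geq d_{m+1}$ from $\omega_0$'' holds only if $\omega_{m+1}$ is the corner of $R_m$ nearest to $\omega_0$, i.e.\ only if both coordinates of $\omega_m$ approach those of $\omega_0$ monotonically from one side. That coordinatewise monotonicity is implicit in the decimal-truncation demonstration, but you should state it as a hypothesis on the construction rather than derive it from $d_{m+1}<d_m$, which does not imply it.

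Part (3) contains a genuine gap, and it is located exactly where you suspected — but the obstacle you flag as ``technical'' is in fact fatal to the literal statement. If $\omega$ and $\omega'$ are two \emph{distinct rational} (2-resonant) points, then $\tilde\omega=(\omega,1)$ and $\tilde\omega'=(\omega',1)$ are linearly independent rational vectors in $\mathbb{R}^3$, so their common orthogonal complement is the rational line $\mathbb{R}\,(\tilde\omega\times\tilde\omega')$, which always contains nonzero integer points; hence $\Lambda_{\omega}\cap\Lambda_{\omega'}$ contains a rank-one lattice no matter which segments the two points sit on (one checks the same for the paper's non-maximal lattices $\mathrm{span}_{\mathbb Z}\{(l^m,0,-a_m),(0,l^m,-b_m)\}$ by solving $J(l^{n-m}a_m-a_n)+K(l^{n-m}b_m-b_n)=0$ over $\mathbb{Z}$). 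No ``arithmetic of the approximant denominators'' can rule this out, and the Diophantine property of $\omega_0$ is irrelevant here because only the two fixed rational points enter the computation. The dichotomy in (3) can therefore only be read as a statement about the rank-one resonance lattices attached to the \emph{segments} (the $\Lambda_{\lambda\vec e_1}$-type lattices actually used in the normal forms), in which case it collapses to part (1): points on a common segment share that segment's generator, and generators of distinct segments are non-parallel by non-collinearity. Your ``easy direction'' already proves this; the hard direction you were worried about should be replaced by this reinterpretation rather than attacked head-on.
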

	\begin{proof}
	We omit the proof here since these can be easily deduced from our construction.
	\end{proof}
	In the next, we will find the Stable Normal Forms of system (\ref{5}) in different domains which are valid for all the resonant segments $\{\Gamma_m^p\}_{m\in\mathbb{N}}$ with a KAM iteration approach. Since $h(p)$ is strictly positive definite, we get the $\Gamma_m^p$ via a diffeomorphism from $\Gamma_m^{\omega}$. We just need to give the demonstration on $\Gamma_{m,1}$ and other resonant segments can be treated in the same way. This process can be operated for any $m\in\mathbb{N}$, so we could assume $m\gg1$ sufficient large.
\vspace{10pt}	
	
\section{Stable Normal Form and unified expression of Hamiltonian systems}

First, we need to divide the Stable Normal Form into 2-resonance and 1-resonance two different cases as $\omega\in\Gamma_{m,1}^{\omega}$ is differently chosen.
\subsection{2-Resonant case}
Except for the two end points $\omega_m$ and $\omega_{m+1/2}$, there are infinite many other 2-resonant points on $\Gamma_{m,1}^{\omega}$. We just need to consider finitely many of them, which we call `sub 2-resonant' points. In other words, we only consider the 2-resonant case $\omega^*=(\frac{a_m}{l^m},\frac{c_m}{\iota_m})$, and $\iota_m$ could be chosen any integer between $1$ and $l^{m(1+\xi)}$, here $\xi>0$ could be chosen a proper real number later.\\

For the KAM iteration's need, we expand system (\ref{5}) to an autonomous quasi-convex system as following:
\begin{equation}
\label{6}
H(p,q,I,t)=I+h(p)+p^{\sigma}f(q,t)\doteq\tilde{h}(p,I)+p^{\sigma}f(q,t).
\end{equation}
We can formally give one step KAM iteration to this system:
\begin{eqnarray*}
H_{+}=H\circ\Phi&=&H+\{H,W\}+\int_0^1(1-t)\{\{H,W\}, W\}\circ\phi_t(p,q)dt\\
&=&\tilde{h}+p^{\sigma}f(q,t)+\{\tilde{h},W\}+\{p^{\sigma}f(q,t), W\}\\
&+&\int_0^1(1-t)\{\{\tilde{h},W\}, W\}\circ\phi_t dt+h.o.t.\\
\end{eqnarray*}
Here $\Phi$ is an exact symplectic transformation defined in the domain $\mathcal{B}(p^{*},\delta_{+})\times\mathbb{T}^2\times\mathbb{S}$ of phase space, with $W$ its Hamiltonian function and $\Phi=\phi_{t=1}$ the time-1 mapping. $\nabla h(p^*)=\omega^*$ and $\delta_{+}\leq\delta$, with $\delta$ as the available radius at first. Recall that we could make $m\gg1$ sufficiently large and $D^2h(0)\doteq\mathbf{A}\sim\mathcal{O}(1)$ relative to $m$.\\

We could take $W=p^{\sigma}g(q,t)$ formally and $g(q,t)=\frac{1}{T_m^{*}}\int_0^{T_m^*}f(q+\omega^*t,t)tdt$. Here $T_m^*$ is the period of frequency $\omega^*$ and we know $T_m^*=lcm(l^m,\iota_m)\leq l^m\cdot\iota_m$. Then we can solve the cohomology equation and get:
\begin{eqnarray*}
H_{+}&=&\tilde{h}(p,I)+p^{\sigma}[f](q,t)+\langle\Delta\omega,p^{\sigma}\nabla_{\theta}g\rangle+\sigma p^{2\sigma-1}(f\nabla_{\theta}g-\nabla_{theta}f g)\\
&+&\int_0^1(1-t)\{\{\tilde{h},W\}, W\}\circ\phi_t dt+h.o.t
\end{eqnarray*}
Here $\theta\doteq(q,t)$ and $\Delta\omega\doteq\omega-\omega^*$. Again we recall that $p$, $\theta$, $f$ are $g$ all vectors. 
Besides, we have
\[
\{\{\tilde{h},W\},W\}=(W_{tp}+h_{pp}W_q+\omega W_{qp})W_q-(W_{tq}+\omega W_{qq})W_p,
\]
of which only the value under $\|\cdot\|_{C^2,\mathcal{B}}$ norm we care.\\

Recall that it's just a formal derivation, so we need a list of conditions to make it valid. \\

$\bullet$ First, we should control the drift of action variable $p$, i.e. restricted in the domain $\mathcal{B}(p_{+}^*,\delta_{+})$, the quantity of drift doesn't exceed $\Delta\delta\doteq\delta-\delta_{+}$. Without loss of generality, we can assume $\int_{\mathbb{T}^3}f(q,t)dqdt=0$ and $p_{+}^*=p^*$. Then we need
\[
\|\int_0^1\frac{\partial W}{\partial q}\circ\phi_t dt\|_{\mathcal{B}(p^*,\delta_{+})}\leq\|W_q\|_{\mathcal{B}(p^*,\delta)}\leq\Delta\delta.
\]
As we know,
\begin{equation}\label{distance}
\frac{1}{l^{m}}\leq d_m^*\leq\frac{\sqrt{2}}{2l^m}, \;m\gg1.
\end{equation}
This is because the special resonant plan we choose. So we just need $\|p\|^{\sigma}\|g_q\|_{\mathcal{B}(p^*,\delta)}\leq\Delta\delta$. On the other side, 
\[
\|p\|^{\sigma}\leq(\|p-p^*\|+\|p^*\|)^{\sigma}\leq\sum_{i=0}^{\sigma}C_{\sigma}^{i}\|p-p^*\|^i\|p^*\|^{\sigma-i},
\] 
so we need 
\begin{equation}
\|p^{\sigma}g_q\|_{\mathcal{B}(p^*,\delta)}\leq c_2d_m^{*\sigma}T_m^*\|f\|_{C^2,\mathcal{B}}\leq\frac{\delta}{4},
\end{equation}
by taking $\Delta\delta=\frac{\delta}{4}$ and $\delta\leq d_m^*$. Here $c_2$ is a constant depending on $c_1$ and $\sigma$. Actually, the estimation here is rather loose, owing to the robustness of KAM method.\\

$\bullet$ Second, we must assure that the tail term $R(p,q,t)$ and the resonant term $Z(p,q,t)$ are strictly separated, i.e. $\|R\|_{C^0,\delta_{+}}\ll\|Z\|_{C^0,\delta_{+}}$. Here $Z(p,q,t)=p^{\sigma}[f](q,t)$, $R=R_1+R_2$, $R_1=\Delta\omega\cdot p^{\sigma}g_{\theta}$, and
\[
R_2=\sigma p^{2\sigma-1}(f\nabla_{\theta}g-\nabla_{theta}f g)+\int_0^1(1-t)\{\{\tilde{h},W\}, W\}\circ\phi_t dt+h.o.t.
\]
Actually, we have $R_2(p,q,t)=d_m^{*2\sigma-1}\tilde{R}_2(q,t)+h.o.t$, and
\begin{equation*}
\|\tilde{R}_2\|_{C^2}\leq T_m^{*2}\|f\|_{C^2},
\end{equation*}
\begin{equation*}
\|R_1\|_{C^2}\leq\delta d_m^{*\sigma} T_m^*\|f\|_{C^2}.
\end{equation*}
On the other side, the resonant term satisfies:
\begin{eqnarray*}
Z=p^{\sigma}[f]&=&p^{\sigma}\sum_{\substack{(k,l)\bot\tilde{\omega}^*\\
                                                                      (k,l)\in\mathbb{Z}^3\setminus\{0\}\\
                                                                }
                                                     }f_{k,l}\exp^{2\pi i(\langle k,q\rangle+l\cdot t)},                                          
\end{eqnarray*}
where $\tilde{\omega}^*=(\frac{a_m}{l^m},\frac{c_m}{\iota_m},1)$. Here $\frac{c_m}{\iota_m}$ is irreducible, but $\frac{a_m}{l^m}$ may be not. There are two aforementioned difficulties we should face:
\begin{enumerate}\vspace{5pt}
\item $\frac{a_m}{l^m}$ may be reducible and $\frac{a_m}{l^m}=\frac{\lambda a_m^{'}}{\lambda l_m^{'}}$. $l_m^{'}\ll l_m$ could even happen and make the estimation of $Z$ of $m$ ambiguous.\\
\item the case $\iota_m\ll l_m$ may happen. Later we will see that this may cause a big `obstruction' to the persistence of NHIC according to $Z_1$.\\
\end{enumerate}

We denote the maximal Lattice vertical to $\tilde{\omega}^*$ by $\Lambda_{\omega^*}^{max}=span_{\mathbb{Z}}\{\vec{e}_1,\vec{e}_2\}$, with $\vec{e}_1=(l_m^{'},0,-a_m^{'})$ and $\vec{e_2}=(0,\iota_m,-c_m)$. Then we can translate $Z$ into:
\begin{eqnarray*}
Z=p^{\sigma}[f]&=&p^{\sigma}[f]_1+p^{\sigma}[f]_2\\
                         &=&p^{\sigma}\sum_{l\in\mathbb{Z}}f_{l\vec{e}_1}\exp^{2\pi il(l_m^{'}q_1-a_m^{'}t)}\\ 
                         &+&p^{\sigma}\sum_{\substack{l,k\in\mathbb{Z}\\
                                                                            k\neq0}} f_{l\vec{e}_1+k\vec{e}_2}\exp^{2\pi i [l(l_m^{'}q_1-a_m^{'}t)+k(\iota_m q_2-c_m t)]}.
\end{eqnarray*}

We have $(l_m,0,-a_m)=\lambda\vec{e}_1$, $\Lambda_{\lambda\vec{e}_1}\doteq span_{\mathbb{Z}}\{\lambda\vec{e}_1\}$ and $\Lambda_{\omega^*}\doteq span_{\mathbb{Z}}\{\lambda\vec{e}_1,\vec{e}_2\}$. If (2) happens, $[f]_2$ may be much larger than $[f]_1$ by comparing their Fourier coefficients. So we need $f(q,t)\in\mathfrak{B}(0,c_1)$ to be well chosen and satisfies the following conditions:\\

{\bf C1:}  $f\in\mathfrak{B}(0,c_1)$ satisfies: $
\mathscr{F}(f,\Lambda_{\omega^*})=\mathscr{F}(f,\Lambda_{\omega^*}^{max})$, i.e. $f_{k}\equiv0$, $\forall k\in\Lambda_{\omega^*}^{max}\setminus\Lambda_{\omega^*}$.

{\bf C2:} $f\in\mathfrak{B}(0,c_1)$ satisfies: $\mathscr{G}(f,\Lambda_{\omega^*}^{max},l^m)=\mathscr{F}(f,\Lambda_{\omega^*}^{max})$, i.e. $f_{k}\equiv0$, $\forall k\in\Lambda_{\omega^*}^{max}$ and $|k|\leq l^m$.\\

Since $f\in\mathfrak{B}(0,c_1)$, we have $|f_k|\leq\frac{c_3}{|k|^r}$ from Lemma \ref{Fourier}. Here $c_3=c_3(r,c_1)$ is a constant. Later we also use a symbol $\lessdot$ ($\gtrdot$) to avoid too much $c_i$ constant involved, which means $\leq$ ($\geq$) by timing a $\mathcal{O}(1)$ constant on the right side. These symbols are firstly used by J. p\"oschel in \cite{P}.\\

Based on the two Fourier conditions above, we will give the first uniform restriction on $[f]_1$.\\

{\bf U1:} As a single-variable function of $\langle \lambda\vec{e}_1, \theta\rangle$, $p^{*\sigma}[f]_1$ has a unique maximal value point, at which it is strictly nondegenerate with an eigenvalue not less than $\frac{c_4 d_m^{*\sigma}}{l^{m(r+2)}}$, since $m\geq M\gg1$. Here $c_4\geq1$ is a new constant.
\begin{Rem}
This restriction assures the strength of normal hyperbolicity corresponding to the main direction of $\Gamma_{m,1}$. Its order of $m$ is controllable and uniform. It's a necessary demand to resist infinitely many cusp remove.
\end{Rem}
\begin{Rem}
We also recall that the index $r+2$ in {\bf U1} can be replaced by any $r+\zeta$ ($\zeta\geq2$), but new conditions of $\zeta$ will be involved in to assure $|R|\ll|Z|$.  The stronger hyperbolicity is, the easier to assure the existence of NHICs. So we just consider the case of $r+2$.
\end{Rem}
\begin{Rem}
During the whole $\Gamma_{m,1}$ except $\omega_{m+1/2}$, transformations between different resonant lines are not involved in. We just need to construct a NHIC `transpierce' the whole $\Gamma_{m,1}$, so we don't give any restriction to $[f]_2$ temporarily (see figure \ref{fig4}).
\end{Rem}
\begin{figure}
\begin{center}
\includegraphics[width=8cm]{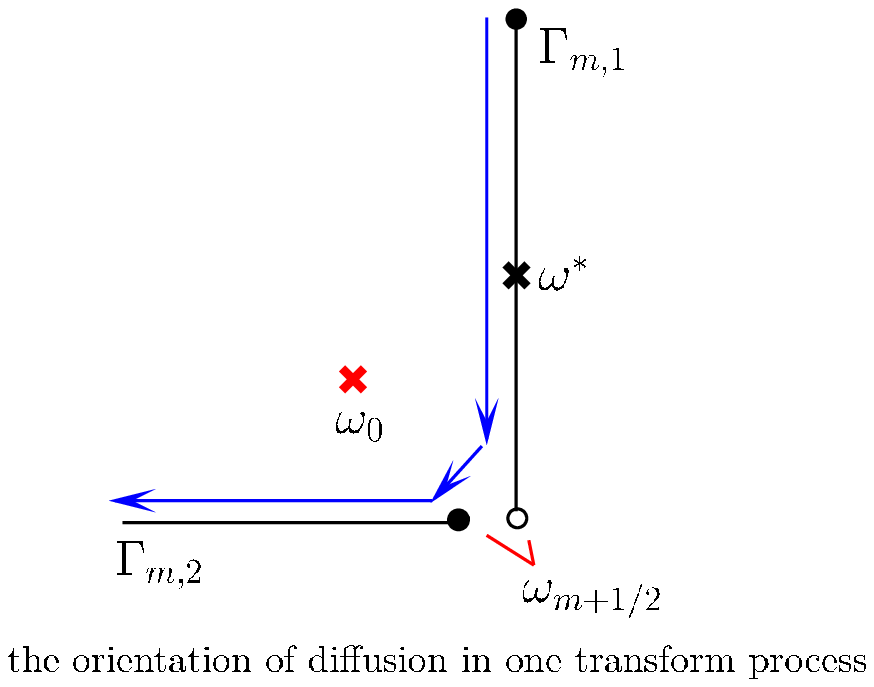}
\caption{ }
\label{fig4}
\end{center}
\end{figure}
Now let's satisfy the two bullets above and finish this KAM iteration with:
\begin{eqnarray}
d_m^{*\sigma-1}T_m^*\lessdot\delta,\vspace{6pt}\\
\delta T_m^*\ll d_m^{*r+2},\vspace{6pt}\\
\delta\leq d_m^{*}, \vspace{6pt}\\
m\geq M\gg1.\vspace{6pt}
\end{eqnarray}
We can sufficiently take 
\begin{equation}\label{index1}
\sigma>r+2
\end{equation}
 and 
\begin{equation}\label{radius}
\delta\ll l^{-m(r+4+\xi)}.
\end{equation}
Recall that $\sigma$ can be chosen properly large from Lemma \ref{robust}.
\begin{Rem}
In the above process, we left an index $\xi>0$ not dealt with. We also know that the larger $\xi$ is, the more `sub 2-resonant' points we should consider. However, we hope the number of these points as little as possible, since the  diffusion mechanism of 2-resonance is much complex than that of 1-resonance. In other words, we'll apply 1-resonant mechanism along $\Gamma_{m,1}$ as much as possible. That needs an estimation of the lower-bound of $\xi$ in the next subsection.
\end{Rem}
\vspace{10pt}

\subsection{1-resonant case}\label{1-resonance}
We first revise several symbols which are valid only in this subsection. As Figure \ref{fig5} shows us, $\Gamma_{m,1}$ is devided into several 1-resonant segments by sub 2-resonant points. Of each segment $\mathscr{S}$ we could find a tube-neighborhood with radius $\delta$, on which we hope to get a similar Stable Normal Form by KAM iterations. Let $\delta_{+}$ be the radius of ball-neighborhood of 2-resonant points, for which the restriction (\ref{radius}) holds. In order to make the tube-neighborhoods approach 2-resonant points as near as possible, we will choose $\delta_{+}$ as less as possible under the premise that 2-resonant Stable Mornal Form is valid.\\


We devide $f(q,t)=\sum_{(k,l)\in\mathbb{Z}^3}f_{k,l}\exp^{i2\pi(\langle k,q\rangle+lt)}$ into $ T_{K}f$ and $R_K f$, to express the partial sums of Fourier series of $|(k,l)|\leq K$ and $|(k,l)|>K$. From Lemma \ref{Fourier} we know that $\|R_K f\|_{C^2}\leq \kappa_3 K^{-r+6}\|f\|_{C^r}$. Still we could give a formal KAM iteration in the tube-neighborhood of $\mathscr{S}$:
\begin{eqnarray*}
H_{+}=H\circ\Phi&=&H+\{H,W\}+\int_0^1(1-t)\{\{H,W\},W\}\circ\phi_t(p,q)dt\\
&=&\tilde{h}+p^{\sigma}f+\{\tilde{h}, W\}+\{p^{\sigma}f, W\}\\
&\quad\quad+&\int_0^1(1-t)\{\{\tilde{h},W\},W\}\circ\phi_tdt+h.o.t\\
&=&\tilde{h}+p^{\sigma}T_{l^{m(1+\xi)}}f+\tilde{\omega}W_{\theta}+\{p^{\sigma}f,W\}+p^{\sigma}R_{l^{m(1+\xi)}}f\\
&\quad\quad+&\int_0^1(1-t)\{\{\tilde{h},W\},W\}\circ\phi_tdt+h.o.t.
\end{eqnarray*}
Here $\Phi$ is an exact symplectic transformation defined in the domain $\mathcal{T}(\mathscr{S},\delta)\times\mathbb{T}^2\times\mathbb{S}$ of phase space, with $W$ its Hamiltionian and $\Phi=\phi_{t=1}$ the time-1 mapping.
\begin{figure}
\begin{center}
\includegraphics[width=13cm]{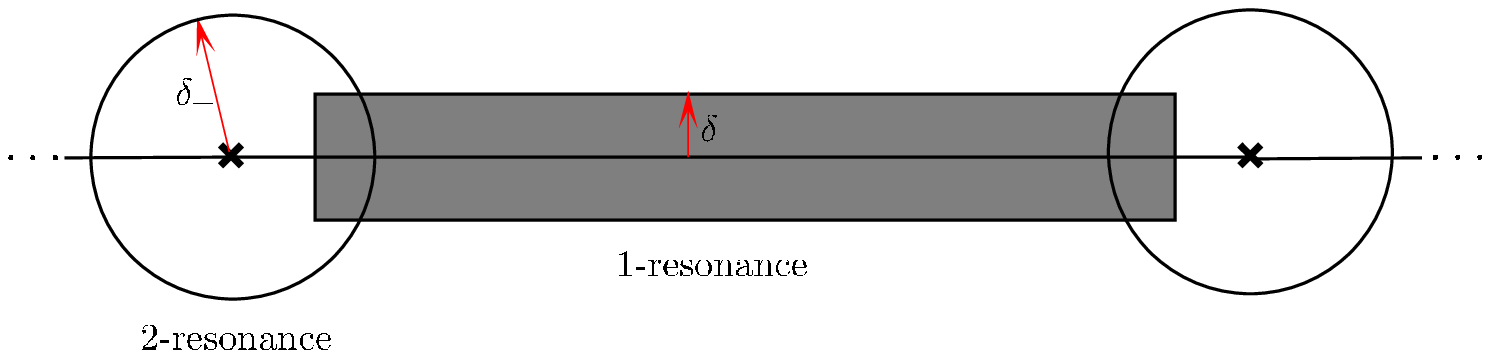}
\caption{ } 
\label{fig5}
\end{center}
\end{figure}
We denote by $R_{\Lambda}\doteq\{\omega\in\mathbb{R}^3|\langle k,\omega\rangle=0,\forall k\in\Lambda\subset\mathbb{Z}^3, \omega_3\equiv 1\}$ the set of frequencies vertical to $\Lambda$-Lattice, then $\Gamma_{m,1}\subset R_{\Lambda_{\lambda\vec{e}_1}}$. Recall that we only consider the $\omega$ of which there exists a $k'\in\mathbb{Z}^3$ not in $\Lambda_{\vec{e}_1}$, such that $k'\bot\omega$ and $|k'|\leq l^{m(1+\xi)}$. We denote by $R_{\Lambda_{\lambda\vec{e}_1,+}^{\xi}}$ the set of all these frequencies. Then we can solve the cohomology equation formally in the domain $\mathcal{T}(\Gamma_{m,1},\delta)\setminus\mathcal{B}(R_{\Lambda_{\lambda\vec{e}_1,+}^{\xi}}\cap\Gamma_{m,1},\delta_{+})$ and get the resonant term 
\[
Z(q,t)=p^{\sigma}\sum_{\substack{(k,l)\in\Lambda_{\lambda\vec{e}_1}\\
|(k,l)|\leq l^{m(1+\xi)}}}f_{k,l}\exp^{i2\pi(\langle k,q\rangle+lt)},
\]
owing to {\bf C1} and {\bf C2} conditions.\\

To ensure this formal KAM iteration valid, we also face the two difficulties as the bullet parts in previous subsection of 2-resonance.\\

$\bullet$ First, we need to control the drift value of action variable $p$. Let's sufficiently take
\begin{equation}
\|\int_0^1\frac{\partial W}{\partial q}\circ\phi_t dt\|_{C^0,\frac{\delta}{2}}\leq\|W_q\|_{C^0,\delta}\leq\frac{\delta}{2}.
\end{equation}
Here we directly shrink the radius of tube-neighborhood of $\mathscr{S}$ to $\frac{\delta}{2}$. Also we know that 
\[
W=p^{\sigma}\sum_{\substack{
(k,l)\notin\Lambda_{\lambda\vec{e}_1}\\
|(k,l)|\leq l^{m(1+\xi)}}}\frac{f_{k,l}}{2\pi i \langle \tilde{\omega}, (k,l)\rangle}\exp^{i2\pi(\langle k,q\rangle+lt)}.
\]

$\bullet$ Second, we need $\|R\|_{C^0,\delta}\ll\|Z\|_{C^0,\delta}$. As the case of 2-resonance, we need a uniform condition of $Z$ here:
\vspace{10pt}

{\bf U2:} As a single-variable function of $\langle \lambda\vec{e}_1, \theta\rangle$, $\forall p\in\mathscr{S}$, $-p^{\sigma}[T_{l^{m(1+\xi)}}f]_1$ has a unique minimal value point, which is strictly nondegenerate with the eigenvalue not less than $\frac{c_5 d_m^{\sigma}}{l^{m(r+2)}}$ ($m\geq M\gg1$). Here $c_5=c_5(c_4,\xi)\geq\frac{1}{2}$ is a new constant, and $d_m$ is the distance between $p\in\Gamma_{m,1}^p$ and $p_0$.

\begin{Rem}
Here {\bf U2} can be considered as a reinforcement of {\bf U1}, which helps us avoid a complicated 1-resonant bifurcation problem which is faced in \cite{Ch} and \cite{BKZ}. A special example satisfying {\bf U2} is to take $p^{\sigma}f(q,t)=(p_1^2+p_2^2)^{\sigma/2}\cos(\langle\theta,\lambda\vec{e}_1\rangle)$ with even $\sigma$. We can see that in this case NHIC corresponding to $[f]_1$ does exist as a single connected cylinder without  decatenation.\\

Actually, we can loosen {\bf U2} to a general condition with bifurcation points (see Appendix \ref{U2}). To avoid the verbosity of narration, we still assume {\bf U2} as a proper uniform restriction. Later, in the Appendix \ref{U2} we will deal with the bifurcated case use a genericity developed in \cite{Ch}.
\end{Rem}

\begin{defn}
Let $P_{\Lambda}:\mathbb{R}^3\rightarrow\mathbb{R}^3$ be the projection of a vector ${\omega}$ to the real-expanded space  $span_{\mathbb{R}}\{\Lambda\}$ and $P_{\Lambda_{+}}$ be the projection to $span_{\mathbb{R}}\{\Lambda,k\}$, if there exists $k\in\mathbb{Z}^3$ not lie on $\Lambda$.
\end{defn}

In the 1-resonant situation we have $\Lambda=\Lambda_{\lambda\vec{e}_1}$, $P=P_{\Lambda_{\lambda\vec{e}_1}}$ and $P_{+}=P_{\Lambda_{\lambda\vec{e}_1,+}}$. We don't care the concrete form of $k$, but $|k|\leq l^{m(1+\xi)}$ is demanded in the following estimation. $\forall \tilde{\omega}\in\mathcal{T}(R_{\Lambda_{\lambda\vec{e}_1}}\cap\Gamma_{m,1},\delta)\setminus\mathcal{B}(R_{\Lambda_{\lambda\vec{e}_1,+}^{\xi}},\delta_{+})$, we have:
\begin{eqnarray*}
\langle k,\tilde{\omega}\rangle=\langle k, P_{+}\tilde{\omega}\rangle&=&\langle Qk,Q\circ P_{+}\tilde{\omega}\rangle+\langle Pk,P\circ P_{+}\tilde{\omega}\rangle\\
&=&\langle Qk,(P_{+}-P)\tilde{\omega}\rangle+\langle Pk,P\tilde{\omega}\rangle,
\end{eqnarray*}
with $Q=Id-P$, then
\[
|\langle k,\tilde{\omega}\rangle|\geq |\langle Qk,(P_{+}-P)\tilde{\omega}\rangle|-|\langle Pk,P\tilde{\omega}\rangle|.
\]
Notice that $Qk$ is parallel to $(P_{+}-P)\tilde{\omega}$ and $Pk$ is parallel to $P\tilde{\omega}$. We actually get
\[
|\langle k,\tilde{\omega}\rangle|\geq |Qk||(P_{+}-P)\tilde{\omega}|-| Pk||P\tilde{\omega}|,
\]
and furthermore
\begin{equation}\label{small denominator}
|\langle k,\tilde{\omega}\rangle|\geq\frac{\sqrt{\delta_{+}^2-\delta^2}}{\sqrt{l^{2m}+a_m^2}}-l^{m(1+\xi)}\delta.
\end{equation}
We can write the right side of above inequality by $\alpha$. That's the so called `small denominator' problem, so we need $\alpha>0$ as large as possible.
\begin{Rem}
This estimate of $\langle k,\omega\rangle$ was firstly given in \cite{P2}. Here is just a direct application of that.
\end{Rem}
 On the other side, we can estimate the tail term $R$ by:
 \begin{equation}\label{1-resonant 1}
 \|R_1\|_{C^2}=\|p^{\sigma}R_{l^{m(1+\xi)}}f\|_{C^2}\lessdot\frac{\kappa_3 d_m^{\sigma}\|f\|_{C^2}}{l^{m(1+\xi)(r-6)}},
 \end{equation}
 If we take $R_2\doteq\{p^{\sigma}f,W\}+\int_0^1(1-t)\{\{H,W\},W\}\circ\phi_t(p,q)dt+h.o.t$, then we have
\begin{equation}\label{1-resonant 2}
\|R_2\|_{C^2}\leq d_m^{2\sigma-1}\frac{\|f\|_{C^2}}{\alpha^2}l^{2m(1+\xi)}.
\end{equation}
Recall that $d_m$ is the distance between $p\in\mathscr{S}$ and $p_0$, and formula (\ref{distance}) is still valid. Based on {\bf U2}, 
we need the followings to ensure the KAM iteration valid:
\vspace{6pt}
\begin{enumerate}
\item $\|R_1\|_{C^2}\ll\|Z\|_{C^2}\;\Rightarrow\;(r-6)(1+\xi)>r+2\;\Rightarrow\;\xi>\frac{8}{r-6}$,
\vspace{6pt}
\item $\|R_2\|_{C^2}\ll\|Z\|_{C^2}\;\Rightarrow\;\frac{d_m^{\sigma-1}}{\alpha^2}\ll l^{-m(r+4+2\xi)}$,
\vspace{10pt}
\item $\frac{d_m^{\sigma}}{\alpha}l^{m(1+\xi)}\lessdot\delta\quad$ (drift value control).
\end{enumerate} 
We can roughly take $\alpha\sim\mathcal{O}(\frac{\delta_{+}}{l^m})$. Recall that $\delta_{+}\ll l^{-m(r+4+\xi)}$ from (\ref{radius}) and $d_m\sim\mathcal{O}(l^{-m})$, then we have:
\begin{eqnarray*}
\delta_{+}^{-1}l^{m(2+\xi)}d_m^{\sigma}\lessdot\delta,\\
\delta_{+}\gg d_m^{\frac{\sigma-1}{2}}l^{\frac{m}{2}(6+r+2\xi)},\\
\delta\cdot l^{m(2+\xi)}\lessdot\delta_{+},\\
\xi>\frac{8}{r-6}.
\end{eqnarray*}
These can be further transformed into 
 \begin{equation}\label{1-resonant 3}
l^{-\frac{m(\sigma-7-r-2\xi)}{2}}\ll \delta_{+}\ll l^{-m(r+4+\xi)},
 \end{equation}
 \begin{equation}\label{1-resonant 4}
 l^{-m(\sigma-2-\xi)}\frac{1}{\delta_{+}}\lessdot\delta\lessdot l^{-m(2+\xi)}\delta_{+},
 \end{equation}
\begin{equation}
\xi>\frac{8}{r-6}.
\end{equation}
So we need the following index inequalities:
\begin{eqnarray}
r+4+\xi<\frac{1}{2}(\sigma-7-r-2\xi),\\
\xi>\frac{8}{r-6}.
\end{eqnarray}
A new index restriction of
\begin{equation}
\sigma>3r+4\xi+15
\end{equation}
will replace formula (\ref{index1}).\\

Here we give a lower bound for $\xi>\frac{8}{r-6}$. As is known from the previous subsection, we'll take $\xi$ as small as possible, actually $\xi=\frac{9}{r-6}$ is enough. We can see that $\xi\rightarrow 0$ as $r\rightarrow\infty$. On the other side, we know the strict lower bound of $\delta_{+}$ is $l^{-\frac{m(\sigma-7-r-2\xi)}{2}}$, and $\|Z\|_{C^2}\sim\mathcal{O}(l^{-m(\sigma+r+2)})$. So we can roughly estimate the order relationship by 
\begin{equation}\label{index2}
\inf\delta_+\sim\mathcal{O}(\|Z\|_{C^2}^{\frac{\sigma-7-r-2\xi}{2(\sigma+r+2)}}).
\end{equation}
We can see that the index in the above formula tends to $\frac{1}{2}$ as $\sigma\rightarrow+\infty$. Later we'll see that the index `$\frac{1}{2}$' plays a key role in the `homogenized' method, which was firstly used in \cite{Ch} and \cite{Mar}. Nonetheless, we can take $\sigma$ properly large such that the index of (\ref{index2}) greater than $\frac{1}{6}$. Similar estimation was also obtained in \cite{Ch} and \cite{BKZ}, with an $(\epsilon,\delta)$-language.
\begin{Rem}
This approach of Stable Normal Form was firstly developed by Lochak P. and P\"oschel J. in \cite{Lo} and \cite{P2}, in solving a Nekhoroshev estimation problem. Notice that we can get a better estimation with more steps of KAM iterations, but we will face some new difficulties: the loss of regularity and non-linearity of operators $\mathscr{F}$ and $\mathscr{G}$ about $f$. To avoid the technical verbosity, only one-step iteration is operated in this paper. This is enough for our construction and makes the whole proof easy to read.
\end{Rem}

Anyway, we can get Stable Normal Forms for both 1-resonance and 2-resonance, in the domain covering the whole $\Gamma_{m,1}^p\times\mathbb{T}^2\times\mathbb{S}^1$. Similarly, we can repeat this process for $\Gamma_{m,2}\times\mathbb{T}^2\times\mathbb{S}$, and then the whole resonant plan $\mathfrak{P}^{\omega}$. During this process, new versions of {\bf U1} and {\bf U2} can be raised on $\Gamma_{m,2}$ parallelly.
\vspace{10pt}

\subsection{Canonical coordinate transformations for Stable Normal Forms}

Recall that the resonant term $Z(p,q,t)$ of the Stable Normal Form is resonant with respect to $\omega$, the current frequency, so we have $Z=Z(p,\langle \vec{e}_1,\theta\rangle)$ (1-resonance) or $Z=Z(p,\langle \vec{e}_1,\theta\rangle,\langle\vec{e}_2,\theta\rangle)$ (2-resonance). So we can transform the corresponding Stable Normal Form into a canonical form which is universal for the whole $\mathfrak{P}^{\omega}$.\vspace{10pt}

{\bf $\bullet$ 2-Resonant Case} 
\vspace{10pt}

We know the Stable Normal Form of this case is
\begin{equation}\label{2-resonant stable}
H=h(p)+I+p^{\sigma}([f]_1+[f]_2)+R(p,q,t),\;on\;\mathcal{B}(p^*,\delta)\times\mathbb{R}\times\mathbb{T}^2\times\mathbb{S}.
\end{equation}
Here $[f]_1$ only depends on $\langle\lambda\vec{e}_1,\theta\rangle$, and $[f]_2$ depends on $\langle\lambda\vec{e}_1,\theta\rangle$ and $\langle\vec{e}_2,\theta\rangle$. Recall that $\iota_m$ varies from $1$ to $l^{m(1+\xi)}$, which brings some difficulties to our canonical transformation. Actually, this canonical transformation is a linear symplectic matrix, so we need the following condition to make the elements of matrix homogeneous.\\

{\bf C2' :}  If $\mu=\min_{k\in\mathbb{Z}^{+}}\{|k\vec{e}_2|>l^m\}$ and $\Lambda_{\omega^*}^{\lambda,\mu}\doteq span_{\mathbb{Z}}\{\lambda\vec{e}_1,\mu\vec{e}_2\}$, we take $f\in\mathfrak{B}(0,c_1)$ satisfying $\mathscr{F}(f,\Lambda_{\omega^*})=\mathscr{F}(f,\Lambda_{\omega^*}^{\lambda,\mu})$.\\

Let $\vec{e}_3=(0,0,\frac{1}{\mu\iota_m l^m})$ and
\begin{equation}
\Xi\doteq(\lambda\vec{e}_1,\mu\vec{e}_2,\vec{e}_3)^{t}=\begin{pmatrix}
l^m & 0 & -a_m\\
0 & \mu\iota_m & -\mu c_m\\
0 & 0 & \frac{1}{\mu\iota_m l^m}
\end{pmatrix}_{3\times3}
\end{equation}
be a unimodular matrix. We can get a symplectic transformation via:
\begin{equation}
\begin{pmatrix}
x\\s
\end{pmatrix}=\Xi
\begin{pmatrix}
q\\t
\end{pmatrix},\quad
\begin{pmatrix}
p\\I
\end{pmatrix}=\Xi^t
\begin{pmatrix}
y\\J
\end{pmatrix}+
\begin{pmatrix}
p^*\\0
\end{pmatrix}.
\end{equation}
Under this transformation, we can change system (\ref{2-resonant stable}) into
\begin{equation}\label{2-resonant canonical}
H=\frac{J}{\mu\iota_m l^m}+h'(y_1,y_2)+p_m^{*\sigma}[f]_1(x_1)+p_m^{*\sigma}[f]_2(x_1,x_2)+R'(x,y,s),
\end{equation}
with $(y_1,y_2)\in[-\frac{\delta}{l^m},\frac{\delta}{l^m}]\times[-\frac{\delta}{\mu\iota_m},\frac{\delta}{\mu\iota_m}]$. Here we move the higher order terms of $Z$ into the tail term and get a new $R'$. Besides, we have
\[
\|R'\|_{C^0}\lessdot\mathcal{O}(\delta d_m^{\sigma-2}T_m^*),
\]
and
\[
h'(y)=h'(0)+\nabla h'(0)y+\frac{1}{2!}D^2h'(0)y^2+\frac{1}{3!}D^3h'(0)\cdots
\]
Witout loss of generality, we can assume $h'(0)=0$. We also have $\nabla h'(0)=0$ of this formula, and 
\begin{equation}\label{rescale}
D^n h'(0)=\langle \Theta, D^n h(p^*)\underbrace{\Theta^t\rangle,\Theta^t\rangle,\cdots,\Theta^t\rangle}_{n-1},
\end{equation}
where
\[
\Theta=
\begin{pmatrix}
l^m & 0\\
0 & \mu\iota_m
\end{pmatrix}
\]
is an amplified matrix.
\begin{Rem}
In the later paragraph, we often apply the rescaled system $H'=\mu\iota_m l^m H$ which is more convenient. 
\end{Rem}

\vspace{10pt}
{\bf $\bullet$ 1-Resonant Case}
\vspace{10pt}

In the same way, we know the Stable Normal Form of this case is
\begin{equation}\label{1-resonant stable}
H=h(p)+I+p^{\sigma}[f]_1(\langle\lambda\vec{e}_1,\theta\rangle)+R(p,q,t),
\end{equation}
where $(p,I,q,t)\in\mathcal{T}(R_{\Lambda_{\lambda\vec{e}_1}},\delta)\setminus\mathcal{B}(R_{\Lambda_{\lambda\vec{e}_1,+}},\delta_{+})\times\mathbb{R}\times\mathbb{T}^2\times\mathbb{S}$. For each segment $\mathscr{S}$ between two 2-resonant points, we can find a finite sequence of open balls to cover it, i.e. $\mathscr{S}\subset\{\mathcal{B}(p_i^*,\delta)\}_{i=1}^{N_m}$ (see Figure \ref{fig6}). Recall that $\omega_i^*$ is a 1-resonant frequency corresponding to $p_i^*$. In the domain $\mathcal{B}(p_i^*,\delta)\times\mathbb{R}\times\mathbb{T}^2\times\mathbb{S}$ we can find a similar linear symplectic transformation with
\begin{figure}
\begin{center}
\includegraphics[width=13cm]{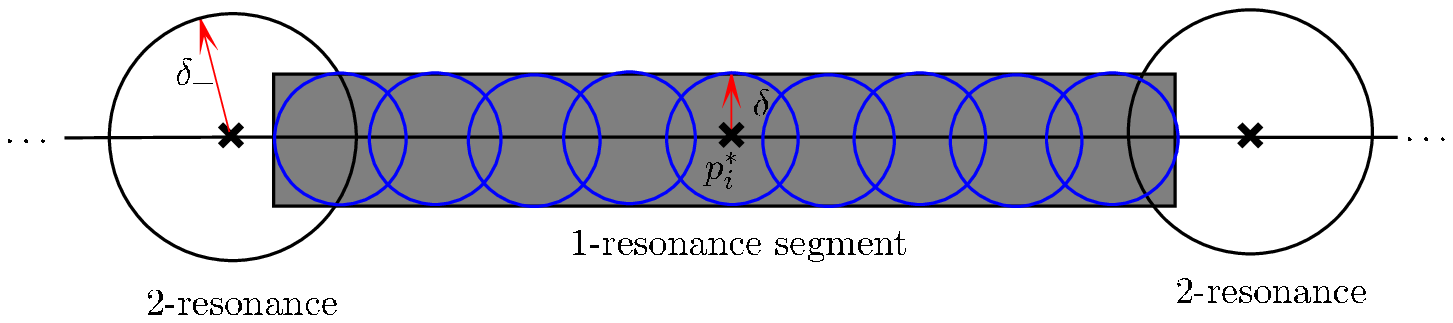}
\caption{ }
\label{fig6}
\end{center}
\end{figure}
\begin{equation}
\Xi=\begin{pmatrix}
l^m & 0 & -a_m\\
0 & l^m & 0\\
0 & 0 & \frac{1}{l^{2m}}
\end{pmatrix},
\end{equation}
and
\begin{equation}
\begin{pmatrix}
x\\t
\end{pmatrix}=
\Xi
\begin{pmatrix}
q\\t
\end{pmatrix},\quad
\begin{pmatrix}
p\\I
\end{pmatrix}=
\Xi^t
\begin{pmatrix}
y\\J
\end{pmatrix}+
\begin{pmatrix}
p_i^*\\0
\end{pmatrix}.
\end{equation}
Based on this transformation, system (\ref{1-resonant stable}) will become:
\begin{equation}\label{parameter system}
H=\frac{J}{l^{2m}}+h'(y,p_i^*)+p_i^{*\sigma}[f]_1(x_1)+R'(x,y,s),
\end{equation}
where $y\in\mathcal{B}(0,\frac{\delta}{l^m})$. Here $p_i^*$ is a parameter to mark the ball neighborhoods in which we apply the transformation. Also we throw the higher order terms of $Z$ into tail terms and get a new $R'$. Notice that
\[
\nabla h'(0,p_i^*)=(0,l^m\omega_{i,2}^*),\quad \omega_i^*=(\omega_{i,1}^*,\omega_{i,2}^*), 
\]
and
\[
D^n h'(0,p_i^*)=l^{nm}D^n h(p_i^*).
\]

\begin{Rem}
From  system (\ref{parameter system}), we can see that $h'(y,p_i^*)$ and $Z(x_1,p_i^*)$ depend on a parameter variable $p_i^*$. 
Later, in the second part of Appendix we will deal with the bifurcation cases with a genericity of \cite{Ch}.
\end{Rem}
\vspace{10pt}

\subsection{Transition from 1-resonance to 2-resonance}\label{1 to 2}
$\newline$

Along $\Gamma_{m,1}$, we need to construct diffusion orbits with the frequency changing from $\omega_m$ to $\omega_{m+1/2}$. As $\omega_{m+1/2}$ is a typical 2-resonant point, once the diffusion orbits pass it, we can repeat this process along $\Gamma_{m,2}$ because {\bf U1} and {\bf U2} are still valid for it. So the key to achieve this process is to  overpass $\omega_{m,1/2}$. Once one step transport process $\Gamma_m$ is finished, all the transition plan $\mathfrak{P}^{\omega}$ could be overcome because of our self-similar structure.\\

Recall that there still exists finitely many `sub 2-resonant' points at $\Gamma_{m,1}$ to be overpassed. Different from $\omega_{m+1/2}$, there's no transitions between resonant lines at these points. So our plan is to find a NHIC with $(x_2,y_2)$ as `fast-variables' (in a rough sense) to overcome these points which persist under small perturbations (see Figure \ref{fig7}).\\
\begin{figure}
\begin{center}
\includegraphics[width=7cm]{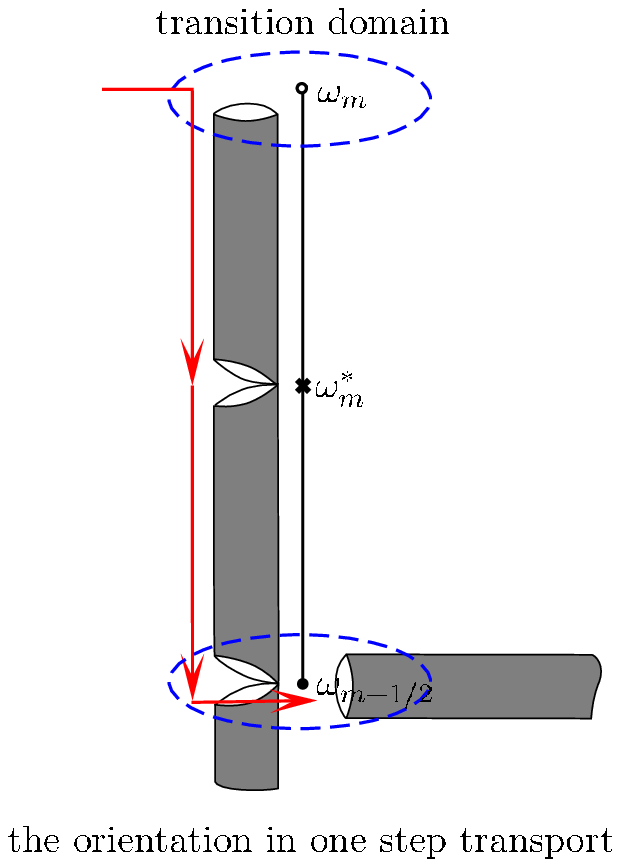}
\caption{ }
\label{fig7}
\end{center}
\end{figure} 

To achieve these, we need to weaken the hyperbolicity corresponding to fast variables $(x_2,y_2)$ for the sub 2-resonant points, and create a proper domain in which different NHICs can be connected with each other for $\omega_{m+1/2}$. More uniform conditions and a `weak-coupled' mechanism will be involved in this section. \\

From (\ref{index2}) we know, the index is contained in $[\frac{1}{6},\frac{1}{2})$. So there must be a overlapping domain in which both the Stable Normal Forms of 1-resonance and of 2-resonance valid. We can carry out one-step KAM iteration again in a domain $(\mathcal{B}(p_m^*,\delta_{+})\setminus\mathcal{B}(p_m^*,\mathbb{K}\|Z\|_{C^2}^{\frac{1}{2}}))\cap\mathcal{T}(\Gamma_{m,1}^p,\|Z\|_{C^2}^{\frac{1}{2}})$ (see Figure \ref{fig8}). Here $\mathbb{K}\gg1$ will be a posteriori constant determined later.
\begin{figure}
\begin{center}
\includegraphics[width=11cm]{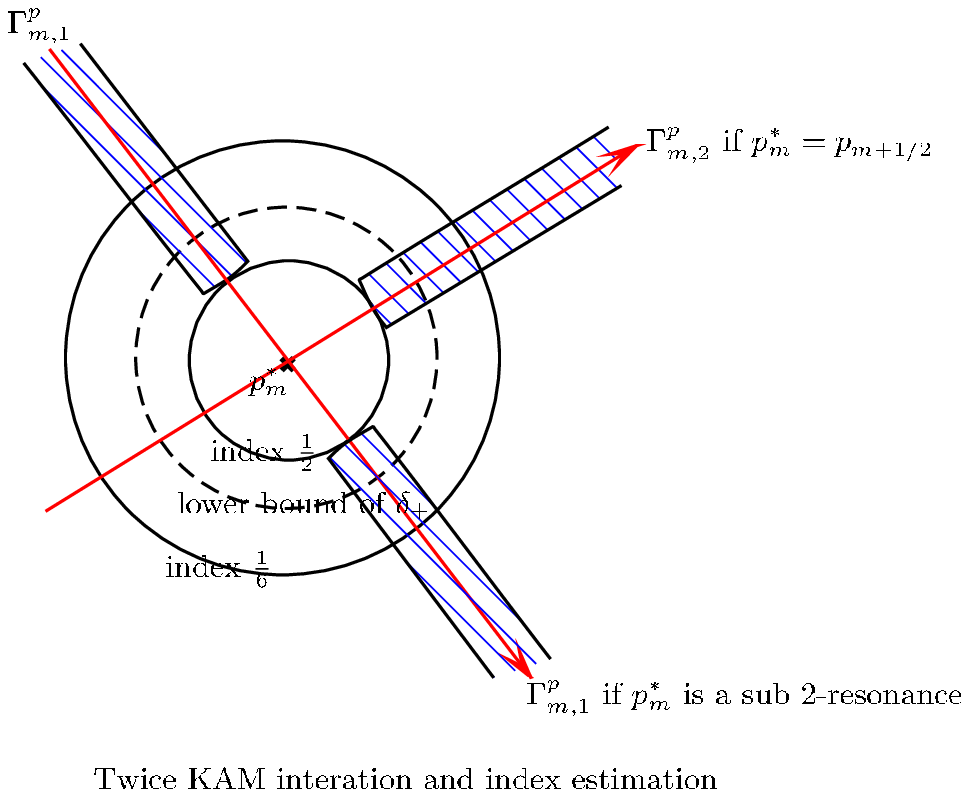}
\caption{ }
\label{fig8}
\end{center}
\end{figure}
In this domain, $H'$ system corresponding to (\ref{2-resonant canonical}) can be rewritten as:
\[
H'=J+\mu\iota_m l^m(h'(y)+p_m^{*\sigma}[f]_1(x_1)+p_m^{*\sigma}[f]_2(x_1,x_2)+R'(x,y,t)),
\] 
with $y\in[-\frac{\delta_{+}}{l^m},\frac{\delta_{+}}{l^m}]\times[-\frac{\delta_{+}}{\mu\iota^m},\frac{\delta_{+}}{\mu\iota^m}]$. We can divide $[f]_2$ into:
\begin{eqnarray*}
[f]_2(x)&=&\sum_{\substack{l,k\in\mathbb{Z}\\
                                                   k\neq0}} f_{l\lambda\vec{e}_1+k\mu\vec{e}_2}\exp^{2\pi i [lx_1+kx_2]}\\
           &=&\sum_{\substack{k\in\mathbb{Z}\\k\neq0}}f_{k\mu\vec{e}_2}\exp^{2\pi i kx_2}+\sum_{\substack{l,k\in\mathbb{Z}\\k,l\neq0}} f_{l\lambda\vec{e}_1+k\mu\vec{e}_2}\exp^{2\pi i [lx_1+kx_2]}\\
           &\doteq&[f]_{2,1}(x_2)+[f]_{2,2}(x_1,x_2)
\end{eqnarray*}
and raise a new uniform condition:
\vspace{10pt}

{\bf U3:} As a single-variable function, $p_m^{*\sigma}[f]_{2,1}(x_2)$ has a unique maximal point (without loss of generality we assume this point by $x_2=0$) at which $[f]_{2,1}$ is non-degenerate. Besides, $\|p_m^{*\sigma}[f]_{2,1}\|_{C^2} \leq\frac{c_6 d_m^{*\sigma}}{(\mu\iota_m)^{r+2}}$ with $\frac{1}{2}\leq c_6<\frac{c_5}{2}$ for $m\geq M\gg1$.
\begin{Rem}
This condition is aiming to weaken the hyperbolicity according to $(x_2,y_2)$ variables. Recall that when $p_m^*=p_{m+1/2}$, also ${\bf U1}$ should be satisfied and that's why a comparison of $c_5$ and $c_6$ is involved.
\end{Rem}

{\bf U4:} If $\|p_m^{*\sigma}[f]_{2,1}\|_{C^2}\sim\mathcal{O}(\frac{d_m^{*\sigma}}{(l^m)^{r+2+\eta}})$ with $\eta\geq0$, we restrict that $\|p_m^{*\sigma}[f]_{2,2}\|_{C^2}\lessdot\frac{1}{\mathbb{L}(l^m)^{r+2+\eta}}$, $\mathbb{L}\gg0$ will be properly chosen later on. Here $m\geq M\gg\mathbb{L}\gg1$.
\begin{Rem}
This is the so-called `weak-coupled' mechanism. We actually weaken the coupled Fourier coefficients to make the system (\ref{2-resonant canonical}) more like a nearly integrable system at a proper domain of 2-resonance (later in the section of homogenization we will see that). Notice that here a sufficiently large number $\mathbb{L}$ is involved, we will give {\it a priori} estimation of $\mathbb{L}$ then take $m$ sufficiently large comparing to it in the following proof.
\end{Rem}

In the domain $(\mathcal{B}(p_m^*,\delta_{+})\setminus\mathcal{B}(p_m^*,\mathbb{K}\|Z\|_{C^2}^{\frac{1}{2}}))\cap\mathcal{T}(\Gamma_{m,1}^p,\|Z\|_{C^2}^{\frac{1}{2}})$, we can carry out one more step KAM iteration for the system (\ref{2-resonant canonical}):
\begin{eqnarray*}
H'_{+}=H'\circ\Phi&=&H'+\{H',W\}+\int_0^1(1-t)\{\{H',W\},W\}\circ\phi_t(x,y)dt\\
&=&J+\mu\iota_m l^m(h'+p_m^{*\sigma}[f]_1+p_m^{*\sigma}[f]_2)+\mu\iota_m l^m\{h',W\}+R'_{2,+},\\
\end{eqnarray*}
where the new tail term $R'_{2,+}$ satisfying:
\[
R'_{2,+}=\mu\iota_m l^m (R'+\{p_m^{*\sigma}[f]_1+p_m^{*\sigma}[f]_2+R',W\})+\int_0^1\{\{H',W\},W\}\circ\phi_t(x,y)dt.
\]
Here the cohomology equation to solve is 
\[
p_m^{*\sigma}[f]_2+\{h',W\}=p_m^{*\sigma}[[f]_2](x_1)+\Delta\omega W_x,
\]
with $\Delta\omega=\omega-\omega^*$, $\omega^*=\nabla h'(y^*)=(0,\omega_2^*)$ and $y^*\in\Gamma_{m,1}^y\cap(\mathcal{B}(p_m^*,\delta_{+})\setminus\mathcal{B}(p_m^*,\mathbb{K}\|Z\|_{C^2}^{\frac{1}{2}}))$. So we can formally take
\[
W=\frac{1}{T^*}\int_{0}^{T^*}p_m^{*\sigma}[f]_2(x_1,x_2+\omega_2^*s)sds
\]
and
\[
p_m^{*\sigma}[[f]_2](x_1)=\frac{1}{T^*}\int_{0}^{T^*}p_m^{*\sigma}[f]_2(x_1,x_2+\omega_2^*s)ds,
\]
where we denote by $T^*=\frac{1}{\omega_2^*}$.\\

Since this iteration is operated in $\mathcal{B}(y_1^*,\frac{\delta}{l^m})\times\mathcal{B}(y_2^*,\frac{\delta}{\mu\iota_m})$, we need to consider the drift value of $y$ variables. Recall that $\mu\iota_m\geq l^m$, so we can sufficiently take:
\begin{equation}
\|\int_0^1\frac{\partial W}{\partial x}\circ\phi_tdt\|_{\mathcal{B}(y^*,\frac{\delta}{4})\times\mathbb{T}^2}\leq\|W_x\|_{\mathcal{B}(y^*,\delta)}\leq\frac{\delta}{4\mu\iota_m}.
\end{equation}
On the other side, we have:
\begin{eqnarray*}
\|W_x\|_{\mathcal{B}(y^*,\delta)}&\leq& T^*\|p_m^{*\sigma}[f]_2\|_{C^1,\mathcal{B}}\\
&\lessdot&\frac{d_m^{*\sigma}}{(\mu\iota_m)^{r+2+\eta}\omega_2^*}\quad\text{(from {\bf U4})}\\
&\lessdot&\frac{1}{\mathbb{K}}\cdot\frac{d_m^{*\sigma}}{(\mu\iota_m)^{r+2+\eta}}\cdot\frac{1}{\mu\iota_m\|Z\|^{\frac{1}{2}}}\quad\text{(from {\bf U2} and (\ref{index2}))}\\
&\lessdot&\frac{\delta}{\mathbb{K}\mu\iota_m}\ll\frac{\delta}{4\mu\iota_m},\quad\mathbb{K}\;\text{{\it a posteriori} sufficiently large} 
\end{eqnarray*}
with $\delta=\|Z\|^{\frac{1}{2}}$. Besides, the new tail term 
\[
R'_{+}\doteq R'_{1.+}+R'_{2,+}=\Delta\omega W_x+R'_{2,+},
\]
and we have
\begin{eqnarray}\label{crumple}
\|R'_{+}\|_{C^0}&\lessdot&\mu\iota_m l^m (T^*\mu\iota_m\delta d_m^{*\sigma}\frac{1}{(\mu\iota_m)^{r+2+\eta}})\nonumber\\
&\lessdot&\mu\iota_m l^m\frac{\|Z\|}{\mathbb{K}}\cdot\frac{l^{m(r+2)}}{(\mu\iota_m)^{r+2+\eta}},
\end{eqnarray}
during which the largest term is $\Delta\omega W_x$.

Notice that the new resonant term $p_m^{*\sigma}[[f]_2](x_1)$ won't influence the main value of $p_m^{*\sigma}[f]_1(x_1)$ because of {\bf U3} condition. Actually we can first take $\mathbb{L}$ properly large such that ${\mathbb{L}^{-1}}\ll c_i$, $i=1,2,3,\cdots,6$, then take $m\geq M\gg\mathbb{L}$ accordingly. But the difference between $Z$ and $R'_{+}$ is just limited by a multiplier ${\mathbb{K}^{-1}}$ (see inequality (\ref{crumple})). Later we will see that the NHICs in this domain present some kind of `crumpled' form.
\vspace{10pt}

\subsection{Homogenized system of 2-resonance}\label{homogenize}

After passing the transition parts from 1-resonance to 2-resonance, now we reach a domain $(y_1,y_2)\in\mathcal{B}(0,\mathcal{O}(\frac{\mathbb{K}\delta}{l^m}))\times\mathcal{B}(0,\mathcal{O}(\frac{\mathbb{K}\delta}{\mu\iota_m}))$, with $\delta=\|Z\|_{C^0}^{1/2}$. Now we can homogenize system (\ref{2-resonant canonical}) into a classical mechanical system with small perturbation, which benefits us with many fine properties. We will see that in the next section.\\

For convenience, we still rewrite system (\ref{2-resonant canonical}) here:
\[
H'=J+\mu\iota_m l^m(h'(y)+p_m^{*\sigma}[f]_1(x_1)+p_m^{*\sigma}[f]_2(x_1,x_2)+R'(x,y,t)),
\] 
where $(y_1,y_2)\in\mathcal{B}(0,\mathcal{O}(\frac{\mathbb{K}\delta}{l^m}))\times\mathcal{B}(0,\mathcal{O}(\frac{\mathbb{K}\delta}{\mu\iota_m}))$. We can transform this system with a rescale symplectic transformation, via:
\begin{equation}
\begin{pmatrix}
x_1\\x_2\\s\\y_1\\y_2\\J
\end{pmatrix}=
\begin{pmatrix}
{l^m} & 0 & 0 & 0 & 0 & 0\\
0 & {\mu\iota_m} & 0 & 0 & 0 & 0\\
0 & 0 & \frac{1}{\delta l^m\mu\iota_m} & 0 & 0 & 0\\
0 & 0 & 0 & \frac{\delta}{l^m} & 0 & 0\\
0 & 0 & 0 & 0 & \frac{\delta}{\mu\iota_m} & 0\\
0 & 0 & 0 & 0 & 0 & \delta^2 l^m\mu\iota_m
\end{pmatrix}\cdot
\begin{pmatrix}
X_1\\X_2\\S\\Y_1\\Y_2\\e
\end{pmatrix},
\end{equation}
where $\delta\sim\mathcal{O}(\frac{d_m^{*\sigma/2}}{l^{m(r+2)/2}})$. These new variables will satisfy a new O.D.E equation with a different proportion:
\begin{equation}
X'\doteq\frac{\partial X}{\partial S}=\frac{\partial X}{\partial s}\cdot\frac{1}{\delta l^m\mu\iota_m}=\frac{1}{\delta l^m\mu\iota_m}
\begin{pmatrix}
\frac{1}{l^m} & 0\\[5pt]
0 & \frac{1}{\mu\iota_m}
\end{pmatrix}
\begin{pmatrix}
\frac{\partial x_1}{\partial s}\\[5pt]
\frac{\partial x_2}{\partial s}
\end{pmatrix}.
\end{equation}
Since we have
\begin{equation}
\begin{pmatrix}
\frac{\partial x_1}{\partial s}\\[5pt]
\frac{\partial x_2}{\partial s}
\end{pmatrix}=l^m\mu\iota_m
\begin{pmatrix}
\dot{x}_1\\[5pt]
\dot{x}_2
\end{pmatrix}=l^m\mu\iota_m
\left[\begin{pmatrix}
\frac{\partial h'}{\partial y_1}\\[5pt]
\frac{\partial h'}{\partial y_2}
\end{pmatrix}+
\begin{pmatrix}
\frac{\partial R'}{\partial y_1}\\[5pt]
\frac{\partial R'}{\partial y_2}
\end{pmatrix}
\right],
\end{equation} 
we can throw $\mathcal{O}(y^3)$ term of $h'$ into tail and get:
\begin{eqnarray}\label{X}
X'&=&D^2h(p_m^*)Y+\frac{1}{\delta}\Theta_{2\times2}^{-1}
(\mathcal{O}(y^2)+\nabla_{y}R')\nonumber\\
&=&D^2h(p_m^*)Y+\mathcal{O}(\delta Y^2)+\frac{1}{\delta^2}\nabla_{Y}R'.
\end{eqnarray}
On the other hand, we have:
\begin{equation}
Y'\doteq\frac{\partial Y}{\partial S}=\frac{\partial Y}{\partial s}\cdot\frac{1}{\delta l^m\mu\iota_m}=\frac{1}{\delta l^m\mu\iota_m}
\begin{pmatrix}
\frac{l^m}{\delta} & 0\\[5pt]
0 & \frac{\mu\iota_m}{l^m}
\end{pmatrix}
\begin{pmatrix}
\frac{\partial y_1}{\partial s}\\[5pt]
\frac{\partial y_2}{\partial s}
\end{pmatrix},
\end{equation}
\begin{equation}
\begin{pmatrix}
\frac{\partial y_1}{\partial s}\\[5pt]
\frac{\partial y_2}{\partial s}
\end{pmatrix}=l^m\mu\iota_m
\begin{pmatrix}
\dot{y}_1\\[5pt]
\dot{y}_2
\end{pmatrix}=-l^m\mu\iota_m
\left[\begin{pmatrix}
\frac{\partial Z'}{\partial x_1}\\[5pt]
\frac{\partial Z'}{\partial x_2}
\end{pmatrix}+
\begin{pmatrix}
\frac{\partial R'}{\partial x_1}\\[5pt]
\frac{\partial R'}{\partial x_2}
\end{pmatrix}
\right]
\end{equation}
with $Z'(x)\doteq p_m^{*\sigma}[f]_1(x_1)+p_m^{*\sigma}[f]_2(x_1,x_2)$, then we get
\begin{equation}\label{Y}
Y'=-\frac{1}{\delta^2}\nabla_{X}Z'(X_1,X_2)-\frac{1}{\delta^2}\nabla_X R'.
\end{equation}
From (\ref{X}) and (\ref{Y}) we can rescale $H'$ into a new system of variables $(X,S,Y,e)$
\begin{equation}\label{mechanical}
\tilde{H}=\frac{1}{2}\langle Y^t, D^2h(p_m^*)Y\rangle+\tilde{Z}(X)+\tilde{R}(X,Y,S),
\end{equation}
where $\tilde{Z}(X)=\frac{Z'}{\delta^2}$ and $\tilde{R}(X,Y,S)=\frac{R'}{\delta^2}+\mathcal{O}(\delta Y^2)$. Actually, $\tilde{Z}(X)$ can be devided into:
\[
\tilde{Z}(X)=\tilde{Z}_1(X_1)+(\frac{l^m}{\mu\iota_m})^{r+2}\tilde{Z}_2(X_2)+\frac{1}{\mathbb{L}}(\frac{l^m}{\mu\iota_m})^{r+2}\tilde{Z}_3(X_1,X_2),
\]
where $\|\tilde{Z}_i\|_{C^2}\sim\mathcal{O}(1)$, $i=1,2,3$. Recall that $l^m\leq\mu\iota_m\leq l^{m(1+\xi)}$, so the `hardest' case to be considered is of a form
\begin{equation}\label{hardest}
\tilde{Z}(X)=\tilde{Z}_1(X_1)+\tilde{Z}_2(X_2)+\frac{1}{\mathbb{L}}\tilde{Z}_3(X_1,X_2).
\end{equation}
On the other side, the new tail terms $\|\tilde{R}\|_{C^0}\lessdot\mathbb{K}d_m^{*\sigma/2}l^{m(r+6+2\xi)/2}$. It's sufficiently small comparing with $\tilde{Z}$ as $m\gg1$ can be sufficiently large.
\begin{Rem}
This homogenization actually rectifies the `stretched' effect of previous canonical transformation of $(x,y)$ variables and recover the phase space into a normal $\mathcal{O}(1)-$scale. Recall that when $\omega_m^*$ is a sub 2-resonant point, there is no transformation between different resonant lines and we just need to prove the persistence of NHIC with $(x_1,y_1)$ as slow-variables (see Figure \ref{fig7}). So we just need to prove that for the case $\omega_m^*=\omega_{m+1/2}$, which corresponds to the hardest $\tilde{Z}$ case (\ref{hardest}).
\end{Rem}

\section{Existence of NHICs and Location of Aubry sets}\label{NHIC and Aubry}\vspace{10pt}

After the previous conditions and uniform restrictions been satisfied, we can prove the persistence of NHICs corresponding to different Stable Normal Forms in different domains of the phase space. We can divide them into 3 cases of different mechanisms to deal with: 1-resonance, transition from 1-resonance to 2-resonance and 2-resonance. Since our construction is self-similar, we just need to prove that for $\Gamma_{m,1}$ and get the same persistence for that whole $\mathfrak{P}^{\omega}$. It's remarkable that the homogenization does help us greatly in the latter two cases.\\

Actually, we will prove the persistence of `weak-invariant' NHICs in proper domains where KAM iterations work. Here `weak-invariant' means the vector field is just tangent at each point of the cylinder, but unnecessarily vanished at the boundary. We call a NHIC `strong-invariant' if it contains the whole flow of each points. These conceptions were firstly used by Bernard P. in \cite{Ber}. \\

In the following we will firstly prove the former 2-cases with the skill used in \cite{Ber}, then prove the 2-resonance case with the help of the method developed in \cite{Ch} and our special `weak-coupled' construction. \\

\subsection{the persistence of wNHICs for 1-resonance}
$\newline$

From (\ref{parameter system}) we know the canonical system for 1-resonance can be rewritten as:
\[
H={J}+l^{2m}(h'(y,p_i^*)+p_i^{*\sigma}[f]_1(x_1)+R'(x,y,t)),\quad y\in\mathcal{B}(0,\frac{\delta}{l^m}).
\]
The following holds:
\begin{Lem}
There exists a modified system 
\begin{eqnarray*}
\widehat{H}=J&+&l^{2m}[h'_0(y_2)+\frac{1}{2}B(y_2)(y_1-Y_1(y_2))^2+\frac{1}{2}p_i^{*\sigma}[f]''_1(0)x_1^2]\\
&+&l^{2m}[\frac{1}{3!}S(y)(y_1-Y_1(y_2))^3\chi(\frac{l^m(y_1-Y_1(y_2))}{\delta\varpi})+\frac{1}{3!}p_i^{*\sigma}[f]'''_1(x_1)x_1^3\chi(\frac{x_1}{\varrho})\\
&+&y\tilde{R}'\chi(\frac{l^m y}{\delta})],
\end{eqnarray*}
with the assumption that $x_1=0$ is the unique maximal point of $p_i^{*\sigma}[f]_1(x_1)$ and $\chi(\cdot)$ is a compactly supported smooth function which equals to $1$ on the unit ball $\mathcal{B}(0,1)$ and $0$ outside of $\mathcal{B}(0,2)$. Here $(Y_1(y_2),y_2)$ with $\|y_2\|\leq\frac{\delta}{l^m}$ satisfies $ \frac{\partial h'}{\partial y_1}(Y_1,y_2,p_i^*)=0$, $B(y_2)=\partial_{11} h'(Y_1(y_2), y_2)$ and $S(y)$ is a 3-linear form on $\mathbb{R}$ depending smoothly on $y\in\mathbb{R}^2$. 
\end{Lem}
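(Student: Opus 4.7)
The plan is to construct $\widehat{H}$ by Taylor-expanding the kinetic part $h'$ around its critical curve in the $y_1$-direction and the resonant term $p_i^{*\sigma}[f]_1$ around its maximum at $x_1=0$, and then inserting smooth cut-off functions so that $\widehat{H}$ agrees with the original $H$ on a small tubular neighborhood of the anticipated wNHIC $\{x_1=0,\ y_1=Y_1(y_2)\}$ while degenerating into a manifestly hyperbolic quadratic normal form elsewhere. This is the Bernard-style device of \cite{Ber}: one first establishes the wNHIC for the globally tame model $\widehat{H}$, then transfers the result back to $H$ on the coincidence set.

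First, since $D^2 h(p_i^*)$ is strictly positive-definite, $\partial_{11} h'$ stays uniformly positive on $\mathcal{B}(0,\delta/l^m)$ for $m\gg1$, and the implicit function theorem applied to $\partial_{y_1} h'=0$ produces a smooth branch $Y_1(y_2)$ with $B(y_2):=\partial_{11}h'(Y_1(y_2),y_2)>0$. The integral form of Taylor's theorem in $y_1$ about $Y_1(y_2)$ yields
\[
h'(y,p_i^*)=h'_0(y_2)+\tfrac{1}{2}B(y_2)(y_1-Y_1(y_2))^2+\tfrac{1}{3!}S(y)(y_1-Y_1(y_2))^3,
\]
with $h'_0(y_2):=h'(Y_1(y_2),y_2)$ and $S(y)$ a smooth trilinear remainder. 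Condition \textbf{U2} pins $x_1=0$ as the unique strictly non-degenerate maximum of $p_i^{*\sigma}[f]_1$, so $[f]'_1(0)=0$, and the analogous second-order expansion with integral remainder gives $p_i^{*\sigma}[f]_1(x_1)=\mathrm{const}+\tfrac{1}{2}p_i^{*\sigma}[f]''_1(0)x_1^2+\tfrac{1}{3!}p_i^{*\sigma}[f]'''_1(x_1)x_1^3$, after dropping the harmless additive constant. The KAM tail $R'$ vanishes at $y=0$ (its principal piece, obtained in Section 3, is linear in $p\sim y$), so it factors as $R'=y\cdot\tilde R'(x,y,t)$ with $\tilde R'$ smooth.

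Finally, multiplying the three cubic (or higher) remainder pieces by the cut-offs $\chi(l^m(y_1-Y_1(y_2))/(\delta\varpi))$, $\chi(x_1/\varrho)$ and $\chi(l^m y/\delta)$ respectively yields exactly the claimed form of $\widehat H$. For $\varpi,\varrho$ chosen sufficiently small, all three cut-offs equal $1$ on the tubular set $\{|x_1|\leq\varrho,\ |y_1-Y_1(y_2)|\leq\delta\varpi/l^m,\ |y|\leq\delta/l^m\}$, so $\widehat H=H$ there; outside the common support of the cut-offs $\widehat H$ collapses to the completely integrable saddle$\,\times\,$centre model
\[
J+l^{2m}\bigl[h'_0(y_2)+\tfrac{1}{2}B(y_2)(y_1-Y_1(y_2))^2+\tfrac{1}{2}p_i^{*\sigma}[f]''_1(0)x_1^2\bigr],
\]
whose wNHIC $\{x_1=0,\ y_1=Y_1(y_2)\}$ is transparent. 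The main obstacle is to calibrate $\varpi$ and $\varrho$ consistently so that the cubic corrections and the $y\tilde R'$ term remain perturbative relative to the two quadratic principal parts across the whole phase space after truncation; this reduces to comparing $\|S\|_{C^0}\varpi^3$ and $\|p_i^{*\sigma}[f]'''_1\|_{C^0}\varrho^3$ with $B(y_2)\varpi^2$ and $|p_i^{*\sigma}[f]''_1(0)|\varrho^2$, and is arranged by invoking Lemma \ref{Fourier}, condition \textbf{U2}, and the KAM separation $\|R'\|_{C^0}\ll\|Z\|_{C^0}$ established in Section 3.
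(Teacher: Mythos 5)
Your proposal is correct and follows essentially the same route as the paper's own (one-line) proof: Taylor-expand $h'$ about the critical curve $y_1=Y_1(y_2)$ and $p_i^{*\sigma}[f]_1$ about its maximum $x_1=0$, factor the tail as $R'=y\tilde R'+h.o.t.$, and smooth the remainders with the bump functions $\chi$. The extra detail you supply (implicit function theorem for $Y_1(y_2)$, integral-form remainders, and the eventual calibration of $\varpi,\varrho$) is consistent with what the paper defers to the subsequent NHIC persistence argument.
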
 

The value of $\varpi\ll1$ and $\frac{1}{\varrho}\ll1$ will be properly chosen later. Then system $\tilde{H}$ coincides with $H$ on the domain 
\[
\{\|x_1\|\leq\varrho,\|y_1-Y_1(y_2)\|\leq\frac{\delta\varpi}{l^m},x_2\in\mathbb{T}^1,\|y_2\|\leq\frac{\delta}{l^m}\},
\]
and coincides with the integrable system
\begin{eqnarray*}
\overline{H}=J+l^{2m}[h'_0(y_2)+\frac{1}{2}B(y_2)(y_1-Y_1(y_2))^2+\frac{1}{2}p_i^{*\sigma}[f]''_1(0)x_1^2]
\end{eqnarray*}
on the domain
\[
\{\|x_1\|\geq2\varrho,\|y_1-Y_1\|\geq\frac{2\delta\varpi}{l^m},x_2\in\mathbb{T}^1,\|y_2\|\leq\frac{\delta}{l^m}\}.
\]

\begin{proof}
We just need to expand the system $H$ at the point $\{x_1=0,y_1=Y_1(y_2),\|y_2\|\leq\frac{\delta}{l^m}\}$ into a finite Taylor series and then smooth it by multiplying a compactly supported bump function. Recall that $R'=y\widehat{R}'+h.o.t.$ is the new tail term from (\ref{parameter system}).
\end{proof}

First, we can show that $\{(0,x_2,Y_1(y_2),y_2)\big{|}x_2\in\mathbb{T}^1,\|y_2\|\leq\frac{\delta}{l^m}\}\subset T^*\mathbb{T}^2$ is a NHIC according to system $\overline{H}$. In order to simplify the corresponding equations, we set
\[
h_2(y)=h'_0(y_2)+\frac{1}{2}B(y_2)(y_1-Y_1(y_2))^2,
\]
then we have
\begin{eqnarray*}
\begin{pmatrix}
\dot{x}_1\\[8pt]\dot{x}_2\\[8pt]\dot{y}_1\\[8pt]\dot{y_2}
\end{pmatrix}=\frac{1}{l^{2m}}
\begin{pmatrix}
\frac{\partial\overline{H}}{\partial y_1}\\[5pt]
\frac{\partial\overline{H}}{\partial y_2}\\[5pt]
-\frac{\partial\overline{H}}{\partial x_1}\\[5pt]
-\frac{\partial\overline{H}}{\partial x_2}
\end{pmatrix}=
\begin{pmatrix}
0\\[8pt]\partial_{y_2}h_2\\[8pt]0\\[8pt]0
\end{pmatrix}+
\begin{pmatrix}
0 & 0& B(y_2) &0\\[8pt]
0 & 0 & 0 &0\\[8pt]
-p_i^{*\sigma}[f]''_1(0) & 0 & 0 & 0\\[8pt]
0 & 0 & 0 & 0
\end{pmatrix}\cdot
\begin{pmatrix}
x_1\\[8pt]x_2\\[8pt]y_1-Y_1(y_2)\\[8pt]y_2
\end{pmatrix}.
\end{eqnarray*}
We also get the eigenvalues $\sqrt{-B(y_2)p_i^{*\sigma}[f]''_1(0)}$, $-\sqrt{-B(y_2)p_i^{*\sigma}[f]''_1(0)}$, $0$ and $0$ for the linear matrix above, with the corresponding eigenvectors
\[
(\sqrt{B(y_2)},0,\sqrt{-p_i^{*\sigma}[f]''_1(0)},0)^t,
\]
\[
(-\sqrt{B(y_2)},0,\sqrt{-p_i^{*\sigma}[f]''_1(0)},0)^t,
\]
\[
(0,0,0,1)^t,
\]
and
\[
(0,1,0,0)^t.
\]
The existence of NHIC according to $\overline{H}$ can be easily proved.\\

Second, we set
\[
\widehat{R}=\frac{1}{3!}S(y)(y_1-Y_1(y_2))^3\chi(\frac{l^m(y_1-Y_1(y_2))}{\delta\varpi})+\frac{1}{3!}p_i^{*\sigma}[f]'''_1(x_1)x_1^3\chi(\frac{x_1}{\varrho})+y\tilde{R}'\chi(\frac{l^m y}{\delta}),
\]
then the equation corresponding to $\tilde{H}$ can be written by
\begin{eqnarray*}
\begin{pmatrix}
\dot{x}_1\\[8pt]\dot{x}_2\\[8pt]\dot{y}_1\\[8pt]\dot{y_2}
\end{pmatrix}=\frac{1}{l^{2m}}
\begin{pmatrix}
\frac{\partial\tilde{H}}{\partial y_1}\\[5pt]
\frac{\partial\tilde{H}}{\partial y_2}\\[5pt]
-\frac{\partial\tilde{H}}{\partial x_1}\\[5pt]
-\frac{\partial\tilde{H}}{\partial x_2}
\end{pmatrix}=\frac{1}{l^{2m}}
\begin{pmatrix}
\frac{\partial\overline{H}}{\partial y_1}\\[5pt]
\frac{\partial\overline{H}}{\partial y_2}\\[5pt]
-\frac{\partial\overline{H}}{\partial x_1}\\[5pt]
-\frac{\partial\overline{H}}{\partial x_2}
\end{pmatrix}+\frac{1}{l^{2m}}
\begin{pmatrix}
\frac{\partial\widehat{R}}{\partial y_1}\\[5pt]
\frac{\partial\widehat{R}}{\partial y_2}\\[5pt]
-\frac{\partial\widehat{R}}{\partial x_1}\\[5pt]
-\frac{\partial\widehat{R}}{\partial x_2}
\end{pmatrix}.
\end{eqnarray*}
To get the persistence of NHIC corresponding to $\tilde{H}$, we need to control the value of $\widehat{R}$ under the norm $\|\cdot\|_{C^2,\mathcal{B}}$ and make it strictly separated from the spectrum radius $\sqrt{-B(y_2)p_i^{*\sigma}[f]''_1(0)}$, based on the classical theory of NHIC in \cite{HPS}. So we get
\begin{eqnarray*}
\|S(y)\|\frac{\delta\varpi}{l^m}\ll\sqrt{-B(y_2)p_i^{*\sigma}[f]''_1(0)}\Rightarrow l^{2m}\delta\varpi\ll \sqrt{\frac{d_i^{*\sigma}}{l^{mr}}},\\
\frac{d_i^{*\sigma}}{l^{m(r+2)}}\varrho\ll\sqrt{-B(y_2)p_i^{*\sigma}[f]''_1(0)}\Rightarrow\varrho\sim\mathcal{O}(1) \text{\;small},\\
\frac{1}{\delta}\frac{d_i^{*\sigma-1}}{l^{mr}}\ll\sqrt{-B(y_2)p_i^{*\sigma}[f]''_1(0)}\Rightarrow\frac{d_i^{*\sigma/2-1}}{l^{mr/2}}\ll\delta,\\
\frac{d_i^{*\sigma}}{l^{m(1+\xi)(r-6)}}\frac{l^{2m}}{\delta^2}\ll\sqrt{-B(y_2)p_i^{*\sigma}[f]''_1(0)}\Rightarrow\frac{d_i^{*\frac{\sigma}{4}}}{l^{\frac{m(r+2\xi r-16-12\xi)}{4}}}\ll\delta,
\end{eqnarray*}
from (\ref{1-resonant 1}), (\ref{1-resonant 2}), (\ref{1-resonant 3}) and (\ref{1-resonant 4}). Then we can get the strict lower bound of $\delta$ by $\frac{d_i^{*\frac{\sigma}{4}}}{l^{\frac{m(r+2\xi r-16-12\xi)}{4}}}$. Once the aforementioned inequalities satisfied, we actually verify the persistence of NHIC for $\tilde{H}$ in the domain:
\[
\{\|x_1\|\leq2\varrho,\|y_1-Y_1\|\leq\frac{2\delta\varpi}{l^m},x_2\in\mathbb{T}^1,\|y_2\|\leq\frac{\delta}{l^m}\}.
\]
On the other side, $\tilde{H}$ coincides with $H$ in the domain:
\[
\{\|x_1\|\leq\varrho,\|y_1-Y_1(y_2)\|\leq\frac{\delta\varpi}{l^m},x_2\in\mathbb{T}^1,\|y_2\|\leq\frac{\delta}{l^m}\},
\]
So the cylinder is also `weak-invariant' for $H$ in the sense that at the two ends there may be overflow (or interflow). Besides, we can see that the wNHIC (weak Normally Hyperbolic Invariant Cylinder) is of the form 
\[
\{(t,x_1(x_2,y_2,t),x_2,y_1(x_2,y_2,t),y_2)\big{|}t\in\mathbb{S}^1\},
\]
where we have
\[
\|y_1(x_2,y_2,t)-Y_1(y_2)\|\lessdot\frac{p_i^{*\sigma+r+1}}{B(y_2)}
\]
and
\[
\|x_1(x_2,y_2,t)\|\lessdot\varrho^2.
\]
By changing $p_i^*$ along the $\Gamma_{m,1}^{p}$, the `short' wNHICs under different canonical coordinations can be joint into a `long' wNHIC with only two ends overflow (or interflow).\\

Notice that we take the lower bound of $\delta$ into (\ref{1-resonant 4}) and get a restriction for $\delta_{+}$ as well. As $d_i\sim\mathcal{O}(1)$, we can get an index estimation replacing (\ref{index2}):

\begin{equation}\label{index3}
d_i^{*\frac{\sigma}{4}}l^{\frac{m(24+16\xi-r-2\xi r)}{4}}\sim\mathcal{O}(\|Z\|^{\frac{\sigma+(1+2\xi)r-24-16\xi}{4(\sigma+r+2)}}).
\end{equation}

Here the index tends to 1/4 as $\sigma\rightarrow\infty$. This implies that the wNHIC of a 1-resonant mechanism can be expanded into the place at least $\mathcal{O}(\|Z\|^{1/6})$ approaching the 2-resonant points, as long as $\sigma$ is chosen properly large (see figure \ref{fig8}). 
\vspace{10pt}

\subsection{the persistence of wNHICs for the transition part from 1-resonance to 2-resonance }
$\newline$

In this subsection, we will further expand the wNHIC got in the previous subsection into the places $\mathcal{O}(\mathbb{K}\|Z\|^{\frac{1}{2}})$ approaching 2-resonant points. Different from the wNHIC of 1-resonant mechanism, here we will use the methods developed in subsection \ref{1 to 2} and \ref{homogenize}, i.e. both the twice KAM iteration and the homogenization are involved.\\

\begin{figure}
\begin{center}
\includegraphics[width=7cm]{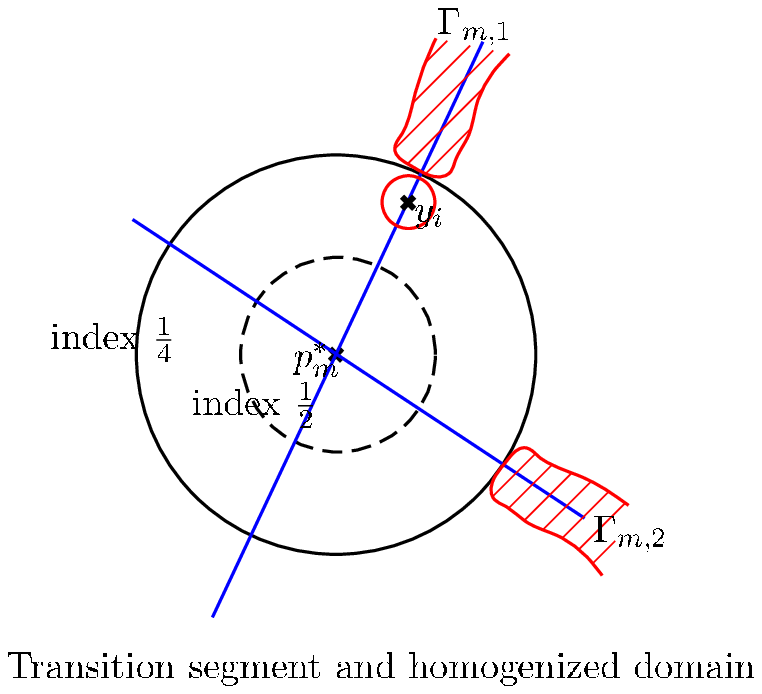}
\caption{ }
\label{fig9}
\end{center}
\end{figure}
As Figure \ref{fig9} shows us, we can pick up finitely many $y_i$ points on the $\Gamma_{m,1}^{y}$ from       index `1/6' to `1/2' domains of 2-resonant points $p_m^*$. Recall that system (\ref{2-resonant canonical}) is valid, and we can carry out twice KAM iteration in the domain $\mathcal{B}(y_{i,1},\frac{\delta}{l^m})\times\mathcal{B}(y_{i,2},\frac{\delta}{\mu\iota_m})$, where $y_i=(y_{i,1},y_{i,2})$ and $\delta=\|Z\|^{1/2}$. We rewrite the system as:
\begin{equation}\label{transition twice iteration}
H'=J+\mu\iota_m l^m(h'(y,y_i)+p_m^{*\sigma}[f]'_1(x_1)+R'(x,y,y_i,t)), 
\end{equation}
where {\bf U1$\rightarrow$4} restrictions are available. Here the parameter $y_i$ reminds us in which domain we carry out the twice KAM iteration. Recall that the estimation (\ref{crumple}) is still valid for this system \ref{transition twice iteration} and we have $\|R'\|\lessdot\frac{\|Z\|}{\mathbb{K}}$ for all chosen $y_i$. This comparison of the nearly same order failed to ensure the persistence of a common wNHIC and we have to seek for other mechanisms to ensure the persistence of that. Homogenization will be involved to find the so-called `crumpled' wNHIC (this definition was firstly proposed in \cite{BKZ}).\\

With the same idea as subsection \ref{homogenize}, we can `rescale' system \ref{transition twice iteration} via:
\[
x_1=l^mX_1,\;x_2=\mu\iota_mX_2,\; y_1-y_{i,1}=\frac{\delta}{l^m}Y_1,\;y_2-y_{i,2}=\frac{\delta}{\mu\iota_m}Y_2,\;s=\frac{S}{\delta\mu\iota_m l^m}.
\]
Then system (\ref{transition twice iteration}) will become:
\[
\tilde{H}= \frac{\omega_{i,2}}{\mu\iota_m\delta} Y_2+1/2\langle Y^t, D^2h(p_i)Y\rangle+\tilde{Z}(X_1)+\frac{\tilde{R}(X,Y,S)}{\mathbb{K}},
\]
where $p_i$ is the corresponding point of $y_i$ and $\nabla h'(y_i)=(0,\omega_{i,2})$. Recall that the tail term takes the largest value $\mathcal{O}(\frac{1}{\mathbb{K}})$ when $y_i\in\mathcal{B}(0,\mathcal{O}(\frac{\mathbb{K}\delta}{l^m}))\times\mathcal{B}(0,\mathcal{O}(\frac{\mathbb{K}\delta}{\mu\iota_m}))$, so we write it down in a form $\frac{\tilde{R}(X,Y,S)}{\mathbb{K}}$. After this rescale we actually have $Y\in\mathcal{B}(0,c_7)\subset\mathbb{R}^2$, $\|\tilde{Z}\|_{C^2}\sim\mathcal{O}(1)$ and $\|\tilde{R}\|_{c^2}\sim\mathcal{O}(1)$. Here $c_7\sim\mathcal{O}(1)$ is a constant.\\

Under the new coordination, we have the following modified system:
\begin{eqnarray*}
\widehat{H}=&\frac{\omega_{i,2}}{\mu\iota_m\delta}Y_2+\frac{1}{2}(Y_1(Y_2),Y_2)^t\cdot D^2h(p_i)\cdot\begin{pmatrix}
Y_1(Y_2)\\Y_2
\end{pmatrix}+\frac{1}{2}a_i(Y_1-Y_1(Y_2))^2+\frac{1}{2}\tilde{Z}''(0)X_1^2\\
&\quad+\frac{1}{3!}\chi(\frac{X_1}{\varrho})\tilde{Z}'''(X_1)X_1^3+\frac{1}{\mathbb{K}}\chi(\frac{Y_1-Y_1(Y_2)}{\varpi},\frac{2Y_2}{c_7})\tilde{R}(X,Y,S),
\end{eqnarray*}
where $\chi(\cdot)$ is a smooth bump function as it's given in the previous subsection and we formally denote by
\[
D^2h(p_i)=\begin{pmatrix}
a_i & b_i\\
b_i & c_i
\end{pmatrix}.
\]
We can see that $\widehat{H}$ coincides with $\tilde{H}$ in the domain:
\[
\{\|X_1\|\leq\varrho,\|Y_1-Y_1(Y_2)\|\leq\varpi,\|Y_2\|\leq\frac{c_7}{2}\}.
\]
For convenience we can assume:
\[
\widehat{R}=\frac{1}{3!}\chi(\frac{X_1}{\varrho})\tilde{Z}'''(X_1)X_1^3+\frac{1}{\mathbb{K}}\chi(\frac{Y_1-Y_1(Y_2)}{\varpi},\frac{2Y_2}{c_7})\tilde{R}(X,Y,S),
\]
and
\[
h_2(Y)=\frac{\omega_{i,2}}{\mu\iota_m\delta}Y_2+\frac{1}{2}(Y_1(Y_2),Y_2)^t\cdot D^2h(p_i)\cdot\begin{pmatrix}
Y_1(Y_2)\\Y_2
\end{pmatrix}+\frac{1}{2}a_i(Y_1-Y_1(Y_2))^2,
\]
where $Y_1(Y_2)$ satisfies: $\partial_{Y_1}h_2(Y_1(Y_2),Y_2)=0$.
Then we write down the corresponding equations as:
\[
X_1'=\frac{\partial\widehat{H}}{\partial Y_1}=a_i(Y_1-Y_1(Y_2))+\frac{\partial\widehat{R}}{\partial Y_1},
\]
\[
Y_1'=-\frac{\partial\widehat{H}}{\partial X_1}=-\tilde{Z}''(0)X_1-\frac{\partial\widehat{R}}{\partial X_1},
\]
\[
X_2'=\frac{\partial\widehat{H}}{\partial Y_2}=\partial_{Y_2}h_2+\frac{\partial\widehat{R}}{\partial Y_2},
\]
\[
Y_2'=-\frac{\partial\widehat{H}}{\partial X_2}=-\frac{\partial\widehat{R}}{\partial X_2}.
\]
Also we can show that $\{(0,X_2,Y_1(Y_2),Y_2)\big{|}X_2\in\mathbb{T}^1,\|Y_2\|\leq c_7\}$ is a NHIC corresponding to system 
\[
\overline{H}=h_2(Y)+\frac{1}{2}\tilde{Z}''(0)X_1^2.
\]
Recall that the eigenvalues of the linear system $\overline{H}$ is $\pm\sqrt{-a_i\tilde{Z}''(0)}$ and $0$ (2 order). To prove the persistence of NHIC for system $\widehat{H}$, we need to control the value of $\widehat{R}$ under the norm $\|\cdot\|_{C^2,\mathcal{B}}$. So the undetermined variables $\varrho$, $\varpi$ should satisfy the following:
\begin{eqnarray*}
\frac{1}{3!}\varrho\ll\sqrt{-a_i\tilde{Z}''(0)},\\
\frac{1}{\mathbb{K}\varpi^2}\ll\sqrt{-a_i\tilde{Z}''(0)},
\end{eqnarray*}
 where we take $\varpi\leq\frac{c_7}{8}$ for convenience. Since $\sqrt{-a_i\tilde{Z}''(0)}\sim\mathcal{O}(1)$ and $\mathbb{K}$ can be chosen properly large, we roughly take $\varrho=\frac{1}{\mathbb{K}^{1/2}}$ and $\varpi=\frac{1}{\mathbb{K}^{1/4}}$ and verify the persistence of NHIC for $\widehat{H}$ in the domain:
 \[
\{\|X_1\|\leq2\varrho,\|Y_1-Y_1(Y_2)\|\leq2\varpi,\|Y_2\|\leq{c_7}\}.
 \]
On the other side, $\widehat{H}$ coincides with $\tilde{H}$ in the domain:
\[
\{\|X_1\|\leq\varrho,\|Y_1-Y_1(Y_2)\|\leq\varpi,\|Y_2\|\leq\frac{c_7}{2}\},
\]
so we actually proved the persistence of wNHIC for $\tilde{H}$ in this domain which is of the form
\begin{equation}
\{(S,X_1(X_2,Y_2,S),Y_1(X_2,Y_2,S),X_2,Y_2)\}.
\end{equation} 
We can see that 
\[
\|Y_1(X_2,Y_2,S)-Y_1(Y_2)\|\lessdot\frac{1}{\mathbb{K}^{3/4}}\rightarrow0,
\] 
and 
\[
\|X_1(X_2,Y_2,S)\|\lessdot\frac{1}{\mathbb{K}}\rightarrow0.
\]
uniformly as $\mathbb{K}\rightarrow\infty$. But we can see that in the original coordination
\[
\|\frac{\partial x_1(x_2,x_2,s)}{\partial y_2}\|\sim\mathcal{O}(\frac{\mu\iota_m l^m}{\mathbb{K}\delta}),
\]
the right side of which is quite large. That's the meaning of `crumpled' and we can see that the nearer $y_i$ approaches $0$, the more violent the crumple is. $\mathcal{O}(\frac{\mu\iota_m l^m}{\mathbb{K}\delta})$ is actually the largest estimation of crumple for $y_i\in\mathcal{B}(0,\mathcal{O}(\frac{\mathbb{K}\delta}{l^m}))\times\mathcal{B}(0,\mathcal{O}(\frac{\mathbb{K}\delta}{\mu\iota_m}))$ (see Figure \ref{fig10}).
\begin{figure}
\begin{center}
\includegraphics[width=8cm]{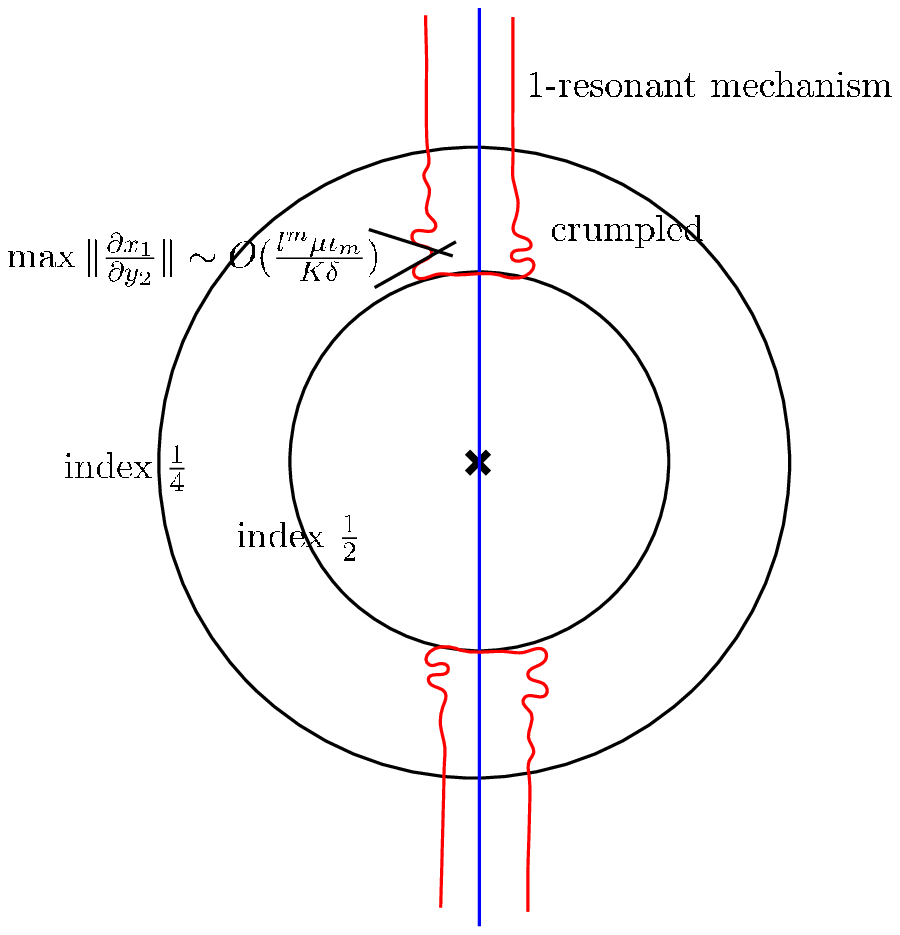}
\caption{ }
\label{fig10}
\end{center}
\end{figure}
\vspace{10pt}

\subsection{the persistence of wNHICs for the 2-resonance ($\omega_{m,\frac{1}{2}}$)} 
$\newline$

After the homogenization of subsection \ref{homogenize}, now we consider the following mechanical system with the `hardest' case potential function of (\ref{hardest}):
\begin{equation}
H(X,Y,S)=\overline{H}(X,Y)+\epsilon R(X,Y,S),
\end{equation}
\begin{equation}\label{model}
\overline{H}(X,Y)=\frac{1}{2}\langle Y^t, D^2h(p_m^*)Y\rangle+Z_1(X_1)+Z_2(X_2)+\varepsilon Z_3(X_1,X_2),
\end{equation}
where $\varepsilon=\frac{1}{\mathbb{L}}$ and $\epsilon=\mathbb{K}d_m^{*\sigma/2}l^{m(r+6+2\xi)/2}$. \\

Recall that the variable $(X_1,Y_1)$ corresponding to the resonant line $\Gamma_{m,1}$ and $(X_2, Y_2)$ corresponding to $\Gamma_{m,2}$ when $p_m^*=p_{m,1/2}$ (We only need to consider this case since there's no transition between resonant lines at other sub 2-resonant points). To complete our `weak-coupled' structure, another condition is needed:\\

{\bf C3:} $D^2h(0)=Id_{2\times2}$.

\begin{Rem}
Once {\bf C3} is satisfied, we have $D^2h(p_m^*)=Id+\mathcal{O}(\frac{1}{l^m})$ for sufficiently large $m\gg1$. Then the `model system' (\ref{model}) is actually of a form:
\begin{equation}\label{model system}
\overline{H}(X,Y)=\frac{1}{2}\langle Y^t, Y\rangle+Z_1(X_1)+Z_2(X_2)+\varepsilon{Z}_3(X_1,X_2)+\mathcal{O}(\frac{\|Y\|^2}{l^m}),
\end{equation}
where $\frac{1}{l^m}\ll\varepsilon$ as long as $m\gg1$. Later we will see that the perturbation term $\mathcal{O}(\frac{\|Y\|^2}{l^m})$ will not damage the qualitative properties of the following system 
\begin{equation}\label{new model}
\overline{H}=\frac{1}{2}\langle Y^t, Y\rangle+Z_1(X_1)+Z_2(X_2)+\varepsilon{Z}_3(X_1,X_2),
\end{equation}
So we take this (\ref{new model}) as our `model system'  and research it instead.
\end{Rem}
\begin{Rem}
From the following analysis we can see that $D^2h(0)$ being diagonal is enough. But we still take {\bf C3} condition for convenience of our symbolism.
\end{Rem}
Since we can take $\varepsilon$ {\it a priori} small, so this system 
\begin{equation}\label{integrable}
\overline{\overline{H}}=\frac{1}{2}\langle Y^t, Y\rangle+Z_1(X_1)+Z_2(X_2)
\end{equation}
will be involved and $\overline{H}$ can be seen as its $\mathcal{O}(\varepsilon)$ perturbation, $\varepsilon\ll1$. This `weak-coupled' structure is enlighten by the classical Melnikov method \cite{Tr2}, which was firstly discovered by V. Melnikov in 1960s. Systems of this kind have several fine properties from the viewpoint of Mather Theory. We will elaborate these in the following.

\begin{Pro}\label{autonomous}
\begin{itemize}
\item For the model system (\ref{new model}), we can find two NHICs $\overline{N}_1^{+}$, $\overline{N}_2^{+}$ corresponding to homology class $g_1=(1,0)$, $g_2=(0,1)$. The bottom of $\overline{N}_1^{+}$ (or $\overline{N}_2^{+}$) is a unique $g_1-$ (or $g_2-$) homoclinic orbit. Since $\overline{H}$ is a mechanical system, we can also find NHICs $\overline{N}_1^{-}$ and $\overline{N}_2^{-}$ corresponding to $-g_1$ and $-g_2$, with the bottom $-g_1$ and $-g_2$ homoclinic orbits.
\item Based on our {\bf U3} restrictions, we can expand $\overline{N}_2^{\pm}$ to the minus energy surfaces, i.e. there exists a NHIC $\overline{N}_2^{-}\cup\overline{N}_2^{0,-e_0}\cup\overline{N}_2^{+}$, with $1<e_0<4$.
\end{itemize}
\end{Pro}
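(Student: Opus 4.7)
The plan is to reduce everything to the integrable split model $\overline{\overline H}=\tfrac12\langle Y,Y\rangle+Z_1(X_1)+Z_2(X_2)$ and treat $\overline H$ as a perturbation of it with coupling strength $\varepsilon=1/\mathbb L\ll 1$. When $\varepsilon=0$ the system splits as a direct product of two autonomous pendulum-like systems on $T^{*}\mathbb T^{1}$, and by \textbf{U1}/\textbf{U2} applied to $Z_1$ and \textbf{U3} applied to $Z_2$ each $Z_i$ has a unique non-degenerate maximum. The phase portrait of the $i$-th factor therefore contains a hyperbolic saddle, a single homoclinic separatrix and two rotational regions of homology $\pm 1$. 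I take $\overline{\overline N}_1^{+}$ to be the product of the upper rotational cylinder of the $X_1$-pendulum with the saddle of the $X_2$-pendulum; this is a smooth $2$-dimensional NHIC whose orbits have homology class $g_1=(1,0)$, and whose bottom is the product of the $X_1$-separatrix with the $X_2$-saddle, a single $g_1$-homoclinic orbit. Define $\overline{\overline N}_1^{-}$ and $\overline{\overline N}_2^{\pm}$ analogously. Direct linearisation along these cylinders gives normal eigenvalues $\pm\sqrt{-Z_{3-i}''(X_{3-i}^{*})}$, which are $\mathcal O(1)$ and uniformly bounded below by the constants furnished by \textbf{U1}, \textbf{U2}, \textbf{U3}.

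For $\varepsilon>0$, the normal hyperbolic rate is independent of $\mathbb L$ while $\|\varepsilon Z_3\|_{C^2}=\mathcal O(1/\mathbb L)$, so the classical Hirsch--Pugh--Shub (Fenichel) persistence theorem for NHICs produces perturbed cylinders $\overline N_i^{\pm}$ that are $C^{1}$-close to $\overline{\overline N}_i^{\pm}$ and carry the same homology class. For uniqueness of the bottom homoclinic orbit I would run a Melnikov/variational argument: the unperturbed homoclinic loop persists as the $c$-minimizer of the Mather action at the appropriate cohomology class corresponding to $g_i$, and generic choice of the Fourier coefficients defining $Z_3$ (which we are still free to impose at this level of the Stable Normal Form) renders the Melnikov function non-degenerate with isolated simple zeros, selecting a unique $c$-minimizing homoclinic orbit modulo time translation. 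Upper semicontinuity of the Mañé set and the Lipschitz graph property of the Aubry set then guarantee that this orbit sits cleanly at the bottom of the perturbed NHIC.

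The delicate part is the extension of $\overline N_2^{\pm}$ to minus-energy surfaces using \textbf{U3}. The quantitative content of \textbf{U3} is $\|Z_2\|_{C^2}\leq c_6<c_5/2$, while \textbf{U2} provides $|Z_1''(X_1^{*})|\gtrsim c_5$; thus the normal hyperbolic rate of $\overline N_2^{\pm}$ at the $X_1$-saddle strictly dominates both $\varepsilon$ and the total oscillation of $Z_2$. This margin permits one to continue $\overline N_2^{+}$ downward in total energy past the critical value $\max Z_1+\max Z_2$: just below that level, a full revolution in $X_2$ remains possible provided $X_1$ is allowed to librate by a small amount near (rather than sit exactly at) its saddle, and normal hyperbolicity is preserved because the linearisation along any such slowly drifting $X_1$-orbit is a small perturbation of the saddle linearisation of $Z_1$. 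Continuing the extension down to the energy at which the $X_1$-libration amplitude becomes comparable with the oscillation of $Z_1$ yields an energy threshold $e_0$, with the range $1<e_0<4$ coming from a direct quantitative comparison of $\|Z_1''\|$, $\|Z_2\|_{C^{0}}$ and $\varepsilon$ using the explicit constants of \textbf{U2}, \textbf{U3}. A symmetric argument extends $\overline N_2^{-}$ upward, and the two pieces join through a middle slab $\overline N_2^{0,-e_0}$ of orbits librating in $X_1$ and rotating in $X_2$, producing the single NHIC $\overline N_2^{-}\cup\overline N_2^{0,-e_0}\cup\overline N_2^{+}$ asserted. The main obstacle is precisely this third step: maintaining uniform normal hyperbolicity through a full neighbourhood of the $X_1$-saddle, where the unperturbed splitting is only marginally hyperbolic. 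The strict inequality $c_6<c_5/2$ in \textbf{U3} is what secures the necessary spectral margin, and turning this into a genuine quantitative NHIC estimate—rather than a qualitative comparison—is the technical heart of the proof.
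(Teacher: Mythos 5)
Your starting point (the product structure of the uncoupled system $\overline{\overline{H}}$, with each cylinder a rotational annulus of one pendulum factor crossed with the saddle of the other) is the same as the paper's, and your identification of the normal rates $\sqrt{\lambda_1}$ for $\overline{N}_2^{\pm}$ and $\sqrt{\lambda_2}$ for $\overline{N}_1^{\pm}$ is correct. But the blanket appeal to Hirsch--Pugh--Shub/Fenichel for all four cylinders \emph{including their bottoms} does not work for $\overline{N}_1^{\pm}$: as $E\to0$ the tori $\mathcal{T}_{g_1,E}$ accumulate on the separatrix, where the tangential expansion rate approaches $\lambda_1$, which by {\bf U3'} strictly exceeds the normal rate $\lambda_2$; the spectral-gap hypothesis of the persistence theorem therefore fails exactly where the proposition needs it (the paper states this obstruction explicitly in the remark following Corollary \ref{wiggins time-dependent}). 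The paper avoids Fenichel entirely for the first bullet: it constructs each perturbed torus $\mathcal{T}_{g_i,E}^{\varepsilon}$ individually as a transversal intersection of the perturbed stable and unstable Lagrangian graphs, using the generating functions $S_{g_i,E,\varepsilon}^{s,u}=S_{g_i,E,0}^{s,u}+\varepsilon S_{g_i,E,1}^{s,u}+\mathcal{O}(\varepsilon^2)$ and the non-degeneracy $\partial^2(S^u-S^s)=2\sqrt{2\lambda_i}$ together with the implicit function theorem; uniqueness of the bottom homoclinic then follows from the uniqueness and non-degeneracy of the minimizer of $S^u-S^s$ on the sections $\{X_1=\pi\}$ and $\{X_2=\pi\}$ (using {\bf C4}), with no genericity assumption on $Z_3$ needed. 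This leaf-by-leaf Lagrangian-graph construction is the missing idea in your first step.

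Your treatment of the second bullet also misdescribes the dynamics. At total energy $E=-e<0$ one has $\tfrac12 Y_2^2=-e-Z_2(X_2)$ with $\max Z_2=0$, so a full revolution in $X_2$ is impossible; the middle slab $\overline{N}_2^{0,-e_0}$ consists of the $X_1$-saddle (the fixed point of the first factor, not a small libration of $X_1$) crossed with \emph{zero-homology librational} orbits of the $X_2$-pendulum, the opposite of the ``librating in $X_1$, rotating in $X_2$'' picture you give. The paper's argument is: take the Poincar\'e section $\Sigma^{+}_{\{X_2=\pi,Y_2>0\}}$ near the librational orbit $O_{-e}$, observe that the return map has the hyperbolic fixed point $(0,\pi,0,\sqrt{-2e-2Z_2(\pi)})$ with multipliers governed by $\lambda_1$, so the extended uncoupled cylinder exists; then pass to $\overline{H}$ via Theorem \ref{wiggins}, which cuts off the coupling with a bump function in $\overline{\overline{H}}$ and applies the Fenichel--Wiggins Lemma \ref{F&W} using the exponent comparison $\max\nu_{\parallel}^{\pm}=\lambda_2<\lambda_1=\nu_{\bot}^{\pm}$ supplied by {\bf U3'}. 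Note that here (unlike for $\overline{N}_1$) the Lyapunov-ratio condition holds on the whole extended cylinder, which is why Fenichel is legitimate for $\overline{N}_2$ but not for $\overline{N}_1$; your proposal does not distinguish the two cases, and that distinction is the technical heart of the proposition.
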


\begin{proof}
First, we can see that $\overline{\overline{H}}_1=\frac{1}{2}Y_1^2+Z_1(X_1)$ and $\overline{\overline{H}}_2=\frac{1}{2}Y_1^2+Z_1(X_1)$ are two uncoupled first-integrals of $\overline{\overline{H}}$. Then at the energy surface $\mathcal{S}_{E}\doteq\{(X,Y)\in T^*\mathbb{T}^2\big{|}\overline{\overline{H}}=E\geq0\}$, we can find two 1-dimensional normally hyperbolic tori each located in the foliation $\mathcal{S}_{E}^{1}\doteq\{\overline{\overline{H}}_1=E,\overline{\overline{H}}_2=0\}$ and $\mathcal{S}_{E}^{2}\doteq\{\overline{\overline{H}}_2=E,\overline{\overline{H}}_1=0\}$, which can be denoted by $\mathcal{T}_{g_1,E}$ and $\mathcal{T}_{g_2,E}$. Notice that we can define $\mathcal{T}_{-g_1,E}$ and $\mathcal{T}_{-g_2,E}$ in the same way, but we just need to deal with the positive homology case owing to the symmetry property of mechanical systems. Without loss of generality, we can assume $Z_1+Z_2$ take its maximal value $0$ at the point $(0,0)\in\mathbb{T}^2$. Also we can transform {\bf U3} into a following {\bf U3'} for this homogenized case, which is more convenient to use:\\

{\bf U3':} $-Z_1''(0)=\lambda_1>0$ and $-Z_2''(0)=\lambda_2>0$. Besides, $\lambda_1-\lambda_2\geq c_8>0$ and $\frac{\lambda_1}{\lambda_2}\geq1+c_9>1$. Here $c_8$ and $c_9$ are constants depending only on $c_5$ and $c_6$, and they are uniformly taken for $\forall m\geq M\gg1$.\\

Since the vector fields $J\nabla\overline{\overline{H}}_1$ and $J\nabla\overline{\overline{H}}_2$ are independent of each other at the place $\mathcal{S}_{E}\setminus(\mathcal{T}_{\pm g_1,E}\cup\mathcal{T}_{\pm g_2,E})$, we can define the stable manifold $W_{g_i,E}^s$ and unstable manifold $W_{g_i,E}^u$ from the trend of trajectories on it, $i=1,2$. Actually they are invariant Lagrangian graphs and $\mathcal{T}_{g_i,E}\subset W_{g_i,E}^u\cap W_{g_i,E}^s$. So we can express $W_{g_1,E}^{s,u}$ as $\{(X, dS_{g_1,E}^{s,u}(X))+(0,h(\mathcal{T}_{g_1,E}))\big{|}X\in\mathbb{T}^1\times[-\pi-\delta,\pi+\delta]\subset\mathbb{R}^2\}$. Here $h(\mathcal{T}_{g_1,E})\in\mathbb{R}^2$ is the average velocity of $\mathcal{T}_{g_1,E}$. In the same way we express $W_{g_2,E}^{s,u}$ as $\{(X, dS_{g_2,E}^{s,u}(X))+(h(\mathcal{T}_{g_2,E}),0)\big{|}X\in[-\pi-\delta,\pi+\delta]\times\mathbb{T}^1\subset\mathbb{R}^2\}$. Notice that $0<\delta<1$ can be chosen properly small, which is to ensure that the definition domains of these graphs can cover a whole copy of $\mathbb{T}^2$ in the universal covering space $\mathbb{R}^2$. Actually we can see that $S_{g_2,E}^{s,u}(X)$ only depends on $X_1$ and $S_{g_1,E}^{s,u}(X)$ only depends on $X_2$ in their corresponding domains. So we have
\begin{enumerate}
\item $\frac{\partial(S_{g_2,E}^{u}-S_{g_2,E}^{s})(X_1)}{\partial X_1}\big{|}_{\mathcal{T}_{g_2,E}}=0$, $\frac{\partial^2(S_{g_2,E}^{u}-S_{g_2,E}^{s})(X_1)}{\partial X_1^2}\big{|}_{\mathcal{T}_{g_2,E}}=2\sqrt{2\lambda_1}$.\\
\item $\frac{\partial(S_{g_1,E}^{u}-S_{g_1,E}^{s})(X_2)}{\partial X_2}\big{|}_{\mathcal{T}_{g_2,E}}=0$, $\frac{\partial^2(S_{g_1,E}^{u}-S_{g_1,E}^{s})(X_2)}{\partial X_2^2}\big{|}_{\mathcal{T}_{g_2,E}}=2\sqrt{2\lambda_2}$.\\
\end{enumerate}
Then the projection set $\{0\}\times\mathbb{T}^1$ of $\mathcal{T}_{g_2,E}$ is the set of minimizers of $S_{g_2,E}^{u}-S_{g_2,E}^{s}$. In the same way $\mathbb{T}^1\times\{0\}$ is that of $(S_{g_1,E}^{u}-S_{g_1,E}^{s})$'s. Notice that the former analysis is valid for all $E\geq0$, and $\mathcal{T}_{g_i,E}$ are unique in their small neighborhoods, $i=1,2$.\\

For $\overline{H}$ now, since $\varepsilon\ll1$ can be chosen {\it a priori} small, we can still find the perturbed manifolds $W_{g_i,E,\varepsilon}^{s,u}$ and their generating functions $S_{g_i,E,\varepsilon}^{s,u}$ in the corresponding domains, $i=1,2$. Besides, we have 
\[
S_{g_i,E,\varepsilon}^{s,u}=S_{g_i,E,0}^{s,u}+\varepsilon S_{g_i,E,1}^{s,u}+\mathcal{O}(\varepsilon^2),\;i=1,2,
\]
and
\[
\overline{H}(X, \nabla S_{g_i,E,\varepsilon}^{s,u}+h(\mathcal{T}_{g_i,E}^{\varepsilon}))=E,\;i=1,2\;\&\;E>0.
\]
Here $h(\mathcal{T}_{g_i,E}^{\varepsilon})$ is the average velocity of $\mathcal{T}_{g_i,E}^{\varepsilon}$. The existence of $\mathcal{T}_{g_i,E}^{\varepsilon}\subset W_{g_i,E,\varepsilon}^{s}\cap W_{g_i,E,\varepsilon}^{u}$ can be proved by the theorem of implicit function from the two formulas above. Actually, we can take a section $[-\sqrt{\varepsilon},\sqrt{\varepsilon}]\times\{X_2^*\}$ and restrict $S_{g_2,E,\varepsilon}^{u}-S_{g_2,E,\varepsilon}^{s}$ on it. We will find the unique minimizer $X_1(X_2^*)$. So we have proved the existence of $\mathcal{T}_{g_2,E}^{\varepsilon}$ with $\{(X_1(X_2^*),X_2^*)$\big{|}$X_2^*\in\mathbb{T}^1\}$ its projection. Besides, $\mathcal{T}_{g_2,E}^{\varepsilon}$ is hyperbolic and as $E$ changes these tori make up a NHIC corresponding to homology class $g_2$. In the same way we get similar results for $\mathcal{T}_{g_1,E}^{\varepsilon}$. Now we raise another restriction for $Z_3(X)$. This restriction is just for convenience and is not necessary.\\

{\bf C4:} $Z_1+Z_2+\varepsilon Z_3$ reaches its maximum at $(0,0)\in\mathbb{T}^2$. Besides, its two eigenvalues are still $\lambda_1$ and $\lambda_2$, which have the same corresponding eigenvectors with $Z_1+Z_2$ at $(0,0)$.\\

From this restriction we can see that the Ma\~{n}\'e Critical Value of $\overline{H}=0$ is the same with $\overline{\overline{H}}$. We denote the energy surface of $\overline{H}$ by $\mathcal{S}_{E}^{\varepsilon}\doteq\{(X,Y)\in T^*\mathbb{T}^2\big{|}\overline{H}=E\}$. Then in the similar way as above we can prove the existence of $g_1-$ and $g_2-$ type homoclinic orbits at $\mathcal{S}_{0}^{\varepsilon}$, then prove the uniqueness of them.\\

For $\overline{\overline{H}}$ system, we can suspend the generating function $S_{g_i,0}$ into the universal covering space $\mathbb{R}^2$. So we have a couple of $\tilde{S}_{0,\vec{n}}^{u,s}$ defined in the domain $[-\pi-\delta,\pi+\delta]\times[-\pi-\delta,\pi+\delta]+\vec{n}$, where $i=1,2$ and $\vec{n}=(n_1, n_2)\in\mathbb{Z}^2$. Here $+\vec{(\cdot)}$ is a parallel move in $\mathbb{R}^2$. Based on {\bf C4}, then $\tilde{S}_{0,\vec{n}}^{u}-\tilde{S}_{0,\vec{n}}^{s}$ takes $\vec{n}$ as its unique minimizer in the domain of definition. Taking $(X,d\tilde{S}_{0,\vec{n}}^{u})$ as the initial condition, where $X\in([-\pi-\delta,\pi+\delta]\times[-\pi-\delta,\pi+\delta]+\vec{n})$, the trajectory of $\overline{\overline{H}}$ will exponentially tend to $(\vec{n},0)\in T^*\mathbb{R}^2$ as $t\rightarrow-\infty$. Similarly, the trajectory with initial condition $(X,d\tilde{S}_{0,\vec{n}}^{s})$ tends to $(\vec{n},0)$ exponentially as $t\rightarrow+\infty$.\\

We take $\{X_1=\pi\}$ as the common section and restrict $S_{0,(0,0)}^u$ and $S_{0,(2\pi,0)}^s$ to it, then we have
\[
\frac{\partial(S_{0,(0,0)}^{u}-S_{0,(2\pi,0)}^{s})(\pi,0)}{\partial X_2}=0, \;\frac{\partial^2(S_{0,(0,0)}^{u}-S_{0,(2\pi,0)}^{s})(\pi,0)}{\partial X_2^2}=2\sqrt{2\lambda_2}.
\]
Similarly, we take $\{X_2=\pi\}$ as the common section and restrict $S_{0,(0,0)}^u$ and $S_{0,(0,2\pi)}^s$  to it and get
\[
\frac{\partial(S_{0,(0,0)}^{u}-S_{0,(0,2\pi)}^{s})(0,\pi)}{\partial X_2}=0, \;\frac{\partial^2(S_{0,(0,0)}^{u}-S_{0,(0,2\pi)}^{s})(0,\pi)}{\partial X_2^2}=2\sqrt{2\lambda_1}.
\]
Since $\overline{H}$ is just a $\mathcal{O}(\epsilon)$ perturbation of  $\overline{\overline{H}}$, we also have:
\[
S_{0,\vec{n},\varepsilon}^{s,u}=S_{0,\vec{n}}^{s,u}+\varepsilon S_{0,\vec{n},1}^{s,u}+\mathcal{O}(\varepsilon^2),\;i=1,2.
\]
So $(S_{0,(0,0),\varepsilon}^{u}-S_{0,(2\pi,0),\varepsilon}^{s})(\pi,X_2)$ has a unique minimizer in $[-\pi-\delta,\pi+\delta]$ as a single variable function of $X_2$. Also $(S_{0,(0,0),\varepsilon}^{u}-S_{0,(0,2\pi),\varepsilon}^{s})(X_1,\pi)$ takes its unique maximal value in $[-\pi-\delta,\pi+\delta]$ as a single variable function of $X_1$. Then we get the existence and uniqueness of $g_1-$ and $g_2-$ type homoclinic orbits. The first bullet of this proposition has been proved.

\begin{Rem}
Similar results have been proved by A. Delshams and etc in \cite{Del}, where they call the homoclinic orbits we find `isolated' type and also get the uniqueness with the same Melnikov method. 
\end{Rem}

Second, for the uncoupled system $\overline{\overline{H}}$, we can find a closed trajectory of zero-homology in $\mathcal{S}_{-e}^{2}$ which is denoted by $O_{-e}$ with a period $T_{-e}$, $e>0$. We can take a section $\Sigma_{\{X_2=\pi,Y_2>0\}}^{+}$ and restrict it in a small neighborhood of $O_{-e}$. Here `$+$' means the restriction of $Y_2>0$. Then we have a Poincar\'e mapping 
\[
\phi^{T(X,Y)}: \Sigma_{\{X_2=\pi\}}^{+}\cap\mathcal{B}(O_{-e},\delta)\cap\mathcal{S}_{-e}\rightarrow\Sigma_{\{X_2=\pi\}}^{+}\cap\mathcal{B}(O_{-e},\delta)\cap\mathcal{S}_{-e}.
\]
Obviously $\phi^{T(X,Y)}$ has a unique hyperbolic fixed point $(0,\pi,0,\sqrt{-2e-2Z_2(\pi)})$, with the eigenvalues $\pm2\sqrt{-2\lambda_1}$. So we have a expanded NHIC $\overline{\overline{N}}_2^{-}\cup\overline{\overline{N}}_2^{0,-2e_{0}}\cup\overline{\overline{N}}_2^{+}$, where $e_{0}\sim\mathcal{O}(1)$ is a proper positive constant.\\

For the system $\overline{H}$, we can prove the persistence of $\overline{N}_2^{-}\cup\overline{N}_2^{0,-e_{0}}\cup\overline{N}_2^{+}$ as a wNHIC via the following theorem \ref{wiggins}. This wNHIC can be seen as a deformation of 
$\overline{\overline{N}}_2^{-}\cup\overline{\overline{N}}_2^{0,-2e_{0}}\cup\overline{\overline{N}}_2^{+}$. \\
\end{proof}
\begin{Rem}
Notice that the approach of theorem \ref{wiggins} is also valid for system $H$.
\end{Rem}
\vspace{10pt}

\begin{The}\label{wiggins}
There exists $ \varepsilon_0>0$ sufficiently small such that $\forall 0<\varepsilon\leq\varepsilon_0$, $\overline{N}_2^{-}\cup\overline{N}_2^{0,-e_{0}}\cup\overline{N}_2^{+}$ persists as a $C^1$ wNHIC of system $\overline{H}$ and satisfies the following:
\begin{itemize}
\item $\overline{N}_2^{-}\cup\overline{N}_2^{0,-e_{0}}\cup\overline{N}_2^{+}$ is $C^1$ in $\varepsilon$.\\
\item $\overline{N}_2^{-}\cup\overline{N}_2^{0,-e_{0}}\cup\overline{N}_2^{+}$ $\varepsilon-$close to $\overline{\overline{N}}_2^{-}\cup\overline{\overline{N}}_2^{0,-2e_{0}}\cup\overline{\overline{N}}_2^{+}$ and can be represented as a graph over it as
\[
\overline{N}_2^{-}\cup\overline{N}_2^{0,-e_{0}}\cup\overline{N}_2^{+}=\{(X_1,X_2,Y_1,Y_2)\in T^*\mathbb{T}^2\big{|}X_1=X_1^{\varepsilon}(X_2,Y_2), Y_1=Y_1^{\varepsilon}(X_2,Y_2)\}
\]
with $\|X_1^{\varepsilon}\|_{C^1}\sim\mathcal{O}(\varepsilon)$ and $\|Y_1^{\varepsilon}\|_{C^1}\sim\mathcal{O}(\varepsilon)$.\\
\item There exist locally invariant manifolds $W^{s,u}_{loc}(\overline{N}_2^{-}\cup\overline{N}_2^{0,-e_{0}}\cup\overline{N}_2^{+})$ of $\overline{N}_2^{-}\cup\overline{N}_2^{0,-e_{0}}\cup\overline{N}_2^{+}$ which are $C^1$ in $\varepsilon$.
\end{itemize}
\end{The}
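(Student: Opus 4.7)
The plan is to reduce the claim to the classical Hirsch--Pugh--Shub (HPS) / Wiggins persistence theorem for strongly invariant normally hyperbolic manifolds, applied after modifying $\overline{H}$ by a smooth cutoff that converts weak invariance into strong invariance, exactly in the spirit of the two previous subsections. Concretely, take a bump function $\chi$ equal to $1$ on a $\sqrt{\varepsilon}$-tubular neighborhood of $\overline{\overline{N}}_2^{-}\cup\overline{\overline{N}}_2^{0,-2e_0}\cup\overline{\overline{N}}_2^{+}$ (and capped in the tangential direction slightly inside the energies $-e_0$ and some fixed positive upper bound) and vanishing outside a slightly larger neighborhood, then multiply the coupling $\varepsilon Z_3(X_1,X_2)$ by $\chi$. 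Outside the support of $\chi$ the modified system is the uncoupled $\overline{\overline{H}}$, for which the plane $\{X_1=0,\,Y_1=0\}$ is trivially invariant; thus the unperturbed cylinder extends to a compact, smooth, strongly invariant manifold of the modified flow with no boundary.

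I would then verify normal hyperbolicity uniformly. Along the unperturbed cylinder the linearized flow in the normal direction at $X_1=Y_1=0$ is $\dot X_1=Y_1$, $\dot Y_1=\lambda_1 X_1$, yielding Lyapunov exponents $\pm\sqrt{\lambda_1}$ independently of the base point. The tangential Hamiltonian $\frac{1}{2}Y_2^2+Z_2(X_2)$ is a pendulum whose tangential Lyapunov exponent is uniformly bounded by $\sqrt{\lambda_2}$, attained only at the tangential hyperbolic fixed point sitting at energy $0$. The cap $-e_0$ keeps the negative-energy piece uniformly away from the separatrix so tangential periods are uniformly bounded, and the gap $\sqrt{\lambda_1}-\sqrt{\lambda_2}\ge c>0$ supplied by \textbf{U3'} is $\mathcal{O}(\varepsilon)$-stable. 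This yields $1$-normal hyperbolicity in the sense of HPS with uniform constants.

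The HPS / Wiggins theorem then produces, for every $\varepsilon\le\varepsilon_0$, a unique $C^1$ invariant manifold $\widetilde{N}^{\varepsilon}$ of the modified system that is $C^1$-close to the unperturbed cylinder and $C^1$ in $\varepsilon$, together with $C^1$ local stable and unstable manifolds also $C^1$ in $\varepsilon$. Since the unperturbed cylinder is the zero-section graph over $(X_2,Y_2)$, the graph-transform step of HPS delivers $\widetilde{N}^{\varepsilon}$ as a graph $(X_1,Y_1)=(X_1^{\varepsilon},Y_1^{\varepsilon})(X_2,Y_2)$; a first-order variational computation in $\varepsilon$ (the linearized operator is invertible thanks to the normal spectral gap) gives the estimate $\|X_1^{\varepsilon}\|_{C^1}+\|Y_1^{\varepsilon}\|_{C^1}=\mathcal{O}(\varepsilon)$. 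Finally, restricting $\widetilde{N}^{\varepsilon}$ to the region where $\chi\equiv 1$, where the modified and original systems coincide, produces the required wNHIC for $\overline{H}$.

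The principal obstacle is the seam $E=0$: the tangential period diverges as one approaches the separatrix of the $(X_2,Y_2)$-pendulum, and the cylinder passes through an unstable tangential fixed point, so the underlying manifold is neither compact nor genuinely invariant at the ends. Bernard's weak-invariant formalism together with the cutoff reduction above resolves both issues simultaneously by replacing the cylinder by its compact, strongly invariant, $\chi$-extended version; the crucial input is the pointwise normal spectral gap $\sqrt{\lambda_1}>\sqrt{\lambda_2}$, which is precisely what \textbf{U3'} is designed to enforce uniformly in $m$. Everything else is standard NHIM perturbation theory.
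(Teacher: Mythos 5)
Your overall strategy --- cut off the coupling so that the modified system coincides with the uncoupled $\overline{\overline{H}}$ near the ends of the cylinder, check the Lyapunov--exponent gap supplied by \textbf{U3'} ($\lambda_1>\lambda_2$, i.e. normal rate $\sqrt{\lambda_1}$ beats tangential rate $\sqrt{\lambda_2}$), and invoke Fenichel--Wiggins --- is exactly the paper's proof, which uses the energy cutoff $\tilde H=\overline{\overline{H}}+\varepsilon\rho\bigl(\tfrac{2e_0+\overline{\overline{H}}}{e_0}\bigr)Z_3$ and then restricts back to $\{\overline{\overline{H}}\ge-e_0\}$ where $\tilde H=\overline H$. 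Two points in your write-up need repair. First, the sentence claiming that ``the unperturbed cylinder extends to a compact, smooth, strongly invariant manifold of the modified flow'' is backwards: where $\chi\equiv1$, i.e. precisely on the cylinder away from the caps, the modified flow is the full perturbed flow and the plane $\{X_1=Y_1=0\}$ is \emph{not} invariant for it. What the cutoff actually buys is that the capped cylinder is a compact invariant normally hyperbolic manifold (with invariant boundary circles) of the \emph{uncoupled} flow, and the modified flow is a small perturbation of the uncoupled flow to which HPS applies; genuine invariance of the perturbed manifold near the boundary then follows because the perturbation vanishes there.

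Second, and more seriously, the normal cutoff on a $\sqrt{\varepsilon}$-tube is both unnecessary and dangerous. It is unnecessary because $\varepsilon Z_3(X_1,X_2)$ is already globally $\mathcal{O}(\varepsilon)$ in every $C^k$-norm, so no localization in the normal direction is needed; only the tangential (energy) cap matters, which is why the paper's cutoff is a function of $\overline{\overline{H}}$ alone. It is dangerous because if the transition layer of $\chi$ has width comparable to $\sqrt{\varepsilon}$, then $\|\nabla^2(\varepsilon\chi Z_3)\|_{C^0}\sim\varepsilon\cdot\varepsilon^{-1}=\mathcal{O}(1)$, so the modified vector field is only $C^0$-close, not $C^1$-close, to the uncoupled one, and the hypotheses of the $C^1$ persistence theorem are no longer satisfied. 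Replacing the $\sqrt{\varepsilon}$-tube by a fixed $\mathcal{O}(1)$-width transition (or dropping the normal cutoff entirely) repairs this. A final small remark: the difficulty you flag at the ``seam $E=0$'' (diverging tangential periods near the separatrix of the $(X_2,Y_2)$-pendulum) is harmless for the $g_2$-cylinder, since the tangential exponent there is still only $\sqrt{\lambda_2}<\sqrt{\lambda_1}$; that obstruction only becomes real for the $g_1$-cylinder, where the inequality reverses and the paper must abandon Fenichel--Wiggins in favor of the recurrent-map analysis of the following subsection.
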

\begin{proof}
We can modify $\overline{H}$ into a new system $\tilde{H}\doteq\overline{\overline{H}}+\varepsilon\rho(\frac{2e_{0}+\overline{\overline{H}}}{e_{0}})Z_3(X)$, where $\rho(x):\mathbb{R}\rightarrow\mathbb{R}$ is a $C^{\infty}$ function taking value $0$ when $x\leq0$ and $1$ when $x\geq1$ (see Figure \ref{fig11}). Then we can see that $\tilde{H}=\overline{H}$ restricted in the domain $\{\overline{\overline{H}}\geq-e_{0}\}$ and $\tilde{H}=\overline{\overline{H}}$ restricted in $\{\overline{\overline{H}}\leq-2e_{0}\}$. Besides, we have 
\begin{equation}\label{modify}
\|\varepsilon\rho(\frac{2e_{0}+\overline{\overline{H}}}{e_{0}})Z_3(X)\|_{C^2, \{|\overline{\overline{H}}|\leq e_9\sim\mathcal{O}(1)\}}\lessdot\mathcal{O}(\varepsilon).
\end{equation}
Since we know that the existence of NHIC $\overline{\overline{N}}_2^{-}\cup\overline{\overline{N}}_2^{0,-2e_{0}}\cup\overline{\overline{N}}_2^{+}$, which can be written by $\overline{\overline{N}}_{2,\pm, e_0}$ for short, we can restrict  the tangent bundle on $\overline{\overline{N}}_{2,\pm, e_0}$ and split it by 
\[
T(T^*\mathbb{T}^2)\big{|}_{T\overline{\overline{N}}_{2,\pm, e_0}}=T\overline{\overline{N}}_{2,\pm, e_0}\oplus T\overline{\overline{N}}_{2,\pm, e_0}^{\bot}. 
\]
On the other side, we can calculate the Lyapunov exponents of each of the splitting spaces because $\overline{\overline{H}}$ is autonomous and uncoupled. First we will give a definition of Lyapunov exponents for our special case of NHIC.\\
\begin{figure}
\begin{center}
\includegraphics[width=5cm]{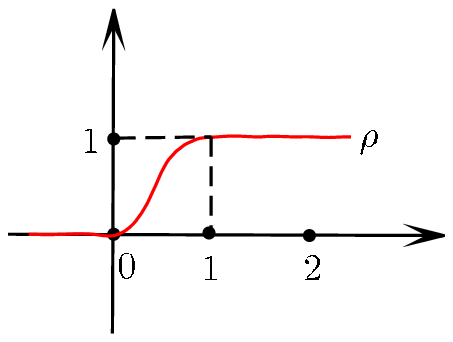}
\caption{ }
\label{fig11}
\end{center}
\end{figure}

We define the {\bf positive Lyapunov exponent} restricted on $T\overline{\overline{N}}_{2,\pm, e_0}$ by
\[
\nu^{+}_{\parallel}(Z)=\limsup_{t\rightarrow+\infty}\frac{1}{t}\ln(\frac{\|D\phi^t(Z)\cdot V\|}{\|V\|}),
\]
and the {\bf negative Lyapunov exponent}
\[
\nu^{-}_{\parallel}(Z)=\liminf_{t\rightarrow+\infty}\frac{1}{t}\ln(\frac{\|D\phi^t(Z)\cdot V\|}{\|V\|}),
\]
where ${(Z,V)\in T\overline{\overline{N}}_{2,\pm, e_0}}$ and $\|\cdot\|$ is the Euclid norm. Also we can define the {\bf positive (or negative) Lyapunov exponents} restricted on $T\overline{\overline{N}}_{2,\pm, e_0}^{\bot}$ by
\[
\nu^{+}_{\bot}(Z)=\limsup_{t\rightarrow+\infty}\frac{1}{t}\ln(\frac{\|D\phi^t(Z)\cdot V\|}{\|V\|}),
\]
and
\[
\nu^{-}_{\bot}(Z)=\liminf_{t\rightarrow+\infty}\frac{1}{t}\ln(\frac{\|D\phi^t(Z)\cdot V\|}{\|V\|}),
\]
where ${(Z,V)\in T\overline{\overline{N}}_{2,\pm, e_0}^{\bot}}$ (see \cite{Po} for preciser definitions of these). Obviously we can see that 
\[
\max_{Z\in\overline{\overline{N}}_{2,\pm,e_0}}\nu^{+}_{\parallel}(Z)=\lambda_2,
\]
\[
\min_{Z\in\overline{\overline{N}}_{2,\pm,e_0}}\nu^{-}_{\parallel}(Z)=-\lambda_2,
\]
and
\[
\nu^{\pm}_{\bot}(Z)=\pm\lambda_1,\quad\forall Z\in\overline{\overline{N}}_{2,\pm,e_0}^{\bot}.
\]
There exist $\varepsilon_0>0$ sufficient small and $\forall \varepsilon<\varepsilon_0$, we can finish the proof with the help of estimation (\ref{modify}) and the following Lemma.
\end{proof}

\begin{Lem}{\bf (Fenichel, Wiggins)}\label{F&W}
${M}$ is a compact, connected $C^r$ $(r\geq1)$ embedded manifold of $\mathbb{R}^n$, which is also invariant of the vector field $\mathcal{X}$. We have the splitting $TM\bigoplus TM^{\bot}=T\mathbb{R}^n\big{|}_{M}$ and 
\[
\frac{\nu^{+}_{\bot}}{\nu^{+}_{\parallel}}>r,\;\frac{\nu^{-}_{\bot}}{\nu^{-}_{\parallel}}>r,\quad\forall x\in M.
\]
Then $\exists\;\varepsilon>0$, $\forall\;\mathcal{Y}\in\mathcal{B}(\mathcal{X},\varepsilon)$ vector field, there exists an invariant set $M_{\mathcal{Y}}$ which is $C^r$ differmorphic to $M$.
\end{Lem}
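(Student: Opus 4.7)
The plan is to apply the classical graph transform method, which is the standard route for establishing persistence of normally hyperbolic invariant manifolds. First I would fix a $C^r$ tubular neighborhood $\mathcal{U}$ of $M$ in $\mathbb{R}^n$ and, using the invariant splitting $T\mathbb{R}^n|_M = TM \oplus TM^\perp$, trivialize $\mathcal{U}$ as a neighborhood of the zero section of the normal bundle $\pi\colon TM^\perp \to M$. In these coordinates any $C^r$ manifold close to $M$ and transverse to the fibers of $\pi$ can be written uniquely as the graph of a $C^r$ section $\sigma\colon M \to TM^\perp$ with small $C^r$ norm; let $\Sigma_\rho$ denote the closed ball of such sections in $C^r$ of radius $\rho$.

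Next I would build the graph transform. For a time $t>0$ to be fixed and a vector field $\mathcal{Y}$ close to $\mathcal{X}$, the time-$t$ map $\phi^t_\mathcal{Y}$ sends the graph of $\sigma$ to a submanifold still transverse to the fibers of $\pi$ (provided $\|\mathcal{Y}-\mathcal{X}\|$ and $\|\sigma\|_{C^r}$ are sufficiently small). Pulling back through $\pi$ produces a new section $\Gamma_\mathcal{Y}(\sigma)$. A fixed point of $\Gamma_\mathcal{Y}$ is, by construction, an invariant graph, i.e.\ the desired perturbed manifold $M_\mathcal{Y}$. The task then reduces to showing $\Gamma_\mathcal{Y}$ is a contraction on $\Sigma_\rho$ in a suitable norm.

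The crucial estimates come from the spectral gap. Linearizing $\phi^t_\mathcal{X}$ along $M$ and splitting with respect to $TM \oplus TM^\perp$ gives a block form whose normal block contracts (on the stable side) or expands (on the unstable side) at rates $e^{\pm t\nu_\perp^\pm}$, while the tangential block distorts at rates $e^{\pm t\nu_\parallel^\pm}$. Under the graph transform, the Lipschitz constant of $\Gamma_\mathcal{Y}$ in $C^0$ scales like $e^{-t(\nu_\perp^+-\nu_\parallel^+)}$, which is already less than $1$ by normal hyperbolicity. For higher derivatives, differentiating the graph transform introduces extra tangential factors; the $k$-th derivative contracts with rate $e^{-t(\nu_\perp^+-k\nu_\parallel^+)}$, and the hypothesis $\nu_\perp^+/\nu_\parallel^+ > r$ (together with the analogous condition for $\nu^-$) is exactly what is needed to keep this factor $<1$ up to order $k=r$. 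Thus $\Gamma_\mathcal{Y}$ is a contraction in $C^r$ on $\Sigma_\rho$ for $t$ large, and the Banach fixed point theorem yields a unique $\sigma_\mathcal{Y} \in \Sigma_\rho$ whose graph is the desired $M_\mathcal{Y}$. Uniqueness at each iteration $C^k$, $k \le r$, combined with smooth dependence of $\Gamma_\mathcal{Y}$ on $\mathcal{Y}$, gives $C^r$ regularity of $M_\mathcal{Y}$ and continuity in $\mathcal{Y}$; a standard conjugation argument then identifies $M_\mathcal{Y}$ as $C^r$ diffeomorphic to $M$.

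The main obstacle is the sharpness of the spectral gap argument at the top order $k=r$: bounding the $r$-th derivative of $\Gamma_\mathcal{Y}$ requires careful control of the Fa\`a di Bruno composition terms, each of which pairs a product of lower-order derivatives of $\sigma$ against derivatives of $\phi^t_\mathcal{Y}$, and all such terms must be absorbed by the single dominant factor $e^{-t(\nu_\perp^+-r\nu_\parallel^+)}$. This is precisely the point at which the strict inequalities $\nu_\perp^\pm/\nu_\parallel^\pm > r$ (rather than $\geq$) are essential. Since the paper merely cites the result, I would not reproduce the full technical estimate but refer to the standard treatments of Fenichel and of Hirsch--Pugh--Shub.
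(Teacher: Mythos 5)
Your outline is the standard graph-transform proof of $r$-normal hyperbolicity persistence, which is exactly the argument in the references (Fenichel, and Section 7 of Wiggins) that the paper itself cites; the paper gives no proof beyond that citation. Your sketch correctly isolates the role of the gap condition $\nu^{\pm}_{\bot}/\nu^{\pm}_{\parallel}>r$ in making the transform a $C^r$ contraction, so this is essentially the same approach.
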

\begin{proof}
Here we omit the proof since you can find the details in section 7 of \cite{W}, or in the paper \cite{F}.
\end{proof}
\begin{figure}
\begin{center}
\includegraphics[width=8cm]{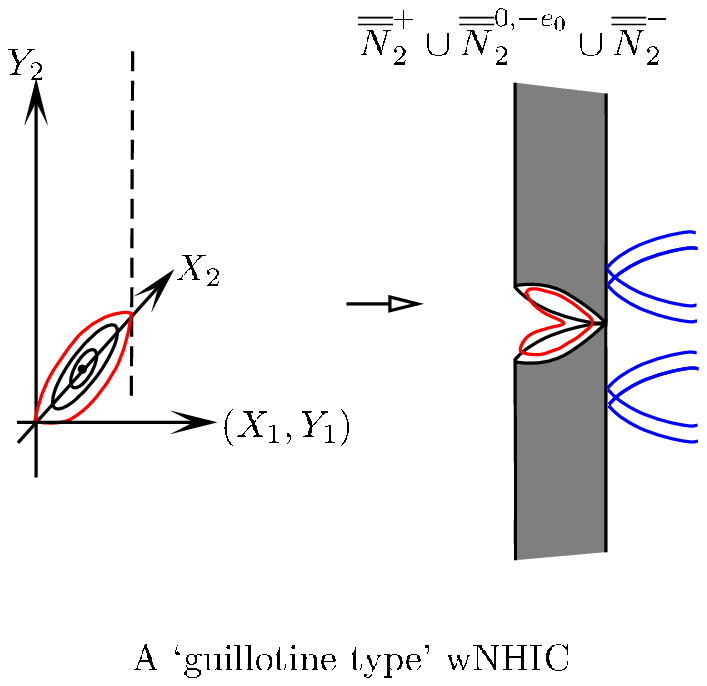}
\caption{ }
\label{fig12}
\end{center}
\end{figure}

For the system $H$, we can similarly use Theorem \ref{wiggins} and get the persistence of wNHIC $N_2^{-}\cup N_2^{0,-e_{0}}\cup{N}_2^{+}$. That's because $\forall m\geq M\gg1$, $l^{-m}\ll\varepsilon$. Besides, as a perturbation function of $(X,Y)$ variables, $\|\epsilon R\|_{C^2}\leq\mathbb{K}d_m^{*\sigma/2}l^{m(r+6+2\xi)/2}$.

\begin{Cor}\label{wiggins time-dependent}
$\exists M\gg1$ and $\forall m\geq M$, there exists a wNHIC $N_2^{-}\cup N_2^{0,-e_{0}}\cup{N}_2^{+}$ corresponding to system $H$,  for which the same properties hold as Theorem \ref{wiggins}.
\end{Cor}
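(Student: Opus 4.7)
The plan is to reduce to Theorem \ref{wiggins} by suspending the time-periodic system into one extra dimension and then invoking Lemma \ref{F&W} (Fenichel--Wiggins) exactly as in the autonomous case. Concretely, on the extended phase space $T^{*}\mathbb{T}^{2}\times T^{*}\mathbb{S}^{1}$ I consider the autonomous Hamiltonian
\[
\mathcal{H}(X,Y,S,e)=e+\overline{H}(X,Y)+\epsilon R(X,Y,S),
\]
so that trajectories of $H$ correspond to level sets of $\mathcal{H}$ with $\dot S=1$. The wNHIC $\overline{N}_{2}^{-}\cup\overline{N}_{2}^{0,-e_{0}}\cup\overline{N}_{2}^{+}$ produced in Theorem \ref{wiggins} for $\overline{H}$ then lifts to the product $(\overline{N}_{2}^{-}\cup\overline{N}_{2}^{0,-e_{0}}\cup\overline{N}_{2}^{+})\times\mathbb{S}^{1}$, a weakly invariant $C^{1}$ submanifold of $T^{*}\mathbb{T}^{2}\times\mathbb{S}^{1}$ for the unperturbed flow at $\epsilon=0$.

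Next I would check that the normal hyperbolicity gap required by Lemma \ref{F&W} survives the lift. The only effect of the suspension is to adjoin a single neutral direction $\partial_{S}$ to the tangent bundle; the normal bundle is unchanged. Hence the tangential Lyapunov exponents of the lifted cylinder are $\max\nu^{+}_{\parallel}=\lambda_{2}$, $\min\nu^{-}_{\parallel}=-\lambda_{2}$ (together with the new neutral $0$), while the normal exponents remain $\nu^{\pm}_{\bot}=\pm\lambda_{1}$. By condition \textbf{U3'} we have $\lambda_{1}/\lambda_{2}\geq 1+c_{9}$ and $\lambda_{1}-\lambda_{2}\geq c_{8}$, so the ratio $\nu^{\pm}_{\bot}/\nu^{\pm}_{\parallel}$ is bounded away from $1$, and by choosing $\sigma$ large enough in Lemma \ref{robust} we can ensure it exceeds any prescribed $r\geq 1$. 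Therefore the hypothesis of Lemma \ref{F&W} holds for the lifted cylinder viewed inside the suspended flow.

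The remaining point is the smallness of the perturbation, which is the only place where the time-dependent case differs from Theorem \ref{wiggins}. The perturbation vector field on the extended space is bounded by $\|\epsilon R\|_{C^{2}}\lessdot\mathbb{K}\,d_{m}^{*\sigma/2}\,l^{m(r+6+2\xi)/2}$. Since $\sigma$ can be taken as large as we wish (Lemma \ref{robust}) and $d_{m}^{*}\sim l^{-m}$, this bound is $\mathcal{O}(l^{-m\kappa})$ for a constant $\kappa>0$ that grows with $\sigma$. In particular, for every fixed $\varepsilon\in(0,\varepsilon_{0}]$ we can choose $M=M(\varepsilon,\mathbb{K})$ so that for all $m\geq M$ the perturbation is dominated by the Fenichel--Wiggins threshold $\varepsilon_{\mathrm{FW}}$ associated with the lifted cylinder. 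Lemma \ref{F&W} then produces a $C^{1}$ wNHIC $\mathcal{N}$ in $T^{*}\mathbb{T}^{2}\times\mathbb{S}^{1}$, $C^{1}$-close to the lifted cylinder and $C^{1}$ in $\epsilon$, together with locally invariant stable and unstable manifolds. Projecting $\mathcal{N}$ to $T^{*}\mathbb{T}^{2}$ at each time slice yields the desired $N_{2}^{-}\cup N_{2}^{0,-e_{0}}\cup N_{2}^{+}$, and all three properties listed in Theorem \ref{wiggins} transfer verbatim.

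The main obstacle in this plan is precisely the quantitative comparison between the two small parameters $\varepsilon=1/\mathbb{L}$ and the effective size of $\epsilon R$. One must be careful that $\mathbb{K}$ (fixed earlier to guarantee the overlap of the 1-resonant and 2-resonant normal forms) and $\mathbb{L}$ (fixed via the weak-coupling condition \textbf{U4}) are chosen \emph{before} $M$, and that $M$ is then taken large enough so that the residual $\|\epsilon R\|_{C^{2}}$ is smaller than the normal hyperbolicity threshold inherited from the $\varepsilon$-perturbation already absorbed in Theorem \ref{wiggins}. Once this ordering of constants is respected, the remainder of the argument is a direct re-run of the proof of Theorem \ref{wiggins}.
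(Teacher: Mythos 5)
Your proposal is correct and follows essentially the same route as the paper, which disposes of this corollary in two sentences just above its statement: treat $\epsilon R$ as a further perturbation, note that $\|\epsilon R\|_{C^2}\lessdot\mathbb{K}d_m^{*\sigma/2}l^{m(r+6+2\xi)/2}$ can be made much smaller than $\varepsilon=1/\mathbb{L}$ by taking $m\geq M$ large after $\mathbb{K}$ and $\mathbb{L}$ are fixed, and re-run the modification-plus-Fenichel--Wiggins argument of Theorem \ref{wiggins}; your explicit suspension to the extended phase space and the observation that the added neutral direction $\partial_S$ leaves the normal/tangential exponent ratio untouched are exactly the details the paper leaves implicit. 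One small caveat: your remark that $\sigma$ can be enlarged to push the exponent ratio above any prescribed $r$ is off the mark ($\sigma$ does not influence $\lambda_1/\lambda_2$, which is pinned down by \textbf{U3'}), but it is also unnecessary, since only $C^1$ persistence is claimed and $\lambda_1/\lambda_2\geq 1+c_9>1$ already meets the $r=1$ hypothesis of Lemma \ref{F&W}.
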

\begin{Rem}
We actually don't care the exact value of $e_0$, since the locally connecting orbits are just constructed in energy surfaces with the energy larger than the Ma\~{n}\'e Critical Value. 
\end{Rem}
\begin{Rem}
 For the case of sub 2-resonant points, it's enough for us to get the persistence of this wNHIC $N_2^{-}\cup N_2^{0,-e_{0}}\cup{N}_2^{+}$. That's because there isn't any transition between resonant lines we will adopt a `transpierce' mechanism (see Figure \ref{fig12}). But for the case of $\omega_{m,1/2}$, Lemma \ref{F&W} is invalid for us to prove the persistence of wNHIC of a $g_1-$type. That's because the Lyapunov exponents $\nu_{\parallel}^{\pm}$ in this case satisfies:
 \[
\max_{Z\in\overline{\overline{N}}_1^{\pm}}\| \nu_{\parallel}^{\pm}(Z)\|=\lambda_1,
 \]
 and
 \[
\|\nu_{\bot}^{\pm}(Z)\|\equiv\lambda_2,\quad\forall Z\in\overline{\overline{N}}_2^{\pm}.
 \]
So we need to know more details about the dynamic behaviors of homoclinic orbits of system $\overline{H}$. With these new discoveries we can prove the persistence of $N_1^{\pm}$ wNHICs by sacrificing a small part near the margins. Of course, new restrictions are necessary and a much preciser calculation will be involved later (see Figure \ref{fig13}).\vspace{5pt}
\end{Rem}
\begin{figure}
\begin{center}
\includegraphics[width=8cm]{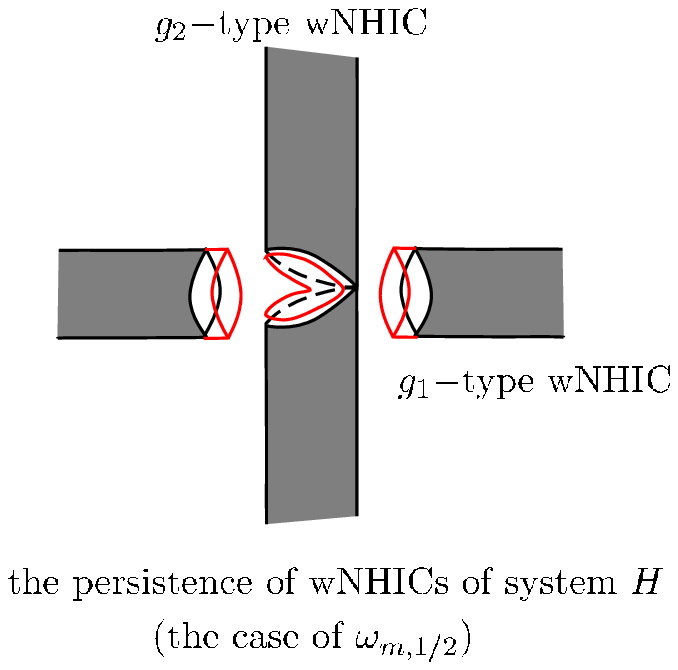}
\caption{}
\label{fig13}
\end{center}
\end{figure}

{\bf U5:} $\frac{\lambda_1}{\lambda_2}\in\mathbb{R}\setminus\mathbb{Q}$, $\forall m\in\mathbb{N}$.\\

Based on {\bf U5} restriction, we can transform system $\overline{H}$ into a Normal Form in a $\mathcal{B}(0,r)$ neighborhood of $(0,0)\in T^*\mathbb{T}^2$. Here $r=r(\lambda_1,\lambda_2)\sim\mathcal{O}(1)$.
\begin{The}{\bf (Belitskii \cite{Be}, Samovol \cite{Sa})}\label{Bel}
$\forall l\in\mathbb{N}$ and $\vec{\lambda}=(\lambda_1,\cdots,\lambda_n)\in\mathbb{C}^n$ with {\bf Re}$\lambda_i\neq0\;(i=1,2,\cdots,n)$, there exists an integer $k=k(l,\lambda)$ such that the following holds: If vector fields $V_1$ and $V_2$ have the same fixed point and their jets till order $k$ coincide with each other, then these two vector fields are $C^l-$conjugate.
\end{The}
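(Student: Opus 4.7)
The plan is to follow the classical strategy for smooth conjugacy at a hyperbolic fixed point via the path method (Moser's trick). After translating the fixed point to the origin, write $V_2 = V_1 + R$, where by hypothesis $R$ vanishes to order $k+1$ at $0$. Both vector fields share the same linear part $A$ whose spectrum is $\vec{\lambda}$, and by assumption $\mathrm{Re}\,\lambda_i \neq 0$, so $A$ is hyperbolic; let $\mu = \min_i |\mathrm{Re}\,\lambda_i|$ and $M = \max_i |\mathrm{Re}\,\lambda_i|$.

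First I would introduce the homotopy $V_t = V_1 + t R$, $t \in [0,1]$, and search for a time-dependent family of local diffeomorphisms $h_t$ with $h_0 = \mathrm{id}$, $h_t(0) = 0$, such that $(h_t)_{*}V_1 = V_t$. Differentiating in $t$ and setting $X_t = \dot{h}_t \circ h_t^{-1}$ reduces the problem to the cohomological equation $[V_t, X_t] = R$ in a neighborhood of $0$, with $X_t$ vanishing to sufficiently high order at the origin to preserve the fixed point and to ensure $h_1$ is a $C^l$ diffeomorphism.

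Next I would solve this cohomological equation using the hyperbolic splitting $\mathbb{R}^n = E^s \oplus E^u$ and an integral formula along the flow $\phi^{\tau}_{V_t}$. For components lying in $E^s$ one writes $X_t(x) = -\int_0^{+\infty} (\phi^{\tau}_{V_t})^{*} R\, d\tau$, and for the unstable components one integrates over $(-\infty, 0]$; convergence of these improper integrals is guaranteed because $R(\phi^{\tau}(x))$ decays like $|\phi^{\tau}(x)|^{k+1} \lesssim e^{-(k+1)\mu |\tau|}\,|x|^{k+1}$ along the contracting direction, while the loss from differentiating the flow $D\phi^{\tau}$ grows at most like $e^{M|\tau|}$. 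Differentiating $l$ times, the $C^l$ norm is controlled by an integral whose integrand is bounded by $e^{-[(k+1)\mu - (l+1)M]|\tau|}$, so taking $k = k(l,\vec{\lambda})$ with $(k+1)\mu > (l+1)M + 1$ guarantees absolute convergence in $C^l$ together with the required vanishing of $X_t$ at the origin. Resonances $\lambda_i = \langle m, \vec{\lambda}\rangle$ with $|m| \leq k$ pose no obstruction here because the corresponding resonant monomials are already zero in $R$ by the hypothesis that the $k$-jets coincide.

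The main technical obstacle is precisely this balance between hyperbolicity rates and regularity loss: one must verify that the integral representation and its derivatives up to order $l$ converge uniformly on a fixed small ball around $0$, depending uniformly on $t \in [0,1]$, so that integrating the nonautonomous vector field $X_t$ from $t=0$ to $t=1$ produces a $C^l$ local diffeomorphism $h_1$ with $(h_1)_{*}V_1 = V_2$. The threshold $k(l,\vec{\lambda})$ is then governed by the ratio $M/\mu$ together with $l$, matching the statement of Belitskii--Samovol.
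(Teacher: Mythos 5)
The paper gives no proof of this theorem; it is quoted verbatim from Belitskii and Samovol as a black box, so there is no in-paper argument to compare yours against. Your sketch follows the standard path-method route to finite-smoothness conjugacy (homotopy $V_t=V_1+tR$, homological equation $[V_t,X_t]=R$, integral representation along the flow split over $E^s\oplus E^u$), and you have correctly identified the decisive quantitative point: the threshold $k(l,\vec\lambda)$ is governed by the competition between the flatness order $k+1$ times the minimal contraction rate $\mu$ and the derivative loss of order roughly $(l+1)M$. The observation that resonant terms cause no obstruction because they are already absent from $R$ is also correct.

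There is, however, a genuine gap in the convergence argument as you state it. The bound $|R(\phi^{\tau}(x))|\lesssim e^{-(k+1)\mu|\tau|}|x|^{k+1}$ holds only for $x$ on the stable manifold; at a saddle, the forward orbit of a generic $x$ near $0$ does not converge to the origin but leaves every small neighborhood, so the improper integral $\int_0^{\infty}(\phi^{\tau}_{V_t})^{*}R\,d\tau$ is not even well defined for the merely local flow, let alone absolutely convergent with the decay you claim. The standard repair is to first cut off the nonlinear parts outside a small ball so that both fields coincide with the hyperbolic linear part $A$ globally and the flows are complete; then $R\circ\phi^{\tau}(x)$ vanishes once the orbit exits the support, and the estimate becomes a balance between the (logarithmically long) time the orbit spends near the origin, during which $|\phi^{\tau}(x)|^{k+1}$ is extremely small, and the growth $e^{jM\tau}$ of the $j$-th derivatives of the flow. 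A second point needing care is the splitting of $R$ into components adapted to the stable and unstable directions of the \emph{nonlinear} flow $\phi^{\tau}_{V_t}$: the invariant splitting is itself only finitely smooth, and controlling the regularity lost there is precisely the delicate part of the Belitskii--Samovol arguments that determines the sharp value of $k(l,\vec\lambda)$. Your outline is the right skeleton, but without the globalization step and the off-$W^s$ orbit estimates it does not yet yield a proof.
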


In our case we have $n=2$ and just need $l\geq2$. Since our system $\overline{H}$ is sufficiently smooth, we can firstly transform it into a normal form of:
\begin{equation}\label{jet}
\overline{H}=\lambda_1X_1Y_1+\lambda_2X_2Y_2+\mathcal{O}(X,Y,k+2),
\end{equation}
where $(X,Y)\in\mathcal{B}(0,r)$ and $k=k(l,\lambda)$ is the order we needed in Theorem \ref{Bel}. Then we can find a $C^l$ transformation to convert (\ref{jet}) into the linear one:
\begin{equation}\label{linear jet}
\overline{H}=\overline{H}_{1,k}(X_1Y_1)+\overline{H}_{2,k}(X_2Y_2),
\end{equation}
where $(X,Y)\in\mathcal{B}(0,r)$ and $\overline{H}_{i,k}(\cdot)$ is a polynomial of order $k$, $i=1,2$. Notice that 
\[
\overline{H}'_{1,k}(0)=\lambda_1,\quad\overline{H}'_{2,k}(0)=\lambda_2.
\]
\begin{Rem}
In fact, {\bf U5} can be loosened. Now we give an explanation of this for $\overline{H}$. More general case can be found in Sec. 2 of \cite{Sh}.
\begin{defn}
We call a vector $\vec{\lambda}=(\lambda_1,\lambda_2)\in\mathbb{R}^2$  $k-$nonresonant if $\forall\vec{m}=(m_1,m_2,m_3,m_4)\in\mathbb{N}^4$ with $\sum_{i=1}^{i=4}m_i\leq k$, $m_1\neq m_2$ and $m_3\neq m_4$, we have
\[
(m_1-m_2)\lambda_1+(m_3-m_4)\lambda_2\neq0.
\]
\end{defn}
If $\lambda$ is $k-$nonresonant with $k(l,\lambda)$ satisfying Theorem \ref{Bel}, then we can also find a smooth transformation to convert $\overline{H}$ into a form of (\ref{jet}) in a small neighborhood of 0. So we actually loosen {\bf U5} into the following:\\

{\bf U5':} $\lambda$ is $k-$nonresonant, $\forall m\in\mathbb{N}$.
\end{Rem}
For system (\ref{linear jet}), we can get the local stable (unstable) manifolds of $(0,0)\in T^*\mathbb{T}^2$ by
\[
\overline{W}_{loc}^s=\{(0,0,Y_1,Y_2)\in\mathcal{B}(0,r)\},\quad \overline{W}_{loc}^{u}=\{(X_1,X_2,0,0)\in\mathcal{B}(0,r)\}.
\]
We can further get the parameter function $Y_1=\hat{C}Y_2^{\frac{\lambda_1}{\lambda_2}}$ for trajectories in $\overline{W}_{loc}^{s}$ and $X_1=\check{C}X_2^{\frac{\lambda_1}{\lambda_2}}$ for trajectories in $\overline{W}_{loc}^{u}$. On the other side, we can translate (\ref{linear jet}) into a Tonelli form 
\begin{equation}\label{linear tonelli}
\overline{H}(Q,P)=\overline{H}_{1,k}(\frac{P_1^2-Q_1^2}{2})+\overline{H}_{2,k}(\frac{P_2^2-Q_2^2}{2}),\quad(Q,P)\in\mathcal{B}(0,r)
\end{equation}
via
\[
\begin{pmatrix}
X_i\\Y_i
\end{pmatrix}=
\begin{pmatrix}
\frac{1}{\sqrt{2}} & \frac{1}{\sqrt{2}}\\
-\frac{1}{\sqrt{2}} & \frac{1}{\sqrt{2}}\\
\end{pmatrix}\cdot
\begin{pmatrix}
Q_i\\P_i
\end{pmatrix},\quad
i=1,2.
\]
This system is more convenient for us to compare the action value of trajectories. \\

From Proposition (\ref{autonomous}) we know there exists a unique $g_1-$type homoclinic orbit as the bottom of NHIC $\overline{N}_1^{+}$, which can be denoted by $\gamma$. Without loss of generality, we can project it onto the configuration space $\mathbb{T}^2$  and then suspend it in the universal space $\mathbb{R}^2$. So in the basic domain $(0,2\pi)\times(0,2\pi)$, $\gamma$ tends to $(0,0)$ as $t\rightarrow-\infty$ and tends to $(0,2\pi)$ as $t\rightarrow+\infty$ (see Figure \ref{fig14}). Besides, we need $\gamma$ leaves $(0,0)$ along the direction $\partial_{Q_2}$ and raise a new uniform restriction:\\

{\bf U6:} Under the canonical coordinations of $(Q,P)$ in the small neighborhood $\mathcal{B}(0,\frac{r}{2})$ of $(0,0)\in\mathbb{R}^2$, $\gamma$ leaves $(0,0)$ according with the trajectory function:
\[
Q_1=\hat{C}Q_2^{\frac{\lambda_1}{\lambda_2}},\quad0<\hat{C}\leq c_8(\varepsilon),
\]
which is valid for all $m\in\mathbb{N}$.\\

To make {\bf U6} satisfied, we just need to restrict $Z_3$ of a certain form in the domain $\mathcal{B}([0,2\pi]\times\{0\},\sqrt{\varepsilon})\cap\mathcal{B}((0,0),r)$. Since in $\mathcal{B}((0,0),r)$ the normal form (\ref{linear tonelli}) is valid, the coordinate of $\gamma$ should satisfy 
\[
Q_1^{\gamma}(t)=P_1^{\gamma}(t),\; Q_2^{\gamma}(t)=P_2^{\gamma}(t),
\]
for $ t\leq-t_0$ such that $(Q_1^{\gamma}(t),Q_2^{\gamma}(t),P_1^{\gamma}(t),P_2^{\gamma}(t))\in\mathcal{B}((0,0),r)$. The extreme case is that $Q_2^{\gamma}\equiv0$. Once this happens, $\gamma$ will leave $(0,0)$ along the direction $\partial_{Q_1}$, i.e. $\hat{C}=\frac{Q_1^{\gamma}}{0}=+\infty$ in a rough meaning. Now we will make a local surgery to make $\hat{C}$ finite.\\

\begin{figure}
\begin{center}
\includegraphics[width=8cm]{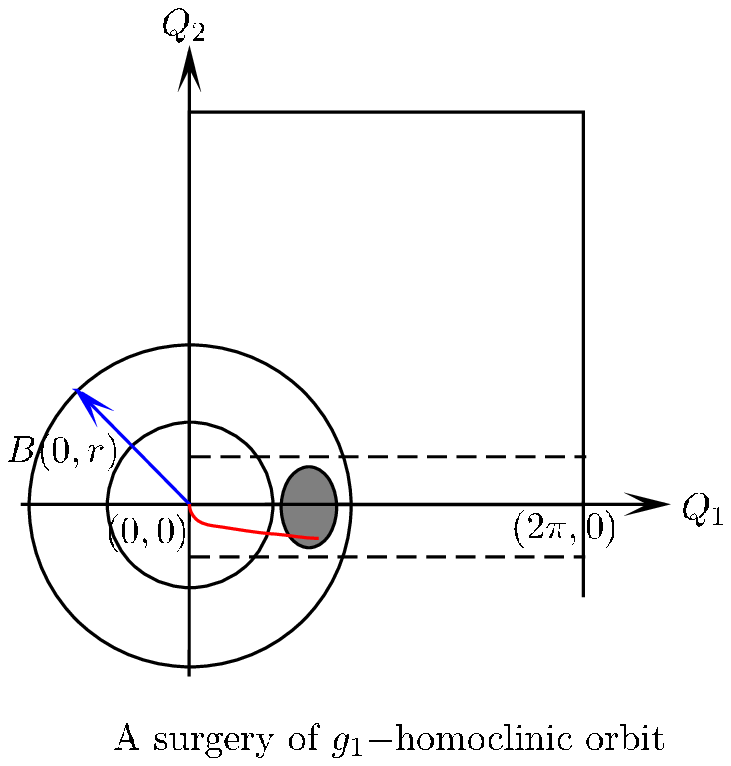}
\caption{ }
\label{fig14}
\end{center}
\end{figure}

Recall that $(Q^{\gamma}(t),P^{\gamma}(t))\in\overline{W}^u_{(0,0)}\cap\overline{W}_{(2\pi,0)}^{s}$ for all $t\in\mathbb{R}$. We can take $\varepsilon$ sufficiently small such that $\overline{W}_{(2\pi,0)}^s$ is a graph covering the domain $\mathcal{B}([0,2\pi]\times\{0\},\sqrt{\varepsilon})\cap(\mathcal{B}(0,r)\setminus\mathcal{B}(0,\frac{r}{2}))$. Besides, we know that $\gamma$ is the unique $g_1-$homoclinic orbits from Proposition \ref{autonomous}. So we just need to change the intersectional point of $\overline{W}^u_{(0,0)}$ and $\overline{W}_{(2\pi,0)}^{s}$ in this domain (see Figure \ref{fig14}). The following Lemma will help us to achieve this.
\begin{Lem}{\bf (Figalli, Rifford\cite{Fi})}\label{Rifford}
Let $\overline{H}:T^*M\rightarrow\mathbb{R}$ be a Tonelli Hamiltonian of class $C^k$ with $k\geq2$. $(Q(\cdot),P(\cdot))$ is a solution of $\overline{H}(Q,P)=0$ for $t\in[0,T]$, and satisfies the following:
\begin{itemize}
\item $(Q(0),P(0))=(0,0)$, $Q_1(T)=T$ and $\dot{Q}(0)=\partial_{Q_1}=\dot{Q}(T)$.\\
\item $\|\dot{Q}(t)-\partial_{Q_1}\|\leq\frac{1}{2},\quad\forall t\in[0,T]$.
\end{itemize}
Then $\forall \varsigma>0$ and $\varepsilon>0$, $\exists\delta=\delta(\varsigma,\varepsilon)$ such that $\forall Q_0,P_0,Q_f,P_f$ satisfying
\[
Q_0=(0,Q_{0,2}),\;Q_f=(T,Q_{f,2}),\;\|Q_{0,2}\|\leq\delta,\;\|P_0-P(0)\|\leq\delta,
\] 
we have
\[
\|(T,Q_{f,2})-(T,Q_{0,2}(T(Q_0,P_0)))\|,\;\|Q_f-P_0(T(Q_0,P_0))\|<\varsigma\varepsilon,
\]
\[
\overline{H}(Q_0,P_0)=\overline{H}(Q_f,P_f)=0,
\]
where $T(Q_0,P_0)$ satisfying $Q_{0,1}(T(Q_0,P_0))=T$ is the arrival time of flow $(Q_0(t),P_0(t))$ (see figure \ref{fig15}). There exists a time $T_f>0$, a constant $K>0$ and a potential $V:M\rightarrow\mathbb{R}$ of class $C^k$ such that:
\begin{itemize}
\item supp$(V)\subset\mathcal{C}(Q_0,T(Q_0,P_0),\varsigma)$;\\
\item $\|V\|_{C^2}\leq K\varepsilon$;\\
\item $\|T_f-T(Q_0,P_0)\|<Kr\varepsilon$;\\
\item $\phi_{\overline{H}+V}^{T_f}(Q_0,P_0)=(Q_f,P_f)$.
\end{itemize}
Here $\mathcal{C}(Q_0,T(Q_0,P_0),\varsigma)$ is the tube neighborhood defined as
\[
\mathcal{C}(Q_0,T(Q_0,P_0),\varsigma)=\{Q_0(t)+(0,y)\big{|}t\in[0,T_0(Q_0,P_0)],\|y\|\leq\varsigma\},
\]
and $\phi_{\overline{H}+V}^t(\cdot)$ is the Hamiltonian flow of $\overline{H}+V$.
\end{Lem}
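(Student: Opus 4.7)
The plan is to deduce the lemma from a quantitative inverse function theorem applied to an ``endpoint map'' on a finite-dimensional family of perturbing potentials supported in the tube $\mathcal{C}(Q_0, T(Q_0,P_0), \varsigma)$. First I would set up the map: for any $C^k$ potential $V$ supported in the tube with $\|V\|_{C^2}$ small, the hypothesis $\|\dot Q - \partial_{Q_1}\| \leq \tfrac{1}{2}$ combined with continuity of solutions shows that the first coordinate of $\phi_{\overline{H}+V}^{t}(Q_0, P_0)$ hits $T$ at a unique time $T_f(V)$ close to $T(Q_0, P_0)$. Define
\[
E(V) = \phi_{\overline{H}+V}^{T_f(V)}(Q_0, P_0).
\]
Energy conservation combined with the fact that $V$ vanishes near the target endpoint places $E(V)$ in the submanifold $\{\overline{H}=0\} \cap \{Q_1 = T\}$, on which the target $(Q_f,P_f)$ also lies by the hypotheses. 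The lemma reduces to showing that $E$ is a submersion onto this submanifold, with quantitative lower bounds on the derivative.

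Second I would linearize at $V=0$. The first-order variation $(\delta Q, \delta P)(t)$ satisfies the inhomogeneous linearized Hamilton system along the unperturbed trajectory with forcing $(0, -\nabla V(Q(t)))$, and Duhamel's formula expresses $dE(0)[V]$ as a time integral of the linear propagator applied to this forcing, plus a correction from the variation of $T_f$. The heart of the argument is to produce potentials supported in the thin tube whose variations at time $T_f$ span the tangent space of the target submanifold. I would pick $N = \dim M - 1$ bump functions $\chi_1,\dots,\chi_N$ with pairwise disjoint supports inside consecutive short subsegments of the tube, each of transverse width less than $\varsigma$. The Tonelli hypothesis, which makes $\overline{H}_{PP}$ positive-definite, converts the momentum kick produced by $-\nabla \chi_i$ into a position shift transverse to $\partial_{Q_1}$ under subsequent free flow; combined with the near-uniform direction bound $\|\dot Q - \partial_{Q_1}\| \leq \tfrac{1}{2}$ this yields a quantitative lower bound on the smallest singular value of the linearization restricted to $\mathrm{span}(\chi_1,\dots,\chi_N)$, depending only on $\overline{H}$, $T$, and the tube geometry.

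Third I would invoke a quantitative inverse function theorem on the finite-dimensional family $V = \sum c_i \chi_i$ to conclude that every $(Q_f, P_f)$ within $\varsigma\varepsilon$ of the unperturbed endpoint is realized as $E(V)$ for some $V$ with $\|V\|_{C^2} \leq K\varepsilon$; the bound on $|T_f - T(Q_0,P_0)|$ then follows by combining the $C^0$-bound on the variation with energy conservation, and choosing $\delta$ smaller than the radius of the IFT ball gives the admissible boundary-data range. The main obstacle is the spanning property of the second step: one must verify that only adding a \emph{potential} $V(Q)$ (not a general perturbation of $\overline{H}$), with support constrained to a tube of transverse radius $\varsigma$, already generates enough variations to span the codimension-one level set at the endpoint. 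This is where the combination of the Tonelli assumption and the one-directional character of the unperturbed trajectory (codified by $\dot Q(0) = \dot Q(T) = \partial_{Q_1}$ and $\|\dot Q - \partial_{Q_1}\| \leq \tfrac{1}{2}$) enters crucially, ensuring that successive momentum kicks decorrelate and yield a uniformly well-conditioned control matrix.
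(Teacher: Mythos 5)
The paper offers no proof of this lemma at all --- it is quoted verbatim from Figalli--Rifford \cite{Fi} --- so the only meaningful comparison is with the argument in that reference. Your outline does resemble it in spirit: an endpoint map defined on a family of potentials supported in the tube, linearization along the unperturbed trajectory, and a quantitative open-mapping/inverse-function step, with energy conservation (because $V$ vanishes at both ends of the trajectory) explaining why the image is confined to $\{\overline{H}=0\}$ and why the target must lie there too.

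There is, however, a concrete gap in your second step: the dimension count. The target $(Q_f,P_f)$ ranges over a neighborhood in the submanifold $\{\overline{H}=0\}\cap\{Q_1=T\}$, whose dimension is $2(\dim M-1)$: the $\dim M-1$ transverse position coordinates $Q_{f,2}$ and, independently, $\dim M-1$ momentum coordinates of $P_f$ once the energy constraint is imposed. A family of $N=\dim M-1$ bump coefficients has an image of dimension at most $\dim M-1$, so the map cannot be a submersion onto the target; in the paper's own setting $\dim M=2$ this is one control parameter against a two-dimensional target. The repair needs at least $2(\dim M-1)$ bumps together with an argument that kicks applied at early stations of the tube (which the subsequent flow integrates into transverse \emph{position} displacements at time $T_f$) and kicks applied at late stations (which survive essentially as transverse \emph{momentum} displacements) jointly produce a well-conditioned $2(\dim M-1)\times 2(\dim M-1)$ control matrix. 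This decoupling of position control from momentum control is exactly the delicate point of the Figalli--Rifford proof (which must also rule out degeneracies of the first-order linearization), and it is missing from your outline. The remaining ingredients --- the constrained direction along $\dot Q$ matching the energy-level codimension, and the scaling $\|V\|_{C^2}\lesssim\varepsilon$ against a displacement of order $\varsigma\varepsilon$ for bumps of transverse width $\varsigma$ --- are consistent with the statement.
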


\begin{figure}
\begin{center}
\includegraphics[width=9cm]{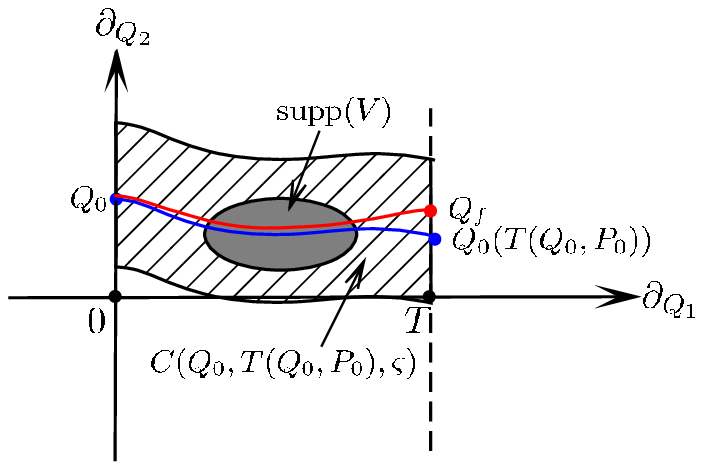}
\caption{ }
\label{fig15}
\end{center}
\end{figure}

We can use this Lemma to modify the homoclinic orbits in the open set $\Omega$ (the gray set of figure \ref{fig14}). Assuming that $\gamma$ leaves $(0,0)$ along the direction $\partial_{Q_1}$, then we have $Q_2^{\gamma}\equiv0$. 
Besides, we assume $\gamma(t)\in\Omega$ $\forall t\in[a,b]$. Then we have $T=(b-a)$ and two sections,
\[
\Sigma_{Q_1^{\gamma}(a)}\doteq\{Q_1=Q_1^{\gamma}(a)\}\cap\Omega,\quad
\Sigma_{Q_1^{\gamma}(b)}\doteq\{Q_1=Q_1^{\gamma}(b)\}\cap\Omega,
\]
as is showed in figure \ref{fig15}. If $\varsigma\sim\mathcal{O}(1)$ and $\varepsilon\ll1$ sufficiently small, we can find $Q_0\in\Sigma_{Q_1^{\gamma}(a)}$ with $\|Q_0-Q^{\gamma}(a)\|=\delta(\varsigma,\varepsilon)$, such that 
\[
Q_0(a+T(Q_0,P_0))\in\Sigma_{Q_1^{\gamma}(b)}
\]
and 
\[
\|Q_f-Q_0(a+T(Q_0,P_0))\|<\varsigma\varepsilon,
\]
where $Q_f=(Q_1^{\gamma}(b),0)$ and $(Q_1^{\gamma}(b),0,Q_1^{\gamma}(b),0)\in\overline{W}_{(2\pi,0)}^s$. Then we can take $V$ supported in $\mathcal{C}(Q_0,T(Q_0,P_0),\varsigma)$ (the gray set of figure \ref{fig15}) and the flow of $\overline{H}+V$ connects $(Q_0,P_0)$ and $(Q_f,P_f)$ by Lemma \ref{Rifford}. Since $\mathcal{B}((0,0),\frac{r}{2})\cap supp(V)=\emptyset$, the normal form of $\overline{H}+V$ is still of the form (\ref{linear tonelli}) and then we have
\[
Q_{0,1}=c_{10}(\delta)Q_{0,2}^{\frac{\lambda_1}{\lambda_2}},
\]
where $c_{10}$ depends only on $\varepsilon$. So we can find a $c_11(\varepsilon)$ constant with $c_{10}\leq c_{11}$ and {\bf U6} can be satisfied.\\

We can repeat this process in the domain $\mathcal{B}([0,2\pi]\times\{0\},\sqrt{\varepsilon})\cap(\mathcal{B}((2\pi,0),r)\setminus\mathcal{B}((2\pi,0),\frac{r}{2}))$ and modify $\gamma$ to approach $(2\pi,0)$ along the direction $\partial_{Q_2}$. Accordingly, we have:\\

{\bf U7:} Under the canonical coordinate of $(Q,P)$ in the small neighborhood $\mathcal{B}((2\pi,0),\frac{r}{2})$ of $(0,0)\in\mathbb{R}^2$, $\gamma$ approaches $(2\pi,0)$ according to the trajectory function:
\[
Q_1=\check{C}Q_2^{\frac{\lambda_1}{\lambda_2}},\quad0<\check{C}\leq c_8(\varepsilon),
\]
which is valid for all $m\in\mathbb{N}$.
\begin{Rem}
Based on these two restrictions, the unique $g_1-$homoclinic orbit will approach (and leave) the hyperbolic fixed point along the direction of $\partial_{Q_2}$. In the phase space, $(\dot{Q}(\pm\infty),\dot{P}(\pm\infty))$ are parallel to the $\pm\lambda_2-$eigenvectors (see Figure \ref{fig16}). Recall that $\|\lambda_2\|<\|\lambda_1\|$ from {\bf U3'}, so the NHIC $\overline{N}_1^{+}$ will get extra normal hyperbolicity from the hyperbolic fixed point, so does $\overline{N}_1^{-}$ from the symmetry of mechanical systems. Notice that $\overline{N}_1^{+}$ are foliated by periodic orbits, so we can get such a conclusion: the nearer the periodic orbit approaches the $g_1-$homoclinic orbit, the more normal hyperbolicity it will get as it pass by the small neighborhood of $0\in T^*\mathbb{T}^2$. On the other side, the nearer the periodic orbit approaches the homoclinic orbit, the longer its periods is. So we need a precise calculation of this competition relationship to persist as large as possible part of NHIC for the ultimate system $H$.
\end{Rem}
\begin{figure}
\begin{center}
\includegraphics[width=12cm]{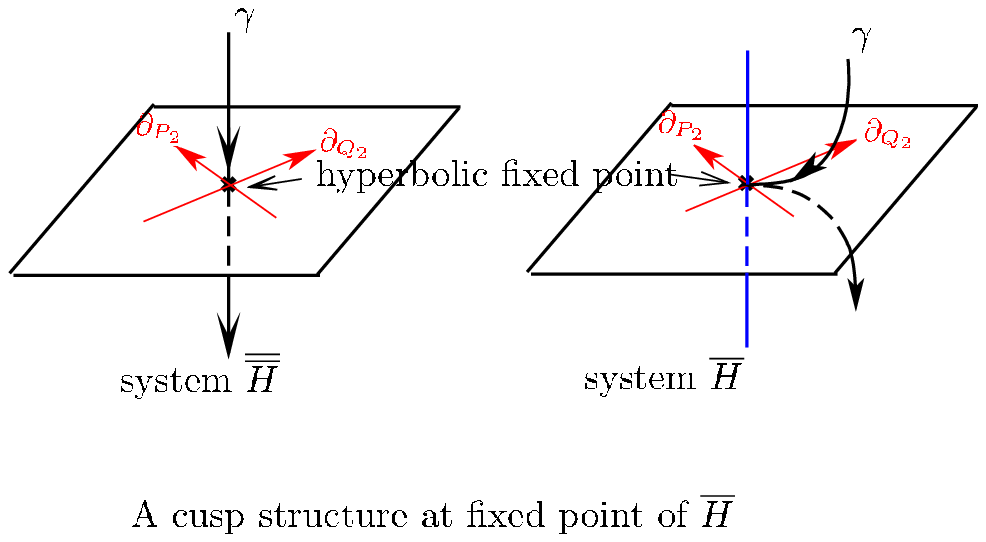}
\caption{ }
\label{fig16}
\end{center}
\end{figure}
\vspace{5pt}
As mentioned before, $\overline{N}_1^{+}$ is foliated by a list of periodic orbits written by $\gamma_{E}$ with different periods $T_E$. We use the subscript `E' to remind the readers in which energy surface $\gamma_{E}$ lies.
\begin{Lem}
When $E_0\ll c_{12}(\varepsilon)$ sufficiently small, we can estimate the period by $T_E=-\frac{C_E}{\lambda_2}\ln\frac{E}{c_{10}}$, where $C_E\sim\mathcal{O}(1)$ is uniformly bounded for all $E\in[0,E_0]$. 
\end{Lem}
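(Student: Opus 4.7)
The plan is to split the period $T_E$ into the time $T_E^{\mathrm{in}}$ that $\gamma_E$ spends inside a fixed neighborhood $\mathcal{B}(0,r/2)$ of the hyperbolic fixed point, where the Belitskii--Samovol normal form (\ref{linear jet}) applies, and the complementary time $T_E^{\mathrm{out}}$ spent traversing the rest of the loop. Since $\gamma_E$ converges in $C^0$ to the $g_1$-homoclinic $\gamma$ on the compact complement $\overline{N}_1^{+}\setminus\mathcal{B}(0,r/4)$, and the vector field is bounded away from zero there, $T_E^{\mathrm{out}}$ is uniformly bounded by a constant depending only on $\overline{H}$ and $r$; this contributes only an $O(1)$ term to $T_E$, and is the place where the boundedness of $C_E$ ultimately comes from.

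For $T_E^{\mathrm{in}}$, I work in the normal form $\overline{H}_{1,k}(X_1Y_1)+\overline{H}_{2,k}(X_2Y_2)$, in which the actions $I_i=X_iY_i$ are conserved and each coordinate satisfies $X_i(t)=X_i(0)\exp(\lambda_i^*(I_i)t)$, $Y_i(t)=Y_i(0)\exp(-\lambda_i^*(I_i)t)$, with $\lambda_i^*(I_i)=\overline{H}_{i,k}'(I_i)\to\lambda_i$ as $I_i\to 0$. Parametrize the entry (respectively exit) surface as the piece of $\partial\mathcal{B}(0,r/2)$ where $|Y_2|\sim r/2$ (respectively $|X_2|\sim r/2$). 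Then on $\gamma_E$ the component $X_2$ grows from $X_2^{\mathrm{in}}\sim 2|I_2|/r$ to $X_2^{\mathrm{out}}\sim r/2$, giving
\[
T_E^{\mathrm{in}}=\frac{1}{\lambda_2^*(I_2)}\ln\frac{|X_2^{\mathrm{out}}|}{|X_2^{\mathrm{in}}|}=\frac{1}{\lambda_2^*(I_2)}\ln\frac{r^2}{4|I_2|}.
\]

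To translate $|I_2|$ into $E$, I invoke restrictions {\bf U6} and {\bf U7}: the homoclinic $\gamma$ enters and leaves the fixed point tangent to $\partial_{Q_2}$, which in the $(X,Y)$ coordinates corresponds to the $\lambda_2$-eigenspace $\{X_1=Y_1=0\}$. Hence on $\gamma_E$ close to $\gamma$ the $(X_1,Y_1)$-component is of order $|X_2|^{\lambda_1/\lambda_2}$ on $\overline{W}^u_{\mathrm{loc}}$, and symmetrically on $\overline{W}^s_{\mathrm{loc}}$, so $|I_1|=|X_1Y_1|\lessdot |I_2|^{\lambda_1/\lambda_2}$ with $\lambda_1/\lambda_2>1$ by {\bf U3'}. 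Consequently $E=\overline{H}_{1,k}(I_1)+\overline{H}_{2,k}(I_2)=\lambda_2 I_2(1+o(1))$; substituting $I_2=E/\lambda_2+o(E)$ into the formula for $T_E^{\mathrm{in}}$ and adding $T_E^{\mathrm{out}}$ yields
\[
T_E=-\frac{1}{\lambda_2}\ln E + O(1),
\]
which can be rewritten as $-C_E\lambda_2^{-1}\ln(E/c_{10})$ for a suitable choice of $c_{10}$ and a function $C_E=1+O(1/|\ln E|)$, bounded on $[0,E_0]$ once $E_0$ is small enough.

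The main obstacle is justifying the inequality $|I_1|\lessdot |I_2|^{\lambda_1/\lambda_2}$ on $\gamma_E$ uniformly in $E\in(0,E_0]$. A priori the conserved values $I_1,I_2$ on $\gamma_E$ are determined only implicitly by the matching between the normal-form flow inside $\mathcal{B}(0,r/2)$ and the outside return map along the homoclinic loop. One has to argue that this return map is a diffeomorphism sending a neighborhood of $\gamma\cap\partial\mathcal{B}(0,r/2)$ smoothly to a neighborhood of the other intersection point, uniformly in $E$, and that its linearization at the base point $\gamma\cap\partial\mathcal{B}$ inherits the $\lambda_2$-tangency prescribed by {\bf U6} and {\bf U7}. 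Once this is in place, the explicit parameterizations $Q_1=\hat{C}Q_2^{\lambda_1/\lambda_2}$ and $Q_1=\check{C}Q_2^{\lambda_1/\lambda_2}$ of $\overline{W}^{u,s}_{\mathrm{loc}}$ pin down $|I_1|$ in terms of $|I_2|$, and the computation above closes.
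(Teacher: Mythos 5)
Your proposal is correct and follows essentially the same route as the paper: decompose $T_E$ into the bounded time outside a fixed neighborhood of the fixed point and the passage time inside, compute the latter in the Belitskii--Samovol normal form, and use \textbf{U6}, \textbf{U7} to pin down the entry/exit data so that $E\sim\lambda_2 I_2$ and $T_E=-\lambda_2^{-1}\ln E+\mathcal{O}(1)$. The paper only sketches this (deferring the ``tedious but simple computation'' to \cite{Ch}), so your more explicit treatment of the matching between the inside normal-form flow and the outside return map is a faithful filling-in of the same argument rather than a different one.
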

\begin{proof}
That's estimation was gotten in \cite{Ch} with a tedious but simple computation. Here we just explain the idea of that. When $E$ sufficiently small, the time for $\gamma_E$ passing by $\mathcal{B}(0,c_{12})$ will be much longer than what it spends outside. Besides, normal form (\ref{linear tonelli}) is available in this domain $\mathcal{B}(0,c_{12})$. As long as $\gamma_E$ approaches the homoclinic orbit $\gamma$ closely, {\bf U6,7} will restrict the enter position $(Q_E(0),P_E(0))$ and exit position $(Q_E(T),P_E(T))$ of $\gamma_E$ effectively. Here we take $c_{12}$ properly small comparing to $c_{11}$ and assume $T$ as the time of $\gamma_E$ staying in $\mathcal{B}(0,c_{12})$. On the other side, we have
\[
E=\overline{H}_{1,k}(\frac{P_1(0)^2-Q_1(0)^2}{2})+\overline{H}_{2,k}(\frac{P_2(0)^2-Q_2(0)^2}{2})
\]
and
\[
E=\overline{H}_{1,k}(\frac{P_1(T)^2-Q_1(T)^2}{2})+\overline{H}_{2,k}(\frac{P_2(T)^2-Q_2(T)^2}{2}).
\]
So we can get an estimate of $E$ and $T$, since the 1-order term of $\overline{H}_{i,k}$ occupies the main value of the right side of above formulas, $i=1,2$.
\begin{equation}\label{local time}
T=\frac{1}{\lambda_2}\ln\frac{c_{12}^2}{E}+\tau_{c_{12}},
\end{equation}
where $\tau_{c_{12}}\sim\mathcal{O}(1)$ is uniformly bounded for $E\in[0,c_{12}]$. So we get the period estimation of a form $T_E=-\frac{C_E}{\lambda_2}\ln\frac{E}{c_{12}}$ and Lemma is proved.
\end{proof}
\begin{figure}
\begin{center}
\includegraphics[width=10cm]{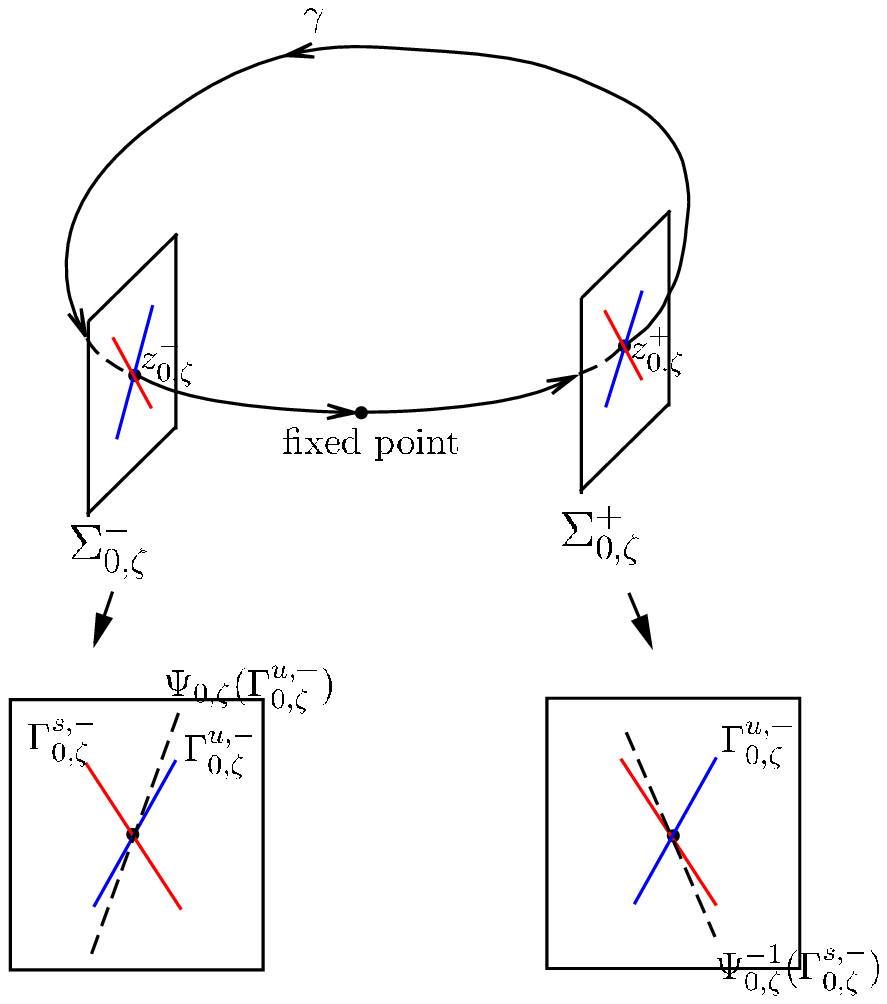}
\caption{ }
\label{fig17}
\end{center}
\end{figure}
Recall that $\frac{Q^{\gamma}}{\|Q^{\gamma}\|}\rightarrow(0,1)$ as $t\rightarrow\pm\infty$ from {\bf U6,7}. Then for sufficiently small energy $E$, we can find a couple of 2-dimensional sections
\[
\Sigma_{E,\zeta}^{\pm}\doteq\{(Q,P)\in\mathbb{R}^4\big{|}\|(Q,P)\|\leq c_{12}, \overline{H}(Q,P)=E,Q_2=\pm\zeta\},\quad E\in[0,E_0].
\]
Then on the energy surface $\overline{H}^{-1}(0)$, $\overline{W}^{u}$ will intersect $\Sigma_{0,\zeta}^{+}$ transversally of a 1-dimensional submanifold, which can be written by $\Gamma_{0,\zeta}^{u,+}$. Similarly $\overline{W}^s$ intersects $\Sigma_{0,\zeta}^{-}$ of a 1-dimensional submanifold $\Gamma_{0,\zeta}^{s,-}$. We also know that $\gamma$ passes across $\Sigma_{0,\zeta}^{\pm}$ and denote the intersectional points by $z_{0,\zeta}^{\pm}\in T^*\mathbb{T}^2$. It's obvious that $z_{0,\zeta}^{+}\in\Gamma_{0,\zeta}^{u,+}$ and $z_{0,\zeta}^{-}\in\Gamma_{0,\zeta}^{s,-}$. So we could define a global mapping
\begin{equation}
\Psi_{0,\zeta}:\Sigma_{0,\zeta}^{+}\rightarrow\Sigma_{0,\zeta}^{-}
\end{equation}
with $\Psi_{0,\zeta}(z_{0,\zeta}^{+})=z_{0,\zeta}^{-}$ (see Figure \ref{fig17}). From the $\lambda-$Lemma \cite{Pa} we know that $\Psi_{0,\zeta}(\Gamma_{0,\zeta}^{u,+})$ is $C^1-$close to $\Gamma_{0,\zeta}^{u,-}$ at the point $z_{0,\zeta}^{-}$, where $\Gamma_{0,\zeta}^{u,-}\doteq\overline{W}^u\cap\Sigma_{0,\zeta}^{-}$. This is because $\Psi_{0,\zeta}(\Gamma_{0,\zeta}^{u,+})\subset\Sigma_{0,\zeta}^{-}$ and $\Gamma_{0,\zeta}^{u,+}\subset\overline{W}^u$, so we have:
\begin{equation}
\|\langle \frac{D\Psi_{0,\zeta}(z_{0,\zeta}^{+})\cdot v}{\|D\Psi_{0,\zeta}(z_{0,\zeta}^{+})\cdot v\|},v'\rangle\|\geq\frac{4}{5}\|v'\|,\quad \forall \;v\in T_{z_{0,\zeta}^{+}}\Gamma_{0,\zeta}^{u,+},\quad v'\in T_{z_{0,\zeta}^{-}}\Gamma_{0,\zeta}^{u,-}
\end{equation}
and
\begin{equation}
\|\langle \frac{D\Psi^{-1}_{0,\zeta}(z_{0,\zeta}^{-})\cdot w}{\|D\Psi^{-1}_{0,\zeta}(z_{0,\zeta}^{-})\cdot w\|},w'\rangle\|\geq\frac{4}{5}\|w'\|,\quad \forall\; w\in T_{z_{0,\zeta}^{-}}\Gamma_{0,\zeta}^{s,-},\quad w'\in T_{z_{0,\zeta}^{+}}\Gamma_{0,\zeta}^{s,+},
\end{equation}
provided $\zeta>0$ is sufficiently small. The following is a sketch-map of $\lambda-$Lemma for 2-dimensional mappings, which can be helpful to reader's understanding (see Figure \ref{fig18}).\\
\begin{figure}
\begin{center}
\includegraphics[width=8cm]{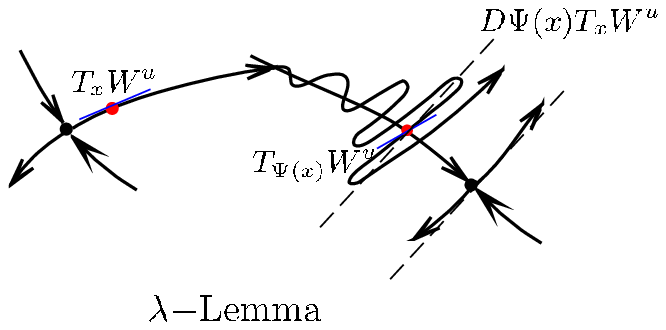}
\caption{ }
\label{fig18}
\end{center}
\end{figure}

For sufficiently small $E>0$, $\Sigma_{E,\zeta}^{\pm}$ is $C^{r-1}$-close to $\Sigma_{0,\zeta}^{\pm}$. Let $z_{E,\zeta}^{\pm}$ be intersectional points of $\gamma_E$ with $\Sigma_{E,\zeta}^{\pm}$, we can similarly define
\begin{equation}
\Psi_{E,\zeta}:\Sigma_{E,\zeta}^{+}\rightarrow\Sigma_{E,\zeta}^{-}
\end{equation}
with $\Psi_{E,\zeta}(z_{E,\zeta}^{+})=z_{E,\zeta}^{-}$. Because of the smooth dependence of ODE solutions on initial data, there exists a sufficiently small $\nu>0$ such that $\forall$ vector $v\in T_{z_{E,\zeta}^{+}}\Sigma_{E,\zeta}^{+}$ which is $\nu-$parallel to $T_{z_{0,\zeta}^{+}}\Gamma_{0,\zeta}^{u,+}$ in the sense that $\|\langle v,v_0\rangle\|\geq(1-\nu)\|v\|\|v_0\|$ holds for any $ v_0\in T_{z_{0,\zeta}^{+}}\Gamma_{0,\zeta}^{u,+}$, we have
\begin{equation}
\|\langle \frac{D\Psi_{E,\zeta}(z_{E,\zeta}^{+})\cdot v}{\|D\Psi_{E,\zeta}(z_{E,\zeta}^{+})\cdot v\|},v'\rangle\|\geq\frac{2}{3}\|v'\|,\quad \forall v'\in T_{z_{0,\zeta}^{-}}\Gamma_{0,\zeta}^{u,-}.
\end{equation}
Similarly $\forall w\in T_{z_{E,\zeta}^{-}}\Sigma_{E,\zeta}^{-}$ which is $\nu-$parallel to $T_{z_{0,\zeta}^{-}}\Gamma_{0,\zeta}^{s,-}$, we have
\begin{equation}
\|\langle \frac{D\Psi^{-1}_{E,\zeta}(z_{E,\zeta}^{-})\cdot w}{\|D\Psi^{-1}_{E,\zeta}(z_{E,\zeta}^{-})\cdot w\|},v'\rangle\|\geq\frac{2}{3}\|v'\|,\quad \forall v'\in T_{z_{0,\zeta}^{+}}\Gamma_{0,\zeta}^{s,+}.
\end{equation}
Once $\zeta$ is fixed, we have $c_{13}(\zeta)>1$ such that
\begin{equation}
c_{13}^{-1}\leq\|D\Psi_{0,\zeta}(z_{0,\zeta}^{+})\big{|}_{T_{z_{0,\zeta}^{+}}\Gamma_{0,\zeta}^{u,+}}\|,\;\|D\Psi_{0,\zeta}^{-1}(z_{0,\zeta}^{-})\big{|}_{T_{z_{0,\zeta}^{-}}\Gamma_{0,\zeta}^{s,-}}\|\leq c_{13}.
\end{equation}
Clearly $c_{13}\rightarrow\infty$ as $\zeta\rightarrow 0$. For sufficiently small $E>0$, we have
\begin{equation}
(2c_{13})^{-1}\leq\frac{\|D\Psi_{E,\zeta}(z_{E,\zeta}^{+})v\|}{\|v\|},\;\frac{\|D\Psi_{E,\zeta}^{-1}(z_{E,\zeta}^{-})w\|}{\|w\|}\leq 2c_{13},
\end{equation}
where $v$ is $\nu-$parallel to $T_{z_{0,\zeta}^{+}}\Gamma_{0,\zeta}^{u,+}$ and $w$ is $\nu-$parallel to $T_{z_{0,\zeta}^{-}}\Gamma_{0,\zeta}^{s,-}$.\\

Besides, for $E>0$ we have a local mapping:
\[
\Phi_{E,\zeta}:\Sigma_{E,\zeta}^{-}\rightarrow\Sigma_{E,\zeta}^{+},
\]
with $\Phi_{E,\zeta}(z_{e,\zeta}^{-})=z_{E,\zeta}^{+}$. Since $\zeta\ll r$, normal form (\ref{linear tonelli}) is available and the time from $z_{E,\zeta}^{-}$ to $z_{E,\zeta}^{+}$ is about $T=\frac{1}{\lambda_2}\ln\frac{\zeta^2}{E}+\tau_{\zeta}$ (see formula (\ref{local time})), where $\tau_{\zeta}$ is uniformly bounded as $\zeta\rightarrow0$. On the other side, as (\ref{linear tonelli}) is uncoupled, for an arbitrary vector $v$ $\nu-$parallel to $T_{z_{0,\zeta}^{-}}\Gamma_{0,\zeta}^{u,-}$, $D\Phi_{E,\zeta}(z_{E,\zeta}^{-})v$ is also $\nu-$parallel to $T_{z_{0,\zeta}^{+}}\Gamma_{0,\zeta}^{u,+}$. Analogously, for all $w$ which is $\nu-$parallel to $T_{z_{0,\zeta}^{+}}\Gamma_{0,\zeta}^{s,+}$, $D\Phi_{E,\zeta}^{-1}(z_{E,\zeta}^{+})w$ is also $\nu-$parallel to $T_{z_{0,\zeta}^{-}}\Gamma_{0,\zeta}^{s,-}$. Furthermore, we have
\begin{equation}
c_{14}^{-1}(\frac{\zeta^2}{E})^{\frac{\lambda_1}{\lambda_2}-\mu}\leq\frac{\|D\Phi_{E,\zeta}(z_{E,\zeta}^{-})v\|}{\|v\|}\leq c_{14}(\frac{\zeta^2}{E})^{\frac{\lambda_1}{\lambda_2}-\mu}
\end{equation}
and
\begin{equation}
c_{14}^{-1}(\frac{\zeta^2}{E})^{\frac{\lambda_1}{\lambda_2}-\mu}\leq\frac{\|D\Phi_{E,\zeta}^{-1}(z_{E,\zeta}^{+})w\|}{\|w\|}\leq c_{14}(\frac{\zeta^2}{E})^{\frac{\lambda_1}{\lambda_2}-\mu},
\end{equation}
where $c_{14}>1$ is a $\mathcal{O}(1)$ constant and $\mu\rightarrow0$ as $\zeta\rightarrow 0$.

The composition of the two mappings above constitutes a Poincar\'e recurrent mapping
\begin{equation}
\Psi_{E,\zeta}\circ\Phi_{E,\zeta}:\Sigma_{E,\zeta}^{-}\rightarrow\Sigma_{E,\zeta}^{-},
\end{equation}
with $\Psi_{E,\zeta}\circ\Phi_{E,\zeta}(z_{E,\zeta}^{-})=z_{E,\zeta}^{-}$. Then we have
\begin{equation}
(2c_{14}c_{13})^{-1}(\frac{\zeta^2}{E})^{\frac{\lambda_1}{\lambda_2}-\mu}
\leq\frac{\|D(\Psi_{E,\zeta}\circ\Phi_{E,\zeta})(z_{E,\zeta}^{-})v\|}{\|v\|}\leq 2c_{14}c_{13}(\frac{\zeta^2}{E})^{\frac{\lambda_1}{\lambda_2}-\mu}
\end{equation}
for $v$ $\nu-$parallel to $T_{z_{0,\zeta}^{-}}\Gamma_{0,\zeta}^{u,-}$ and 
\begin{equation}
(2c_{14}c_{13})^{-1}(\frac{\zeta^2}{E})^{\frac{\lambda_1}{\lambda_2}-\mu}
\leq\frac{\|D(\Psi_{E,\zeta}\circ\Phi_{E,\zeta})^{-1}(z_{E,\zeta}^{-})w\|}{\|w\|}\leq 2c_{14}c_{13}(\frac{\zeta^2}{E})^{\frac{\lambda_1}{\lambda_2}-\mu}
\end{equation}
for $w$ $\nu-$parallel to $T_{z_{0,\zeta}^{-}}\Gamma_{0,\zeta}^{s,-}$.\\

From these two inequalities above, we can see that $z_{E,\zeta}^{-}$ is a hyperbolic fixed point of $\Psi\circ\Phi_{E,\zeta}$. We denote by $\overline{W}_{E}^{s}$ $(\overline{W}_E^u)$ the stable (unstable) manifold corresponding to $\gamma_E$. Besides, we also have 
\[
\Gamma_{E,\zeta}^{u,\pm}\doteq\overline{W}_{E}^u\cap\Sigma_{E,\zeta}^{\pm},\quad \Gamma_{E,\zeta}^{s,\pm}\doteq\overline{W}_E^s\cap\Sigma_{E,\zeta}^{\pm}.
\]
From {\bf U5,6}, we can take $E_0\leq(2c_{14}c_{13})^{-\frac{2}{\iota}}\zeta^{\frac{4\lambda_1}{\iota\lambda_2}}$ then get the following
\begin{Lem}\label{recurrent}
$\forall E\in(0,E_0]$, the recurrent mapping $\Psi_{E,\zeta}\circ\Phi_{E,\zeta}$ satisfying
\begin{itemize}
\item $\|D(\Psi_{E,\zeta}\circ\Phi_{E,\zeta})(z_{E,\zeta}^{-})v_E^u\|\geq E^{-(\frac{\lambda_1}{\lambda_2}-\iota)}\|v_E^u\|$, $\forall v_E^u\in T_{z_{E,\zeta}^{-}}\Gamma_{E,\zeta}^{u,-}$,\\
\item $\|D(\Psi_{E,\zeta}\circ\Phi_{E,\zeta})^{-1}(z_{E,\zeta}^{-})v_E^s\|\geq E^{-(\frac{\lambda_1}{\lambda_2}-\iota)}\|v_E^s\|$, $\forall v_E^s\in T_{z_{E,\zeta}^{-}}\Gamma_{E,\zeta}^{s,-}$,
\end{itemize}
where $1+10^3\iota<\frac{\lambda_1}{\lambda_2}$ and $\iota\sim\mathcal{O}(1)$, as long as $\zeta$ is chosen sufficiently small.
\end{Lem}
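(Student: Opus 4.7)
The plan is to chain the two derivative bounds already established for the local map $\Phi_{E,\zeta}$ and the global map $\Psi_{E,\zeta}$ via the chain rule, and then absorb the resulting $\zeta$-dependent constants into the cleaner exponent $E^{-(\lambda_1/\lambda_2-\iota)}$ by means of the a priori choice $E_0 \leq (2c_{14}c_{13})^{-2/\iota}\zeta^{4\lambda_1/(\iota\lambda_2)}$ made just before the lemma. The argument is, in essence, bookkeeping on the two estimates already displayed, but the legitimacy of chaining them hinges on checking that the relevant $\nu$-parallelism hypotheses propagate along the composition.

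First I would verify compatibility of the tangent directions. A vector $v_E^u\in T_{z_{E,\zeta}^{-}}\Gamma_{E,\zeta}^{u,-}$ lies in $\Sigma_{E,\zeta}^{-}$ and, by the $C^{r-1}$-closeness of $\Sigma_{E,\zeta}^{\pm}$ and $\overline{W}_E^u$ to their $E=0$ counterparts (a consequence of smooth dependence of hyperbolic invariant manifolds on parameters, applied to the family $\gamma_E$), is $\nu$-parallel to $T_{z_{0,\zeta}^{-}}\Gamma_{0,\zeta}^{u,-}$ provided $E\leq E_0$. Hence the lower bound for $\Phi_{E,\zeta}$ applies and yields $\|D\Phi_{E,\zeta}(z_{E,\zeta}^{-})v_E^u\|\geq c_{14}^{-1}(\zeta^2/E)^{\lambda_1/\lambda_2-\mu}\|v_E^u\|$. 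Crucially, because the normal form \eqref{linear tonelli} is uncoupled, $D\Phi_{E,\zeta}(z_{E,\zeta}^{-})v_E^u$ remains $\nu$-parallel to $T_{z_{0,\zeta}^{+}}\Gamma_{0,\zeta}^{u,+}$, so the lower bound for $\Psi_{E,\zeta}$ applies and contributes an extra multiplicative factor of $(2c_{13})^{-1}$.

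Chaining gives
\[
\|D(\Psi_{E,\zeta}\circ\Phi_{E,\zeta})(z_{E,\zeta}^{-})v_E^u\|\geq (2c_{13}c_{14})^{-1}\left(\frac{\zeta^2}{E}\right)^{\lambda_1/\lambda_2-\mu}\|v_E^u\|.
\]
To upgrade this to $E^{-(\lambda_1/\lambda_2-\iota)}\|v_E^u\|$, it suffices to have $(2c_{13}c_{14})^{-1}\zeta^{2(\lambda_1/\lambda_2-\mu)}E^{\iota-\mu}\geq 1$. Since $\mu\to 0$ as $\zeta\to 0$, we first fix $\zeta$ small enough to ensure $\mu<\iota/2$; then the restriction $E\leq E_0 = (2c_{13}c_{14})^{-2/\iota}\zeta^{4\lambda_1/(\iota\lambda_2)}$ delivers the required inequality (using $\iota-\mu>\iota/2$). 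The inverse estimate on $T_{z_{E,\zeta}^{-}}\Gamma_{E,\zeta}^{s,-}$ is obtained by repeating the same three-step procedure with the already-stated lower bounds for $D\Psi_{E,\zeta}^{-1}$ and $D\Phi_{E,\zeta}^{-1}$, exchanging the roles of the stable and unstable sections.

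The main obstacle is the propagation of the $\nu$-parallelism through $D\Phi_{E,\zeta}$: if the local normal form had any nontrivial cross-coupling between the $(Q_1,P_1)$ and $(Q_2,P_2)$ planes, the image $D\Phi_{E,\zeta}(z_{E,\zeta}^{-})v_E^u$ could tilt away from $T_{z_{0,\zeta}^{+}}\Gamma_{0,\zeta}^{u,+}$ by an amount comparable to the expansion factor $(\zeta^2/E)^{\lambda_1/\lambda_2-\mu}$, and the subsequent application of the $\Psi_{E,\zeta}$ bound would fail. This is precisely why assumption \textbf{U5} (or \textbf{U5'}) and the Belitskii--Samovol theorem \ref{Bel} were used to produce the decoupled form \eqref{linear tonelli} — they guarantee that $\Phi_{E,\zeta}$ preserves the two linear foliations exactly up to an error controlled by $\mu$, which is made arbitrarily small with $\zeta$. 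Once this is in place, the rest is the elementary arithmetic of comparing $\zeta$-powers with $E$-powers.
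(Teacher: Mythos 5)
Your proposal is correct and follows essentially the same route as the paper, which obtains the lemma directly by composing the displayed bounds for $D\Phi_{E,\zeta}$ and $D\Psi_{E,\zeta}$ (the $\nu$-parallelism surviving $D\Phi_{E,\zeta}$ precisely because the normal form (\ref{linear tonelli}) is uncoupled) and then absorbing $(2c_{13}c_{14})^{-1}\zeta^{2(\lambda_1/\lambda_2-\mu)}$ into the exponent via the choice of $E_0$. Note only a sign slip in your intermediate condition: it should read $(2c_{13}c_{14})^{-1}\zeta^{2(\lambda_1/\lambda_2-\mu)}E^{\mu-\iota}\geq1$, i.e.\ $E^{\iota-\mu}\leq(2c_{13}c_{14})^{-1}\zeta^{2(\lambda_1/\lambda_2-\mu)}$, which is what your subsequent use of $E\leq E_0$ and $\iota-\mu>\iota/2$ actually establishes.
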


For $E_1<E_0$, The segment of NHIC $\overline{N}_{1,E_1,E_0}^{+}$ is a 2-dimensional symplectic sub manifold. We can restrict symplectic 2-form $\omega$ to $\overline{N}_{1,E_1,E_0}^{+}$, which is equivalent to the area form $\Omega$. Recall that $\overline{N}_{1,E_1,E_0}^{+}$ is an invariant manifold under the flow mapping $\phi_{\overline{H}}^t$, then $\det|D\phi_{\overline{H}}(z)|\equiv1$, $\forall z\in\overline{N}_{1,E_1,E_0}^{+}$ and $t\in\mathbb{R}$. So the eigenvalues of $D\phi_{\overline{H}}^t(z)$ must appear in pairs of the form $\lambda(z)$ and $\lambda(z)^{-1}$.\\

On the other side, the normal form (\ref{linear tonelli}) is available in the domain $\mathcal{B}(0,c_{11})$. Then we have
\[
E=\frac{\lambda_1}{2}(P_1^2-Q_1^2)+\frac{\lambda_2}{2}(P_2^2-Q_2^2)+\mathcal{O}(Q,P,4)
\]
and the ODE
\begin{equation}\label{linear tonelli ODE}
\left\{
\begin{array}{cccccc}
\dot{Q}_i=\frac{\partial\overline{H}}{\partial P_i}=\lambda_1 P_i+\mathcal{O}(Q_i,P_i,3),\vspace{5pt}\\
\dot{P}_i=-\frac{\partial\overline{H}}{\partial Q_i}=\lambda_1 Q_i+\mathcal{O}(Q_i,P_i,3),
\end{array}
\right.
\end{equation}
where $i=1,2$. Then we have 
\[
\min\|\dot{z}_E\|\geq c_{15}(\lambda_2)\sqrt{E}, 
\]
where $z_{E}(t)=(Q_E(t),P_E(t))$ is the trajectory of $\gamma_E$ and $c_{15}\sim\mathcal{O}(1)$ is a constant depending on $\lambda_2$. Besides, $\dot{z}_E(t)$ is just an eigenvector of $D\phi_{\overline{H}}^{t}(z_E(t))$. Therefore, $\exists c_{16}>1$ such that  
\begin{equation}
\inf_{t\in\mathbb{R}} \|D\phi_{\overline{H}}^t(z_E(t))v\|\geq\frac{\sqrt{E}}{c_{16}}\|v\|,\quad\max_{t\in\mathbb{R}}\|D\phi_{\overline{H}}^t(z_E(t))v\|\leq\frac{c_{16}}{\sqrt{E}}\|v\|
\end{equation}
holds for each vector $v$ tangent to the periodic orbit $z_E(t)$, $\forall t\in\mathbb{R}$. In fact, above formulae give us a control of $\|D\phi_{\overline{H}}^t\|$ on $\overline{N}_{1,E_1,E_0}^{+}$. We can make a comparison between $|D\phi_{\overline{H}}^t\|_{\overline{N}_{1,E_1,E_0}^{+}}$ and $\|D\Psi_{E,\zeta}\circ\Phi_{E,\zeta}\|$.
\begin{Lem}\label{NHIC time}
$\overline{N}_{1,E_1,E_0}^{+}$ is NHIC under $\phi_{\overline{H}}^t$, where $t=\frac{2}{\lambda_2}\ln\frac{1}{E_1}$.
\end{Lem}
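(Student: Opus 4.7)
My plan is to verify the normally hyperbolic condition by comparing, at time $t=\frac{2}{\lambda_2}\ln\frac{1}{E_1}$, the normal expansion/contraction rates against the tangential ones on $\overline{N}_{1,E_1,E_0}^+$, using the Poincar\'e recurrent map $\Psi_{E,\zeta}\circ\Phi_{E,\zeta}$ as the fundamental building block. The key observation is that the chosen time $t$ is, up to an $\mathcal{O}(1)$ factor, exactly twice the period $T_{E_1}=-\frac{C_{E_1}}{\lambda_2}\ln\frac{E_1}{c_{12}}$ of the slowest periodic orbit $\gamma_{E_1}$ on the cylinder; for any other $E\in[E_1,E_0]$, the same time $t$ corresponds to at least $\lfloor t/T_E\rfloor\geq 2$ full iterates of the recurrent map at energy $E$, since $T_E\leq T_{E_1}$.

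First I would set up, for each $E\in[E_1,E_0]$, the splitting $T(T^*\mathbb{T}^2)\big|_{\gamma_E}=T\gamma_E\oplus N_E$ where $N_E$ is the 2-dimensional symplectic normal bundle spanned by directions transverse to $\gamma_E$ inside the section $\Sigma_{E,\zeta}^-$. On $N_E$ the linearization of $\phi_{\overline{H}}^{T_E}$ agrees (up to a conjugation that is uniformly bounded in $E$) with $D(\Psi_{E,\zeta}\circ\Phi_{E,\zeta})(z_{E,\zeta}^-)$; by Lemma~\ref{recurrent} this contracts along $T\Gamma_{E,\zeta}^{s,-}$ and expands along $T\Gamma_{E,\zeta}^{u,-}$ at rate at least $E^{-(\frac{\lambda_1}{\lambda_2}-\iota)}$. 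Iterating the number of times $n_E\doteq\lfloor t/T_E\rfloor\geq 2$ dictated by our chosen $t$, the normal expansion/contraction over time $t$ is therefore bounded below by
\[
E^{-n_E(\frac{\lambda_1}{\lambda_2}-\iota)}\;\geq\;E^{-2(\frac{\lambda_1}{\lambda_2}-\iota)}.
\]
On the tangential side, the inequalities $\|D\phi_{\overline{H}}^s v\|\leq \frac{c_{16}}{\sqrt{E}}\|v\|$ and $\|D\phi_{\overline{H}}^s v\|\geq \frac{\sqrt{E}}{c_{16}}\|v\|$ for $v\parallel \dot z_E$, together with the symplectic constraint forcing conjugate pairs of eigenvalues, confine the tangential rates over time $t$ to the band $[c_{16}^{-1}E^{-1/2},\,c_{16}E^{-1/2}]$ (i.e.\ they are algebraic in $E$, \emph{not} of Poincar\'e-map type).

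Putting these together, the ratio of normal to tangential rates over time $t$ is at least
\[
\frac{E^{-2(\frac{\lambda_1}{\lambda_2}-\iota)}}{c_{16}E^{-1/2}}\;=\;c_{16}^{-1}\,E^{-(2\frac{\lambda_1}{\lambda_2}-2\iota-\frac12)}.
\]
Since $\frac{\lambda_1}{\lambda_2}>1+10^3\iota$, the exponent $2\frac{\lambda_1}{\lambda_2}-2\iota-\frac12$ is strictly greater than $3/2$, so the ratio is bounded below by $c_{16}^{-1}E^{-3/2}\geq c_{16}^{-1}E_0^{-3/2}\cdot (E_0/E_1)^{3/2}$, which can be made arbitrarily large (in particular $>r$) by taking $E_1$ sufficiently small relative to $E_0$. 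This uniform normal hyperbolicity over time $t$, combined with the existence of invariant cone fields inherited from the $\lambda$-parallel cones around $T\Gamma^{u,\pm}_{0,\zeta}$ and $T\Gamma^{s,\pm}_{0,\zeta}$ established just before Lemma~\ref{recurrent}, gives the NHIC property in the sense of Fenichel--Wiggins (Lemma~\ref{F&W}) for the time-$t$ map.

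The main obstacle I anticipate is \emph{uniformity in $E$}: for energies $E$ close to $E_0$ the number of recurrence iterates $n_E$ is only $\mathcal{O}(1)$, while near $E_1$ it is essentially $2$, so the normal rate varies considerably across the cylinder. I would handle this by showing that even the worst-case (smallest-$E$) rate $E_1^{-2(\lambda_1/\lambda_2-\iota)}$ dominates the best-case tangential rate $c_{16}E_1^{-1/2}$ with margin exceeding $r$; this reduces to the inequality $\lambda_1/\lambda_2-\iota>1/4+$ (log of $c_{16}r$)$/(2\log 1/E_1)$, which holds automatically for $E_1$ small enough by {\bf U3'}. A secondary technical point is justifying that the remainders $\mathcal{O}(Q,P,3)$ in~(\ref{linear tonelli ODE}) do not spoil the bounds during the local passage; this follows from the standard fact that in the normal form (\ref{linear tonelli}) the passage time $\frac{1}{\lambda_2}\ln\frac{\zeta^2}{E}+\tau_\zeta$ absorbs higher-order corrections into the constant $\tau_\zeta$, which was already used in deriving~(\ref{local time}).
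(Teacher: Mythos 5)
Your argument is correct and follows essentially the same route as the paper: bound $T_E\leq\frac{2}{\lambda_2}\ln\frac{1}{E}\leq t$ so that the recurrent-map estimates of Lemma \ref{recurrent} govern the normal bundle over the time $t$, compare the resulting normal rate $E^{-(\frac{\lambda_1}{\lambda_2}-\iota)}$ against the tangential band $[\sqrt{E}/c_{16},\,c_{16}/\sqrt{E}]$, and conclude from \textbf{U3'} that the normal exponent $\frac{\lambda_1}{\lambda_2}-\iota$ dominates the tangential exponent $\frac{1}{2}$. The one caution is that the Fenichel--Wiggins criterion involves the ratio of Lyapunov \emph{exponents} rather than of the rates themselves, so "making the rate ratio larger than $r$ by shrinking $E_1$" is not the right reading; but since only a $C^1$ cylinder is claimed, the exponent domination $\frac{\lambda_1}{\lambda_2}-\iota>\frac{1}{2}$ that you do establish is exactly what is required.
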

\begin{proof}
As is known to us from (\ref{local time}), $\tau_E$ is uniformly bounded as $E\rightarrow0$, so $T_E\leq\frac{2}{\lambda_2}\ln\frac{1}{E}$ when $E_0$ is sufficiently small. 

On the other side, the tangent bundle of $T^*\mathbb{T}^2$ over $\overline{N}_{1,E_1,E_0}^{+}$ admits such a $D\phi_{\overline{H}}^t\big{|}_{t=T_{E_1}}-$invariant splitting
\[
T_{z}T^*\mathbb{T}^2=T_zN_{\bot}^{+}\oplus T_z\overline{N}_{1,E_1,E_0}^{+}\oplus T_zN_{\bot}^{-},\quad z\in\overline{N}_{1,E_1,E_0}^{+}.
\]
Besides,
\begin{eqnarray}
\frac{\sqrt{E}}{c_{16}}\leq\frac{\|D\phi_{\overline{H}}^t(z)v\|}{\|v\|}\leq\frac{c_{16}}{\sqrt{E}},\quad\forall v\in T_z\overline{N}_{1,E_1,E_0}^{+},\\
\frac{\|D\phi_{\overline{H}}^t(z)v\|}{\|v\|}\geq E^{-(\frac{\lambda_1}{\lambda_2}-\iota)}, \quad\forall v\in T_zN_{\bot}^{-},\\
\frac{\|D\phi_{\overline{H}}^t(z)v\|}{\|v\|}\leq E^{(\frac{\lambda_1}{\lambda_2}-\iota)}, \quad\forall v\in T_zN_{\bot}^{+},
\end{eqnarray}
holds for $t\geq\frac{2}{\lambda_2}\ln\frac{1}{E_1}$. Since $\frac{\lambda_1}{\lambda_2}>\iota+1>\frac{1}{2}$ from {\bf U3'} and Lemma \ref{recurrent}. Then we finished the proof.
\end{proof}
For $E\in[E_0,E_2]$ with $E_2\sim\mathcal{O}(1)$, we can see that $\overline{N}_{1,E_0,E_2}^{+}$ is also NHIC under $\phi_{\overline{H}}^t$ with $t\geq\frac{2}{\lambda_2}\ln\frac{1}{E_1}$. Now we explain this. Recall that we can find a hyperbolic fixed point $z_{E,\pi}\in\Sigma_{E,\pi}$ of the recurrent mapping $P_E:\Sigma_{E,\pi}\rightarrow\Sigma_{E,\pi}$ from Proposition \ref{autonomous}, where $\Sigma_{E,\pi}=\{(X,Y)\in T^*\mathbb{T}^2\big{|}X_1=\pi,\|X_2\|\leq\sqrt{\varepsilon}\}$ is a local section. Besides, we have
\begin{eqnarray}
\frac{\|DP_E(z_{E,\pi})v\|}{\|v\|}\geq\exp^{(\lambda_2-c_{17}(\varepsilon))T_E},\quad\forall v\in T_{z_{E,\pi}}(\overline{W}_E^u\cap\Sigma_{E,\pi}),\\
\frac{\|DP_E(z_{E,\pi})v\|}{\|v\|}\leq\exp^{-(\lambda_2-c_{17}(\varepsilon))T_E},\quad\forall v\in T_{z_{E,\pi}}(\overline{W}_E^s\cap\Sigma_{E,\pi}),
\end{eqnarray}
where $c_{17}(\varepsilon)\rightarrow0$ as $\varepsilon\rightarrow0$. On the other side, $\forall v\in T_{z_{E,\pi}}\overline{N}_{1,E_0,E_2}^{+}$, 
\begin{equation}
\frac{\sqrt{E}_0}{c_{16}}\leq\frac{\|D\phi_{\overline{H}}^t(z)v\|}{\|v\|}\leq\frac{c_{16}}{\sqrt{E}_0},\quad\forall t\in\mathbb{R}.
\end{equation}
If we take $t\geq\frac{2}{\lambda_2}\ln\frac{1}{E_1}$, then $T_E<t$ for all $E\in[E_0,E_2]$. So we have
\begin{eqnarray}
\frac{\sqrt{E}_0}{c_{16}}\leq\frac{\|D\phi_{\overline{H}}^t(z_{E,\pi})v\|}{\|v\|}\leq\frac{c_{16}}{\sqrt{E}_0},\quad\forall v\in T_{z_{E,\pi}}\overline{N}_{1,E_0,E_2}^{+},\\
\frac{\|D\phi_{\overline{H}}^t(z_{E,\pi})v\|}{\|v\|}\geq E_1^{-2(1-\frac{c_{17}}{\lambda_2})}, \quad\forall v\in T_{z_{E,\pi}}(\overline{W}_E^u\cap\Sigma_{E,\pi}),\\
\frac{\|D\phi_{\overline{H}}^t(z_{E,\pi})v\|}{\|v\|}\leq E_1^{2(1-\frac{c_{17}}{\lambda_2})}, \quad\forall v\in T_{z_{E,\pi}}(\overline{W}_E^s\cap\Sigma_{E,\pi}).
\end{eqnarray}
As $E_1<E_0$, so we get the normal hyperbolicity of $\overline{N}_{1,E_0,E_2}^{+}$ accordingly.
\begin{Rem}
From the analysis of normal hyperbolicity above, we can see that $\overline{N}_{1,E_1,E_2}^{+}$ is NHIC under $\phi_{\overline{H}}^t$ with $t\geq\frac{2}{\lambda_2}\ln\frac{1}{E_1}$. Notice that $t\rightarrow+\infty$ as $E_1\rightarrow0$. This leads to a technical flaw and prevent us from proving the persistence of $\overline{N}_{1,E_1,E_2}^{+}$ with $E_1=0$ for system $H$, using the classical invariant manifold theory \cite{HPS}. In the following we will give a precise estimation of $E_1>0$ and prove the existence of wNHIC $N_{1,E_1,E_2}^{+}$ by sacrificing a small segment near the bottom.
\end{Rem}
\vspace{10pt}

Recall that system $H$ is of a form
\begin{equation}
\overline{H}=\frac{1}{2}\langle Y^t, Y\rangle+Z_1(X_1)+Z_2(X_2)+\varepsilon{Z}_3(X_1,X_2)+\epsilon R(X,Y,S),
\end{equation}
where $\varepsilon=\frac{1}{\mathbb{L}}$ and $\epsilon=\mathbb{K}d_m^{*\sigma/2}l^{m(r+6+2\xi)/2}$ and we can take $m\geq M\gg1$ sufficiently large and $\epsilon\ll\varepsilon$. Once again we use the self-similar structure. The target of this part is to get the exact value of $E_1$. We can assume it by $E_1=\epsilon^d$ with $d>0$. Later the analysis will give $d$ a proper value. Similar as above or \cite{Ch}, we will modify $H$ into 
\begin{equation}\label{modified H}
H'=\frac{1}{2}\langle Y^t, Y\rangle+Z_1(X_1)+Z_2(X_2)+\varepsilon{Z}_3(X_1,X_2)+\epsilon \rho(\frac{\overline{H}-\epsilon^d}{\epsilon^d})R(X,Y,S),
\end{equation}
where $\rho(\cdot)$ is the same with the one of Theorem \ref{wiggins} (see figure \ref{fig11}). We can see that $H'=H$ in the domain $\{\overline{H}\geq2\epsilon^d\}$ and $H'=\overline{H}$ in the domain $\{\overline{H}\leq\epsilon^d\}$. If we can prove that there exists a NHIC under $\phi_{H'}^t$ in the domain $\{\overline{H}\geq\frac{\epsilon^d}{2}\}$, then $\{\gamma_{E=\frac{\epsilon^d}{2}}(S),S\}$ must be the bottom of this NHIC. This invariant cylinder verifies the existence of wNHIC for $H$ system, as $H'=H$ in the domain $\{\overline{H}\geq2\epsilon^d\}$. This is the main idea to prove the persistence of $g_1-$wNHIC for $H$, which was firstly used in \cite{Ch} to prove a similar result.
\begin{Lem}
Let the equation $\dot{z}=F_{\epsilon}(z,t)$ be a small perturbation of $\dot{z}=F_0(z,t)$, with $\phi_{\epsilon}^t$ and $\phi_0^t$ the flow determined respectively. Then we have:
\[
\|\phi_{\epsilon}^t(z)-\phi_0^t(z)\|_{C^1}\leq\frac{B}{A}(1-e^{-At})e^{2At},
\]
where 
\[
A=\max_{0\leq s\leq t}\{\|F_0(\cdot,s)\|_{C^2},\|F_{\epsilon}(\cdot,s)\|_{C^2}\}
\] 
and 
\[
B=\max_{0\leq s\leq t}\|F_{\epsilon}(\cdot,s)-F_0(\cdot,s)\|_{C^1}.
\]
Here $\|\cdot\|_{C^2}$ and $\|\cdot\|_{C^1}$ only depend on $z-$variables, and $supp_{z}(F_{\varepsilon}-F_0)\subset\mathbb{R}^n$.
\end{Lem}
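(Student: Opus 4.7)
The plan is to derive this estimate via Gronwall's inequality applied in two stages: first to obtain a $C^0$ bound on the difference of the two flows, then to lift that to a $C^1$ bound by differentiating the variational equation with respect to the initial point $z$.

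For the $C^0$ estimate, I would set $\psi^{t}(z) = \phi_{\epsilon}^{t}(z) - \phi_{0}^{t}(z)$ and split the derivative as
\[
\dot\psi^{t} = \bigl[F_{\epsilon}(\phi_{\epsilon}^{t},t) - F_{\epsilon}(\phi_{0}^{t},t)\bigr] + \bigl[F_{\epsilon}(\phi_{0}^{t},t) - F_{0}(\phi_{0}^{t},t)\bigr].
\]
The first bracket is controlled by $A\,\|\psi^{t}\|$ using the $C^{2}$-bound (which gives a Lipschitz constant $A$ in $z$), and the second by $B$. Since $\psi^{0} = 0$, standard Gronwall yields $\|\psi^{t}\| \leq \tfrac{B}{A}(e^{At}-1)$.

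For the $C^{1}$ part, I would introduce $V^{t} = D_{z}\phi_{\epsilon}^{t} - D_{z}\phi_{0}^{t}$ and differentiate the variational equation:
\[
\dot V^{t} = \bigl[D_{z}F_{\epsilon}(\phi_{\epsilon}^{t},t) - D_{z}F_{0}(\phi_{0}^{t},t)\bigr]\cdot D_{z}\phi_{\epsilon}^{t} + D_{z}F_{0}(\phi_{0}^{t},t)\cdot V^{t}.
\]
The first factor in the first term splits further into $[D_{z}F_{\epsilon}-D_{z}F_{0}](\phi_{\epsilon}^{t},t)$, bounded by $B$, plus $D_{z}F_{0}(\phi_{\epsilon}^{t},t) - D_{z}F_{0}(\phi_{0}^{t},t)$, bounded by $A\|\psi^{t}\|$ via the $C^{2}$-norm of $F_{0}$. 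Combined with $\|D_{z}\phi_{\epsilon}^{t}\| \leq e^{At}$ (again from the linear variational equation with $\|D_{z}F_{\epsilon}\|\leq A$), and inserting the previously obtained $\|\psi^{t}\|\leq \tfrac{B}{A}(e^{At}-1)$, I obtain
\[
\|\dot V^{t}\| \leq B\,e^{2At} + A\,\|V^{t}\|.
\]
A second application of Gronwall with $V^{0}=0$ then gives $\|V^{t}\| \leq \tfrac{B}{A}(e^{2At}-e^{At}) = \tfrac{B}{A}(1-e^{-At})e^{2At}$, which together with the weaker $C^{0}$ bound establishes the claim.

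There is no real obstacle here — the argument is a routine two-step Gronwall, and the precise exponent $e^{2At}$ arises exactly because the $C^{1}$-norm picks up one extra factor $\|D_{z}\phi_{\epsilon}^{t}\|\leq e^{At}$ when propagating the linearized perturbation. The only point requiring a modicum of care is the splitting $D_{z}F_{\epsilon}(\phi_{\epsilon}^{t})-D_{z}F_{0}(\phi_{0}^{t})$ into a pure perturbation piece (contributing $B$) and a trajectory-displacement piece (contributing $A\|\psi^{t}\|$), since otherwise one loses the constant $B/A$ in front of the final bound.
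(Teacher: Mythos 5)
Your proposal is correct and follows essentially the same route as the paper's proof: a first Gronwall argument on $\phi_\epsilon^t-\phi_0^t$ giving $\tfrac{B}{A}(e^{At}-1)$, followed by a second Gronwall on the difference of the linearized flows, using the bound $\|D\phi^t_\lambda\|\le e^{At}$ and the mean-value splitting of $D_zF_\epsilon(\phi_\epsilon^t)-D_zF_0(\phi_0^t)$ into a perturbation piece of size $B$ and a displacement piece of size $A\|\psi^t\|$. The only difference is which trajectory each term in the splitting is evaluated along, which is immaterial and yields the identical bound $Be^{2At}+A\|V^t\|$.
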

\begin{proof}
Let $\Delta z(t)=z_{\varepsilon}(t)-z(t)$ and 
\[
\Delta\dot{z}=\nabla F_{\epsilon}(\nu(t)z+(1-\nu(t))z_{\epsilon})\Delta z+(F_{\epsilon}-F_0)(z),
\]
where $\nu(t)\in[0,1]$. Therefore,
\[
\|\Delta\dot{z}\|\leq\max\|\nabla F_{\epsilon}\|\|\Delta z\|+\max\|F_{\epsilon}-F_0\|.
\]
It follows from Gronwall's inequality that
\[
\|\Delta z(t)\|\leq\frac{B}{A}(e^{At}-1).
\]

On the other side, the variational equation of $D\phi_{\lambda}^t$ $(\lambda=\epsilon,0)$ is the following:
\[
\frac{d}{dt}D\phi_{\lambda}^t(z_{\lambda(t)})=\nabla F_{\lambda}(z_{\lambda}(t))D\phi_{\lambda}^t(z_{\lambda}(t)),\quad\lambda=\epsilon,0.
\]
Therefore, for each tangent vector $v\in T_{z_{\lambda}(0)}\mathbb{R}^n$ one has
\[
\|D\phi_{\lambda}^t(z_{\lambda}(0))v\|\leq\|v\|e^{At}.
\] 
As for $D(\phi_{\epsilon}^t-\phi_0^t)$, we have
\begin{eqnarray*}
\frac{d}{dt}(D\phi_{\epsilon}^t(z_{\epsilon})-D\phi_0^t(z))&=&\nabla F_{\epsilon}(z_{\epsilon})D\phi_{\epsilon}^t-\nabla F_{0}(z)D\phi_{0}^t\\
&=& DF_{\epsilon}(z(t))D\phi_{\epsilon}^t(z_{\epsilon})+D^2F_{\epsilon}(\mu z_{\epsilon}+(1-\mu)z)\Delta zD\phi_{\epsilon}^t\\
&\quad-& DF_0(z(t))D\phi_{0}^t(z)\\
&=& DF_{\epsilon}(z)(D\phi_{\epsilon}^t-D\phi_0^t)+(DF_{\epsilon}(z)-DF_0(z))D\phi_0^t\\
&\quad+& D^2F_{\epsilon}(\mu z_{\epsilon}+(1-\mu)z)\Delta z D\phi_{\epsilon}^t.
\end{eqnarray*}
Then we have
\begin{eqnarray*}
\|\frac{d}{dt}(D\phi_{\epsilon}^t(z_{\epsilon})-D\phi_0^t(z))\|&\leq& A\|D\phi_{\epsilon}^t(z_{\epsilon})-D\phi_0^t(z)\|+Be^{At}+B(e^{At}-1)e^{At}\\
&\leq&A\|D\phi_{\epsilon}^t(z_{\epsilon})-D\phi_0^t(z)\|+Be^{2At},
\end{eqnarray*}
and
\[
\|D\phi_{\epsilon}^t-D\phi_0^t\|\leq\frac{B}{A}(e^{2At}-e^{At})
\]
from Gronwall inequality again. Then we complete the proof.
\end{proof}

Now we use this Lemma for our systems $H'$ and $\overline{H}$. We have the estimation 
\begin{equation}\label{peturbation estimation}
\|D\phi_{H'}^t-D\phi_{\overline{H}}^t\|_{C^1}\lessdot\epsilon^{1-2d}(e^{2c_{18}t}-e^{c_{18}t}),
\end{equation}
where $c_{18}\sim\mathcal{O}(1)$ is a constant depending on $E_2$. For $t\lessdot\ln\epsilon^{2d-1}$, we can see that $\|D\phi_{H'}^t-D\phi_{\overline{H}}^t\|_{C^1}\rightarrow0$ as $\epsilon\rightarrow0$. On the other side, Lemma \ref{NHIC time} gives a time restriction for the existence of NHIC $\overline{N}_{1,E_1,E_2}^{+}$. So we need
\[
\frac{2}{\lambda_2}\ln\epsilon^{-d}\lessdot\ln\epsilon^{2d-1} 
\]
if we take $E_1=2\epsilon^d$. Then $d<\frac{1}{3}$ is sufficient.\\

Therefore, from the invariant manifold theory of \cite{HPS} we can see that the separated property of spectrums in Lemma \ref{NHIC time} can not be destroyed as a result of  (\ref{peturbation estimation}), as long as $\epsilon$ is chosen sufficiently small. So we get the persistence of wNHIC $N_{1,\frac{\epsilon^d}{2},E_2}$ for system $H$.

\subsection{Location of Aubry sets}
\vspace{10pt}
$\newline$

From subsection \ref{Mather theory} we can see that there exists a conjugated Lagrangian system $L_H$ for any Tonelli Hamiltonian $H$. The autonomous system $\overline{H}=h+Z$ has the corresponding $\overline{L}$ which satisfies
\[
\overline{L}(x,v)\doteq\max_{p\in T^*_x\mathbb{T}^2}\{\langle v,p\rangle-\overline{H}(x,p)\},\quad v\in T_x\mathbb{T}^2.
\]
Besides, it's a diffeomorphism that $\mathcal{L}:T\mathbb{T}^2\rightarrow T^*\mathbb{T}^2$ via $(x,v)\rightarrow(x,p)$, where $v=\overline{H}_p(x,p)$. Then we can list all the $\overline{L}$ as follows:
\begin{equation}
\overline{L}(x,v)=l(v,p_i^*)-d_i^{*\sigma}Z(x_1),\quad\text{(1-resonance),}
\end{equation}
\begin{equation}
\overline{\tilde{L}}(X,V)=\frac{1}{2}\langle V^t-(0,\frac{\omega_{i,2}}{\mu\iota_m\delta}),D^2h^{-1}(p_i)(V-\begin{pmatrix}
0\\ \frac{\omega_{i,2}}{\mu\iota_m\delta}
\end{pmatrix})\rangle-\tilde{Z}_i(X_1),
\end{equation}
\hspace*{\fill}(homogenized system of transitional segment),
\begin{equation}
\overline{L}(X,V)=\frac{1}{2}V^2-Z_1(X_1)-Z_2(X_2)-\varepsilon Z_3(X),
\end{equation}
\hspace*{\fill}(homogenized system of 2-resonance),
\begin{equation}
\overline{\overline{L}}(X,V)=\frac{1}{2}V^2-Z_1(X_1)-Z_2(X_2),\quad\text{(uncoupled system of 2-resonance).}
\end{equation}
Recall that these autonomous Lagrangians are all Tonelli and defined in the domains where canonical coordinations are valid respectively. For a given $\vec{h}=(0,h_2)\in H_1(\mathbb{T}^2,\mathbb{R})$, the minimizing measure $\overline{\mu}_h$ exists and supp$(\overline{\mu}_h)$ lays on the cylinder $\overline{N}_2$ ($\overline{\overline{N}}_2$ for $\overline{\overline{L}}$). Actually, $\overline{\mu}_h$ is uniquely ergodic with the periodic trajectory $\gamma_h$ as its support. This is because the strictly positive definiteness of $h(p)$ w.r.t. $p$.\\

Besides, we have
\[
\beta_{\overline{L}}(h)=\int \overline{L}d\overline{\mu}_h.
\]
Since $\alpha_{\overline{L}}(c)$ is the convex conjugation of $\beta_{\overline{L}}(h)$, we can find $c_h=(0,c_{h,2})\in D^{+}\beta_{\overline{L}}(h)\in H^1(\mathbb{T}^2,\mathbb{R})$ and
\[
\alpha_{\overline{L}}(c_h)=-\int \overline{L}-c_hd\overline{\mu}_h=\overline{H}\big{|}_{supp(\overline{\mu}_h)},
\] 
i.e. $\overline{\mu}_h$ is the $c_h-$minimizing measure.\\

Besides, we can see that $\tilde{\mathcal{A}}_{\overline{L}}(c_h)=\widetilde{\mathcal{M}}_{\overline{L}}(c_h)=$supp$(\overline{\mu}_h)$. Then $\widetilde{N}_{\overline{L}}(c_h)=\tilde{A}_{\overline{L}}(c_h)$ from \cite{B}. Since $\tilde{\mathcal{N}}_{L}(c_h)$ is upper semicontinuous as a set-valued function of $L$ (see subsection \ref{Mather theory}), then $\forall \epsilon_0>0$ sufficiently small, there exists $\delta(\epsilon_0)$ such that for $\|L-\overline{L}\|_{C^2}\leq\delta$, $\widetilde{\mathcal{N}}_{L}(c_h)\subset\mathcal{B}(\widetilde{\mathcal{N}}_{\overline{L}}(c_h),\epsilon_0)$. However, in the previous paragraph we have proved the persistence of wNHIC for $L$ conjugated to $H=\overline{H}+R$. As the wNHIC is the unique invariant set in its small neighborhood (the normal hyperbolic property), this implies that $\tilde{\mathcal{A}}_{L}(c_h)$ is still contained in the weak invariant cylinder $N_1$.\\

For the 1-resonant case, we can see that 
\begin{equation}\label{1-resonant Lagrangian}
L=\overline{L}+R(x,v,t),
\end{equation}
where $\|R(x,v,t)\|_{C^2,\mathcal{B}}\lessdot d_m^{\sigma-r-6}$. Owing to our self-similar structure, we can take $m\geq M\gg1$ sufficiently large such that $\widetilde{\mathcal{N}}_{L}(c_h)$ is in the wNHIC $N_1$. Actually, this is a typical {\it a priori} unstable case. the readers can find an alternative proof from \cite{BKZ} and \cite{Ch}.\\

For the transitional segment from 1-resonance to 2-resonance, we can see that
\begin{equation}\label{transitional Lagrangian}
L=\overline{\tilde{L}}+R(X,V,S),
\end{equation}
where $\|R\|_{C^2,\mathcal{B}}\lessdot\frac{1}{\mathbb{K}}$. Recall that $\tilde{H}$ is homogenized and the norm $\|\cdot\|_{C^2,\mathcal{B}}$ depends just on $(X,V)$ variables, but not on $S$. However, we needn't control the value of $\partial_{S}R$ in the Legendre transformation from $H$ to $L$. So we can take $\mathbb{K}$ {\it a priori} large such that $\widetilde{\mathcal{N}}_{L}(c_h)$ is in the wNHIC $N_1$ due to above analysis.
\begin{The}
Given $g_2=(0,1)\in H_1(\mathbb{T}^2,\mathbb{Z})$, there exists a wNHIC $N_2\subset T^*\mathbb{T}^2\times\mathbb{S}^1$ corresponding to it for the two cases above: 1-resonance and transitional segment. We can find a wedge region $\mathbb{W}_{g_2}^c\subset H^1(\mathbb{T}^2,\mathbb{R})$ corresponding to $[h_1,h_2]\subset H_1(\mathbb{T}^2,\mathbb{R})$ with $h_i=(0, h_{i,2})=\nu_i g_2$, $i=1,2$. $\forall c_h=(0,c_{h,2})\in\mathbb{W}_{g_2}$ for $h\in[h_1,h_2]$, we have $\tilde{\mathcal{A}}_{L}(c_h)\subset\widetilde{\mathcal{N}}_{L}(c_h)\subset N_2$. Besides, from the upper semicontinuous of Ma\~{n}\'e set, we can see that $\widetilde{\mathcal{N}}_{L}(c'_h)\subset N_2$ for $c'_h=(c'_{h,1},c_{h,2})\in\mathring{\mathbb{W}}_{g_2}$
\end{The}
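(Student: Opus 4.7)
The plan is to first establish the statement for the autonomous ``comparison'' Lagrangian $\overline{L}$ (either $\overline{L}=l(v,p_i^*)-d_i^{*\sigma}Z(x_1)$ in the $1$-resonant case, or the homogenized $\overline{\tilde{L}}$ on the transitional segment), and then to transfer it to the full Lagrangian $L=\overline{L}+R$ by means of the upper semicontinuity of the Ma\~n\'e set and the uniqueness-of-invariant-set property guaranteed by the normal hyperbolicity of the wNHIC $N_2$. Concretely: fix $\vec{h}=(0,h_2)\in H_1(\mathbb{T}^2,\mathbb{R})$ and observe that, thanks to the product/uncoupled structure of the principal part and the strict convexity of $h(p)$, the orbit foliation of $\overline{N}_2$ is made up of periodic orbits $\gamma_h$ with well-defined homology $h$; the unique $h$-minimizing measure $\overline{\mu}_h\in\mathfrak{M}_{inv}(\overline{L})$ is uniquely ergodic and supported on one such $\gamma_h\subset\overline{N}_2$.

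Next I would pass to the cohomology side via Legendre--Fenchel duality: choose $c_h=(0,c_{h,2})\in D^{+}\beta_{\overline{L}}(h)$, so that $\alpha_{\overline{L}}(c_h)=-\int\overline{L}-c_h\,d\overline{\mu}_h$ equals the energy of $\gamma_h$. Because of uniqueness of $\overline{\mu}_h$, one obtains $\widetilde{\mathcal{M}}_{\overline{L}}(c_h)=\tilde{\mathcal{A}}_{\overline{L}}(c_h)=\mathrm{supp}(\overline{\mu}_h)$. Since $\overline{H}$ is autonomous and the Aubry set reduces to a single periodic orbit, a standard argument (as in \cite{B}) upgrades this to $\widetilde{\mathcal{N}}_{\overline{L}}(c_h)=\tilde{\mathcal{A}}_{\overline{L}}(c_h)\subset\overline{N}_2$. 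Letting $h_2$ vary over $[h_{1,2},h_{2,2}]$, the associated $c_h$ trace out a $1$-parameter arc $\Gamma\subset H^1(\mathbb{T}^2,\mathbb{R})$; we take $\mathbb{W}_{g_2}^c$ to be an open wedge neighborhood of $\Gamma$ in $H^1(\mathbb{T}^2,\mathbb{R})$ whose width in the $c_1$-direction is dictated by the gap between $\alpha_{\overline{L}}$ restricted to $\Gamma$ and the energy of any other invariant set not on $\overline{N}_2$; that gap is $\mathcal{O}(1)$ by the hyperbolicity/convexity estimates from Section~\ref{NHIC and Aubry}.

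To pass from $\overline{L}$ to $L$, I would invoke the upper-semicontinuity of $(c,L)\mapsto\widetilde{\mathcal{N}}_{L}(c)$ stated earlier: in the $1$-resonant case (\ref{1-resonant Lagrangian}) the tail $\|R\|_{C^2,\mathcal{B}}\lessdot d_m^{\sigma-r-6}$ is made arbitrarily small by the self-similar structure with $m\ge M\gg1$, while in the transitional case (\ref{transitional Lagrangian}) one has $\|R\|_{C^2,\mathcal{B}}\lessdot 1/\mathbb{K}$ which we make as small as needed by choosing $\mathbb{K}$ a~posteriori large. Thus $\widetilde{\mathcal{N}}_{L}(c_h)\subset\mathcal{B}(\widetilde{\mathcal{N}}_{\overline{L}}(c_h),\epsilon_0)$ for any prescribed $\epsilon_0$. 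Now the key structural input is normal hyperbolicity: $N_2$ is the unique compact $\phi_L^t$-invariant set contained in a fixed tubular neighborhood of $\overline{N}_2$, so any invariant piece of $\widetilde{\mathcal{N}}_{L}(c_h)$ sitting in that neighborhood must lie in $N_2$. This yields $\tilde{\mathcal{A}}_{L}(c_h)\subset\widetilde{\mathcal{N}}_{L}(c_h)\subset N_2$. The final extension to $c'_h=(c'_{h,1},c_{h,2})$ in the interior $\mathring{\mathbb{W}}_{g_2}^c$ is then a direct application of upper semicontinuity in the $c$-variable together with the $\mathcal{O}(1)$ wedge width established above.

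The main technical obstacle I expect is the uniformity of the tail-size estimate across the infinite family $\{\Gamma_m^\omega\}$: the constant in the upper-semicontinuity statement depends on $\overline{L}$ (hence on $m$), so one must check that the homogenization and rescaling of Section~\ref{homogenize} rescale $\overline{L}$ to an $\mathcal{O}(1)$-bounded family, and simultaneously that $R$ stays $o(1)$ in the same rescaled $C^2$-norm. Both are guaranteed by the index inequality (\ref{index2}) together with $\sigma$ chosen sufficiently large and the weak-coupling condition \textbf{U4}, but the bookkeeping is the delicate point; the wedge $\mathbb{W}_{g_2}^c$ must be defined in the rescaled coordinates so that its width survives the inverse rescaling uniformly in $m$.
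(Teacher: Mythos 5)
Your proposal follows essentially the same route as the paper: establish the result for the autonomous comparison Lagrangian $\overline{L}$ via unique ergodicity of $\overline{\mu}_h$ on $\overline{N}_2$ and Legendre--Fenchel duality (so that $\widetilde{\mathcal{M}}=\tilde{\mathcal{A}}=\widetilde{\mathcal{N}}$ there), then transfer to $L=\overline{L}+R$ using upper semicontinuity of the Ma\~{n}\'e set in $L$ combined with the uniqueness of the invariant set in a neighborhood of the wNHIC, with the tail made small by taking $m\ge M\gg1$ (1-resonance) or $\mathbb{K}$ large (transitional segment), and finally extend to $\mathring{\mathbb{W}}_{g_2}^c$ by upper semicontinuity in $c$. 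Your closing remarks on the uniformity of the estimates across the self-similar family and on how the wedge width is controlled are consistent elaborations of what the paper leaves implicit.
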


Now we consider the 2-resonant case of $\omega_{m,1/2}$. We also need to consider the location of Aubry sets for $N_1$ cylinder in this case, as there exists a transformation of resonant lines. First, the uncoupled Lagrangian $\overline{\overline{L}}$ has two routes $\overline{\overline{\Upsilon}}_{g_2}^h\doteq\{(0,h_2)\in H_1(\mathbb{T}^2,\mathbb{R})\big{|}h_2\in[0,h_2^{\max}]\}$ and $\overline{\overline{\Upsilon}}_{g_1}^h\doteq\{(h_1,0)\in H_1(\mathbb{T}^2,\mathbb{R})\big{|}h_1\in[0,h_1^{\max}]\}$, of which we can find minimizing measures $\overline{\overline{\mu}}_{(0,h_2)}$ and $\overline{\overline{\mu}}_{(h_1,0)}$ respectively. Besides, they are uniquely ergodic with the supports lay on cylinders $\overline{\overline{N}}_2$ and $\overline{\overline{N}}_1$. Accordingly, we can find routes $\overline{\overline{\Upsilon}}_{g_2}^c\doteq\{(0,c_2)\in H^1(\mathbb{T}^2,\mathbb{R})\big{|}c_2\in[0,c_2^{\max}]\}$ and $\overline{\overline{\Upsilon}}_{g_1}^c\doteq\{(c_1,0)\in H^1(\mathbb{T}^2,\mathbb{R})\big{|}c_1\in[0,c_1^{\max}]\}$, of which $\overline{\overline{\mu}}_{(0,h_2)}$ is $(0,c_2)-$minimizing and $\overline{\overline{\mu}}_{(h_1,0)}$ is $(c_1,0)-$minimizing. Similarly as above, $\widetilde{\mathcal{N}}_{\overline{\overline{L}}}(c)=\widetilde{\mathcal{A}}_{\overline{\overline{L}}}(c)=\widetilde{\mathcal{M}}_{\overline{\overline{L}}}(c)\subset\overline{\overline{N}}_i$ for $c\in\overline{\overline{\Upsilon}}_{g_i}^c$, $i=1,2$.\\

Second, $\overline{L}$ can be considered as an autonomous perturbation of $\overline{\overline{L}}$ by taking $\varepsilon\ll1$ {\it a priori} small. From {\bf C4} we can see that $\min_{c}\alpha_{\overline{L}}(c)=0=\alpha_{\overline{L}}(0)$. Still by the upper semicontinuous of Ma\~{n}\'e set, $\widetilde{\mathcal{N}}_{\overline{L}}(c)$ is contained in a small neighborhood of $\widetilde{\mathcal{N}}_{\overline{\overline{L}}}(c)$, of which the neighborhood radius depends only on $\varepsilon$. Then we have $\widetilde{\mathcal{N}}_{\overline{L}}(c)\subset\overline{N}_i$ for $c\in\overline{\overline{\Upsilon}}_{g_i}^c$, $i=1,2$.\\

Recall that $\widetilde{\mathcal{N}}_{\overline{L}}(0)=\widetilde{\mathcal{A}}_{\overline{L}}(0)=\widetilde{\mathcal{M}}_{\overline{L}}(0)=\{0\}\subset T^*\mathbb{T}^2$. So we have a 2-dimensional flat $\overline{\mathbb{F}}$ containing $0\in H^1(\mathbb{T}^2,\mathbb{R})$ in it and $\widetilde{\mathcal{N}}(c)=\{0\}$, $\forall c\in\mathring{\overline{\mathbb{F}}}$. So we can find $(0,c_2^{\min})$ and $(c_1^{\min},0)$ in $\partial\mathbb{F}$ such that 
\[
\widetilde{\mathcal{N}}((0,c_2^{\min}))=\overline{\{0\}\cup\{\gamma_{g_2}\}},
\]
and
\[
\widetilde{\mathcal{N}}((c_1^{\min},0))=\overline{\{0\}\cup\{\gamma_{g_1}\}}.
\]
These are the end points of routes $\overline{\Upsilon}_{g_2}^c$ and $\overline{\Upsilon}_{g_1}^c$ of which Ma\~{n}\'e set is contained in the cylinders $\overline{N}_1$ or $\overline{N}_2$ respectively (see figure \ref{fig19}).
\begin{figure}
\begin{center}
\includegraphics[width=6cm]{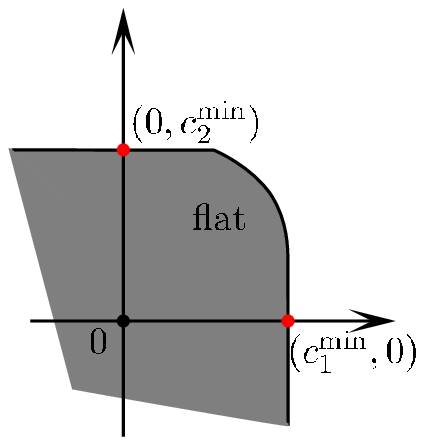}
\caption{ }
\label{fig19}
\end{center}
\end{figure}
As a further perturbation of $\overline{L}$, we have:
\begin{equation}\label{homogenized Lagrangian}
L=\overline{L}-R(X,V,S),
\end{equation}
where $\|R\|_{C^2,\mathcal{B}}\sim\mathcal{O}(\epsilon)$. Since we have proved that $N_2$ can be expanded to minus energy surfaces from Corollary \ref{wiggins time-dependent}, we can still find a route $\Upsilon_{g_2}^c=\{(0,c_2)\big{|}c_2\in[c_2^{*\min},c_2^{*\max}]\}$ of which the Ma\~{n}\'e set is contained in $N_2$. Besides, there is still a flat $\mathbb{F}^*$ containing $0\in H^1(\mathbb{T}^2,\mathbb{R})$ in it and $(0,c_2^{*\min})\in\partial\mathbb{F}^*$.\\

On the other side, we only proved the persistence of wNHIC $N_{1, E_1,E_2}$ with $E_1=2\epsilon^d$ in the previous paragraph. So we can find a lower bound $c_1^{**\min}$ and $\Upsilon_{g_1}^c=\{(c_1,0)\big{|}c_1\in[c_1^{**\min},c_1^{*\max}]\}$, of which $\widetilde{\mathcal{N}}(c)\subset N_{1, E_1,E_2}$. Notice that $(c_1^{**\min},0)$ doesn't lie on $\mathbb{F}^*$ (see Figure \ref{fig20}). \\

\begin{figure}
\begin{center}
\includegraphics[width=8cm]{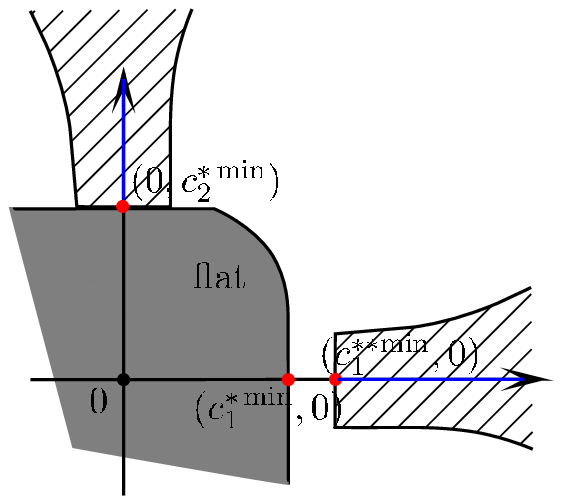}
\caption{ }
\label{fig20}
\end{center}
\end{figure}

We can find a $\overline{c}_1^{\min}$ corresponding to $c_1^{**\min}$ and $\alpha_{\overline{L}}((\overline{c}_1^{\min},0))=3\epsilon^d$. Without loss of generality, we assume ${\mu}_{(\overline{c}_1^{\min},0)}$ is the corresponding minimizing measure of $L$. Then we have $\alpha_{L}(\overline{c}_1^{\min},0)\geq 3\epsilon^d-\mathcal{O}(\epsilon)\gg \frac{5}{2}\epsilon^d\geq E_1$. So we can see that the minimizing measure $\mu_{(\overline{c}_1^{\min},0)}$ lays on the wNHIC $N_{1,E_1,E_2}$. In contrary, this supplies a upper bound of $\overline{c}_1^{\min}>c_1^{**\min}$.\\

Similarly as above, from the upper semicontinuous of $\widetilde{\mathcal{N}}_{L}(c)$ as a set-valued function of $c$, there exist two wedge set $\mathbb{W}_{g_1}^c$ and $\mathbb{W}_{g_2}^c$ satisfying
\[
\mathbb{W}_{g_1}^c\doteq\{(c_1,c_2^{w})\big{|}c_1\in[c_1^{**\min},c_1^{*\max}],\|c_2^w\|\leq\delta(c_1)\}
\]
and
\[
\mathbb{W}_{g_2}^c\doteq\{(c_1^{w},c_2)\big{|}c_2\in[0,c_2^{*\max}],\|c_1^w\|\leq\delta(c_2)\}.
\]
Here $\delta>0$ depending on $c_1$ and $c_2$ can be properly chosen, but we actually don't care the exact value of $\delta$.

\begin{The}\label{wedge}
For the case of 2-resonance, we can find 3-dimensional cylinder $N_i$ corresponding to $g_i\in H_1(\mathbb{T}^2,\mathbb{Z})$, $i=1,2$. There exists a wedge set $\mathbb{W}_{g_i}^c$ as is given above such that
\begin{itemize}
\item $\forall c\in\mathring{\mathbb{W}}_{g_i}$, $\tilde{\mathcal{A}}(c)\subset\widetilde{\mathcal{N}}(c)\subset N_{i}$.
\item the wedge set reaches to certain small neighborhood of flat $\mathbb{F}^*$ in the sense that
\[
\min_{c\in\mathbb{W}_{g_1}^c}\alpha(c)-\min_{c\in H^1(\mathbb{T}^2,\mathbb{R})}\alpha(c)\leq 3\epsilon^d,
\]
and
\[
\min_{c\in\mathbb{W}_{g_2}^c}\alpha(c)-\min_{c\in H^1(\mathbb{T}^2,\mathbb{R})}\alpha(c)=0.
\]
\end{itemize}
\end{The}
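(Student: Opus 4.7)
The plan is to obtain the wedges by successively applying the upper semicontinuity of the set-valued map $(c,L)\mapsto\widetilde{\mathcal{N}}(c)$, bootstrapping from the separable model $\overline{\overline{L}}$ through the coupled autonomous $\overline{L}$ to the full time-dependent $L$. At each stage the key input is the persistence of the relevant (w)NHIC proved in the previous subsection, which provides the unique invariant object in its neighborhood and so ``traps'' the Ma\~n\'e set.

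First I would analyse $\overline{\overline{L}}=\tfrac12|V|^2-Z_1-Z_2$. Separation of variables makes the $h$-minimizing measure on each route $\overline{\overline{\Upsilon}}^{h}_{g_i}$ uniquely ergodic, supported on the periodic orbits foliating $\overline{\overline{N}}_i$, so on the dual routes $\widetilde{\mathcal{N}}_{\overline{\overline{L}}}(c)=\widetilde{\mathcal{A}}_{\overline{\overline{L}}}(c)=\widetilde{\mathcal{M}}_{\overline{\overline{L}}}(c)\subset\overline{\overline{N}}_i$; the cohomology classes whose Ma\~n\'e set collapses to $\{0\}$ form an open flat $\overline{\mathbb{F}}\ni 0$. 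Passing to $\overline{L}=\overline{\overline{L}}-\varepsilon Z_3$ with $\varepsilon=1/\mathbb{L}\ll 1$ a priori small, upper semicontinuity pushes $\widetilde{\mathcal{N}}_{\overline{L}}(c)$ into a small neighborhood of its integrable counterpart; since by Proposition \ref{autonomous} the only invariant object there is the persistent NHIC $\overline{N}_i$, I obtain routes $\overline{\Upsilon}^{c}_{g_i}$ together with a flat $\mathbb{F}^{*}$ guaranteed by condition \textbf{C4}.

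For the passage from $\overline{L}$ to the full $L=\overline{L}-\epsilon R(X,V,S)$ with $\epsilon\ll\varepsilon$, I would separate the two homology classes. For $g_2$, Corollary \ref{wiggins time-dependent} extends $N_2$ into the negative-energy region, so upper semicontinuity at every $c\in\Upsilon^{c}_{g_2}$ confines $\widetilde{\mathcal{N}}_L(c)$ to $N_2$ all the way to $\partial\mathbb{F}^{*}$, yielding $\min_{\mathbb{W}^{c}_{g_2}}\alpha_L-\min\alpha_L=0$. The main obstacle is the $g_1$ side, where only the crumpled wNHIC $N_{1,E_1,E_2}$ with $E_1=2\epsilon^d$ $(d<1/3)$ is available, so the wedge cannot touch $\mathbb{F}^{*}$ exactly. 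To make this quantitative I would fix $\overline{c}_1^{\min}$ with $\alpha_{\overline{L}}(\overline{c}_1^{\min},0)=3\epsilon^d$; since $\|L-\overline{L}\|_{C^2,\mathcal{B}}\lesssim\epsilon\ll\epsilon^d$, the dual minimizing measure of $L$ sits at energy exceeding $\tfrac52\epsilon^d>E_1$ and so must lie on $N_{1,E_1,E_2}$. This yields the stated bound $\min_{\mathbb{W}^{c}_{g_1}}\alpha_L-\min\alpha_L\le 3\epsilon^d$.

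Finally I would thicken each route into a wedge by a last application of upper semicontinuity, this time in the variable $c$: for every $c$ on the route there is a transverse $\delta(c_i)$-neighborhood throughout which $\widetilde{\mathcal{N}}_L$ has not detached from $N_i$, and the union of these slices over the route is the required $\mathbb{W}^{c}_{g_i}$. The delicate point, which I would handle by exploiting the self-similar structure, is to keep the thickness $\delta$ uniform in $m$: the hyperbolicity estimates of Lemma \ref{NHIC time} and the perturbation estimates (\ref{peturbation estimation}) depend on $m$ only through the uniform constants prescribed by \textbf{U1}--\textbf{U7}, so the wedges of successive resonant steps have a common lower bound on their transverse width. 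The two bullets of Theorem \ref{wedge} then follow directly from the construction.
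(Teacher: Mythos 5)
Your proposal is correct and follows essentially the same route as the paper: bootstrapping from $\overline{\overline{L}}$ to $\overline{L}$ to $L$ via upper semicontinuity of the Ma\~n\'e set, using the persistence of $N_2$ into negative energy for the $g_2$ wedge, and using the comparison $\alpha_L(\overline{c}_1^{\min},0)\geq 3\epsilon^d-\mathcal{O}(\epsilon)>E_1$ to locate the bottom of the $g_1$ wedge on the crumpled cylinder $N_{1,E_1,E_2}$, then thickening the routes into wedges by upper semicontinuity in $c$. The only addition beyond the paper's argument is your explicit remark on uniformity of the wedge width in $m$, which the paper defers to the uniform conditions \textbf{U1}--\textbf{U7} and the self-similar structure.
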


Therefore, we can connect different $\mathbb{W}_{g_2}^c$ sets into a long `channel' according to $\Gamma_{m,1}^{\omega}$. Recall that here a homogenization method is involved in the cases of 2-resonance and transition part from 1-resonance to 2-resonance. But this approach is just a special kind symplectic transformation. The following subsection ensures the validity of all these properties for system $H$ in the original coordinations.

\subsection{Symplectic invariance of Aubry sets}
$\newline$

Let $\omega=dp\wedge dq$ be the symplectic 2-form of $T^*M$. The diffeomorphism 
\[
\Psi:T^*M\rightarrow T^*M,\quad via (q,p)\rightarrow(Q,P)
\]
is call {\bf exact}, if $\Psi^*\omega-\omega$ is exact 2-form of $T^*M$.
\begin{The}{\bf (Bernard \cite{B2})}
For the exact symplectic diffeomorphism $\Psi$ and $H:T^*M\rightarrow\mathbb{R}$ Tonelli Hamiltonian, 
\[
\alpha_{H}(c)=\alpha_{\Psi^*H}(\Psi^*c),\quad \widetilde{\mathcal{M}}_{H}(c)=\Psi(\widetilde{\mathcal{M}}_{\Psi^*H}(\Psi^*c)),
\] 
\[
\widetilde{\mathcal{A}}_{H}(c)=\Psi(\widetilde{\mathcal{A}}_{\Psi^*H}(\Psi^*c)),\quad\widetilde{\mathcal{N}}_{H}(c)=\Psi(\widetilde{\mathcal{N}}_{\Psi^*H}(\Psi^*c)),
\]
where $\forall c\in H^1(M,\mathbb{R})$.
\end{The}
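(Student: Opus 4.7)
The plan is to reduce the claim to the observation that an exact symplectic diffeomorphism changes every action integral only by a coboundary depending on endpoints. First I would record the defining identity: since $\Psi$ is exact, there exists a smooth function $S:T^*M\to\mathbb{R}$ with $\Psi^*\theta-\theta=dS$, where $\theta=p\,dq$ is the Liouville tautological 1-form. Consequently, for any absolutely continuous curve $t\mapsto(Q(t),P(t))$ and its image $(q(t),p(t))=\Psi(Q(t),P(t))$,
\[
\int_a^b p\,\dot q\,dt-\int_a^b P\,\dot Q\,dt = S(Q(b),P(b))-S(Q(a),P(a)).
\]
Combined with $\Psi^*H=H\circ\Psi$ and the fact that $\Psi$ conjugates the Hamiltonian flows, this yields the basic identity that along any pair of corresponding trajectories the twisted Lagrangian actions satisfy $A_{\Psi^*H,\,\Psi^*c}(\Psi^{-1}\!\circ\gamma)=A_{H,c}(\gamma)+S(\mathrm{end})-S(\mathrm{start})$, up to an additional coboundary coming from the choice of representative of $\Psi^*c$.

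Here $\Psi^*c\in H^1(M,\mathbb{R})$ must be defined through the canonical isomorphism $H^1(T^*M,\mathbb{R})\cong H^1(M,\mathbb{R})$ (induced by the zero section and the bundle projection): one lifts $c$ to $T^*M$ via a representative $\eta_c$, pulls back by $\Psi^*$, and projects the class back down to $M$. Once this identification is in place, the main body of the proof is routine. Averaging against any $\phi_L$-invariant probability measure kills the coboundary, so the pushforward $\mu\mapsto\Psi_*^{-1}\mu$ is an action-preserving bijection between $\mathfrak{M}_{\mathrm{inv}}(L_H)$ and $\mathfrak{M}_{\mathrm{inv}}(L_{\Psi^*H})$, which immediately yields both $\alpha_H(c)=\alpha_{\Psi^*H}(\Psi^*c)$ and $\widetilde{\mathcal{M}}_H(c)=\Psi(\widetilde{\mathcal{M}}_{\Psi^*H}(\Psi^*c))$. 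For the quantities $F_c$ and $h_c^\infty$, the endpoint boundary terms cancel pairwise in the static condition $A_c(\gamma)|_{[t,t']}+F_c((\gamma(t'),t'),(\gamma(t),t))=0$ and in the inequalities defining $c$-semi-static curves, so the correspondence $\gamma\leftrightarrow\Psi\circ\gamma$ transports static and semi-static orbits faithfully and produces the equalities for $\widetilde{\mathcal{A}}$ and $\widetilde{\mathcal{N}}$.

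The main obstacle I foresee is verifying that the cohomological pullback $\Psi^*c$ sketched above is well defined and transforms the twisted action in the expected way, since $\Psi$ in general mixes base and fiber coordinates and $c$ lives on $M$ rather than on $T^*M$. Independence of the representative $\eta_c$ is precisely where the exactness hypothesis is indispensable: the difference produced by replacing $\eta_c$ with $\eta_c+du$ must be absorbed into the primitive $S$ and disappear after averaging, and without exactness two choices of representative could yield cohomologically distinct classes on $M$, breaking the whole identification. The remainder of the argument is essentially bookkeeping once this pullback is set up correctly, together with a verification that trajectories of $L_{\Psi^*H}$ and $L_H$ match pointwise under the Legendre transform composed with $\Psi$.
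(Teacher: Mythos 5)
The paper offers no proof of this statement; it is quoted verbatim from Bernard \cite{B2}, so your attempt must be judged on its own. The measure-theoretic half of your argument is sound: since $\Psi$ conjugates the Hamiltonian flows and $\Psi^*\theta-\theta=dS$, the integral of the coboundary against any invariant measure vanishes, so push-forward of invariant measures is an action-preserving bijection, which does give $\alpha_H(c)=\alpha_{\Psi^*H}(\Psi^*c)$ and the statement for $\widetilde{\mathcal{M}}$. (A small correction: the well-definedness of $\Psi^*c$ under $H^1(T^*M,\mathbb{R})\cong H^1(M,\mathbb{R})$ has nothing to do with exactness --- pullback of cohomology classes is always well defined; exactness is needed so that the Liouville terms contribute only a boundary term rather than the extra class $[\Psi^*\theta-\theta]$.)

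The genuine gap is in the step where you pass to $\widetilde{\mathcal{A}}$ and $\widetilde{\mathcal{N}}$. The semi-static and static conditions are phrased through $h_c$ and $F_c$, which are \emph{infima over all absolutely continuous curves in the base $M$}. An exact symplectomorphism that mixes base and fiber induces no map on base curves: the correspondence ``$\gamma\leftrightarrow$ lift by the Legendre transform, apply $\Psi^{-1}$, project back to $M$'' exists only for Euler--Lagrange trajectories, not for the arbitrary comparison curves over which $h_c$ is minimized. Hence, even though the transported trajectory $\Gamma=\pi\circ\Psi^{-1}\circ(\gamma,p)$ has twisted action equal to that of $\gamma$ up to boundary terms, you cannot conclude that this action still \emph{realizes} $F_{\Psi^*c}((\Gamma(t),\tau),(\Gamma(t'),\tau'))$: the competitors for the two infima are not in bijection, and a priori some other base curve for $\Psi^*H$ could do strictly better. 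This is precisely why Bernard's theorem is non-trivial; his proof does not transport minimizing curves but instead rests on a purely symplectic characterization of the Aubry set in terms of exact Lagrangian pseudographs and the Lax--Oleinik semigroup acting on them, which is manifestly preserved by exact symplectomorphisms. Your ``boundary terms cancel pairwise'' assertion therefore needs to be replaced by (or reduced to) such a characterization; as written, the argument for $\widetilde{\mathcal{A}}$ and $\widetilde{\mathcal{N}}$ does not close.
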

Since the time-1 mapping of Hamiltonian flow $\phi_H^t\big{|}_{t=1}$ can be isotopic to identity and $\phi_{H}^{*t}\big{|}_{t=1}\omega=\omega$, then $\phi_H^t\big{|}_{t=1}$ is exact symplectic. So we can use this theorem and get the symplectic invariance of these sets via different KAM iterations and homogenization.

\section{Annulus of incomplete intersection}\label{Annulus}
\vspace{10pt}
From the previous section we can see that $\mathbb{W}_{g_1}^c$ can reach the place $\epsilon^d-$approaching to $\arg\min_c\alpha_L(c)$. But it's unclear whether it can reach the margin of $\mathbb{F}^*$. So we have to find a route in $H^1(\mathbb{T}^2,\mathbb{R})$ to connect $\mathbb{W}_{g_1}^c$ and $\mathbb{W}_{g_2}^c$, along which the diffusion orbits can be constructed connecting the two wNHICs $N_1$ and $N_2$. In contrary, this demands that there must be an annulus region $\mathbb{A}$ around $\mathbb{F}^*$ of the thickness greater than $\epsilon^d$ such that $\forall c,c'\in\mathbb{A}$ are $c-$equivalent (see Figure \ref{fig21}). This is the central topic we'll discuss in this section.\\

We mention that the `incomplete intersection' here means that the stable manifold of the Aubry set `intersects' the unstable manifold non-trivially but possibly incomplete. In other words, for each class in this region, the Ma\~{n}\'e set does not cover the whole configuration space. On the other side, \cite{L} supplies us with a mechanism to construct $c-$equivalent in this region, which we will discuss in the next section.

\subsection{$\alpha-$flat of system $\overline{H}$}
$\newline$

For $\overline{H}$ system, we have $\alpha_{\overline{H}}(0)=0$ and there exists a 2-dimensional flat $\mathbb{F}$ such that $0\in\mathring{\mathbb{F}}$. Besides, we have the following

\begin{Pro}\label{flat}
\begin{enumerate}
\item $\forall c\in\mathring{\mathbb{F}}$, $\widetilde{\mathcal{N}}(c)=\tilde{\mathcal{A}}(c)=\widetilde{\mathcal{M}}(c)=\{(0,0)\in T^*\mathbb{T}^2\}$.
\item There exist two sub-flat $\mathbb{E}_i\subset\partial\mathbb{F}$, such that $\forall c\in\mathbb{E}_i$
\[
\widetilde{\mathcal{M}}(c)=\{(0,0)\},\quad\widetilde{\mathcal{N}}(c)=\tilde{\mathcal{A}}(c)=\{(0,0)\}\cup\{\gamma_{g_i}\},\quad i=1,2.
\]
\item $\forall c\in\partial\mathbb{F}$, $\{(0,0)\}\subset\widetilde{\mathcal{M}}(c)$ and $\widetilde{\mathcal{N}}(c)\setminus\{(0,0)\}\neq\emptyset$.
\item $\forall c\in\partial\mathbb{F}$, if we can find more than one ergodic minimizing measure $\mu_c$ and $\mu'_c$, there exists $\gamma_c\subset\widetilde{\mathcal{N}}(c)$ connecting supp$\mu_c$ and supp$\mu'_c$. 
\end{enumerate}
\end{Pro}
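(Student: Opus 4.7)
The plan hinges on the mechanical structure of $\overline{H}(X,Y) = \tfrac12|Y|^2 + U(X)$ with $U := Z_1 + Z_2 + \varepsilon Z_3$; by \textbf{C4}, $U$ attains a unique non-degenerate maximum $U(0) = 0$ and the hyperbolic fixed point $(0,0) \in T^*\mathbb{T}^2$ is the only critical point of $\overline{H}$, with critical value $0$. The conjugate Lagrangian $\overline{L}(X,V) = \tfrac12|V|^2 - U(X)$ is non-negative, the Dirac $\delta_{(0,0)}$ is $\phi_{\overline L}$-invariant with $\int\overline L\,d\delta_{(0,0)}=0$, and consequently $\alpha_{\overline L}(c) \geq 0$ for every $c$, with $\alpha_{\overline L}(0)=0$. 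Thus $\mathbb{F} = \{\alpha_{\overline L} = 0\}$ is a non-empty $\alpha$-flat and $\delta_{(0,0)}$ realises $(0,0)\in\widetilde{\mathcal M}(c)$ throughout $\mathbb{F}$, which supplies the first half of (3).

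To establish (1) and identify the faces in (2), I will compute the $c$-actions of the four homoclinics $\gamma_{\pm g_i}$ ($i=1,2$) produced by Proposition \ref{autonomous}. Since $\gamma_{g_i}(t) \to (0,0)$ exponentially as $t\to\pm\infty$ and $\overline L(0,0)=0$, the integral
\[
A_i := \int_{-\infty}^{\infty}\overline L\bigl(\gamma_{g_i}(t),\dot\gamma_{g_i}(t)\bigr)\,dt \;=\; \int_{\gamma_{g_i}}\!\sqrt{-2U(X)}\,\lvert dX\rvert
\]
converges and is strictly positive (non-triviality of the loop). Closedness of $\eta_c$ gives $\int\eta_c(\dot\gamma_{g_i})\,dt = \langle c,g_i\rangle$, so $\gamma_{g_i}$ contributes $A_i-\langle c,g_i\rangle$ to the Mañé potential $\phi_c((0,0),(0,0))$. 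Since $(0,0)$ belongs to the projected Aubry set for every $c\in\mathbb{F}$, $\phi_c((0,0),(0,0))=0$, which forces the linear inequality $\langle c,g_i\rangle\leq A_i$; by the time-reversal symmetry of the mechanical system, also $\langle c,-g_i\rangle\leq A_i$. These four inequalities carve $\mathbb{F}$ out as a convex polytope with boundary lying in their equality loci.

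For (2) set $\mathbb{E}_i := \mathbb{F}\cap\{\langle c,g_i\rangle = A_i\}$ (the faces for $-g_i$ being treated symmetrically). On $\mathbb{E}_i$ the homoclinic $\gamma_{g_i}$ has zero $c$-action and is therefore $c$-semi-static, so $\{(0,0)\}\cup\{\gamma_{g_i}\}\subseteq\widetilde{\mathcal N}(c)$; equality follows from the observation that the only bi-infinite orbits on the separatrix surface $\{\overline H=0\}$ are the fixed point and the four $\pm g_i$-homoclinics (a consequence of $U$ having a unique non-degenerate maximum and of the uniqueness in Proposition \ref{autonomous}), and on $\mathbb{E}_i$ only $\gamma_{g_i}$ realises zero $c$-action. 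The homoclinics support no invariant probability measure other than $\delta_{(0,0)}$, giving $\widetilde{\mathcal M}(c)=\{(0,0)\}$. The same exhaustion proves (1): for $c\in\mathring{\mathbb F}$ all four inequalities are strict, no homoclinic is $c$-semi-static, and $\widetilde{\mathcal N}(c)=\widetilde{\mathcal A}(c)=\widetilde{\mathcal M}(c)=\{(0,0)\}$. Assertion (3) is then immediate from the polytope description, since every $c\in\partial\mathbb{F}$ lies on at least one equality locus and thus admits some $\gamma_{\pm g_i}$ as semi-static.

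For (4) I invoke the classical heteroclinic connection principle in Mather--Mañé theory: whenever $\widetilde{\mathcal A}(c)$ decomposes into distinct Aubry classes, the Mañé set contains $c$-semi-static orbits whose $\alpha$-limit and $\omega$-limit lie in different classes (cf.\ the Remark on finite covering spaces in \S\ref{Mather theory}). Applied at $c\in\partial\mathbb{F}$, any ergodic minimiser besides $\delta_{(0,0)}$ must be supported on a closed orbit bifurcating from the fixed point inside one of the NHICs $\overline N_i^{\pm}$, and the connecting orbit supplied by the principle is necessarily one of the $\gamma_{\pm g_i}$ by the uniqueness part of Proposition \ref{autonomous}. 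The main obstacle I expect is the exhaustion step used in (1)--(2): ruling out bi-infinite orbits on $\{\overline H=0\}$ other than the fixed point and the four homoclinics. This will be handled by combining the phase-portrait structure of the zero energy surface of a Tonelli mechanical system whose potential has a unique non-degenerate maximum with the uniqueness of the $g_i$-homoclinics already established in Proposition \ref{autonomous}.
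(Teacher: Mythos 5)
Your strategy (exploit the mechanical structure, the non-negativity of $\overline{L}$, linear inequalities coming from homoclinic actions, and the Contreras--Paternain connecting result for (4)) is genuinely different from the paper, whose proof consists of citations: \cite{Ms} for (1), the preceding subsection for (2), \cite{Z} for (3) and \cite{Con} for (4). Two steps in your version fail, however. First, the four inequalities $|\langle c,g_i\rangle|\le A_i$ do \emph{not} carve out $\mathbb{F}$: the flat is the intersection of the half-spaces $\langle c,h\rangle\le A_h^{\min}$ over \emph{all} integer homology classes $h$, where $A_h^{\min}$ is the minimal zero-energy (Jacobi) action in class $h$, and the classes $(\pm1,\pm1)$ can contribute genuine additional edges once $\varepsilon Z_3$ breaks the exact additivity $A_{(1,1)}^{\min}=A_1+A_2$ of the uncoupled system. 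This is precisely why the paper later proves that $\partial\mathbb{F}$ may be a hexagon or an octagon and imposes {\bf U7} to control the $(1,1)$- and $(-1,1)$-homoclinics. Consequently your argument for (3) has a hole: a class $c$ lying on a $(1,1)$-edge of $\partial\mathbb{F}$ satisfies all four of your inequalities strictly, none of $\gamma_{\pm g_1},\gamma_{\pm g_2}$ is $c$-semi-static, and the nontrivial element of $\widetilde{\mathcal{N}}(c)$ must be a minimizing homoclinic in class $(1,1)$, whose existence is exactly the content of \cite{Z} and is not supplied by Proposition \ref{autonomous}.

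Second, the exhaustion step you flag as the main obstacle is not repairable as stated: it is false that the only bi-infinite orbits on $\{\overline{H}=0\}$ are the fixed point and the four $\pm g_i$-homoclinics. That level set is three-dimensional, $\overline{W}^u\cap\overline{W}^s$ contains homoclinics of other homology types (Section 5 of the paper works explicitly with the $(1,1)$- and $(-1,1)$-types), and a general orbit of the level set need not lie on $\overline{W}^u\cup\overline{W}^s$ at all. What rescues (1) and the equality in (2) is that only \emph{semi-static} orbits need to be classified: the $\alpha$- and $\omega$-limit sets of a $c$-semi-static orbit lie in $\widetilde{\mathcal{A}}(c)$; one first checks $\widetilde{\mathcal{A}}(c)=\{(0,0)\}$ (for $x\neq0$ every loop at $x$ has $c$-action bounded below by a positive constant, e.g.\ twice the Jacobi distance from $x$ to the origin in the contractible case), so every nontrivial semi-static orbit is a homoclinic whose class $h$ must satisfy $\langle c,h\rangle=A_h^{\min}$, and on $\mathring{\mathbb{F}}$ all of these inequalities are strict. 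Alternatively one quotes Massart's theorem as the paper does: on the relative interior of a face the Ma\~{n}\'e set is contained in the Aubry set of every class of the face, and $\widetilde{\mathcal{A}}(0)=\{(0,0)\}$ is immediate because $\overline{L}\ge0$ vanishes only at the fixed point. Your treatment of (4) coincides with the paper's citation of \cite{Con}; the added assertions that a second ergodic minimizer must sit on a periodic orbit inside $\overline{N}_i^{\pm}$ and that the connecting orbit must be one of the $\gamma_{\pm g_i}$ are neither needed for the statement nor justified.
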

\begin{proof}
1. A direct cite of \cite{Ms}.

2. We can see this point from the analysis of previous subsection.

3. See \cite{Z} for the proof.

4. A direct cite of \cite{Con}.
\end{proof}

In the universal covering space $\mathbb{R}^2$, we consider the elementary weak KAM solution $u_{(0,0)}^{\pm}$ corresponding to $0\in\mathbb{R}^2$, which is the projection of hyperbolic fixed point $(0,0)\in T^*\mathbb{T}^2$. Since $\varepsilon\ll1$ is sufficiently small, the graph part of $\overline{W}_{(0,0)}^{s,u}$ can cover a whole basic domain of $\mathbb{R}^2$, i.e. $\overline{W}_{(0,0)}^{s,u}=\{(x,dS_{(0,0)}^{s,u}(x))\big{|}x\in\Omega\}$. Here $\Omega$ is the maximal domain of which $\overline{W}_{(0,0)}^{s,u}$ is a graph. We can see that there exists a small constant $ \delta>0$ depending on $\varepsilon$ and $[-\pi-\delta,\pi+\delta]\times[-\pi-\delta,\pi+\delta]\subset\Omega$.

\begin{Lem}
$\forall x\in\Omega$, $(x,du_{(0,0)}^{\pm})=(x,dS_{(0,0)}^{s,u})$, i.e. $u_{(0,0)}^{\pm}$ is differentiable in $\Omega$. 
\end{Lem}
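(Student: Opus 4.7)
The plan is to exploit two facts: (i) $\overline{W}^{u}_{(0,0)}$ (resp.\ $\overline{W}^{s}_{(0,0)}$) is an exact Lagrangian graph over $\Omega$ generated by $S^{u}_{(0,0)}$ (resp.\ $S^{s}_{(0,0)}$), with the tautological property that the backward (resp.\ forward) flow on it is forward-asymptotic to $(0,0)$; and (ii) by Proposition \ref{flat} the Aubry set for $c=0$ is the single point $\{(0,0)\}$, so all semi-static orbits must accumulate on it. Combining (i) and (ii) will force the backward-calibrated curve at each $x\in\Omega$ to be unique, and standard weak KAM regularity will then upgrade this uniqueness to differentiability.

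First I would fix $x\in\Omega$ and consider the orbit $(\gamma(t),p(t))$ of $\overline{H}$ with $(\gamma(0),p(0))=(x,dS^{u}_{(0,0)}(x))$. Since this point lies on $\overline{W}^{u}_{(0,0)}$ and the unstable manifold is invariant and asymptotic to $(0,0)$ as $t\to-\infty$, the exact Lagrangian relation $p(t)=dS^{u}_{(0,0)}(\gamma(t))$ together with $\overline{H}\equiv 0=\alpha(0)$ on $\overline{W}^{u}_{(0,0)}$ implies that $\gamma|_{(-\infty,0]}$ realizes the action $u^{-}_{(0,0)}(x)-u^{-}_{(0,0)}(\gamma(t))=A_{0}(\gamma)|_{[t,0]}$; i.e.\ it is backward-calibrated for $u^{-}_{(0,0)}$. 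In particular $(x,dS^{u}_{(0,0)}(x))$ belongs to the super-differential graph of $u^{-}_{(0,0)}$ at $x$, using the standard identification $\partial_{v}\overline{L}(\gamma(0),\dot\gamma(0))=p(0)$.

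Next I would prove uniqueness of the calibrating curve. Any backward-calibrated orbit $\tilde\gamma$ through $x$ for $u^{-}_{(0,0)}$ is semi-static, hence its $\alpha$-limit set lies in the Aubry set; by Proposition \ref{flat}(1) this forces $\tilde\gamma(t)\to 0$ in $\mathbb{T}^{2}$ and $\dot{\tilde\gamma}(t)\to 0$ as $t\to-\infty$. By the local stable manifold theorem at the hyperbolic fixed point $(0,0)$, the lift of $\tilde\gamma$ to $T^{*}\mathbb{R}^{2}$ enters the local unstable manifold of $(0,0)$ and, by unique continuation of the Hamiltonian flow, coincides with the global unstable manifold $\overline{W}^{u}_{(0,0)}$ on all of $\Omega$. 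Hence the momentum at $x$ is forced to be $dS^{u}_{(0,0)}(x)$, so the calibrating curve and its associated momentum are unique.

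Finally I invoke the standard fact from Fathi's theory that if $u^{-}_{(0,0)}$ admits a unique backward-calibrated curve through $x$, then it is differentiable at $x$ and $du^{-}_{(0,0)}(x)=\partial_{v}\overline{L}(x,\dot\gamma(0))=dS^{u}_{(0,0)}(x)$; the analogous argument applied to the time-reversed Lagrangian $\tilde{L}$ together with the symmetry identity $u^{+}_{(0,0)}=-\widetilde{u^{-}_{(0,0)}}$ yields differentiability of $u^{+}_{(0,0)}$ with $du^{+}_{(0,0)}(x)=dS^{s}_{(0,0)}(x)$. The main technical point to watch is step three: one must verify that the $\alpha$-limit convergence of $\tilde\gamma$ to $(0,0)$ is strong enough (in phase space, not just in configuration space) to put the orbit in the \emph{local} unstable manifold, which is where the dichotomy between stable/unstable behavior of hyperbolic fixed points is exploited; this is however straightforward since the energy relation $\overline{H}(\tilde\gamma,\tilde p)=0$ forces $\tilde p(t)\to 0$ whenever $\tilde\gamma(t)\to 0$.
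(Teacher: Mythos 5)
Your overall route --- prove uniqueness of the backward calibrated curve through each $x\in\Omega$, then invoke Fathi's criterion that uniqueness of the calibrated curve implies differentiability --- is genuinely different from the paper's. The paper instead starts from the a.e.\ differentiability of the semiconcave function $u^{-}_{(0,0)}$, shows that at differentiable points the calibrated orbit lies on the \emph{graph part} of $\overline{W}^{u}_{(0,0)}$, and then uses the characterization of reachable gradients $D^{*}u^{-}_{(0,0)}(x)$ as limits of $du^{-}_{(0,0)}(x_n)$ over differentiable points, together with the $C^{r}$ smoothness of the unstable manifold, to collapse $D^{*}u^{-}_{(0,0)}(x)$ to a single point. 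Your scheme could in principle work, but as written it has a gap at its decisive step.

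The gap is this: from the fact that the $\alpha$-limit of a backward calibrated orbit $\tilde\gamma$ is the hyperbolic fixed point you correctly conclude that the lifted orbit lies on $\overline{W}^{u}_{(0,0)}$, but you then jump to ``the momentum at $x$ is forced to be $dS^{u}_{(0,0)}(x)$''. That conclusion requires the orbit to sit, at time $0$, on the graph part of $\overline{W}^{u}_{(0,0)}$ over $\Omega$. The unstable manifold is a two-dimensional Lagrangian submanifold which in general folds beyond the maximal graph domain $\Omega$ (this is exactly why the paper keeps distinguishing its ``graph part''), and a folded sheet may lie above points of $\Omega$ with a different covector; equivalently, the projected calibrated curve could leave $\Omega$ and return to $x$ along such a sheet. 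Merely lying on $W^{u}$ therefore does not pin down the momentum at $x$, and ``unique continuation of the Hamiltonian flow'' does not help, since all sheets belong to the same invariant manifold. This is precisely the point on which the paper's proof spends its effort: it invokes the absence of focal points along the open part of a calibrated curve (Theorem 6.3.6 of Cannarsa--Sinestrari) to obtain twice differentiability of $u^{-}_{(0,0)}$ along the orbit and derives a contradiction with the failure of the graph property at $\partial\Omega$, thereby forcing the calibrated orbit to remain in the graph part. Your proposal contains no substitute for this step; the ``technical point'' you flag at the end (phase-space versus configuration-space convergence to the fixed point) is the easy part and not where the difficulty lies. A secondary remark: your first paragraph, asserting that the orbit through $(x,dS^{u}_{(0,0)}(x))$ is calibrated for $u^{-}_{(0,0)}$, only establishes calibration by $S^{u}_{(0,0)}$ and is essentially the statement to be proved; fortunately it is not needed once the uniqueness step is repaired.
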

\begin{proof}
We just prove the Lemma for $u_{(0,0)}^{-}$ and the case $u_{(0,0)}^{+}$ can be proved in the same way. Because $u_{(0,0)}^{-}$ is linear semi-concave (SCL) from \cite{Ca}, so it's differentiable for a.e. $x\in\Omega$. If $x\in\Omega$ is a differentiable point, $(x,du_{(0,0)}^{-}(x))$ will decide a backward semistatic trajectory $\gamma_{(0,0)}^{-}(t)$ which trends to $(0,0)$ as $t\rightarrow-\infty$. Then $(\gamma_{(0,0)}^{-}(t),\dot{\gamma}_{(0,0)}^{-}(t))\subset\overline{W}_{(0,0)}^u$ for $t\in(-\infty,0]$, with $\gamma_{(0,0)}^{-}(0)=x$. Notice that there may be some $t_0\in(-\infty,0]$ existing and $y=\gamma_{(0,0)}^{-}(t_0)\in\partial\Omega$ (see Figure \ref{fig22}). But there exists no focus locus for $t\in(-\infty,0)$ in the set $\{(\gamma_{(0,0)}^{-}(t),\dot{\gamma}_{(0,0)}^{-}(t))\}$ (see Theorem 6.3.6 of \cite{Ca}). So $u_{(0,0)}^{-}$ is $C^r$ differentiable at the set $\{(\gamma_{(0,0)}^{-}(t),\dot{\gamma}_{(0,0)}^{-}(t))\big{|}t\in(-\infty,0)\}$. But on the other side, we can see that $D^2u_{(0,0)}^{-}$ doesn't exist because the invalidity of graph property. This contradiction means that $y$ doesn't exist and $\{(\gamma_{(0,0)}^{-}(t),\dot{\gamma}_{(0,0)}^{-}(t))\}$ lays on the graph part of $\overline{W}_{(0,0)}^u$ for $t\in(-\infty,0]$.\\

If there exists any $x$  of which $u_{(0,0)}^{-}$ is not differentiable, from \cite{Ca}, for arbitrary reachable gradient $p\in D^*u_{(0,0)}^{-}(x)$ we can find a sequence of differentiable points $x_n\rightarrow x$ and $(x_n, du_{(0,0)}^{-}(x_n))\rightarrow(x,p)$, where $\{x_n\}\subset\Omega$ for $n\in\mathbb{N}$. Then $(x,p)$ lies on the graph part of $\overline{W}_{(0,0)}^u$. But from the invariant manifold theorem $\overline{W}_{(0,0)}^u$ is $C^r-$differentiable, so $D^*u_{(0,0)}^{-}(x)$ is a single-point set and $u_{(0,0)}^{-}$ is differentiable at this point. 
\end{proof}
\begin{figure}
\begin{center}
\includegraphics[width=8cm]{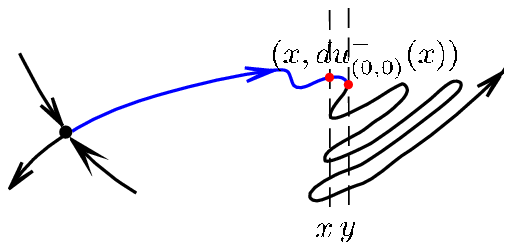}
\caption{ }
\label{fig22}
\end{center}
\end{figure}
Based on this Lemma, we can prove the following
\begin{The}
$\forall c\in\partial\mathbb{F}$, $\widetilde{\mathcal{M}}(c)=\{(0,0)\in T^*\mathbb{T}^2\}$, i.e. there doesn't exist new minimizing measures except the one supported on the hyperbolic fixed point.
\end{The}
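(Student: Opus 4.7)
The plan is a contradiction argument exploiting the characterization of $c$-semistatic orbits supplied by the preceding Lemma, together with a standard recurrence argument showing that the only ergodic invariant probability measure carried by a homoclinic loop of a hyperbolic fixed point is the Dirac mass at that point.

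First I would observe that since $c\in\partial\mathbb{F}$, we have $\alpha_{\overline H}(c)=0$, so the support of any $c$-minimizing measure lies on the zero energy level $\{\overline H=0\}$. Because $Z_1+Z_2+\varepsilon Z_3$ attains its \emph{unique} maximum $0$ at $(0,0)$ (by \textbf{C4} and \textbf{U3'}), the only critical point of $\overline H$ on this level is the hyperbolic fixed point; every other orbit in $\{\overline H=0\}$ has a non-vanishing velocity field.

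Second, I would upgrade the differentiability Lemma to describe $\widetilde{\mathcal M}(c)$. Lift to the universal cover $\mathbb{R}^2$. Any trajectory $(\gamma(t),\dot\gamma(t))$ in the support of a $c$-minimizing measure belongs to $\widetilde{\mathcal A}(c)\subset\widetilde{\mathcal N}(c)$ and is in particular $c$-semistatic. Using the graph property of $\overline W^{s,u}_{(0,0)}$ on $\Omega$ (which covers a basic domain thanks to $\varepsilon\ll1$), and repeating the focus-locus/semiconcavity argument in the Lemma both forward and backward in time for every lifted Aubry orbit, I would conclude that such a lifted orbit sits on $\overline W^u_{(0,0)}\cap\overline W^s_{(\vec n,0)}$ for some $\vec n\in\mathbb Z^2$. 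Hence in $T^*\mathbb T^2$ the projection is either the fixed point itself or one of the $g_1$-/$g_2$-homoclinic loops to $(0,0)$ whose existence was established in Proposition \ref{autonomous}. In other words, $\widetilde{\mathcal A}(c)\subset\{(0,0)\}\cup\bigcup_{i}\gamma_{g_i}$.

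Third, I would kill the possibility that a minimizing measure charges any homoclinic loop. Suppose $\mu_c$ is an ergodic $c$-minimizing measure with $\mu_c\neq\delta_{(0,0)}$. Then $\mu_c$ must assign positive mass to $\bigcup_i\gamma_{g_i}$. Pick any neighborhood $U$ of $(0,0)$ in $T^*\mathbb T^2$; by hyperbolicity one can arrange that the passage time of a homoclinic orbit through $T^*\mathbb T^2\setminus U$ is a finite number $\tau(U)$, independent of the particular crossing, while the total time asymptotically spent on the loop is infinite. Hence along every trajectory on $\{(0,0)\}\cup\gamma_{g_i}$ the asymptotic time fraction spent outside $U$ equals $0$. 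Birkhoff's ergodic theorem applied to the indicator $\mathbf 1_{U^c}$ and an invariant measure-generic point (which must lie in $\mathrm{supp}(\mu_c)$) then yields $\mu_c(U^c)=0$, and since $U$ was arbitrary, $\mu_c=\delta_{(0,0)}$, contradicting our assumption. Therefore $\widetilde{\mathcal M}(c)=\{(0,0)\}$.

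The principal technical point, and the step I expect to require the most care, is the second one: extending the Lemma's graph-and-semiconcavity argument so that it applies to every $c$-semistatic orbit for $c\in\partial\mathbb F$ (not merely to the backward orbits through differentiable points of the elementary weak KAM solution $u_{(0,0)}^-$). One has to verify that for $c\in\partial\mathbb F$ the $c$-minimizing semistatic orbits are still pinned to the (un)stable manifold of $(0,0)$, which rests on the uniqueness of the critical point on $\{\overline H=0\}$ and on the observation that any lifted semistatic orbit that failed to approach a lift of $(0,0)$ either in forward or in backward time would, by compactness modulo $\mathbb Z^2$ and the absence of other critical points on the energy level, have to accumulate on a non-trivial recurrent set of $\phi_{\overline H}^t$ in $\{\overline H=0\}$; such a set does not exist, since the autonomous mechanical structure together with the monotonicity provided by the energy conservation and the fact that $(0,0)$ is the only zero of the gradient on $\{\overline H=0\}$ precludes it. Once this is in place, steps one and three are routine.
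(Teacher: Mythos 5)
Your overall strategy is genuinely different from the paper's, and the step you yourself flag as delicate is where it breaks down. The paper does not attempt to show that every Aubry orbit is homoclinic. Instead it argues by contradiction: if an ergodic $c$-minimizing measure $\mu_c\neq\delta_{(0,0)}$ existed, item (4) of Proposition \ref{flat} (the Contreras--Paternain connecting result) would produce a single $c$-semistatic orbit joining $\{(0,0)\}$ to $\mathrm{supp}(\mu_c)$; since that orbit is backward semistatic on every ray $(-\infty,t_0]$ and is calibrated by the elementary weak KAM solution $u^{-}_{(0,0)}$ of the fixed-point class, the preceding Lemma pins it to the graph part of $\overline{W}^u_{(0,0)}$, whose only invariant subset is the fixed point --- contradiction. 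Your third step (no ergodic invariant measure other than $\delta_{(0,0)}$ lives on a homoclinic loop) is correct and standard, but you only need it because you first try to prove the much stronger containment $\tilde{\mathcal{A}}(c)\subset\{(0,0)\}\cup\bigcup_i\gamma_{g_i}$.

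The gap is in your second step. First, the Lemma you invoke concerns the elementary weak KAM solution of the Aubry class of $(0,0)$; an orbit in the support of a hypothetical second ergodic minimizing measure lies in a different Aubry class and is calibrated by that class's own elementary solutions, so the graph/semiconcavity argument does not transfer to it ``both forward and backward in time'' without further justification. Second, and more seriously, your fallback claim --- that a semistatic orbit not asymptotic to $(0,0)$ would have to accumulate on a non-trivial recurrent set of $\phi^{t}_{\overline{H}}$ in $\{\overline{H}=0\}$, and that no such set exists by ``energy conservation and uniqueness of the critical point'' --- is unjustified and false in general. The critical level $\{\overline{H}=0\}$ of a two-degree-of-freedom mechanical system is three-dimensional away from $(0,0)$, energy conservation provides no monotone quantity along orbits inside a fixed level, and such levels can perfectly well carry periodic or otherwise recurrent orbits. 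What excludes these from the Mather set is the variational (minimizing) structure, not the ODE structure of the level set; this is exactly what the paper's route through the connecting orbit of Proposition \ref{flat}(4) and the graph property of $\overline{W}^u_{(0,0)}$ supplies. To repair your proof you would either have to adopt that route or give an independent argument that every static orbit for $c\in\partial\mathbb{F}$ is biasymptotic to the fixed point.
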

\begin{proof}
If $\exists c\in\partial\mathbb{F}$ such that $\widetilde{\mathcal{M}}(c)\setminus\{(0,0)\}\neq\emptyset$, there must be an ergodic measure $\mu_c$ with $\rho(\mu_c)\neq0$ and supp$(\mu_c)\cap\{(0,0)\}\neq\emptyset$. From (4) of Proposition \ref{flat}, we can find a c-semistatic orbit $\gamma_{c}$ connecting supp$(\mu_c)$ and $\{(0,0)\}$. Then $\forall t_0\in(-\infty,+\infty)$, $\gamma_c:(-\infty,t_0]$ is of course a backward semistatic orbits. Then $\{(\gamma_c(t),\dot{\gamma}_c(t))\}$ lies on the graph part of $\overline{W}_{(0,0)}^u$ for all $t\in(-\infty,t_0]$ from the above Lemma. On the other side, the hyperbolic fixed point is the only invariant set of $\overline{W}_{(0,0)}^u\big{|}_{\Omega}$ and $t_0$ is arbitrarily chosen, so $\mu_c$ only can be the fixed point itself.
\end{proof}
Based on this Theorem, we can use the Theorem 3.2 of \cite{Ch} and see that $\mathbb{F}$ is actually a polygon with finite edges. Besides, $\mathbb{F}$ is of central symmetry as $\overline{H}$ is a mechanical system. Then from (3) of Proposition \ref{flat} we can see that $\widetilde{\mathcal{N}}(c)=\tilde{\mathcal{A}}(c)$ and there must be homoclinic orbits come out for $c\in\partial\mathbb{F}$. Without loss of generality, we can assume the edges of $\mathbb{F}$ as $\mathbb{E}_i^{\pm}$ and $\forall c\in\mathbb{E}_i^{\pm}$, there will be homoclinic orbits coming out with homologgy class just being $\pm g_i$, $i=1,2\cdots,m$. \\

Actually, we can prove in the following that the only possible homology classes of homoclinic orbits are $(0,1)$, $(1,0)$, $(1,1)$ and $(-1,1)$. Since the existence of $(0,1)-$ and $(1,0)-$homoclinic orbit has been proved in the previous subsection, we only need to prove the existence of $(1,1)-$type and $(-1,1)-$type can be sealed with in the same way. \\

Suppose $\vec{n}=(n_1,n_2)$ and $\gamma_n$ is a $n-$type homoclinic orbit. Accordingly, we have $\mathbb{E}_n$ as an edge of $\mathbb{F}$ and $\gamma_n\subset\mathcal{N}(c)$, $\forall c\in\mathbb{E}_n$. Then $\{(\gamma_n(t),\dot{\gamma}_n(t))\}$ is contained in the graph part of $\overline{W}_{(0,0)}^u$, for $t\in(-\infty,t_0]$. By taking $\varepsilon\ll1$ sufficiently small, $\Omega$ can be chosen large enough such that $\Omega\cap\mathcal{B}((2\pi,2\pi),\delta)$, $\Omega\cap\mathcal{B}((2\pi,0),\delta)$ and $\Omega\cap\mathcal{B}((0,2\pi),\delta)$ are all nonempty. Here $\Omega$ is the definition domain of $\overline{W}_{(0,0)}^u$ and $\delta=\delta(\lambda)$ is the radius of the  neighborhood of the fixed point, in which the normal form (\ref{linear tonelli}) is valid (see figure \ref{fig23}). There exists $[t_1,t_2]$ during which $\gamma_n(t)$ is contained in $\mathcal{B}((2\pi,2\pi),\delta)$. Since (\ref{linear tonelli}) is uncoupled, the corresponding equation (\ref{linear tonelli ODE}) is $C^1$ conjugated to the linear ODE

\begin{equation}
\dot{X}_i=\lambda_i Y_i,\quad\dot{Y}_i=\lambda_i X_i,\quad i=1,2,
\end{equation}
in the domain $\mathcal{B}((2\pi,2\pi),\delta)$ from the Hartman Theorem of \cite{Har}. Under this coordination we can see that
\[
\overline{H}(X,Y)=\lambda_1\frac{(Y_1^2-X_1^2)}{2}+\lambda_2\frac{(Y_2^2-X_2^2)}{2},
\]
and 
\[
\overline{L}(X,V)=\frac{(V_1^2+\lambda_1^2X_1^2)}{2\lambda_1}+\frac{(V_2^2+\lambda_2^2X_2^2)}{2\lambda_2},
\]
where $V_i=\lambda_i Y_i$, $i=1,2$. This smoothness is enough for us to calculate the action value of $\gamma_n$. As the coordinate transformation doesn't change the energy, $\{\gamma_n(t)\big{|}_{t\in[t_1,t_2]}\}$ lies on the energy surface $\{\overline{H}=0\}$. Besides, $\overline{H}_i=\lambda_i\frac{(Y_i^2-X_i^2)}{2}$ is a first integral in this domain, $i=1,2$. So we can involve a parameter $e$ to simplify our calculation, where $\overline{H}_1\big{|}_{(\gamma_n(t),\dot{\gamma}_n(t))}=e$ and $\overline{H}_2\big{|}_{(\gamma_n(t),\dot{\gamma}_n(t))}=-e$, $t\in[t_1,t_2]$.\\
\begin{figure}
\begin{center}
\includegraphics[width=8cm]{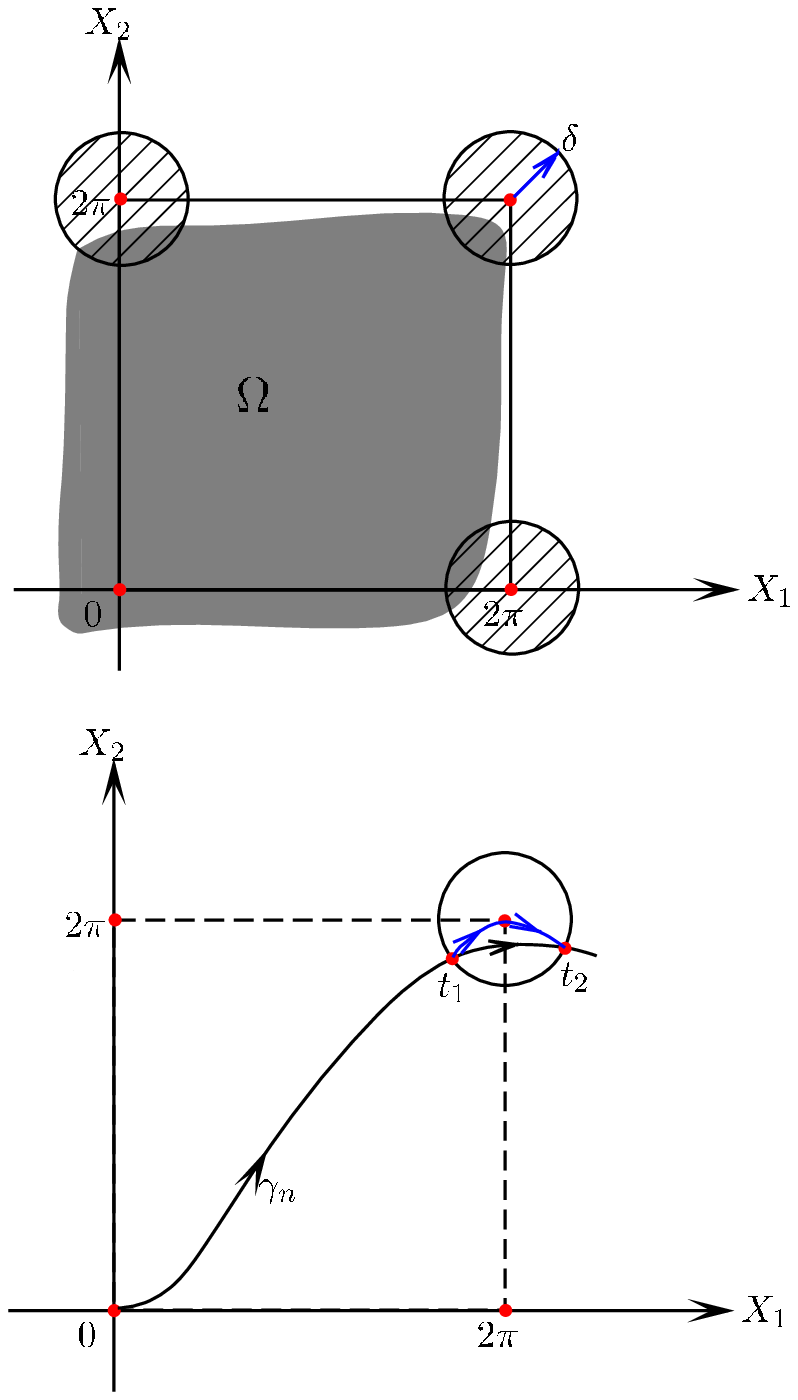}
\caption{ }
\label{fig23}
\end{center}
\end{figure}

Without loss of generality, we can assume that $\gamma_n(t_i)=X^i$, where $X^i=(X_1^i,X_2^i)\in\partial\mathcal{B}((2\pi,2\pi),\delta)$ with $i=1,2$. By a tedious but simple computation, we can get the flow via
\begin{equation}
\left\{
\begin{array}{ccccccc}
X_1=a_1e^{\lambda_1t}+a_2e^{-\lambda_1t},\\
Y_1=a_1e^{\lambda_1t}-a_2e^{-\lambda_1t},
\end{array}
\right.
\end{equation}
\begin{equation}
\left\{
\begin{array}{ccccccc}
X_2=b_1e^{\lambda_1t}+b_2e^{-\lambda_1t},\\
Y_2=b_1e^{\lambda_1t}-b_2e^{-\lambda_1t},
\end{array}
\right.
\end{equation}
where 
\[
a_1=\frac{X^2_1-e^{-\lambda_1T}X^1_1}{e^{\lambda_1T}-e^{-\lambda_1T}},\quad a_2=\frac{e^{\lambda_1T}X^1_1-X^2_1}{e^{\lambda_1T}-e^{-\lambda_1T}},
\]
and
\[
b_1=\frac{X^2_2-e^{-\lambda_2T}X^1_2}{e^{\lambda_2T}-e^{-\lambda_2T}},\quad b_2=\frac{e^{\lambda_2T}X^1_2-X^2_2}{e^{\lambda_2T}-e^{-\lambda_2T}},
\]
with $T=t_2-t_1$. We then get the action as
\begin{equation}
A_{\overline{L}}(\gamma_n)\big{|}_{[t_1,t_2]}=\frac{{X^1_1}^2+{X^1_2}^2}{2}\frac{\sinh2\lambda_1T}{\cosh2\lambda_1T-1}+\frac{{X^2_1}^2+{X^2_2}^2}{2}\frac{\sinh2\lambda_2T}{\cosh2\lambda_2T-1}.
\end{equation}

Now we connect $X^i$ and $(2\pi,2\pi)$ with trajectories lie on $\overline{W}_{(2\pi,2\pi)}^{u,s}$. Also we can calculate the formula as
\begin{equation}
I:\quad\left\{
\begin{array}{cccccccc}
X_i(t)=X^1_ie^{-\lambda_it},\\
Y_i(t)=-X^1_ie^{-\lambda_it},
\end{array}
\right.
\end{equation}
where $t\in[0,+\infty)$ and $i=1,2$ and
\begin{equation}
II:\quad\left\{
\begin{array}{cccccccc}
X_i(t)=X^2_ie^{\lambda_it},\\
Y_i(t)=X^2_ie^{\lambda_it},
\end{array}
\right.
\end{equation}
where $t\in(-\infty,0]$ and $i=1,2$. We then get the action 
\begin{equation}
A_{\overline{L}}(I)+A_{\overline{L}}(II)=\frac{{X^1_1}^2+{X^1_2}^2}{2}+\frac{{X^2_1}^2+{X^2_2}^2}{2}.
\end{equation}
Comparing the actions and we get $A_{\overline{L}}(I)+A_{\overline{L}}(II)<A_{\overline{L}}(\gamma_n)\big{|}_{[t_1,t_2]}$. Therefore $\gamma_n$ will break into two segment when it passes $\mathcal{B}((2\pi,2\pi),\delta)$, as it's semi-static. Then $\gamma_n$ can be decomposed into $\gamma_{(1,1)}$ and $\gamma_{(n_1-1,n_2-1)}$. In the same way we can decompose $\gamma_{(n_1-1,n_2-1)}$. So we get that all the minimizing homoclinic orbits are just of these homology classes:
\[
(1,0),\;(1,1),\;(0,1),\;(-1,1),\;(-1,0),\;(-1,-1),\;(0,-1),\;(1,-1).
\]

\begin{figure}
\begin{center}
\includegraphics[width=8cm]{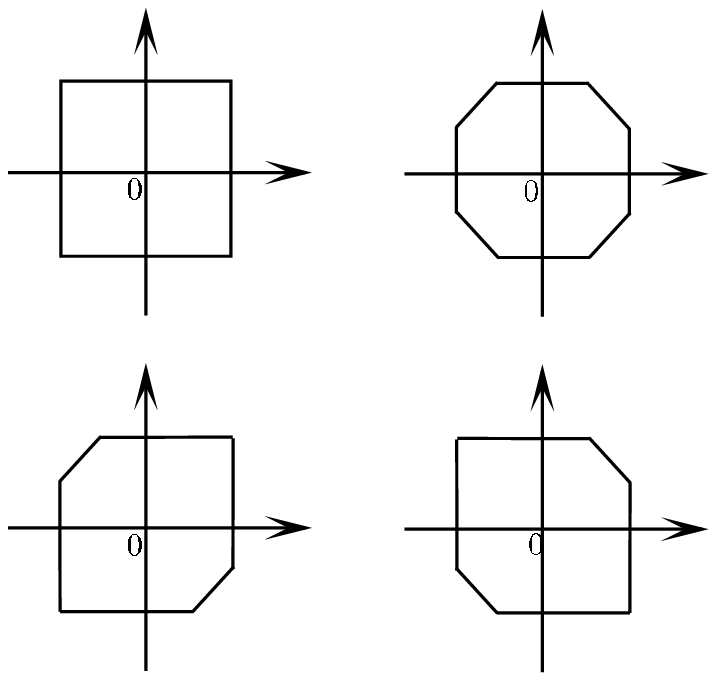}
\caption{ }
\label{fig24}
\end{center}
\end{figure}

\begin{The}
$\partial\mathbb{F}$ only could be rectangle, hexagon and octagon (see figure \ref{fig24}).
\end{The}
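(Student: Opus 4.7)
The plan is to combine three ingredients already established in the excerpt: (i) $\mathbb{F}$ is a compact convex polygon with finitely many edges and is centrally symmetric, (ii) every edge $\mathbb{E}_n\subset\partial\mathbb{F}$ is labelled by a homology class $n\in H_1(\mathbb{T}^2,\mathbb{Z})$ which must be realized by a minimizing homoclinic orbit of $\overline{H}$, and (iii) the list of possible such classes is restricted to the eight elements $\pm(1,0),\pm(0,1),\pm(1,1),\pm(-1,1)$ by the decomposition argument just carried out in the preceding theorem. Central symmetry of $\mathbb{F}$ comes from the symmetry $(X,Y)\mapsto(X,-Y)$ of the mechanical Hamiltonian $\overline{H}$, which gives $\alpha_{\overline{H}}(-c)=\alpha_{\overline{H}}(c)$, so the edges of $\mathbb{F}$ pair up as $\mathbb{E}_{\pm n}$ and their number is even.

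Combining (ii) and (iii), the edges of $\mathbb{F}$ come in at most four parallel pairs, one for each of the four un-ordered classes $\{\pm(1,0)\},\{\pm(0,1)\},\{\pm(1,1)\},\{\pm(-1,1)\}$, so the total number of edges lies in $\{2,4,6,8\}$. The case of two edges is ruled out as follows: if the only realized direction were, say, $\pm(1,0)$, then the defining inequalities of $\mathbb{F}$ would reduce to a single strip $|c_1|\le A_{(1,0)}$, contradicting the compactness of the minimum level set of the superlinear $\alpha$-function. Next, Proposition \ref{autonomous} actually constructs explicit minimizing $g_1=(1,0)$ and $g_2=(0,1)$ homoclinic orbits with finite non-zero actions $A_{(1,0)},A_{(0,1)}>0$; these give genuinely active supporting lines $c_1=\pm A_{(1,0)}$ and $c_2=\pm A_{(0,1)}$, so at least two pairs of edges are always present, i.e.\ the edge count is $4$, $6$ or $8$.

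It then remains to identify the shape in each case. In the four-edge case the only active supporting lines are $c_1=\pm A_{(1,0)}$ and $c_2=\pm A_{(0,1)}$, whose normals $(1,0)$ and $(0,1)$ are mutually orthogonal, so $\mathbb{F}$ is a rectangle (not just a parallelogram). In the six-edge case one of the pairs $\pm(1,1)$ or $\pm(-1,1)$ provides active supporting lines $c_1+c_2=\pm A_{(1,1)}$ (respectively $-c_1+c_2=\pm A_{(-1,1)}$) that cut off two opposite corners of the rectangle, yielding a centrally symmetric hexagon; in the eight-edge case both extra pairs are active and cut off all four corners, producing a centrally symmetric octagon. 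Since the only remaining edge directions are exhausted, no further polygonal type can occur.

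The main technical point — and the only place where a genuine verification is needed — is to guarantee that when a minimizing homoclinic orbit of class $(1,1)$ or $(-1,1)$ does appear, its associated constraint is genuinely active on $\partial\mathbb{F}$ (equivalently that $A_{(1,1)}<A_{(1,0)}+A_{(0,1)}$, etc.), and that no class outside the eight listed ones can arise; the second half is already handled by the break-up argument in $\mathcal{B}((2\pi,2\pi),\delta)$ that forces any $(n_1,n_2)$-homoclinic with $|n_i|\ge 2$ to split into shorter pieces, and the first half is built into the very definition of a polygon edge, for if $A_{(1,1)}\ge A_{(1,0)}+A_{(0,1)}$ the diagonal constraint is redundant and we simply fall back into the hexagon or rectangle case. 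This case-partition exhausts all possibilities and yields the stated trichotomy.
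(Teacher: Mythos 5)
Your argument is correct and follows essentially the same route as the paper's own (very terse, three-item) proof: convexity bounds the number of edges by the number of homology classes of minimizing homoclinics, each edge lies on a line annihilated by its class, the preceding break-up argument restricts the classes to the eight irreducible ones, and central symmetry of the mechanical system pairs the edges up. Your write-up merely fills in details the paper leaves implicit (ruling out the two-edge case, noting that the $\pm g_1,\pm g_2$ edges are always present, and identifying the resulting shapes), so no further comment is needed.
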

\begin{proof}
1. As a convex set, the number of expose points of $\mathbb{F}$ is not more than the number of homology classes of minimizing homoclinic orbits.

2. The inner of sub-flat $\mathbb{E}$ will share the same homoclinic orbits $\gamma_g$, and $\langle g,c-c'\rangle=0$, $\forall c,c'\in\mathring{\mathbb{E}}$.

3. The homology class of minimizing homoclinic orbit is irreducible, i.e. $l\cdot(n_1,n_2)-$type minimizing homoclinic orbit is a conjunction of $(n_1,n_2)-$type minimizing homoclinic orbits.
\end{proof}

\begin{figure}
\begin{center}
\includegraphics[width=7cm]{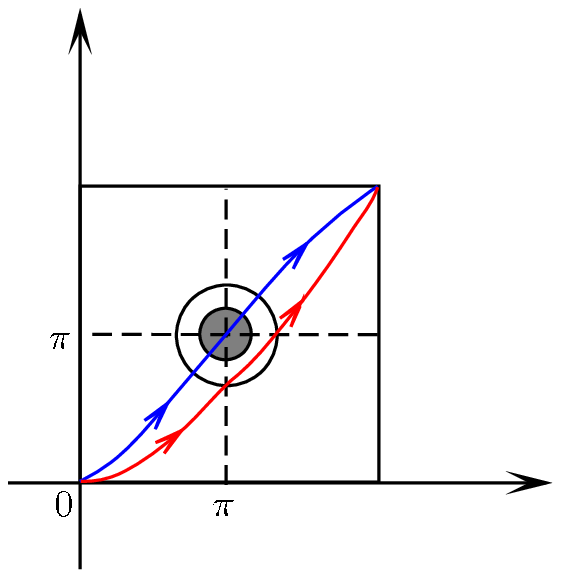}
\caption{ }
\label{fig25}
\end{center}
\end{figure}

Since we have proved the uniqueness of $(\pm1,0)-$ and $(0,\pm1)-$ type minimizing homoclinic orbits, we just need to reduce the number of minimizing homoclinic orbits of other classes. The Melnikov method can be used and we raise a new restriction\\

{\bf U7:}  The Melnikov function of $\overline{H}$ has a unique critical point in $\mathcal{B}((\pi,\pi),\delta)$. Here $\delta=\delta(\epsilon)>0$ is a small constant.\\

 Notice that the Melnikov function of $\overline{H}$ can be of $(1,1)-$type and $(-1,1)-$type, which is written by $M_{\overline{H},(1,1)}$ and $M_{\overline{H},(-1,1)}$. Once {\bf U7} is satisfied, then $(1,1)-$type and $(-1,1)-$type homoclinic orbits is unique in $\mathcal{B}((\pi,\pi),\delta)$ (see figure \ref{fig25}). Since $\overline{H}$ is a mechanical system, the homoclinic orbit of other type is also unique. Actually, we can satisfy {\bf U7} by restricting $Z_3$ of a certain form in $\mathcal{B}((\pi,\pi),\delta)$, since the Melnikov function can be considered as a continuous linear functional of potential $Z_3(X)$:

\begin{eqnarray}
M_{\overline{H},(1,1)}(X)=\lim_{T\rightarrow+\infty}-\int_{-T}^{T}Z_3(\gamma_{(1,1)}(t))dt,\\
M_{\overline{H},(-1,1)}(X)=\lim_{T\rightarrow+\infty}-\int_{-T}^{T}Z_3(\gamma_{(-1,1)}(t))dt,
\end{eqnarray}
where $\gamma_{(1,1)}(0)=X=\gamma_{(-1,1)}$ is the homoclinic orbit of $\overline{\overline{H}}$ of certain homology class, $X\in\mathcal{B}((\pi,\pi),\delta)$.

\begin{Pro}\label{Melnikov}
\begin{itemize}
\item $\nabla_{\overline{\overline{H}}}M_{\overline{H},(1,1)}(X)=0$, $\nabla_{\overline{\overline{H}}}M_{\overline{H},(-1,1)}(X)=0$.
\item If for some $X_0\in\mathcal{B}((\pi,\pi),\delta)$, we have 
\[
\nabla_{\overline{\overline{H}}_1}M_{\overline{H},(1,1)}(X_0)=0=\nabla_{\overline{\overline{H}}_2}M_{\overline{H},(1,1)}(X_0),
\]
and the rank of the matrix $(\nabla_{\overline{\overline{H}}_i}\nabla_{\overline{\overline{H}}_j}M_{\overline{H},(1,1)}(X_0))_{i,j=1,2}$ equals one. Then for sufficiently small $\varepsilon$, there exists a transversal homoclinic orbit of $\overline{H}$ passing from the $\mathcal{O}(\varepsilon)$ neighborhood of $X_0$. For $M_{\overline{H},(-1,1)}(X)$ we have the same conclusion.
\end{itemize}
\end{Pro}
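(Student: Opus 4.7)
The plan is to handle the two bullets separately: the first is a reparameterization invariance of the Melnikov integral, and the second is a Poincar\'e--Melnikov argument adapted to the uncoupled structure of $\overline{\overline{H}}$.

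For the first bullet, fix any base point $X$ on the unperturbed homoclinic manifold $W^u_{(0,0)}\cap W^s_{(2\pi,2\pi)}$. By definition of the Hamiltonian flow, $\gamma_{(1,1)}(t;\phi^s_{\overline{\overline{H}}}(X))=\gamma_{(1,1)}(t+s;X)$ for all $s\in\mathbb{R}$; substituting into $M_{\overline{H},(1,1)}(X)=-\lim_{T\to\infty}\int_{-T}^{T}Z_3(\gamma_{(1,1)}(t;X))\,dt$ and changing variable $t\mapsto t+s$ shows $M_{\overline{H},(1,1)}\circ\phi^s_{\overline{\overline{H}}}=M_{\overline{H},(1,1)}$, hence $\nabla_{\overline{\overline{H}}}M_{\overline{H},(1,1)}\equiv 0$. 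The identical argument applies to $M_{\overline{H},(-1,1)}$.

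For the second bullet, the commuting vector fields $X_{\overline{\overline{H}}_1},X_{\overline{\overline{H}}_2}$ (commuting because $\{\overline{\overline{H}}_1,\overline{\overline{H}}_2\}=0$ by uncoupling) give flow-box coordinates on the 2-dimensional unperturbed intersection manifold. Differentiating the identity $\nabla_{\overline{\overline{H}}_1}M+\nabla_{\overline{\overline{H}}_2}M\equiv 0$ from the first bullet forces $H_{i1}+H_{i2}=0$ for the symmetric Hessian $H_{ij}:=\nabla_{\overline{\overline{H}}_i}\nabla_{\overline{\overline{H}}_j}M_{\overline{H},(1,1)}$, so necessarily
\[
H(X_0)=\begin{pmatrix} a & -a\\ -a & a\end{pmatrix},\qquad a:=H_{11}(X_0),
\]
and the rank-one hypothesis is precisely $a\neq 0$: the Hessian of $M_{\overline{H},(1,1)}$ is nondegenerate in the direction $(1,-1)$, transverse to the flow direction $(1,1)$ of $X_{\overline{\overline{H}}}=X_{\overline{\overline{H}}_1}+X_{\overline{\overline{H}}_2}$. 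Next, inside the 3-dimensional energy level $\{\overline{H}=0\}$, the perturbed manifolds $W^u_{(0,0),\varepsilon}$ and $W^s_{(2\pi,2\pi),\varepsilon}$ are Lagrangian and $C^r$-close to the unperturbed ones on $\Omega$ by the persistence results in Proposition \ref{autonomous}, so they are represented as graphs of generating functions $S^u_\varepsilon,S^s_\varepsilon$ over a common neighborhood of $X_0$ in the base. The difference $\Delta S_\varepsilon:=S^u_\varepsilon-S^s_\varepsilon$ admits the expansion $\Delta S_\varepsilon=\varepsilon M_{\overline{H},(1,1)}+O(\varepsilon^2)$ in $C^2$; critical points of $\Delta S_\varepsilon$ correspond to intersections $W^u_\varepsilon\cap W^s_\varepsilon$, and nondegenerate ones to transverse intersections. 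Since any such intersection is flow-invariant, the critical set of $\Delta S_\varepsilon$ is automatically aligned with (the projection of) $X_{\overline{H}}$; on a 1-dimensional section transverse to $X_{\overline{H}}$ at $X_0$, the critical equation reduces to a single scalar condition whose leading term $\varepsilon\nabla_{(1,-1)}M_{\overline{H},(1,1)}$ has derivative $2\varepsilon a\neq 0$ by the Hessian calculation above. The implicit function theorem then delivers a unique $X_{\overline{H}}$-orbit of critical points $O(\varepsilon)$-close to $X_0$, which is a transversal homoclinic of $\overline{H}$ in $\{\overline{H}=0\}$. The $(-1,1)$-type case is identical after replacing the target fixed point by its appropriate lift.

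The main technical point is to confirm that the rank-one Hessian of $M_{\overline{H},(1,1)}$ with respect to the unperturbed vector fields $X_{\overline{\overline{H}}_1},X_{\overline{\overline{H}}_2}$ still controls the Hessian of $\Delta S_\varepsilon$ transverse to the \emph{perturbed} flow direction. This uses $X_{\overline{H}}=X_{\overline{\overline{H}}}+O(\varepsilon)$, so the perturbed transverse direction is an $O(\varepsilon)$-perturbation of the unperturbed $(1,-1)$-direction along which the Hessian has eigenvalue $2a\neq 0$; this eigenvalue remains bounded away from zero for all sufficiently small $\varepsilon$, and the implicit function theorem applies.
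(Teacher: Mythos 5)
Your proposal is correct and follows essentially the same route as the paper, whose proof is deferred to the appendix (a restatement of the Poincar\'e--Melnikov method cited from Treschev--Zubelevich): the Melnikov function as the first-order term of the splitting $S^u_\varepsilon-S^s_\varepsilon$ of generating functions, flow-invariance forcing $\nabla_{\overline{\overline{H}}}M\equiv0$ and hence rank at most one for the Hessian $(H_{ij})$, and the rank-one hypothesis yielding a transverse intersection via the implicit function theorem on a section transverse to the flow. Your write-up actually supplies more detail than the paper's sketch (the explicit structure $H_{11}=H_{22}=-H_{12}=-H_{21}$ and the reduction to a single scalar equation); the only slip is the harmless constant ($4a$, not $2a$, for the second derivative of $M$ along the direction $(1,-1)$), which does not affect the nonvanishing needed for the implicit function theorem.
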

\begin{proof}
See Appendix for the proof. It's a direct cite of \cite{Tr2}.
\end{proof}

Once {\bf U7} is satisfied, we have

\begin{The}
$\forall c\in\partial\mathbb{F}$, we have ${\mathcal{N}}(c)\subsetneqq\mathbb{T}^2$.
\end{The}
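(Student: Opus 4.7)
The plan is to show that for every $c\in\partial\mathbb{F}$ the Ma\~{n}\'e set $\widetilde{\mathcal{N}}(c)$ is a finite union of smooth trajectories together with the hyperbolic fixed point, so that its projection $\mathcal{N}(c)$ is a finite union of one-dimensional smooth curves in $\mathbb{T}^2$ and therefore cannot exhaust the two-dimensional torus. The entire argument rests on combining the enumeration of admissible homology classes of minimizing homoclinic orbits (reduced to $\{(\pm1,0),(0,\pm1),(\pm1,\pm1),(\pm1,\mp1)\}$ by the universal-cover decomposition performed just above) with the Melnikov uniqueness guaranteed by condition \textbf{U7} and Proposition \ref{Melnikov}.

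First I would pin down the precise structure of $\widetilde{\mathcal{N}}(c)$ for $c\in\partial\mathbb{F}$. The theorem preceding this one proves $\widetilde{\mathcal{M}}(c)=\{(0,0)\}$ for any such $c$; combined with the fact that each $c$-semi-static orbit has its $\alpha$- and $\omega$-limits contained in $\widetilde{\mathcal{A}}(c)$, and with the equality $\widetilde{\mathcal{A}}(c)=\widetilde{\mathcal{M}}(c)$ (which follows here because the only minimizing measure is concentrated on a hyperbolic fixed point, so no recurrent $c$-static behavior beyond the fixed point is possible), every element of $\widetilde{\mathcal{N}}(c)\setminus\{(0,0)\}$ is forced to be a homoclinic trajectory to $(0,0)$. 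Its homology class must belong to the admissible list, since homology classes outside that list were shown to break into strictly shorter minimizing pieces via the passage through the neighborhood $\mathcal{B}((2\pi,2\pi),\delta)$.

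Next I would apply \textbf{U7} and Proposition \ref{Melnikov}: for each of the eight admissible homology classes, the Melnikov function has a unique nondegenerate critical point in $\mathcal{B}((\pi,\pi),\delta)$, so there is at most one minimizing homoclinic orbit of each class. Hence $\widetilde{\mathcal{N}}(c)$ is the union of $\{(0,0)\}$ with at most a handful of distinct homoclinic orbits—precisely those whose homology classes lie in the edge (or pair of edges, if $c$ is a vertex) of $\mathbb{F}$ supporting $c$ in the sense of Proposition \ref{flat}(2).

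Finally, the conclusion is immediate from dimension: each such homoclinic orbit $\gamma$ is the projection of a $C^r$ trajectory of $\phi_{\overline{H}}^t$ and is therefore a $C^r$ immersed one-dimensional submanifold of $\mathbb{T}^2$, which has vanishing two-dimensional Lebesgue measure. A finite union of such curves together with the single point $(0,0)$ still has measure zero in $\mathbb{T}^2$, so we can find $x_0\in\mathbb{T}^2$ outside $\mathcal{N}(c)$, giving $\mathcal{N}(c)\subsetneqq\mathbb{T}^2$. The main delicate point in the argument is justifying the identification $\widetilde{\mathcal{A}}(c)=\widetilde{\mathcal{M}}(c)=\{(0,0)\}$ on $\partial\mathbb{F}$—one must rule out the presence of additional non-trivial $c$-static orbits (for instance, closed orbits at higher energy or degenerate homoclinic loops sharing segments with $\gamma_g$). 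This is handled by the preceding theorem together with the uniqueness of the graph extensions of $\overline{W}^{s,u}_{(0,0)}$ over the fundamental domain, which forces any limit of $c$-semi-static curves to lie on these invariant manifolds and hence to be the fixed point itself.
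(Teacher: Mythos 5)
Your proof follows the paper's route exactly: reduce the admissible homology classes of minimizing homoclinics via the action comparison in $\mathcal{B}((2\pi,2\pi),\delta)$, invoke \textbf{U7} together with the Melnikov proposition to get at most one orbit per class, and conclude that $\mathcal{N}(c)$ is a finite union of one-dimensional curves plus the fixed point, hence a proper subset of $\mathbb{T}^2$ (the paper leaves this last dimension count implicit). One small correction: on $\partial\mathbb{F}$ the paper has $\widetilde{\mathcal{N}}(c)=\widetilde{\mathcal{A}}(c)\supsetneq\widetilde{\mathcal{M}}(c)=\{(0,0)\}$ --- the static homoclinic loops do belong to the Aubry set (see item (2) of the proposition on the flat) --- so your asserted identity $\widetilde{\mathcal{A}}(c)=\widetilde{\mathcal{M}}(c)$ is not right, although the conclusion you draw from it (every element of $\widetilde{\mathcal{N}}(c)\setminus\{(0,0)\}$ is a homoclinic to the fixed point) is correct and already follows from the preceding theorem combined with the graph lemma for $\overline{W}^{u,s}_{(0,0)}$.
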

\vspace{10pt}

\subsection{Thickness of Annulus}
$\newline$
\begin{figure}
\begin{center}
\includegraphics[width=6cm]{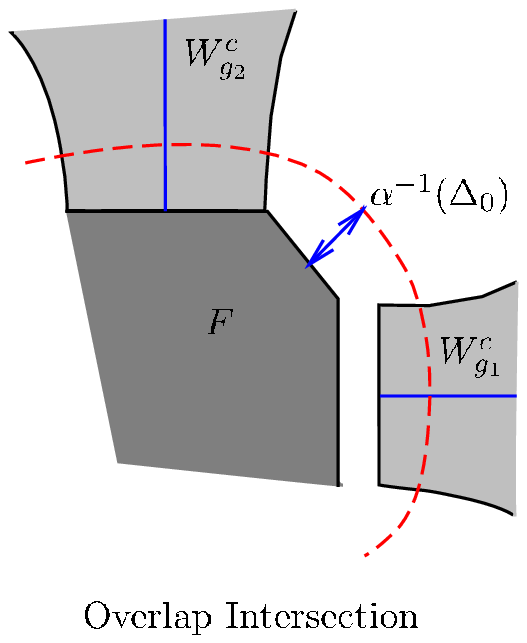}
\caption{ }
\label{fig21}
\end{center}
\end{figure}

From the previous Theorem and the upper semicontinuity of Ma\~{n}\'e set, there exists $\Delta_0>0$ such that $\forall c\in\{\alpha_{\overline{H}}(c\leq\Delta_0)\}$, we have $\widetilde{\mathcal{N}}(c)\subsetneqq\mathbb{T}^2$. Here $\Delta_0$ is a constant depending on $\varepsilon$.\\

On the other side, for $\Delta\in(0,\Delta_0)$ and $c\in\alpha_{\overline{H}}^{-1}(\Delta)$, All the $c-$minimizing measures have the same rotational direction because of the graph property of Mather set \cite{Car}. So we can find a loop section $\Gamma_c\subset\mathbb{T}^2$ such that all the semi-static orbits intersect it transversally. As is known to us that $\mathcal{N}_{\overline{H}}(c)\subsetneqq\mathbb{T}^2$, we can find finitely many open sets $\{U_i\}_{i=1}^n$ covering $\Gamma_c\cap\mathcal{N}_{\overline{H}}(c)$. $\{U_i\}_{i=1}^n$ is diffeomorphic to a list of open internals $\{(a_i,b_i)\subset[0,1]\}_{i=1}^n$ and they are disjoint with each other.\\

Once again we use the upper-semicontinuity of Ma\~{n}\'e set, for $m\geq M\gg1$ sufficiently large, system $H$ satisfies the following 
\begin{The}
$\exists \epsilon_0>0$ and $\Delta_0>0$ depending on $\varepsilon$, such that $\forall \Delta<\Delta_0$, $\epsilon\in(0,\epsilon_0)$ and $c\in\alpha^{-1}(\Delta)$, $\mathcal{N}_H(c)$ intersects $\Gamma_c\times\{S\equiv0\}$ transversally. Besides, we still have $\mathcal{N}_H(c)\cap\Gamma_c\times\{S\equiv0\}\subset\bigcup_{i=1}^nU_i$. Here $\{U_i\}\subset\Gamma_c\times\{S\equiv0\}$ are 1-dimensional open internals disjoint with each other.
\end{The}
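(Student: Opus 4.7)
The plan is to deduce this from the autonomous baseline already established for $\overline{H}$ together with the upper-semicontinuity of the Ma\~{n}\'e set $(c,L) \mapsto \widetilde{\mathcal{N}}(c)$, applied twice: once in $c$ to obtain uniform estimates on a sufficiently thin annulus, and once in $L$ to transfer those estimates from $\overline{L}$ to $L_H$. Crucially, after the homogenization of Subsection~\ref{homogenize} one has $L_H = \overline{L} + \epsilon R(X,V,S)$ with $\|R\|_{C^2,\mathcal{B}}$ controlled by $\mathbb{K} d_m^{*\sigma/2} l^{m(r+6+2\xi)/2}$, which can be made arbitrarily small by choosing $m \geq M$ large; this is what gives the parameter $\epsilon_0$. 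The autonomous side was already analyzed: for every $c \in \alpha_{\overline{H}}^{-1}(\Delta)$ with $\Delta<\Delta_0$, Proposition~\ref{flat} and the subsequent discussion show that $\mathcal{N}_{\overline{H}}(c)\subsetneqq \mathbb{T}^2$, all $c$-minimizing measures share a common rotation direction, and so there is a simple closed loop section $\Gamma_c \subset \mathbb{T}^2$ crossed transversally by every $c$-semistatic orbit, with $\mathcal{N}_{\overline{H}}(c) \cap \Gamma_c$ covered by finitely many pairwise disjoint open intervals $(a_i, b_i)$.

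Next I would fix a small $\Delta_0 > 0$ and work on the compact annulus $\mathbb{K}_{\Delta_0} \doteq \{c : \alpha_{\overline{H}}(c) \leq \Delta_0\}$. By continuity of $c \mapsto \Gamma_c$ (a standard consequence of the uniform hyperbolicity of the wNHICs constructed in Section~\ref{NHIC and Aubry}) one can organize these loop sections into a $C^0$ family, and by shrinking $\Delta_0$ if necessary one may fix a neighborhood $\mathcal{U}_c = \bigsqcup_{i=1}^{n} U_i$ of $\mathcal{N}_{\overline{H}}(c) \cap \Gamma_c$ which is locally constant in $c$ on a small ball. Applying the upper-semicontinuity of $c \mapsto \widetilde{\mathcal{N}}_{\overline{H}}(c)$ over the compact set $\mathbb{K}_{\Delta_0}$ then yields a uniform inclusion $\widetilde{\mathcal{N}}_{\overline{H}}(c')\cap\Gamma_{c'} \subset \mathcal{U}_c$ whenever $\|c-c'\|$ is small, together with uniform transversality (the angle between $\widetilde{\mathcal{N}}_{\overline{H}}(c)$ and $\Gamma_c\times\{S\equiv 0\}$ is bounded below by some $\theta_0 > 0$).

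Now I turn to the time-periodic system $H$. Lifting everything to $\mathbb{T}^2 \times \mathbb{S}^1$ via the trivial product structure of $\overline{H}$, the loop section is $\Gamma_c \times \{S \equiv 0\}$, and the autonomous semistatic orbits cross it transversally for topological reasons (they have nonzero winding along the $\Gamma_c$-direction and the time variable advances uniformly). Apply now the upper-semicontinuity in the Lagrangian variable: since $\|L_H - \overline{L}\|_{C^2}$ is $\mathcal{O}(\epsilon)$ on the relevant bounded domain, there exists $\epsilon_0 > 0$ such that for all $\epsilon \in (0, \epsilon_0)$ and all $c \in \mathbb{K}_{\Delta_0}$ one has $\widetilde{\mathcal{N}}_{L_H}(c) \subset \mathcal{B}(\widetilde{\mathcal{N}}_{\overline{L}}(c), \epsilon_0)$ in phase space. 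This inclusion forces $\mathcal{N}_H(c) \cap (\Gamma_c \times \{S\equiv 0\}) \subset \bigsqcup_{i=1}^n U_i$ and preserves transversality, since the tangent cone of the Aubry/Mañé set varies upper-semicontinuously as well, and a perturbation smaller than $\sin(\theta_0/2)$ in $C^1$ cannot destroy the angle.

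The main technical point I expect to work at is the uniform transversality estimate, because upper-semicontinuity only controls the \emph{set}, not its tangent directions; the rescue is that over the wNHICs established in Section~\ref{NHIC and Aubry}, the Aubry set lies on a Lipschitz graph with Lipschitz constant controlled by the hyperbolicity exponents of $\overline{H}$, and those exponents persist under the small perturbation $\epsilon R$ (Lemma~\ref{F&W} and the perturbation estimate~\eqref{peturbation estimation}). Combining this Lipschitz graph property with the transversality of $\Gamma_c \times \{S \equiv 0\}$ to the flow direction of the limiting minimal measure gives the desired transversality of the intersection, uniformly in $c \in \mathbb{K}_{\Delta_0}$ and in $\epsilon < \epsilon_0$.
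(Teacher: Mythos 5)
Your proposal follows essentially the same route as the paper: establish the statement for the autonomous model $\overline{H}$ (non-covering of $\mathbb{T}^2$ near $\partial\mathbb{F}$, a loop section $\Gamma_c$ transversal to all semi-static orbits from the common rotation direction and the graph property, and a finite disjoint cover of $\Gamma_c\cap\mathcal{N}_{\overline{H}}(c)$ by compactness), then transfer to $H=\overline{H}+\epsilon R$ by upper semicontinuity of the Ma\~{n}\'e set, with $\epsilon$ made small by taking $m\geq M$ large. Your worry about transversality is legitimate but is resolved more directly than via the wNHIC Lipschitz graphs: the upper semicontinuity is that of $\widetilde{\mathcal{N}}(c)\subset TM\times\mathbb{S}^1$, so it controls the velocities of the semi-static curves at the section, and a uniform lower bound on the crossing angle for $\overline{H}$ (by compactness) persists under a $C^0$-small displacement of the set in the tangent bundle.
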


On the other side, we can see that the wedge set $\mathbb{W}_{g_1}^c$ can reach the place of $\Delta=3\epsilon^d$ from Theorem \ref{wedge} (see Figure \ref{fig21}). So the following property of $H$ is valid:

\begin{Cor}{\bf (Overlap Property)}\label{overlap}
For $g_1$ and $g_2$, $\exists\epsilon_0>0$ sufficiently small such that $\mathbb{W}_{g_1}^c$ and $\mathbb{W}_{g_2}^c$ could intersect the annulus region $\mathbb{A}\doteq\{c\big{|}\alpha_H(c)\in[0,\Delta_0]\}$, as long as $\epsilon\leq\epsilon_0$.
\end{Cor}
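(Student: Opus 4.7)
The plan is to combine the two main structural inputs that have just been established: Theorem \ref{wedge}, which pins down how deep the wedge sets $\mathbb{W}_{g_1}^c$ and $\mathbb{W}_{g_2}^c$ reach into the bottom of the $\alpha$-function, and the preceding theorem of this subsection, which produces a quantitative annulus $\mathbb{A}$ of incomplete intersection for $H$ around $\mathbb{F}^*$. The key observation is that the two quantities $3\epsilon^d$ and $\Delta_0$ depend on different small parameters, so they can be arranged independently.

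First I would record the hierarchy of smallness. The constant $\varepsilon = 1/\mathbb{L}$ is chosen \emph{a priori} and then held fixed, so the whole flat $\mathbb{F}$ and its neighbourhood structure for the autonomous system $\overline{H}$ are frozen. Applying the previous theorem, pick $\Delta_0 = \Delta_0(\varepsilon) > 0$ and the associated finite open cover $\{U_i\}$ of $\Gamma_c \cap \mathcal{N}(c)$ so that for all $c$ with $\alpha_H(c) \in [0, \Delta_0]$ one has $\mathcal{N}_H(c) \subsetneqq \mathbb{T}^2 \times \mathbb{S}^1$. This uses only upper semicontinuity of the Ma\~n\'e set plus the fact that $\Delta_0$ was extracted from $\overline{H}$, which is independent of $\epsilon$.

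Next I would apply Theorem \ref{wedge} in its two asymmetric forms. For $g_2$, the wedge reaches the minimum of $\alpha_H$ exactly: $\min_{c\in\mathbb{W}_{g_2}^c}\alpha_H(c) = \min_{H^1}\alpha_H$, so $\mathbb{W}_{g_2}^c \cap \mathbb{A} \neq \emptyset$ automatically (the wedge starts inside $\mathbb{A}$). For $g_1$, the asymmetric bound gives $\min_{c\in\mathbb{W}_{g_1}^c}\alpha_H(c) - \min_{H^1}\alpha_H \leq 3\epsilon^d$, reflecting the fact that the wNHIC $N_1$ was only shown to persist down to the energy level $E_1 = 2\epsilon^d$. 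Thus it suffices to choose $\epsilon_0 > 0$ so small that $3\epsilon_0^d < \Delta_0(\varepsilon)$; for every $\epsilon \leq \epsilon_0$ the lower tip of $\mathbb{W}_{g_1}^c$ lies strictly inside $\mathbb{A}$. Continuity of $\alpha_H$ and connectedness of the wedge sets then yield a common cohomology region in $\mathbb{A}$ to serve as the connecting corridor.

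The only genuinely delicate point, and the one to state carefully rather than estimate further, is that the two small parameters must be ordered correctly: $\mathbb{L}$ is fixed first to activate the weak-coupling mechanism and produce the flat $\mathbb{F}$; this in turn fixes $\Delta_0 = \Delta_0(\varepsilon)$; only then is $\epsilon = \mathbb{K} d_m^{*\sigma/2}l^{m(r+6+2\xi)/2}$ made small, which amounts to taking $m \geq M \gg 1$ sufficiently large thanks to the self-similar construction. Because $d > 0$ is the fixed exponent $d < 1/3$ obtained in Section \ref{NHIC and Aubry}, the inequality $3\epsilon^d < \Delta_0$ is satisfied uniformly for all large enough $m$, and the corollary follows. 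No further estimation of $\Delta_0$ is required, since the qualitative fact $\Delta_0 > 0$ is all that the overlap statement consumes.
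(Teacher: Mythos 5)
Your proposal is correct and follows essentially the same route as the paper: the corollary is stated there as an immediate consequence of the preceding annulus theorem (giving $\Delta_0=\Delta_0(\varepsilon)>0$ independent of $\epsilon$) together with Theorem \ref{wedge} ($\mathbb{W}_{g_2}^c$ reaching the minimum of $\alpha_H$ and $\mathbb{W}_{g_1}^c$ reaching within $3\epsilon^d$ of it), so that $3\epsilon^d<\Delta_0$ for $\epsilon\leq\epsilon_0$ yields the overlap. Your explicit ordering of the parameters $\mathbb{L}$, $\Delta_0$, $\epsilon$ (via $m\geq M$) is exactly the hierarchy the paper relies on, just spelled out more carefully than the text does.
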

\vspace{10pt}
\begin{Rem}{\bf (Robustness of system $\overline{H}$)}
Recall that our system is actually of a form (\ref{model system}), which can be considered as a $\mathcal{O}(\frac{1}{l^m})$ perturbation of $\overline{H}$. But (\ref{model system}) is also an autonomous system, so all these uniform properties for $\overline{H}$ can be preserved for it, as long as we take $m\geq M\gg1$ {\it a posterior} large.
\end{Rem}

\section{Local connecting orbits and generic diffusion mechanism}
\vspace{10pt}

To construct orbits connecting some Aubry set to another Aubry set nearby, we introduce two types of modified Tonelli Lagrangian, i.e. the time-step and the space-step Lagrangian. The former one was firstly developed in \cite{B}, \cite{CY1} and \cite{CY2}, with the earliest idea given by J. Mather in \cite{Mat3}. The latter one was firstly developed in \cite{L}.  Then C-Q. Cheng made a further elaboration and generalization of it in \cite{Ch}, for dealing with {\it a priori} stable Arnold Diffusion problem. Since our construction in this section have a great similarity with this case of \cite{Ch}, we choose it as the main reference for this section.\\

Actually, we can ascribe the local connecting orbits as two different mechanism: Arnold's and Mather's. The former one is essentially heteroclinic orbit, which is known as $h-$equivalent orbit from the variational viewpoint. The latter one is known as $c-$equivalent orbit, which is constructed with the topological conditions provided by Aubry sets. We mainly use the Arnold mechanism to construct the diffusion orbits along the wNHICs. The Mather mechanism is mainly used to solve the difficulty of incomplete intersection annulus.
\vspace{10pt}

\subsection{Modified Lagrangian: time-step case}

\begin{defn}{\bf (Time-step Lagrangian)}
We call a Tonelli Lagrangian $L:T\mathbb{T}^n\times\mathbb{R}\rightarrow\mathbb{R}$ {\bf time-step Lagrangian}, if we can find $L^{-}$ and $L^{+}:T\mathbb{T}^n\times\mathbb{S}^1\rightarrow\mathbb{R}$ such that 
\[
L(\cdot,t)=L^{-}(\cdot,t),\quad\forall t\in(-\infty, 0]
\]
and
\[
L(\cdot,t)=L^{+}(\cdot,t),\quad\forall t\in[1,+\infty),
\]
i.e. $L(\cdot,t)$ is periodic in $(-\infty, 0]\cup[1,+\infty)$.
\end{defn}
\begin{defn}
A curve $\gamma:\mathbb{R}\rightarrow M$ is called {\bf minimal} if
\[
\int_{\tau}^{\tau'}L(\gamma(t),\dot{\gamma}(t),t)dt\leq\int_{\tau}^{\tau'}L(\zeta(t),\dot{\zeta}(t),t)dt
\]
holds for $\tau<\tau'$ and every absolutely continuous curve $\zeta: [\tau,\tau']\rightarrow M$ with $\zeta(\tau)=\gamma(\tau)$ and $\zeta(\tau')=\gamma(\tau')$. We denote the set of minimal curves for time-step Lagrangian $L$ by $\mathscr{G}(L)$ and $\tilde{\mathcal{G}}(L)\doteq\{(\gamma(t),\dot{\gamma}(t),t)\in TM\times\mathbb{R}\big{|}\gamma\in\mathscr{G}(L)\}$. Then we have $\mathcal{G}(L)=\pi\tilde{\mathcal{G}}(L)$ where $\pi:TM\times\mathbb{R}\rightarrow M\times\mathbb{R}$ is the standard projection.
\end{defn}

\begin{The}\cite{CY1, CY2, Ch}
The set-valued map $L\rightarrow\tilde{\mathscr{G}}(L)$ is upper semicontinuous. Consequently, the map $L\rightarrow\mathscr{G}(L)$ is also upper semicontinuous.
\end{The}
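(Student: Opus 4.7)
The plan is to prove upper semicontinuity in the standard way: take $L_n \to L$ in $C^2$ together with $\gamma_n \in \mathscr{G}(L_n)$, show the sequence is precompact in $C^1_{\mathrm{loc}}$, extract a limit, and verify that the limit inherits minimality. The statement for $\mathscr{G}(L)$ will then be an immediate consequence of projecting.

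\textbf{Step 1: a priori velocity bounds.} First I would show that if $L_n \to L$ in $C^2$, then on any compact time interval $[\tau,\tau']$ with fixed endpoints $x,y \in \mathbb{T}^n$, there is a constant $K = K(\tau'-\tau, L, x, y)$, independent of $n$ for large $n$, such that every $L_n$-minimizer $\gamma_n$ joining $x$ to $y$ over $[\tau,\tau']$ satisfies $\|\dot{\gamma}_n\|_\infty \leq K$. This is the classical Mather-Tonelli a priori bound: by superlinearity, $L_n(x,v,t) \geq \Lambda \|v\| - C$ with $\Lambda \to \infty$, while the action of the straight-line comparison curve is uniformly bounded. Since Tonelli minimizers satisfy the Euler-Lagrange equation and the associated flow depends $C^1$ on $L_n$, this promotes to a uniform $C^1$ bound, and hence to uniform equicontinuity of $(\gamma_n,\dot{\gamma}_n)$ on $[\tau,\tau']$.

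\textbf{Step 2: extraction of a limit.} Via Arzel\`a-Ascoli and a diagonal argument over an exhausting sequence of compact time intervals, I extract a subsequence such that $(\gamma_n,\dot{\gamma}_n)$ converges in $C^0_{\mathrm{loc}}$ to some $(\gamma,\dot{\gamma})$. Since $\dot{\gamma}_n \to \dot{\gamma}$ uniformly on compact intervals, $\gamma$ is absolutely continuous with the correct velocity, and $(\gamma(t),\dot{\gamma}(t),t)$ is the pointwise limit of points in $\tilde{\mathscr{G}}(L_n)$.

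\textbf{Step 3: minimality of the limit.} Fix $\tau < \tau'$ and an arbitrary absolutely continuous competitor $\zeta:[\tau,\tau'] \to \mathbb{T}^n$ with $\zeta(\tau)=\gamma(\tau)$, $\zeta(\tau')=\gamma(\tau')$. To compare with $L_n$-minimality I would modify $\zeta$ into $\zeta_n$ so that $\zeta_n(\tau)=\gamma_n(\tau)$ and $\zeta_n(\tau')=\gamma_n(\tau')$, by gluing short reparametrized segments at the endpoints whose lengths are $o(1)$ as $\gamma_n(\tau)\to\gamma(\tau)$. Because $\gamma_n$ is $L_n$-minimal,
\begin{equation*}
\int_\tau^{\tau'} L_n(\gamma_n,\dot{\gamma}_n,t)\,dt \;\leq\; \int_\tau^{\tau'} L_n(\zeta_n,\dot{\zeta}_n,t)\,dt.
\end{equation*}
Passing to the limit using the $C^0$ convergence of $L_n \to L$ on the compact set where the curves live (the velocities are uniformly bounded by Step 1) together with $(\gamma_n,\dot{\gamma}_n) \to (\gamma,\dot{\gamma})$ and $(\zeta_n,\dot{\zeta}_n)\to(\zeta,\dot{\zeta})$ in $L^1$ on $[\tau,\tau']$, one obtains
\begin{equation*}
\int_\tau^{\tau'} L(\gamma,\dot{\gamma},t)\,dt \;\leq\; \int_\tau^{\tau'} L(\zeta,\dot{\zeta},t)\,dt,
\end{equation*}
which shows $\gamma \in \mathscr{G}(L)$, hence $(\gamma,\dot{\gamma},t) \in \tilde{\mathscr{G}}(L)$. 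This proves that the complement of any open neighborhood of $\tilde{\mathscr{G}}(L)$ does not accumulate points of $\tilde{\mathscr{G}}(L_n)$ for $L_n$ close to $L$, which is the definition of upper semicontinuity. The projection $\pi$ is continuous, so $L \mapsto \mathscr{G}(L) = \pi\tilde{\mathscr{G}}(L)$ inherits upper semicontinuity automatically.

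\textbf{Expected main obstacle.} The only delicate point is the endpoint matching in Step 3: the competitor $\zeta$ is adapted to the limit endpoints $\gamma(\tau),\gamma(\tau')$, not to $\gamma_n(\tau),\gamma_n(\tau')$. The surgery producing $\zeta_n$ must be chosen so that its action contribution tends to zero; this requires uniform $C^2$ bounds on $L_n$ on bounded-velocity sets, which follow from $L_n \to L$ in $C^2$ together with the velocity bound of Step 1. The time-step structure of $L$ plays essentially no role here beyond ensuring that the Tonelli hypotheses (convexity, superlinearity, completeness) hold uniformly for $L$, $L^{-}$, $L^{+}$, and their nearby perturbations, so that Step 1 applies on intervals extending into $(-\infty,0]\cup[1,\infty)$ as well.
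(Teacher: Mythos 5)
Your proposal is correct and follows essentially the same route as the paper's own (sketched) proof: a uniform velocity bound on the $L_n$-minimizers, compactness in $C^1_{\mathrm{loc}}$ via Arzel\`a--Ascoli, and verification that the accumulation curve is $L$-minimal by comparing actions against competitors. The endpoint-matching surgery you flag in Step 3 is the standard detail the paper defers to \cite{Ch}, and your treatment of it is adequate.
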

\begin{proof}
Here we only give a sketch of the proof and more details can be found in \cite{Ch}. Let $\{L_i\}\subset C^r(TM\times\mathbb{R},\mathbb{R})$ be a sequence converging to $L$ under the norm $\|\cdot\|_{C^2,TM}$, and $\gamma_i\in\mathscr{G}(L_i)$ be a sequence of minimal curves. These two modified Lagrangian are actually defined in a proper universal covering space of $\mathbb{T}^{n+1}$. We can see that $\|\dot{\gamma}_i(t)\|\leq K$, $\forall t\in\mathbb{R}$ with $K$ is a constant depending on $\|\partial_{vv}L_i\|$. Then the set $\{\gamma_i\}$ is compact in the $C^1(\mathbb{R},M)-$topology. Let $\gamma$ be one accumulated point of $\gamma_i$, we can see that $\gamma$ is $L-$minimal, and this proves the upper-semicontinuity of $\tilde{\mathscr{G}}(L)$.
\end{proof}
In application, the set $\mathscr{G}(L)$ seems too big for the construction of connecting orbits. For time-step Lagrangian, we can introduce the following set of pseudo connecting orbits, which is written by $\mathscr{C}(L)$. Let $\alpha^{\pm}$ be the minimal average action of $L^{\pm}$. For $m_0,m_1\in M$ and $T_0,T_1>0$, we define
\[
h_L^{T_0,T_1}(m_0,m_1)=\inf_{\substack{\gamma(-T_0)=m_0\\
\gamma(T_1)=m_1}}\int_{-T_0}^{T_1}L(d\gamma(t),t)dt+T_0\alpha^{-}+T_1\alpha^{+}
\]
and
\[
h_L^{\infty}(m_0,m_1)=\liminf_{T_0,T_1\rightarrow\infty}h_L^{T_0,T_1}(m_0,m_1).
\]
Let $\{T_0^i\}_{i\in\mathbb{N}}$ and $\{T_1^i\}_{i\in\mathbb{N}}$ be the sequence of positive numbers such that $T_j^i\rightarrow\infty$ $(j=0,1)$ as $i\rightarrow\infty$ and satisfies
\[
\lim_{i\rightarrow\infty}h_L^{T_0^i,T_1^i}(m_0,m_1)=h_L^{\infty}(m_0,m_1).
\]
Accordingly, we can find $\gamma_i(t,m_0,m_1):[-T_0^i,T_1^i]\rightarrow M$ being the minimizer connecting $m_0$ and $m_1$ and
\[
h_L^{T_0^i,T_1^i}(m_0,m_1)=\int_{-T_0^i}^{T_1^i}L(d\gamma_i(t),t)dt+T_0^i\alpha^{-}+T_1^i\alpha^{+}.
\]
Then the following Lemma holds:
\begin{Lem}
The set $\{\gamma_i\}$ is pre-compact in $C^1(\mathbb{R},M)$. Let $\gamma:\mathbb{R}\rightarrow M$ be an accumulation point of $\{\gamma_i\}$, then $\forall s,\tau\geq1$
\begin{eqnarray}\label{pseudo curve}
A_L(\gamma)\big{|}_{[-s,\tau]}=\inf_{\substack{s_1-s\in\mathbb{Z},\tau_1-\tau\in\mathbb{Z}\\
s_1,\tau_1\geq1\\
\gamma^*(-s_1)=\gamma(-s)\\
\gamma^*(\tau_1)=\gamma(\tau)}}\int_{-s_1}^{\tau_1}L(d\gamma^*(t),t)dt&+&(s_1-s)\alpha^{-} \nonumber\\
 &+&(\tau_1-\tau)\alpha^{+}.
\end{eqnarray}
\end{Lem}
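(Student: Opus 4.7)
The plan has two parts: a compactness argument producing $\gamma$, followed by a splicing argument that establishes the variational identity.

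For compactness, I would first bound $\|\dot\gamma_i\|_{C^0}$ uniformly. Standard Tonelli a priori estimates give that an action-minimizer with fixed endpoints has velocity bounded by a constant depending only on $L$ and the spatial distance between the endpoints, independently of the length of the time interval; superlinearity rules out long velocity excursions by comparison against any fixed absolutely continuous competitor from $m_0$ to $m_1$. The Euler-Lagrange equation then yields a uniform bound on accelerations. Arzelà-Ascoli on each $[-N, N]$, combined with a diagonal extraction, produces a subsequence converging in $C^1_{\mathrm{loc}}(\mathbb{R}, M)$ to a $C^r$ limit $\gamma$ that itself solves the E-L equation of $L$.

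For the variational identity, fix $s, \tau \geq 1$ and a competitor $\gamma^*: [-s_1, \tau_1] \to M$ with $\gamma^*(-s_1) = \gamma(-s)$, $\gamma^*(\tau_1) = \gamma(\tau)$, $s_1 - s, \tau_1 - \tau \in \mathbb{Z}$, and $s_1, \tau_1 \geq 1$. For $i$ large enough that $T_0^i > s_1$ and $T_1^i > \tau_1$, I would construct $\tilde\gamma_i$ on $[-T_0^i - (s_1 - s), T_1^i + (\tau_1 - \tau)]$ by concatenating three pieces: a time-translate of $\gamma_i|_{[-T_0^i, -s]}$ shifted by $-(s_1 - s)$, then $\gamma^*$ on $[-s_1, \tau_1]$, then a time-translate of $\gamma_i|_{[\tau, T_1^i]}$ shifted by $\tau_1 - \tau$, with tiny bridging arcs of length and action $o(1)$ at the join points to close the small gaps between $\gamma_i(-s), \gamma_i(\tau)$ and $\gamma^*(-s_1), \gamma^*(\tau_1)$. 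Since $L$ is $1$-periodic in $t$ on $t \leq 0$ and on $t \geq 1$ and the shifts are integers, the $L$-action of each shifted tail piece equals that of the original. Plugging $\tilde\gamma_i$ into the definition of $h_L^{T_0^i + (s_1-s),\, T_1^i + (\tau_1-\tau)}(m_0, m_1)$ and using that $\gamma_i$ realizes $h_L^{T_0^i, T_1^i}(m_0, m_1)$ produces
\[
A_L(\gamma_i)|_{[-s, \tau]} \leq A_L(\gamma^*)|_{[-s_1, \tau_1]} + (s_1 - s)\alpha^- + (\tau_1 - \tau)\alpha^+ + \bigl(h_L^{T_0^i, T_1^i} - h_L^{T_0^i + (s_1-s),\, T_1^i + (\tau_1-\tau)}\bigr) + o(1).
\]
By choice of the sequence $h_L^{T_0^i, T_1^i}(m_0, m_1) \to h_L^\infty(m_0, m_1)$, while the definition of $h_L^\infty$ as a $\liminf$ gives $\liminf_i h_L^{T_0^i + (s_1-s),\, T_1^i + (\tau_1-\tau)}(m_0, m_1) \geq h_L^\infty(m_0, m_1)$. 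So the $h$-difference in parentheses is at most $o(1)$. Passing to the $C^1$ limit on $[-s, \tau]$ yields the $\leq$ direction of (\ref{pseudo curve}); the reverse inequality is immediate on taking $\gamma^* = \gamma|_{[-s, \tau]}$, $s_1 = s$, $\tau_1 = \tau$, which is admissible since $s, \tau \geq 1$.

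The main obstacle is the bookkeeping at the splicing points and the upgrade of the $\liminf$. The splicing corrections are routine: since $\gamma_i \to \gamma$ in $C^1_{\mathrm{loc}}$, the spatial gaps $|\gamma_i(-s) - \gamma^*(-s_1)|$ and $|\gamma_i(\tau) - \gamma^*(\tau_1)|$ tend to zero, so they can be closed by short arcs whose action contribution is $o(1)$. For the $\liminf$ step, one uses that $(T_0^i, T_1^i)$ was chosen to realize $h_L^\infty$, which ensures the $h$-difference above is indeed $o(1)$ rather than a fixed positive number. Once these two ingredients are verified the chain of inequalities closes cleanly and yields the claimed variational characterization of $\gamma$.
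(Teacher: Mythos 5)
Your proposal is correct and follows essentially the route the paper intends (the paper only sketches it, deferring to \cite{Ch}): pre-compactness via the uniform Tonelli velocity bound exactly as in the upper-semicontinuity theorem for $\mathscr{G}(L)$, and the identity (\ref{pseudo curve}) by comparing the action of $\gamma_i$ with a spliced competitor whose integer-shifted tails have unchanged action by the periodicity of $L^{\pm}$, the error being controlled by the $\liminf$ definition of $h_L^{\infty}$. The only cosmetic difference is that you argue directly while the paper phrases the comparison as a proof by contradiction; the mechanism is the same.
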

\begin{proof}
The pre-compactness of $\{\gamma_i\}$ can be proved in the same way with that of $\mathscr{G}(L)$. As for the formula \ref{pseudo curve}, we can use the proof by contradiction, with an approach of comparing the action. We omit the proof since you can find it in \cite{Ch}.
\end{proof}
We define $\mathscr{C}(L)$ by the set $\{\gamma\in\mathscr{G}(L)\big{|}(\ref{pseudo curve})\; \text{holds for}\; \gamma\}$. Clearly, $\forall \gamma\in\mathscr{C}(L)$, the orbits $(\gamma(t),\dot{\gamma}(t),t)$ approaches to $\tilde{\mathcal{A}}(L^{-})$ as $t\rightarrow-\infty$ and approaches to $\tilde{\mathcal{A}}(L^{+})$ as $t\rightarrow+\infty$.
That's why we call it pseudo connecting curve. We also have
\[
\widetilde{\mathcal{C}}(L)=\bigcup_{\gamma\in\mathscr{C}(L)}(\gamma(t),\dot{\gamma}(t),t),\quad\mathcal{C}(L)=\bigcup_{\gamma\in\mathscr{C}(L)}(\gamma(t),t).
\]
If $L$ is a periodic Tonelli Lagrangian, then $\tilde{\mathcal{C}}(L)=\widetilde{\mathcal{N}}(L)$ and $\mathcal{C}(L)=\mathcal{N}(L)$.
\begin{The}\cite{CY1,CY2,Ch}
The map $L\rightarrow\mathscr{C}(L)$ is upper semicontinuous. As the special case for periodic Lagrangian $L$, $c\rightarrow\mathcal{N}(L)$ as well as the map $c\rightarrow\widetilde{\mathcal{N}}(L)$ is upper semicontinuous.
\end{The}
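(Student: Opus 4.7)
The plan is to mimic the argument already sketched for the upper semicontinuity of $L\mapsto\tilde{\mathscr{G}}(L)$, but now track the extra constraint built into (\ref{pseudo curve}). Take a sequence of time-step Tonelli Lagrangians $L_i\to L$ in $C^2(TM\times\mathbb{R},\mathbb{R})$, with periodic ends $L_i^{\pm}\to L^{\pm}$, and pick $\gamma_i\in\mathscr{C}(L_i)$. The first step is a uniform bound: since $\partial_{vv}L_i$ is uniformly positive definite and $L_i$ converges in $C^2$, standard a priori estimates for Tonelli minimizers give $\|\dot{\gamma}_i(t)\|\leq K$ with $K$ independent of $i$ and $t$. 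Hence $\{\gamma_i\}$ is precompact in $C^1_{\mathrm{loc}}(\mathbb{R},M)$; extract a subsequence (without relabelling) converging to some $\gamma:\mathbb{R}\to M$.

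Second, I would verify that $\gamma\in\mathscr{G}(L)$. This is exactly the previous theorem applied to the sequence $\gamma_i$, since each $\gamma_i$ is in particular $L_i$-minimal. Third, and this is the crux, one must check that $\gamma$ satisfies (\ref{pseudo curve}) with respect to $L$. Fix $s,\tau\geq 1$ and a competitor $\gamma^*$ connecting $\gamma(-s)$ to $\gamma(\tau)$ over a time interval $[-s_1,\tau_1]$ with $s_1-s,\tau_1-\tau\in\mathbb{Z}_{\geq 0}$. The goal is to show that
\begin{equation*}
A_L(\gamma)\big|_{[-s,\tau]}\;\leq\;\int_{-s_1}^{\tau_1}L(d\gamma^*(t),t)\,dt+(s_1-s)\alpha^{-}+(\tau_1-\tau)\alpha^{+}.
\end{equation*}
For each $i$, I would build a perturbed competitor $\gamma_i^*$ for $\gamma_i$ by gluing $\gamma^*$ to $\gamma_i$ via short connecting arcs on the intervals $[-s-\epsilon_i,-s]$ and $[\tau,\tau+\epsilon_i]$ (possible because $\gamma_i(-s)\to\gamma(-s)$ and $\gamma_i(\tau)\to\gamma(\tau)$ in $C^1$, and Tonelli Lagrangians admit local minimal connectors with action $o(1)$ as $\epsilon_i\to 0$). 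Applying (\ref{pseudo curve}) for $\gamma_i$ against $\gamma_i^*$, then sending $i\to\infty$, one extracts the desired inequality provided $\alpha_{L_i}^{\pm}\to\alpha_L^{\pm}$.

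The main obstacle is precisely the last point: one needs continuity of the minimal average action $\alpha^{\pm}$ under $C^2$ perturbations of the periodic Lagrangians $L_i^{\pm}$, and one needs the small-gluing arcs between $\gamma_i(\pm\cdot)$ and $\gamma^*(\pm\cdot)$ to contribute vanishing action in the limit. The first is classical (continuity of Mather's $\alpha$-function under $C^2$ perturbations of the Lagrangian; see the semicontinuity results quoted in Subsection \ref{Mather theory}), and the second follows from the quadratic-in-velocity control of $L$ together with the $C^1$-convergence $\gamma_i\to\gamma$ on compact sets. Once these ingredients are in place, the limit $\gamma$ lies in $\mathscr{C}(L)$, giving the upper semicontinuity of $L\mapsto\tilde{\mathscr{C}}(L)$ and hence of $L\mapsto\mathscr{C}(L)$.

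Finally, for the special case of a periodic Tonelli Lagrangian $L$ indexed by $c\in H^1(M,\mathbb{R})$ via $L_c=L-\eta_c$, the identifications $\tilde{\mathcal{C}}(L_c)=\widetilde{\mathcal{N}}(L_c)=\widetilde{\mathcal{N}}(c)$ and $\mathcal{C}(L_c)=\mathcal{N}(c)$ noted just above the theorem, together with the continuity of $c\mapsto L_c$ in $C^2$, immediately convert the abstract statement into the upper semicontinuity of $c\mapsto\widetilde{\mathcal{N}}(c)$ and $c\mapsto\mathcal{N}(c)$, as asserted.
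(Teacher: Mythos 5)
The paper itself offers no proof of this theorem: it is stated with citations to \cite{CY1,CY2,Ch}, and only the two preceding results (upper semicontinuity of $L\mapsto\tilde{\mathscr{G}}(L)$ and the pre-compactness lemma establishing (\ref{pseudo curve}) for accumulation points) carry proof sketches. Your proposal reconstructs the standard argument from those references along exactly the lines the paper's earlier sketches suggest: uniform Lipschitz bounds on minimizers, $C^1_{\mathrm{loc}}$ pre-compactness, passage to the limit in the defining variational characterization, and continuity of $\alpha^{\pm}$. The reduction of the periodic/$c$-dependent case to the $L$-dependent case via $L_c=L-\eta_c$ is also the intended one. So the overall architecture is sound and consistent with what the cited works do.

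One step needs repair. Your glued competitor $\gamma_i^*$, obtained by appending arcs on $[-s-\epsilon_i,-s]$ and $[\tau,\tau+\epsilon_i]$, is not admissible in (\ref{pseudo curve}) for $\gamma_i$: the infimum there runs only over time intervals $[-s_1,\tau_1]$ with $s_1-s,\tau_1-\tau\in\mathbb{Z}$, and $s_1+\epsilon_i-s\notin\mathbb{Z}$. Nor can you simply prepend a full unit-time connector from $\gamma_i(-s)$ to $\gamma(-s)$, since its normalized action converges to $h^1_{L^-}(\gamma(-s),\gamma(-s))+\alpha^-$, which need not vanish. The correct fix is to deform $\gamma^*$ \emph{inside} its own time interval: replace $\gamma^*$ on a fixed short sub-interval $[-s_1,-s_1+\epsilon]$ by the fixed-time minimal connector from $\gamma_i(-s)$ to $\gamma^*(-s_1+\epsilon)$ (and symmetrically at the right end). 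Since $\gamma_i(\mp s)\to\gamma(\mp s)$, continuity of the fixed-endpoint, fixed-time action shows the cost of this surgery is $o(1)$, the total time interval is unchanged, and the competitor becomes admissible for $\gamma_i$. With that modification, passing to the limit in (\ref{pseudo curve}) for $\gamma_i$ (using $\alpha^{\pm}_{L_i}\to\alpha^{\pm}_L$ and locally uniform convergence of the actions) yields the claimed inequality for $\gamma$, and the rest of your argument goes through.
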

\begin{Cor}
For periodic Tonelli Lagrangian $L$ and $c\in H^1(M,\mathbb{R})$, the set-valued function $c\rightarrow\mathcal{N}(c)$ is upper semicontinuous.
\end{Cor}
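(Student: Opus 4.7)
The plan is to deduce this corollary directly from the previous theorem, which already establishes the upper semicontinuity of the map $L \mapsto \mathscr{C}(L)$ (and in the periodic case the agreement $\widetilde{\mathcal{N}}(L) = \widetilde{\mathcal{C}}(L)$, $\mathcal{N}(L) = \mathcal{C}(L)$). The idea is to reduce variation of the cohomology class $c$ to variation of the Lagrangian in $C^r$-topology via the standard device $L_c \doteq L - \eta_c$, where $\eta_c$ is a closed $1$-form with $[\eta_c] = c$, and then invoke the quoted upper semicontinuity result.

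First I would fix, once and for all, a linear section $c \mapsto \eta_c$ from $H^1(M,\mathbb{R})$ into the space of smooth closed $1$-forms on $M$ (for instance by choosing a basis of the cohomology, picking one smooth closed representative per basis element, and extending by linearity; alternatively one uses harmonic representatives after equipping $M$ with a Riemannian metric). This section is continuous in the $C^r$-topology on forms, so that $c_n \to c$ in $H^1(M,\mathbb{R})$ implies $\eta_{c_n} \to \eta_c$ and therefore $L_{c_n} \to L_c$ in $C^r(TM\times\mathbb{S}^1,\mathbb{R})$. Since each $L_{c_n}$ is periodic Tonelli (subtracting a closed $1$-form preserves the Tonelli conditions), the previous theorem applies.

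Next, to verify upper semicontinuity of $c \mapsto \mathcal{N}(c)$, I would take a sequence $c_n \to c$ and any convergent sequence $(x_n,t_n) \in \mathcal{N}(c_n)$ with limit $(x,t)$, and show $(x,t) \in \mathcal{N}(c)$. Lift each such point to $(x_n,\dot x_n,t_n) \in \widetilde{\mathcal{N}}(c_n)$ using the Lipschitz graph property of the Aubry/Mañé set, which ensures uniform bounds on $\|\dot x_n\|$ (from a uniform superlinearity estimate on the $L_{c_n}$, since they converge in $C^2$). Passing to a subsequence, we get a limit $(x,\dot x,t)$. By the identification $\widetilde{\mathcal{N}}(L_{c_n}) = \widetilde{\mathcal{C}}(L_{c_n})$ in the periodic case together with the upper semicontinuity of $L \mapsto \widetilde{\mathscr{C}}(L)$ given by the preceding theorem, the limit point lies in $\widetilde{\mathcal{C}}(L_c) = \widetilde{\mathcal{N}}(c)$, whose projection contains $(x,t)$.

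The only mildly delicate point, and the one I would treat carefully, is the continuous dependence of the $1$-form representative on $c$ — without it, $L_{c_n} \to L_c$ could fail in $C^r$. Once the linear section is fixed this is automatic, and the rest is a direct transcription of the previous theorem's conclusion. No new dynamical ingredient enters: the corollary is essentially a restatement of the upper-semicontinuity for Lagrangians, viewed through the parametrization $c \mapsto L_c$.
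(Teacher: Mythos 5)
Your proposal is correct and follows essentially the same route the paper intends: the corollary is read off from the preceding theorem by parametrizing cohomology classes through $L_c=L-\eta_c$ with a continuous (e.g.\ linear/harmonic) choice of closed $1$-form representatives, combined with the identification $\mathcal{C}(L_c)=\mathcal{N}(c)$ and $\widetilde{\mathcal{C}}(L_c)=\widetilde{\mathcal{N}}(c)$ in the periodic case. One small remark: the uniform bound on $\|\dot x_n\|$ should be attributed to the a priori compactness of minimizers (the bound $\|\dot\gamma_i\|\le K$ used in the proof of the upper semicontinuity of $\mathscr{G}(L)$), not to a Lipschitz-graph property of the Ma\~{n}\'e set, which in general holds only for the Aubry set; this does not affect the validity of your argument.
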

\vspace{10pt}

\subsection{Modified Lagrangian: space-step case}

\begin{defn}{\bf (Space-step Lagrangian)}\label{li xia}
In the covering space $\overline{M}\doteq\mathbb{R}\times\mathbb{T}^{n-1}$ of $\mathbb{T}^n$, we call a Tonelli Lagrangian $L:T\overline{M}\times\mathbb{S}^1\rightarrow\mathbb{R}$ {\bf space-step}, if we can find $L^{-}$ and $L^{+}:T\mathbb{T}^n\times\mathbb{S}^1\rightarrow\mathbb{R}$ such that
\[
L(x_1,\cdot)=L^{-}(x_1,\cdot),\quad\forall x_1\in(-\infty, 0]
\]
and
\[
L(x_1,\cdot)=L^{+}(x_1,\cdot),\quad\forall x_1\in[1,+\infty),
\] 
where $(x_1,x_2,\cdots,x_n,t)\in\overline{M}\times\mathbb{S}^1$. Besides, the following conditions should be satisfied at the same time:
\begin{itemize}
\item Let $\mu^{\pm}$ be the $0-$minimizing measure of $L^{\pm}$ respectively. Then $\pi_1\rho(\mu^{\pm})>0$ for each minimizing measure $\mu^{\pm}$.
\item $\alpha^{-}=\alpha^{+}$ as the minimal average action of $L^{\pm}$. Without loss of generality, we assume it equals $0$.
\item $\|L^{+}-L^{-}\|_{\{(x,v)\in TM\big{|}\|v\|\leq D\}}\leq\frac{1}{2}\min_{h_1=0}\{\beta_{L^{-}}(h),\beta_{L^{+}}(h)\}$.
\end{itemize}
\end{defn}
\begin{Rem}\label{uniform}
A time-periodic Lagrangian $L(x,\dot{x},t)$ can be considered as an autonomous Lagrangian $L(\theta,\dot{\theta})$, where $(x,\dot{x},t)\in T\mathbb{T}^n\times{S}^1$ and $\theta=(t,x)$. As we know, $\mathbb{T}^n\times\mathbb{S}^1$ is diffeomorphic to $\mathbb{T}^{n+1}$. Then a time-step Lagrangian $L$ can be considered as a space-step one with $\theta_1=t$ taken in the universal covering space $\mathbb{R}$. So we just need to consider the autonomous Lagrangian of a form $L(x,\dot{x})$ with $(x,\dot{x})\in T\overline{M}$ in this section.
\end{Rem}
We define
\[
h_L^T(\bar{m}_0,\bar{m}_1)=\inf_{\substack{\bar{\gamma}(-T)=\bar{m}_0\\
\bar{\gamma}(T)=\bar{m}_1}}\int_{-T}^{T}L(\bar{\gamma}(t),\dot{\bar{\gamma}}(t))dt,\quad\forall\bar{m}_0,\bar{m}_1\in\bar{M}.
\]
From the super-linearity of $L$, we can see that once $\bar{m}_0$ and $\bar{m}_1$ are fixed, there exists a finite $T_{\bar{m}_0,\bar{m}_1}$ such that $h_L^T(\bar{m}_0,\bar{m}_1)$ gets its minimum. We can see this from the following Lemma:

\begin{Lem}\cite{Ch,L}
If the rotation vector of each ergodic minimal measure has positive first component $\pi_1\rho(\mu^{\pm})>0$, then $\forall\bar{m}_0\neq\bar{m}_1$ with $\bar{m}_0\leq0<1\leq\bar{m}_1$, we have
\[
\lim_{T\rightarrow0} h_L^T(\bar{m}_0,\bar{m}_1)=\infty,\quad\lim_{T\rightarrow\infty}h_L^T(\bar{m}_0,\bar{m}_1)=\infty.
\]
\end{Lem}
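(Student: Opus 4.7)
The plan is to treat the two limits separately, using super-linearity of $L$ for the small-time case and a contradiction-plus-tightness argument for the large-time case.

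For $T \to 0^+$, I would apply the superlinearity of $L$ directly. Given any constant $C > 0$, there is $K(C)$ so that $L(x, v) \geq C\|v\| - K(C)$ everywhere, whence any absolutely continuous $\bar\gamma:[-T,T]\to\bar M$ with $\bar\gamma(-T)=\bar m_0$, $\bar\gamma(T)=\bar m_1$ satisfies
$$\int_{-T}^{T} L(\bar\gamma,\dot{\bar\gamma})\,dt \;\geq\; C\int_{-T}^{T}\|\dot{\bar\gamma}\|\,dt - 2TK(C) \;\geq\; C\,\|\bar m_1-\bar m_0\| - 2TK(C).$$
Since $\bar m_0\neq\bar m_1$, first letting $T\to 0$ and then $C\to\infty$ drives $h_L^T(\bar m_0,\bar m_1)$ to $\infty$.

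For $T\to\infty$, I would argue by contradiction. Suppose $T_n\to\infty$ with $h_L^{T_n}(\bar m_0,\bar m_1)\leq A$ along the sequence, realized by minimizers $\bar\gamma_n:[-T_n,T_n]\to\bar M$. Form the time-average probability measures
$$\mu_n \;=\; \frac{1}{2T_n}\int_{-T_n}^{T_n}\delta_{(\bar\gamma_n(t),\dot{\bar\gamma}_n(t))}\,dt$$
on $T\bar M$. The uniform action bound combined with $L\geq -K_0$ gives $\int L\,d\mu_n = A_n/(2T_n)\to 0$ and uniform integrability of $\|v\|$; minimality of $\bar\gamma_n$ confines each curve to a bounded $x_1$-strip. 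Hence $\{\mu_n\}$ is tight and admits a weak-$^*$ subsequential limit $\mu$, which is invariant under the Euler-Lagrange flow of $L^-$ (resp.\ $L^+$) on its support in $\{x_1\leq 0\}$ (resp.\ $\{x_1\geq 1\}$). Direct computation of the rotation vector yields $\rho(\mu)=\lim_n(\bar m_1-\bar m_0)/(2T_n)=0$, so in particular $\pi_1\rho(\mu)=0$.

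Decompose $\mu=\mu^-+\mu^0+\mu^+$ according to $\{x_1\leq 0\}$, $\{0\leq x_1\leq 1\}$, $\{x_1\geq 1\}$. After normalization $\mu^{\pm}$ are flow-invariant for $L^{\pm}$ with non-positive first-component rotation. The hypothesis $\pi_1\rho(\mu^{\pm})>0$ for every $0$-minimizing measure, combined with $\alpha_{L^{\pm}}(0)=0$ and strict convexity of $\beta_{L^{\pm}}$, produces a uniform $\delta_0>0$ with $\beta_{L^{\pm}}(h)\geq 2\delta_0$ whenever $\pi_1 h\leq 0$. Coupling this with clause three of Definition~\ref{li xia} (namely $\|L^+-L^-\|\leq\tfrac12\min_{h_1=0}\beta_{L^{\pm}}(h)$), I conclude $\int L\,d\mu\geq\delta_0>0$, contradicting $\int L\,d\mu\leq 0$.

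The main obstacle is the measure-theoretic decomposition: justifying that $\mu$ genuinely splits into invariant pieces for $L^{\pm}$ and that $\mu^0$ carries essentially no mass. This will require a cutting-and-pasting lemma on the minimizers $\bar\gamma_n$: any long excursion into the transition strip $\{0\leq x_1\leq 1\}$ with net $x_1$-displacement of order $O(1)$ can be shortened using segments of $0$-minimizing trajectories of $L^{\pm}$, whose strictly positive $\pi_1$-rotation reduces the elapsed time without increasing the action. Iterating this surgery forces the time spent by $\bar\gamma_n$ in the transition strip to remain uniformly bounded in $n$, which in turn yields $\mu^0=0$ in the weak limit and legitimizes the contradiction above.
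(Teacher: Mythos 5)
Your treatment of the first limit is the same as the paper's (pure superlinearity), but for the second limit you take a genuinely different route. The paper argues by contradiction too, but instead of passing to weak-$^*$ limits of empirical measures it simply closes each minimizer $\bar\gamma_n$ into a loop $\xi_n=\zeta\ast\pi\bar\gamma_n$ by appending a fixed geodesic from $m_1$ to $m_0$ in $M$. The loop's rotation vector has first component $O(1/T_n)\to 0$, so the standard estimate ``average action of a long closed curve with respect to the periodic Lagrangian $L^{+}$ is at least $\beta_{L^{+}}$ of its rotation vector, up to $\epsilon$'' gives $\frac{1}{2T_n}\int L^{+}(\xi_n)\geq\min_{h_1=0}\beta_{L^{+}}(h)-2\epsilon$; the discrepancy $\frac{1}{2T_n}\int(L-L^{+})(\xi_n)$ is then absorbed by the third clause of Definition~\ref{li xia}, yielding a strictly positive lower bound on the average action and hence $h_L^{T_n}\to\infty$. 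This one comparison against the single periodic Lagrangian $L^{+}$ is precisely what lets the paper avoid the obstacle you correctly identify as the hard part of your plan: the splitting $\mu=\mu^{-}+\mu^{0}+\mu^{+}$ into flow-invariant pieces and the claim $\mu^{0}=0$ are not free (restrictions of invariant measures to half-spaces are not invariant, and bounding the time spent in, and the number of crossings of, the transition strip amounts to the surgery lemma you sketch, which is itself a nontrivial piece of the structure theory of minimizers). Note also that the two arguments use the hypotheses differently: in your version, once the decomposition is granted, clause three of the definition plays almost no role because $L$ coincides with $L^{\pm}$ on the respective supports, whereas in the paper's proof clause three is the essential ingredient that licenses replacing $L$ by $L^{+}$ along the entire loop. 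Your approach can presumably be completed, but as written it trades a one-line comparison for a measure-theoretic program whose key step is still open; if you pursue it, the surgery lemma must be proved, and you should also justify tightness in the noncompact $x_1$-direction rather than asserting that minimality confines the curves to a bounded strip.
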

\begin{proof}
The first formula is easy to be proved as $\bar{m}_0\neq\bar{m}_1$ and $L$ is super-linear of the variable $\dot{x}$. On the other side, 
if there exists a sequence of $T_n\rightarrow\infty$ such that $\lim_{n\rightarrow\infty}h_L^{T_n}(\bar{m}_0,\bar{m}_1)=K<\infty$, we can find $\bar{\gamma}_n\in\bar{M}$ as the minimizer of $h_L^{T_n}(\bar{m}_0,\bar{m}_1)$, with $\bar{\gamma}_n(-T_n)=\bar{m}_0$ and $\bar{\gamma}_n(T_n)=\bar{m}_1$. 
Let $\zeta:[0,1]\rightarrow M$ be a geodesic connecting $m_1$ to $m_0$, then $\xi_n\doteq\zeta\ast\pi\bar{\gamma}_n$ becomes a loop in $M$. $\forall\epsilon>0$ sufficiently small, we can always take $N$ properly large, such that $\forall n\geq N$,
\begin{eqnarray*}
\frac{1}{2T_n}h_L^{T_n}(\bar{m}_0,\bar{m}_1)&=&\frac{1}{2T_n}\int_{-T_n}^{T_n+1}L(\xi(t),\dot{\xi}(t))dt-\frac{1}{2T_n}\int_0^1L^{+}(\zeta(t),\dot{\zeta}(t))dt,\\
 &=&\frac{1}{2T_n}\int_{-T_n}^{T_n+1}L^{+}(\xi(t),\dot{\xi}(t))dt-\frac{1}{2T_n}\int_0^1L^{+}(\zeta(t),\dot{\zeta}(t))dt,\\
 & +&\frac{1}{2T_n}\int_{-T_n}^{T_n+1}(L-L^{+})(\xi(t),\dot{\xi}(t))dt,\\
 & \geq&\min_{h_1=0}\beta_{L^{+}}(h)-2\epsilon-\frac{1}{2}\min_{h_1=0}\{\beta_{L^{+}}(h),\beta_{L^{-}}(h)\},\\
 & \geq&\frac{1}{2}\min_{h_1=0}\beta_{L^{+}}(h)-2\epsilon>0,
\end{eqnarray*}
as $\pi_1\rho(\mu^{\pm})>0$ and $\epsilon$ is sufficiently small. This implies that $\lim_{n\rightarrow\infty}h_L^{T_n}(\bar{m}_0,\bar{m}_1)=\infty$ and lead to a contradiction. Then we proved the second formula and finished the proof.
\end{proof}
From the proof of this Lemma, we also get that $T_{\bar{m}_0,\bar{m}_1}\rightarrow\infty$ as $-\bar{m}_{0,1},\bar{m}_{1,1}\rightarrow\infty$. Since $L$ is of a transitional form between $L^{-}$ and $L^{+}$ in the domain $\{0\leq x_1\leq1\}$, the following claim holds:
\begin{Clm}
there exists a $K''>0$ such that
\begin{equation}\label{help}
-K''\leq\inf_{\substack{0\leq\bar{x}_{0,1}\leq1\\
0\leq\bar{x}_{1,1}\leq1}}\inf_{T\geq0}h_L^T(\bar{x}_0,\bar{x}_1)\leq\max_{\substack{0\leq\bar{x}_{0,1}\leq1\\
0\leq\bar{x}_{1,1}\leq1}}\inf_{T\geq0}h_L^T(\bar{x}_0,\bar{x}_1)\leq K''.
\end{equation}
\end{Clm}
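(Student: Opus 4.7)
I upgrade the pointwise estimates of the preceding Lemma to bounds uniform in endpoints $\bar x_0,\bar x_1$ ranging over the compact slab $S\doteq[0,1]\times\mathbb{T}^{n-1}\subset\bar M$; both inequalities of \eqref{help} then follow.

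For the upper bound, I exhibit a short-time connector. Connect $\bar x_0$ and $\bar x_1$ by the straight geodesic in $\bar M$ parameterized affinely on $[-1/2,1/2]$. Its velocity is bounded by $D_0\doteq\operatorname{diam} S$, and $L$ is bounded above on the compact set $S\times\bar B(0,D_0)\subset T\bar M$ by some $M_1$. Hence $\inf_{T\ge 0}h_L^T(\bar x_0,\bar x_1)\le h_L^{1/2}(\bar x_0,\bar x_1)\le M_1$ uniformly in $\bar x_0,\bar x_1\in S$.

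For the lower bound, I combine two estimates. First, by compactness of $S$, superlinearity of $L$, and boundedness-below of the periodic $L^\pm$, there exists $C_0>0$ with $L\ge -C_0$ on $T\bar M$, so $h_L^T\ge -2C_0 T$ for every $T$. Second, rerun the preceding Lemma's argument with a view to uniformity: for each $T$ and each minimizer $\bar\gamma$ of $h_L^T(\bar x_0,\bar x_1)$, let $\zeta\colon[0,1]\to\mathbb{T}^n$ be a shortest geodesic from $\pi\bar x_1$ to $\pi\bar x_0$; by compactness of $\pi S$, both $|\zeta|$ and the $L^+$-action along $\zeta$ are bounded by a constant $K_\zeta$ independent of the endpoints. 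The loop $\xi=\zeta\ast\pi\bar\gamma$ has rotation vector $O(1/T)$, so $\beta_{L^+}(\rho(\xi))$ approaches $\beta_{L^+}(0)>0$; positivity here comes from $\pi_1\rho(\mu^+)>0$ and the superlinearity of $\beta_{L^+}$, which together with $\beta_{L^+}\ge 0$ force $\min_{h_1=0}\beta_{L^+}>0$. Combining with the third bullet of Definition \ref{li xia} yields, exactly as in the preceding Lemma but with now-uniform constants,
\[
\frac{h_L^T(\bar x_0,\bar x_1)}{2T}\;\ge\;\tfrac{1}{2}\min_{h_1=0}\beta_{L^+}(h)\;-\;\frac{K_\zeta}{2T},
\]
which is nonnegative once $T\ge T_\ast\doteq K_\zeta/\min_{h_1=0}\beta_{L^+}$, uniformly in $\bar x_0,\bar x_1\in S$. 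Therefore $\inf_{T\ge 0}h_L^T(\bar x_0,\bar x_1)\ge -2C_0 T_\ast$, and taking $K''\doteq\max(M_1,\,2C_0 T_\ast)$ closes both sides of \eqref{help}.

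The main obstacle is the uniformity of the threshold $T_\ast$: in the preceding Lemma the constants $\epsilon$ and $N$ depended implicitly on the fixed endpoint pair, and now they must be controlled uniformly over $(\bar x_0,\bar x_1)\in S\times S$. All the relevant quantities---the length and $L^+$-action of $\zeta$, the residual correction from the $L-L^+$ discrepancy picked up during slab crossings, and the passage to $\beta_{L^+}(\rho(\xi))\approx \beta_{L^+}(0)$---admit uniform control by compactness of $S$ and continuity of $\beta_{L^+}$, but assembling this bookkeeping into a single threshold $T_\ast$ is the essential technical step.
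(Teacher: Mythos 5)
Your proof is correct and follows essentially the same route as the paper's: the upper bound via a uniformly bounded short-time connector, and the lower bound via the loop construction $\xi=\zeta\ast\pi\bar\gamma$ together with $\pi_1\rho(\mu^\pm)>0$ and the closeness hypothesis $\|L^+-L^-\|\leq\frac{1}{2}\min_{h_1=0}\{\beta_{L^-},\beta_{L^+}\}$. The paper argues the lower bound by contradiction for a fixed endpoint pair, which as written only yields pointwise finiteness of $\inf_{T}h_L^T$; your quantitative split into $T<T_\ast$ (where $h_L^T\geq-2C_0T$) and $T\geq T_\ast$ (where $h_L^T\geq0$, with $T_\ast$ controlled uniformly by compactness of the slab) is the cleaner way to extract the single constant $K''$ that the claim actually asserts.
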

\begin{proof}
The proof of $\max_{\substack{0\leq\bar{x}_{0,1}\leq1\\
0\leq\bar{x}_{1,1}\leq1}}\inf_{T\geq0}h_L^T(\bar{x}_0,\bar{x}_1)\leq K''$ is easy. For the other part of this claim, we can assume that there exist $\bar{x}_0$ and $\bar{x}_1$ such that $\inf_{T\geq0}h_L^T(\bar{x}_0,\bar{x}_1)=-\infty$. If so, there must be $\bar{\gamma}_n$ and $T_n\rightarrow\infty$ such that $h_L^{T_n}(\bar{x}_0,\bar{x}_1)\rightarrow-\infty$, where $\bar{\gamma}_n(-T_n)=\bar{x}_0$ and $\bar{\gamma}_n(T_n)=\bar{x}_1$. But $d(\bar{x}_0,\bar{x}_1)$ is bounded, we can construct a similar loop $\xi_n$ as above Lemma and lead to a contradiction that $h_L^{T_n}(\bar{x}_0,\bar{x}_1)\rightarrow+\infty$.
\end{proof}
Once this claim is available, we can see that 
\begin{equation}
\inf_{T\geq0}h_L^{T}(\bar{m}_0,\bar{m}_1)\geq\inf_{\bar{x}_{0,1}=0}\inf_{T^{-}\geq0}h_L^{T^{-}}(\bar{m}_0,\bar{x}_0)+\inf_{\bar{x}_{1,1}=1}\inf_{T^{+}\geq0}h_L^{T^{+}}(\bar{x}_1,\bar{m}_1)-K'',
\end{equation}
and
\begin{equation}
\inf_{T\geq0}h_L^{T}(\bar{m}_0,\bar{m}_1)\leq\max_{\bar{x}_{0,1}=0}\inf_{T^{-}\geq0}h_L^{T^{-}}(\bar{m}_0,\bar{x}_0)+\max_{\bar{x}_{1,1}=1}\inf_{T^{+}\geq0}h_L^{T^{+}}(\bar{x}_1,\bar{m}_1)+K''.
\end{equation}
Then $\forall\{\bar{m}_0^n\}_{n\in\mathbb{N}}$ and $\{\bar{m}_1^n\}_{n\in\mathbb{N}}$ sequences with $-\bar{m}_{0,1}^n$, $\bar{m}_{1,1}^n\rightarrow\infty$ as $n\rightarrow\infty$, $\|\inf_{T\geq0}h_L^{T}(\bar{m}_0^n,\bar{m}_1^n)\|$ and $\|\dot{\bar{\gamma}}_n\|$ is uniformly bounded for $t\in[-T_{\bar{m}_0^n,\bar{m}_1^n}]$, $\forall n\in\mathbb{N}$. That's because we can find $\bar{x}_0^n$ $(\bar{x}_1^n)$ as the first (last) intersection point of $\bar{\gamma}_n$ with $\{x_1=0\}$ $(\{x_1=1\})$. We denote the segment of $\bar{\gamma}_n$ from $\bar{x}_0^n$ to $\bar{x}_1^n$ by $\bar{\gamma}_n\big{|}_{\bar{x}_0^n}^{\bar{x}_1^n}$. From (\ref{help}), we can see that $\dot{\bar{\gamma}}_n\big{|}_{\bar{x}_0^n}^{\bar{x}_1^n}$ is also uniformly bounded. On the other side, we can see that the segment $\bar{\gamma}_n\big{|}_{\bar{m}_0^n}^{\bar{x}_0^n}$ $(\bar{\gamma}_n\big{|}_{\bar{x}_1^n}^{\bar{m}_1^n})$ satisfies the E-L equation of $L^{-}$ $(L^{+})$. Then as $n\rightarrow\infty$, 
\[
A_{L^{-}}(\bar{\gamma}_n\big{|}_{\bar{m}_0^n}^{\bar{x}_0^n})-h_{L^{-}}^{\infty}(m_0^n,y)-h_{L^{-}}^{\infty}(y,x_0^n)\rightarrow0
\]
and
\[
A_{L^{+}}(\bar{\gamma}_n\big{|}_{\bar{x}_1^n}^{\bar{m}_1^n})-h_{L^{+}}^{\infty}(x_1^n,z)-h_{L^{+}}^{\infty}(z,m_1^n)\rightarrow0
\]
hold. Here $y\in{\mathcal{M}}(L^{-})$, $z\in{\mathcal{M}}(L^{+})$ and $x_i^n\in M$ (${m}_i^n\in M$) is the projection of $\bar{x}_i^n$ $(\bar{m}_i^n)$, $i=0,1$. Then $\bar{\gamma}_n$ $C^1-$uniformly converges to a $C^1-curve$ $\bar{\gamma}:\mathbb{R}\rightarrow\bar{M}$ which satisfies the following:
\begin{defn}
A curve $\bar{\gamma}:\mathbb{R}\rightarrow\bar{M}$ is contained in $\mathscr{G}(L)$ if
\[
A_L(\bar{\gamma})\big{|}_{[-T,T]}=\inf_{T'\in\mathbb{R}_{+}}h_L^{T'}(\bar{\gamma}(-T),\bar{\gamma}(T)),\quad\forall T\in\mathbb{R}_{+}.
\]
\end{defn}
We can see that $\mathscr{G}(L)\neq\emptyset$ based on above analysis. Besides, we have:
\begin{Pro}\cite{Ch}
There exists some $K>0$ such that $\forall \bar{\gamma}\in\mathscr{G}(L)$ and $T>0$, $\|h_L^T(\bar{\gamma}(-T), \bar{\gamma}(T))\|\leq K$ holds.
\end{Pro}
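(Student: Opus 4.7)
The plan combines the two sandwich estimates on $\inf_{T'} h_L^{T'}(\bar m_0, \bar m_1)$ displayed immediately before the proposition with the weak KAM machinery applied to the two periodic ends $L^\pm$. Fix $\bar\gamma \in \mathscr{G}(L)$ and $T > 0$, and set $\bar x^\pm := \bar\gamma(\pm T)$. By the defining property of $\mathscr{G}(L)$, $A_L(\bar\gamma)|_{[-T,T]} = \inf_{T' \geq 0} h_L^{T'}(\bar x^-, \bar x^+)$, and because the original $\bar\gamma|_{[-T,T]}$ is itself a competitor of total time $2T$, this infimum equals $h_L^T(\bar x^-, \bar x^+)$. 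So it suffices to sandwich $\inf_{T'} h_L^{T'}(\bar x^-, \bar x^+)$ by constants independent of $T$ and $\bar\gamma$.

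The hypothesis that the minimal average action of $L^\pm$ equals $0$ is equivalent to $\alpha_{L^\pm}(0) = 0$, so I would use weak KAM solutions $u^\pm$ of $L^\pm$ at cohomology class $c = 0$. These lift from $M = \mathbb{T}^n$ to $\bar M$ as bounded functions, with $\|u^\pm\|_{C^0} \leq C_\pm$ for some constants $C_\pm$. The fundamental weak KAM inequality then reads, for any absolutely continuous $\beta:[0,T'] \to \bar M$ from $\bar x$ to $\bar y$,
$$\int_0^{T'} L^\pm(\beta,\dot\beta)\,dt \;\geq\; u^\pm(\bar y) - u^\pm(\bar x) \;\geq\; -2C_\pm,$$
with equality along $L^\pm$-semi-static orbits. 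Feeding this lower bound into the first sandwich inequality yields
$$\inf_{T'} h_L^{T'}(\bar x^-, \bar x^+) \;\geq\; -2C_- - 2C_+ - K''.$$
For the upper bound I would apply the second sandwich inequality at fixed reference points $\bar p_0 \in \{x_1 = 0\}$ and $\bar p_1 \in \{x_1 = 1\}$ chosen on convenient semi-static leaves of $L^\pm$, and exhibit near-optimal competitors: a semi-static $L^-$-orbit from $\bar x^-$ to $\bar p_0$ saturates the weak KAM inequality, giving action bounded by $2C_-$, and symmetrically a semi-static $L^+$-orbit from $\bar p_1$ to $\bar x^+$ has action bounded by $2C_+$. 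Combined with the middle piece bounded by $K''$ via the preceding claim, this produces $\inf_{T'} h_L^{T'}(\bar x^-, \bar x^+) \leq 2C_- + 2C_+ + K''$, and setting $K := 2C_- + 2C_+ + K''$ completes the argument.

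The delicate step is the upper bound: as $T$ grows, $\bar x^- = \bar\gamma(-T)$ drifts arbitrarily far in the $-x_1$ direction, so the required ``semi-static orbit of $L^-$ from $\bar x^-$ to $\bar p_0$'' is not available off the shelf through every point. The resolution exploits the description of $\bar\gamma$ as an accumulation of the minimizing family $\bar\gamma_n$ constructed just above, whose tail segments $C^1$-converge to $L^\pm$-semi-static rays, together with the $\mathbb Z$-periodicity of $L^\pm$ on $\bar M$. Concatenating the available semi-static ray from $\bar x^-$ with a short, bounded-action segment drifting along a lifted minimizing periodic orbit of $L^-$ (using the periodicity) and a final semi-static descent to $\bar p_0$ produces the required competitor with total action controlled by $\|u^-\|_{C^0}$ up to a universal additive constant absorbable into $K$; a symmetric maneuver handles the $L^+$ side. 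This is essentially the construction carried out in \cite{Ch}.
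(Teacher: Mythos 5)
Your argument is correct and follows essentially the same route as the paper, which states this proposition without proof (citing \cite{Ch}) and relies on exactly the ingredients you use: the identity $h_L^T(\bar\gamma(-T),\bar\gamma(T))=\inf_{T'}h_L^{T'}(\bar\gamma(-T),\bar\gamma(T))$ for $\bar\gamma\in\mathscr{G}(L)$, the two sandwich inequalities together with the Claim (\ref{help}), boundedness of the weak KAM solutions/Peierls barriers of the periodic Lagrangians $L^{\pm}$ on the compact base, and the condition $\pi_1\rho(\mu^{\pm})>0$ to drift between lifts in the cover at asymptotically zero action cost. Two cosmetic points: the weak KAM domination should be applied to the segment of a near-minimizer preceding its first crossing of $\{x_1=0\}$ (which lies entirely in $\bar M^{-}$ where $L=L^{-}$), rather than to $h_L^{T^{-}}(\bar m_0,\bar x_0)$ wholesale --- this is precisely how the sandwich inequalities are derived, so nothing is lost --- and the ``minimizing periodic orbit'' of $L^{-}$ should be a recurrent orbit in the support of an ergodic minimizing measure with positive first rotation component, since such measures need not be supported on periodic orbits.
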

Each $k\in\mathbb{Z}$ defines a Deck transformation ${\bf k}: \bar{M}\rightarrow\bar{M}$ with ${\bf k}x=(x_1+k,x_2,\cdots,x_n)$. Let $\bar{M}^{-}=\{x\in\bar{M}\big{|}x_1\leq0\}$ and $\bar{M}^{+}=\{x\in\bar{M}\big{|}x_1\geq1\}$.
\begin{defn}
A curve $\bar{\gamma}\in\mathscr{G}(L)$ is called {\bf pseudo connecting curve} if the following holds
\[
A_L(\bar{\gamma})\big{|}_{[-T,T]}=\inf_{\substack{T'\in\mathbb{R}_{+}\\
{\bf k^{-}}\bar{\gamma}(-T)\in\bar{M}^{-}\\
{\bf k^{+}}\bar{\gamma}(T)\in\bar{M}^{+}
}}h_L^{T'}({\bf k^{-}}\bar{\gamma}(-T),{\bf k^{+}}\bar{\gamma}(T))
\]
for each $\bar{\gamma}(T)\in\bar{M}^{-}$ and $\bar{\gamma}(T)\in\bar{M}^{+}$. We can denote the set of pseudo connecting curves by $\mathscr{C}(L)$.
\end{defn}
\begin{Lem}
The set $\mathscr{C}(L)$ is not empty.
\end{Lem}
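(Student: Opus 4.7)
The plan is to realize $\bar\gamma \in \mathscr{C}(L)$ as a $C^1_{\mathrm{loc}}$-limit of a minimizing sequence for the global infimum
\[
I = \inf\{h_L^{T'}(\bar y_0, \bar y_1) : T' > 0,\ \bar y_0 \in \bar M^-,\ \bar y_1 \in \bar M^+\}.
\]
First I would verify that $I$ is finite. An upper bound follows from the non-emptiness of $\mathscr{G}(L)$ established in the previous Proposition together with the boundedness estimate there; the lower bound follows by the same loop-construction used in the Lemma preceding (\ref{help}): any sequence driving $h_L^{T'_n}(\bar y_0^n, \bar y_1^n)$ to $-\infty$ would, by closing it up with a fixed geodesic in $M$, contradict the positivity of $\beta_{L^\pm}(h)$ on the subspace $h_1 = 0$, using $\pi_1\rho(\mu^\pm) > 0$ together with the hypothesis $\|L^+ - L^-\| \leq \tfrac{1}{2}\min\beta$ in Definition~\ref{li xia}.

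Next, choose minimizers $\bar\gamma_n : [-T_n, T_n] \to \bar M$ realizing $I$ in the limit. Superlinearity of $L$ together with the uniform action bound and the claim (\ref{help}) forces $\dot{\bar\gamma}_n$ to be uniformly bounded in the transition region $\{0 \leq x_1 \leq 1\}$; on the tails, each $\bar\gamma_n$ satisfies the Euler--Lagrange equation of the time-periodic Tonelli Lagrangian $L^\pm$ and hence has uniformly bounded $C^1$ norm. After normalizing by a time-translation so that $\bar\gamma_n$ first crosses the section $\{x_1 = 1/2\}$ at $t = 0$, Arzelà--Ascoli and a diagonal extraction yield a $C^1_{\mathrm{loc}}$-limit curve $\bar\gamma : \mathbb{R} \to \bar M$. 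Moreover, as $n \to \infty$ the tails of $\bar\gamma_n$ spend more and more time in $\bar M^\pm$ and, being $L^\pm$-minimizers, their $\omega$-limit (resp.\ $\alpha$-limit) is forced into $\tilde{\mathcal{A}}(L^+)$ (resp.\ $\tilde{\mathcal{A}}(L^-)$).

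Finally, I would verify $\bar\gamma \in \mathscr{C}(L)$. Membership in $\mathscr{G}(L)$ is immediate from the $C^1$-convergence and the fact that each $\bar\gamma_n$ is a minimizer of $h_L^{T'}$ between any two of its own points. For the stronger Deck-invariant minimality, fix $T$ with $\bar\gamma(-T) \in \bar M^-$, $\bar\gamma(T) \in \bar M^+$, and suppose for contradiction that there exist Deck transformations ${\bf k}^\pm$ (keeping endpoints in $\bar M^\pm$) and a time $T_\ast$ with
\[
h_L^{T_\ast}({\bf k}^-\bar\gamma(-T), {\bf k}^+\bar\gamma(T)) < A_L(\bar\gamma)\big|_{[-T, T]}.
\]
Splicing this shorter middle curve with Deck-translated tails of $\bar\gamma_n$ for $|t| > T$ would then produce, for $n$ large, a competitor strictly improving on $\bar\gamma_n$ within the class defining $I$ --- contradicting minimality.

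The main obstacle is the splicing in the last step: one has to bridge a fixed translated middle segment to the asymptotic tails of $\bar\gamma_n$ at negligible action cost. This is controlled by the Lipschitz graph property of $\tilde{\mathcal{A}}(L^\pm)$ (invoked via Mather's theorem earlier in the paper) together with Contreras' connectivity result \cite{Con}: any two points of $\mathcal{A}(L^\pm, \bar M)$ can be joined by a semi-static orbit for $L^\pm$ whose excess action is zero in the limit of large time. Absorbing this bridging cost into an $\varepsilon$-error that vanishes as $n \to \infty$ then yields the strict inequality needed for the contradiction, concluding $\bar\gamma \in \mathscr{C}(L)$.
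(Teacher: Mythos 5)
Your strategy is genuinely different from the paper's: the paper starts from an arbitrary $\bar{\gamma}\in\mathscr{G}(L)$ and performs, for each tolerance $\Delta_i\rightarrow0$, finitely many action-decreasing surgeries (the number being controlled by the uniform bound $\|h_L^T(\bar{\gamma}(-T),\bar{\gamma}(T))\|\leq K$ of the preceding Proposition), obtaining curves that satisfy the defining identity of $\mathscr{C}(L)$ up to an error $\Delta_i$, and then passes to a $C^1$-limit by diagonal extraction. You instead try to minimize one global quantity and extract a limit, and this route has a genuine gap at its first step. The infimum
\[
I=\inf\{h_L^{T'}(\bar{y}_0,\bar{y}_1):\ T'>0,\ \bar{y}_0\in\bar{M}^{-},\ \bar{y}_1\in\bar{M}^{+}\}
\]
ranges over \emph{all} admissible endpoints, including points on the walls $\{x_1=0\}$ and $\{x_1=1\}$, and over all finite $T'$. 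By the claim (\ref{help}) the infimum over such boundary configurations is already finite, and for fixed endpoints $h_L^{T}\rightarrow\infty$ as $T\rightarrow\infty$; nothing forces a minimizing sequence for $I$ to have $T_n\rightarrow\infty$ or endpoints escaping to infinity. The sequence may therefore live on uniformly bounded time intervals, and its limit is a compact arc rather than a curve $\bar{\gamma}:\mathbb{R}\rightarrow\bar{M}$, so it cannot belong to $\mathscr{C}(L)$. The assertion that ``the tails of $\bar{\gamma}_n$ spend more and more time in $\bar{M}^{\pm}$'' is exactly what must be arranged, and minimizing $I$ does not arrange it; one would have to replace $I$ by a $\liminf$ over endpoints tending to infinity, as in the time-step construction, and then control the extension cost via the Ma\~{n}\'e potentials of $L^{\pm}$.

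The second difficulty, which you flag but do not resolve, is the splicing. To contradict the minimality of $\bar{\gamma}_n$ you must convert the better Deck-translated competitor joining ${\bf k}^{-}\bar{\gamma}(-T)$ to ${\bf k}^{+}\bar{\gamma}(T)$ into an admissible competitor for $\bar{\gamma}_n$. Either you bridge $\bar{\gamma}_n(\mp T)$ to ${\bf k}^{\mp}\bar{\gamma}(\mp T)$ --- but these are distinct points of the covering space, and the bridging cost is a Ma\~{n}\'e potential that need not vanish (the connectivity result of \cite{Con} gives asymptotically zero cost only between points of the Aubry set, not between a point of the curve and its Deck translate) --- or you translate the entire tails of $\bar{\gamma}_n$ by ${\bf k}^{\mp}$, which preserves their action only if they avoid the transition region $\{0<x_1<1\}$, where $L$ is not periodic; the tails of a minimizer may re-enter this region. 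This is precisely what the surgery argument of \cite{Ch} is built to handle: the uniform bound on $h_L^T$ along curves of $\mathscr{G}(L)$ guarantees that only finitely many corrections of size at least $\Delta$ are ever required, so the limit over $\Delta_i\rightarrow0$ closes the argument. I would recommend following that route, or at minimum reformulating your infimum and supplying the bridging estimates explicitly.
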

\begin{proof}
Since it's a direct cite of \cite{Ch}, we only give the sketch of proof here. As we know, $\mathscr{G}(L)\neq\emptyset$. So we start with a curve $\bar{\gamma}\in\mathscr{G}(L)$. Given a $\Delta>0$, we claim that there are finitely many intervals $[t_i^{-},t_i^{+}]$ such that ${\bf k_i^{-}}\bar{\gamma}(t_i^{-})$ can be connected to ${\bf k_i^{+}} \bar{\gamma}(t_i^{+})$ by another curve $\zeta_i$ with $\Delta-$smaller action than the original one. This is because the previous Proposition and $\Delta>0$. For a sequence $\Delta_i\rightarrow0$, we can do finitely many surgeries on $\bar{\gamma}$ and get a sequence $\bar{\gamma}_i\in\mathscr{G}(L)$ satisfying
\[
A_L(\bar{\gamma}_i)\big{|}_{[-T,T]}\leq \inf_{\substack{T'\in\mathbb{R}_{+}\\
{\bf k^{-}}\bar{\gamma}_i(-T)\in\bar{M}^{-}\\
{\bf k^{+}}\bar{\gamma}_i(T)\in\bar{M}^{+}}}h_L^{T'}({\bf k^{-}}\bar{\gamma}_i(-T),{\bf k^{+}}\bar{\gamma}_i(T))+\Delta_i,\quad\forall T\in\mathbb{R}_{+}.
\]
On the other side, $\forall T>0$, $\exists i_0$ such that the set $\{\bar{\gamma}_i\big{|}_{[-T,T]}:i\geq i_0\}$ is pre-compact in $C^1([-T,T],\bar{M})$. Let $T\rightarrow\infty$, by diagonal extraction argument $\bar{\gamma}_i$ converges $C^1-$uniformly to a $C^1-$curve $\bar{\gamma}:\mathbb{R}\rightarrow\bar{M}$. Obviously, $\bar{\gamma}\in\mathscr{C}(L)$.
\end{proof}
\begin{The}
The map $L\rightarrow\mathscr{C}(L)$ is upper semicontinuous.
\end{The}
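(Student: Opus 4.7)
The plan is to mimic the strategy for $\mathscr{G}(L)$ in the time-step case, but now accounting for the Deck-translation infimum built into the definition of $\mathscr{C}(L)$. Take a sequence $L_n\to L$ in the $C^2$-topology with $\bar{\gamma}_n\in\mathscr{C}(L_n)$, and show that any $C^1_{\mathrm{loc}}$-accumulation point $\bar{\gamma}$ of $\{\bar{\gamma}_n\}$ lies in $\mathscr{C}(L)$.

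First I establish compactness. Applying the previous Proposition to each $L_n$ yields a constant $K_n$ with $|h_{L_n}^T(\bar{\gamma}_n(-T),\bar{\gamma}_n(T))|\le K_n$ for all $T>0$. Since the constant $K''$ in (\ref{help}) depends only on $C^2$-data of $L_n$ and $L_n^\pm$, namely on the minimal $\beta$-values and the $C^0$-size $\|L_n^+-L_n^-\|$ on compact velocity balls, it can be chosen uniform in $n$, and so can $K_n$. Tonelli superlinearity then forces a uniform bound $\|\dot{\bar{\gamma}}_n\|_\infty\le D$ on every compact time interval. By Arzela-Ascoli and a diagonal extraction, a subsequence $\bar{\gamma}_{n_k}\to\bar{\gamma}$ in $C^1_{\mathrm{loc}}(\mathbb{R},\bar{M})$; the upper semicontinuity of $\mathscr{G}$, whose proof extends verbatim to the space-step setting by Remark \ref{uniform}, guarantees $\bar{\gamma}\in\mathscr{G}(L)$.

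Second, I must verify the pseudo-connecting property. Suppose instead that there exist $T>0$, Deck translations ${\bf k}^\pm$ placing the endpoints in $\bar{M}^\mp$, a time $T'>0$, and a competitor $\zeta\in C^{\mathrm{ac}}([-T',T'],\bar{M})$ with $\zeta(-T')={\bf k}^-\bar{\gamma}(-T)$, $\zeta(T')={\bf k}^+\bar{\gamma}(T)$, and $A_L(\zeta)\le A_L(\bar{\gamma})|_{[-T,T]}-3\delta$ for some $\delta>0$. Using $C^1_{\mathrm{loc}}$-convergence of the endpoints, I splice short smooth correction segments of uniformly bounded length at the two ends of $\zeta$ to obtain competitor curves $\zeta_{n_k}$ joining ${\bf k}^-\bar{\gamma}_{n_k}(-T)$ to ${\bf k}^+\bar{\gamma}_{n_k}(T)$. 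Since $L_{n_k}\to L$ in $C^2$ and all velocities involved stay in a fixed compact set, $A_{L_{n_k}}(\zeta_{n_k})\le A_L(\zeta)+\delta$ for large $k$; an analogous comparison gives $A_{L_{n_k}}(\bar{\gamma}_{n_k})|_{[-T,T]}\ge A_L(\bar{\gamma})|_{[-T,T]}-\delta$. Combined, these force $\zeta_{n_k}$ to beat $\bar{\gamma}_{n_k}$ by at least $\delta$ in the infimum defining $\mathscr{C}(L_{n_k})$, contradicting $\bar{\gamma}_{n_k}\in\mathscr{C}(L_{n_k})$.

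The hard part is controlling the splicing in the last step. The Deck translates ${\bf k}^\pm$ are chosen for $\bar{\gamma}$, so I must check that they remain admissible for $\bar{\gamma}_{n_k}$ when $k$ is large, which follows from $\bar{\gamma}_{n_k}(\pm T)\to\bar{\gamma}(\pm T)$ together with a small adjustment of $T$ if an endpoint sits on the boundary of $\bar{M}^\pm$. The correction segments must also be chosen with vanishing action loss as $k\to\infty$; here I use the uniform lower bound on the optimizing connection time coming from (\ref{help}) to ensure that neither endpoint of $\zeta$ is driven into the transition slab $\{0\le x_1\le1\}$ where the identity $L=L^\pm$ fails, which would break the comparison. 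With these two technical points handled, the contradiction argument closes and upper semicontinuity follows.
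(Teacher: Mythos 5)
Your argument is correct and follows the same compactness-plus-action-comparison template that the paper uses for the upper semicontinuity of $\mathscr{G}(L)$ and for the pre-compactness of minimizing sequences in the time-step case (the paper states the present theorem without proof, deferring to \cite{CY1,CY2,Ch}). The only point worth flagging is that your concern about endpoints of $\zeta$ being "driven into" the transition slab $\{0\le x_1\le 1\}$ is a non-issue: the action comparison between $A_{L_{n_k}}$ and $A_L$ only needs $C^0$-closeness of the Lagrangians on a compact velocity set, not the identities $L=L^{\pm}$, so no lower bound on the optimizing connection time is required there.
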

\begin{Cor}
If the space-step Lagrangian $L$ is periodic of $x_1$ variable, then $\bar{\gamma}\in\mathscr{C}(L)$ iff the projection $\gamma=\pi\bar{\gamma}:\mathbb{R}\rightarrow M$ is semi-static.
\end{Cor}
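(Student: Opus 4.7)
\medskip

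The plan is to unwind both sides purely at the level of definitions, using that when $L$ is $x_1$-periodic we have $L^{-}=L^{+}=L$ (as functions on $TM\times\mathbb{S}^1$) and, in particular, $L$ is invariant under every Deck transformation $\mathbf{k}$ of the covering $\bar{M}\to M$. Note also that in this case the normalization in Definition \ref{li xia} gives $\alpha^{\pm}=0$, which matches the cohomology class $c=0$ in the Mañé potential $F_c$ defined in Section~\ref{Mather theory}.

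First I would establish the key reduction: for every $\bar{x}_0,\bar{x}_1\in\bar{M}$ and every $T'>0$, Deck-invariance yields $h_L^{T'}(\mathbf{k}\bar{x}_0,\mathbf{k}\bar{x}_1)=h_L^{T'}(\bar{x}_0,\bar{x}_1)$. Applying $(\mathbf{k}^-)^{-1}$ to both endpoints in the defining infimum for $\mathscr{C}(L)$, the infimum over $(\mathbf{k}^-,\mathbf{k}^+)$ with $\mathbf{k}^\pm\bar{\gamma}(\pm T)\in\bar{M}^\pm$ collapses to an infimum over a single integer (say $\mathbf{k}=\mathbf{k}^+(\mathbf{k}^-)^{-1}$), and this in turn is the same as taking the infimum over all lifts $\bar{y}_0,\bar{y}_1$ of $\gamma(-T),\gamma(T)$ (any lift can be reached by a Deck translation; the constraint $\bar{y}_i\in\bar{M}^\pm$ can be absorbed into the choice of initial lift). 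Thus
\[
\inf_{T',\mathbf{k}^\pm} h_L^{T'}(\mathbf{k}^-\bar{\gamma}(-T),\mathbf{k}^+\bar{\gamma}(T))\;=\;\inf_{T'>0}\inf_{\bar{y}_0,\bar{y}_1}h_L^{T'}(\bar{y}_0,\bar{y}_1),
\]
and the right-hand side is precisely $F_0((\gamma(-T),\tau_{-T}),(\gamma(T),\tau_T))$ by the definition of $F_c$ given in Section~\ref{Mather theory}.

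With this reduction in hand the corollary is essentially a tautology in each direction. For the forward implication, if $\bar{\gamma}\in\mathscr{C}(L)$ then $A_L(\gamma)|_{[-T,T]}=A_L(\bar{\gamma})|_{[-T,T]}$ equals the right-hand side above, i.e. $F_0((\gamma(-T),\tau_{-T}),(\gamma(T),\tau_T))$; since this holds for all $T>0$ and by time-translation for all $t_1<t_2$, the curve $\gamma$ is $0$-semi-static on $M$. Conversely, if $\gamma$ is $0$-semi-static then $A_L(\gamma)|_{[-T,T]}=F_0((\gamma(-T),\tau_{-T}),(\gamma(T),\tau_T))$, and lifting this identity via the reduction above yields precisely the defining equality of $\mathscr{C}(L)$ for $\bar{\gamma}$.

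The only delicate point — and what I would expect to be the main technical obstacle — is verifying that the boundary constraints $\mathbf{k}^\pm\bar{\gamma}(\pm T)\in\bar{M}^\pm$ in Definition of $\mathscr{C}(L)$ really do not restrict the infimum in the periodic case. Concretely, one needs to check that as $\mathbf{k}^\pm$ range over integers with these constraints, the endpoints $\mathbf{k}^\pm\bar{\gamma}(\pm T)$ still sweep out every lift of $\gamma(\pm T)$ that one needs to realize the full Mañé infimum; this uses that $\bar{M}^-=\{x_1\leq 0\}$ and $\bar{M}^+=\{x_1\geq 1\}$ are unbounded on both sides, so the translates of any fixed lift cover them cofinally. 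Once this cofinality observation is made, the rest is a bookkeeping argument comparing actions and infima.
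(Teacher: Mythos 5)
The definitional unwinding is the right strategy, and your converse direction is essentially fine: once $\gamma$ is $0$-semi-static one only needs the easy inequality $F_0\leq\inf_{{\bf k}^{\pm}}\inf_{T'}h_L^{T'}$ (the constrained infimum runs over a subset of the competitors defining $F_0$) together with the curve $\bar{\gamma}|_{[-T,T]}$ itself as a competitor with ${\bf k}^{-}={\bf k}^{+}=\mathrm{id}$. The problem is the forward direction, and it sits exactly at the spot you flag — but your proposed fix does not address the actual obstruction. Deck-invariance of $L$ does collapse the double infimum to an infimum over the single integer $k={\bf k}^{+}-{\bf k}^{-}$, which indexes the homotopy class (relative $x_1$-winding) of the connecting arc in $M$. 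However, the constraints ${\bf k}^{-}\bar{\gamma}(-T)\in\bar{M}^{-}$ and ${\bf k}^{+}\bar{\gamma}(T)\in\bar{M}^{+}$ force $k^{-}\leq b$ and $k^{+}\geq a$ for integers $a\leq 0\leq b$ determined by how deep the endpoints sit in $\bar{M}^{\mp}$, hence $k\geq k_0:=a-b$ with $k_0\approx-(\bar{\gamma}_1(T)-\bar{\gamma}_1(-T))$. So an entire half-line of homotopy classes $\{k<k_0\}$ — the arcs winding sufficiently far backwards in $x_1$ — is excluded from the infimum defining $\mathscr{C}(L)$, whereas $F_0$ infimizes over all $k\in\mathbb{Z}$. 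Your cofinality observation (translates of a lift sweep out all of $\bar{M}^{\pm}$) is true but irrelevant: the obstruction lives on the difference ${\bf k}^{+}-{\bf k}^{-}$, which is bounded below no matter how the individual lifts range. As it stands you have only shown $A_L(\bar\gamma)|_{[-T,T]}\geq F_0$, not the equality needed for semi-staticity.

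To close the gap you must actually use the structural hypotheses on a space-step Lagrangian, namely $\pi_1\rho(\mu)>0$ for every minimizing measure and the normalization $\alpha=0$. These give $\inf\{\beta_L(h):h_1\leq0\}>0$, and a connecting arc in an excluded class $k<k_0$ has non-positive net $x_1$-displacement, so the argument of the Lemma preceding the Claim (the comparison of $\tfrac{1}{2T}h_L^{T}$ with $\min_{h_1=0}\beta$) shows its action tends to $+\infty$ with the transit time and in any case cannot undercut the constrained infimum. A clean way to organize this: verify the semi-static identity only on intervals $[-T,T]$ with $T$ large (where $k_0\to-\infty$, so the excluded classes are exactly the long backward-drifting arcs handled by that Lemma), and then propagate to arbitrary subintervals using the triangle inequality $F_0(x,z)\leq F_0(x,y)+F_0(y,z)$ together with $A_L(\gamma)|_{[t,t']}\geq F_0(\gamma(t),\gamma(t'))$, which forces every subarc of a semi-static arc to be semi-static. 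You should also record explicitly that membership in $\mathscr{C}(L)$ presupposes $\bar\gamma\in\mathscr{G}(L)$, which is what licenses the $k=0$ comparison in the converse.
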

Similar to the definition for time-step Lagrangian, we define
\[
\widetilde{\mathcal{C}}(L)=\bigcup_{\bar{\gamma}\in\mathscr{C}(L)}(\bar{\gamma}(t),\dot{\bar{\gamma}}(t)),\quad\mathcal{C}(L)=\bigcup_{\bar{\gamma}\in\mathscr{C}(L)}\bar{\gamma}(t).
\]
We can see that $\pi\widetilde{\mathcal{C}}(L)=\widetilde{\mathcal{N}}(L)$ and $\pi\mathcal{C}(L)=\mathcal{N}(L)$, where $\pi:\bar{M}\rightarrow M$ is the standard projection.
\vspace{10pt}

\subsection{Local connecting orbits of c-equivalence}\label{c-equivalence}
$\newline$

Recall that a time-step Lagrnagian can be considered as a space-step one from Remark \ref{uniform}, so we only deal with the space-step case in this subsection. This new version of $c-$equivalence is firstly raised in \cite{L}, which is more general than the earlier one raised in \cite{Mat3}. This type of connecting orbits are found in the annulus of incomplete intersection and plays a key role in establishing transition chain crossing double resonance.

Assume $\phi:\mathbb{T}^{n-1}\rightarrow\mathbb{T}^n$ is a smooth injection and $\Sigma_c$ is the image of $\phi$. Let $\mathfrak{C}\subset H^1(\mathbb{T}^n,\mathbb{R})$ be a connected set where we are going to define $c-$equivalence. For each class $c\in\mathfrak{C}$, we assume that there exists a non-degenerate embedded $(n-1)-$dimensional torus $\Sigma_c\subset\mathbb{T}^n$ such that each $c-$semi static curve $\gamma$ transversally intersects $\Sigma_c$. Let
\[
\mathbb{V}_c=\bigcap_U\{i_{*}H_1(U,\mathbb{R})\big{|}U \text{ is a neighborhood of }\mathcal{N}(c)\cap\Sigma_c\},
\]
where $i:U\rightarrow M$ is the inclusion map. $\mathbb{V}_c^{\bot}$ is defined to be the annihilator of $\mathbb{V}_c$, i.e. if $c'\in H^1(\mathbb{T}^n,\mathbb{R})$, $\langle c',h\rangle=0$ for all $h\in\mathbb{V}_c$. Clearly,
\[
\mathbb{V}_c^{\bot}=\bigcup_U\{\ker i^{*}\big{|}U \text{ is a neighborhood of } \mathcal{N}(c)\cap\Sigma_c\}.
\]
Note that there exists a neighborhood $U$ of $\mathcal{N}(c)\cap\Sigma_c$ such that $\mathbb{V}_c=i_{*}H_1(U,\mathbb{R})$ and $\mathbb{V}_c^{\bot}=\ker i^*$ \cite{Mat3}.

We say that $c,c'\in H^1(M,\mathbb{R})$ are {\bf $c-$equivalent} if there exists a continuous curve $\Gamma:[0,1]\rightarrow\mathfrak{C}$ such that $\Gamma(0)=c$, $\Gamma(1)=c'$ and $\alpha(\Gamma(s))$ keeps constant for all $s\in[0,1]$. Besides we need $\forall s_0\in[0,1]$, $\exists\delta>0$ such that $\Gamma(s)-\Gamma(s_0)\in\mathbb{V}_{\Gamma(s_0)}^{\bot}$ whenever $s\in[0,1]$ and $\|s-s_0\|<\delta$.

\begin{The}
Assume the cohomology class $c^*$ is $c-$equivalent to the class $c'$ through the path $\Gamma:[0,1]\rightarrow H^1(\mathbb{T}^n,\mathbb{R})$. For each $s\in[0,1]$, the followings are satisfied:
\begin{itemize}
\item There exists at least one component of rotation vector which is positive, i.e. $\forall \mu_{\Gamma(s)}$ ergodic $\Gamma(s)-$minimizing measure, $\omega_j(\mu_{\Gamma(s)})>0$ for some $j\in\{1,2,\cdots,n\}$.
\item We can find finitely many $\{c_i\}_{i=1}^k\subset\Gamma$ $(c_1=c^*,c_k=c')$ and closed 1-forms $\eta_i$, $\bar{\mu}_i$ on $M$ with $[\eta_i]=c_i$ and $[\bar{\mu}_i]=c_{i+1}-c_i$, and smooth functions $\varrho_i$ on $\overline{M}_{j(i)}$ for $j=1,2,\cdots,k-1$ such that the pseudo connecting curve se $\mathscr{c}(L_i)$ for the space-step Lagrangian 
\[
L_i=L-\eta_i-\varrho_i\bar{\mu}_i
\]
possesses the properties:
\begin{itemize}
\item each curve $\bar{\gamma}\in\mathscr{C}(L_i)$ determines an E-L orbit $(\gamma,\dot{\gamma})$ of $\phi_L^t$;
\item the orbit $(\gamma,\dot{\gamma})$ connects $\tilde{\mathcal{A}}(c_i)$ to $\tilde{\mathcal{A}}(c_{i+1})$, i.e. the $\alpha-$limit set is contained in $\tilde{\mathcal{A}}(c_i)$ and the $\omega-$limit set is contained in $\tilde{\mathcal{A}}(c_{i+1})$.
\end{itemize}
Here $\overline{M}_{j(i)}=\mathbb{T}^{j(i)-1}\times\mathbb{R}\times\mathbb{T}^{n-j(i)}$ is the covering space of $M=\mathbb{T}^n$, and $\varrho_i$ is a smooth map of a form
\[
\varrho_i:\overline{M}_{j(i)}\rightarrow\mathbb{R}\quad via\quad\varrho_i(x)=\varrho_i(x_j),
\]
with $\varrho_i(x_j)=0$ for $x_j\leq0$ and $\varrho_i(x_j)=1$ for $x_j\geq1$ (see figure \ref{fig11}).
\end{itemize}
\end{The}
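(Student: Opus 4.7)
The plan is to reduce the global statement to a sequence of local connection problems, and then to realize each local connection by invoking the existence theory for pseudo connecting curves of space-step Lagrangians developed in the previous two subsections.

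First I would use compactness of $[0,1]$ together with the defining property of $c$-equivalence: for each $s_0\in[0,1]$ there is some $\delta(s_0)>0$ with $\Gamma(s)-\Gamma(s_0)\in\mathbb{V}_{\Gamma(s_0)}^{\bot}$ whenever $|s-s_0|<\delta(s_0)$. Extracting a finite subcover from $\{(s_0-\delta,s_0+\delta)\}$ and ordering the midpoints yields a finite partition $0=s_1<s_2<\cdots<s_k=1$ and the corresponding classes $c_i=\Gamma(s_i)$ (with $c_1=c^*$, $c_k=c'$) such that $c_{i+1}-c_i\in\mathbb{V}_{c_i}^{\bot}$ for each $i$. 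The positivity assumption on some rotation component of every ergodic $\Gamma(s)$-minimizing measure, combined with upper semicontinuity of the Mather set, lets me fix an index $j(i)\in\{1,\ldots,n\}$ such that $\pi_{j(i)}\rho(\mu)>0$ uniformly for all ergodic minimizing measures of $c_i$ and $c_{i+1}$. This $j(i)$ selects the covering space $\overline{M}_{j(i)}$ on which the space-step construction will be carried out.

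Next I would construct the modifiers $\eta_i$, $\bar\mu_i$, $\varrho_i$ for each adjacent pair $(c_i,c_{i+1})$. Pick $\eta_i$ any smooth closed $1$-form with $[\eta_i]=c_i$. Since $c_{i+1}-c_i\in\mathbb{V}_{c_i}^{\bot}=\ker i^{*}$ for a suitable neighborhood $U$ of $\mathcal{N}(c_i)\cap\Sigma_{c_i}$, I can choose a closed $1$-form $\bar\mu_i$ with $[\bar\mu_i]=c_{i+1}-c_i$ whose support is disjoint from $U$; this is the key topological input. For $\varrho_i$, take any smooth monotone cutoff on $\overline{M}_{j(i)}$ depending only on the unfolded coordinate $x_{j(i)}$, vanishing for $x_{j(i)}\le 0$ and equal to $1$ for $x_{j(i)}\ge 1$, and arrange the fundamental domain $\{0\le x_{j(i)}\le 1\}$ to lie in the region where the minimizing measures have transverse crossings (guaranteed by positivity of $\pi_{j(i)}\rho$). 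Then set $L_i=L-\eta_i-\varrho_i\bar\mu_i$, giving $L_i^{-}=L-\eta_i$ and $L_i^{+}=L-\eta_i-\bar\mu_i$, whose cohomology classes are $c_i$ and $c_{i+1}$. Because $\alpha_L$ is constant along $\Gamma$ one has $\alpha^{-}=\alpha(c_i)=\alpha(c_{i+1})=\alpha^{+}$, so after normalization the third condition of Definition~\ref{li xia} can be met by taking successive $c_i$'s sufficiently close, i.e.\ refining the partition if needed. This verifies that $L_i$ is a bona fide space-step Lagrangian on $\overline{M}_{j(i)}\times\mathbb{S}^1$, and the abstract theory of the previous subsection produces $\mathscr{C}(L_i)\ne\emptyset$.

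Finally I would show that each $\bar\gamma\in\mathscr{C}(L_i)$ projects to an Euler--Lagrange orbit of the original $L$ connecting $\tilde{\mathcal{A}}(c_i)$ to $\tilde{\mathcal{A}}(c_{i+1})$. On $\{x_{j(i)}\le 0\}$ the modifier $\varrho_i\bar\mu_i$ vanishes and $L_i=L-\eta_i$ differs from $L$ by a closed form, so E--L orbits coincide; symmetrically on $\{x_{j(i)}\ge 1\}$ the sum $\eta_i+\bar\mu_i$ is closed of class $c_{i+1}$. In the transition strip $\{0\le x_{j(i)}\le 1\}$ the form $\varrho_i\bar\mu_i$ need not be closed, which is the delicate point; however, the support condition on $\bar\mu_i$ was arranged precisely so that any $L_i$-minimizer stays away from the transition anomaly, or equivalently the ``extra'' term $d\varrho_i\wedge\bar\mu_i$ vanishes on the orbit. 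By the asymptotic characterization of curves in $\mathscr{C}(L_i)$ obtained in the previous subsection, $\bar\gamma(t)$ approaches the lift of $\tilde{\mathcal{A}}(L_i^{-})=\tilde{\mathcal{A}}(c_i)$ as $t\to-\infty$ and the lift of $\tilde{\mathcal{A}}(L_i^{+})=\tilde{\mathcal{A}}(c_{i+1})$ as $t\to+\infty$, giving the desired $\alpha$- and $\omega$-limit statements.

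\textbf{Main obstacle.} I expect the hardest step to be verifying that the pseudo connecting curves of $L_i$ really are E--L orbits of the unmodified $L$ inside the transition strip, and, a priori to that, realizing the topological condition $c_{i+1}-c_i\in\mathbb{V}_{c_i}^{\bot}$ by an explicit closed form $\bar\mu_i$ with support disjoint from $\mathcal{N}(c_i)\cap\Sigma_{c_i}$. Both points require a careful choice of the neighborhood $U$ and of the cross-section $\Sigma_{c_i}$, together with the graph property of the Aubry set to rule out unwanted crossings; this is where the refinement of the partition $\{c_i\}$ has to be pushed far enough so that the barrier functions stay strictly positive off a small neighborhood of the Aubry set, ensuring that no $L_i$-minimizer can exploit the non-closed portion of $\varrho_i\bar\mu_i$.
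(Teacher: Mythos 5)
Your plan follows essentially the same route as the paper's proof: a finite partition of $\Gamma$ by compactness, closed forms $\bar\mu_i$ representing $c_{i+1}-c_i\in\mathbb{V}_{c_i}^{\bot}$ with support disjoint from a neighborhood $U$ of $\mathcal{N}(c_i)\cap\Sigma_{c_i}$, and the asymptotic characterization of $\mathscr{C}(L_i)$ for the limit sets. The step you flag as the main obstacle is resolved in the paper exactly as you anticipate, by the upper semicontinuity of $L\mapsto\mathcal{C}(L)$, which forces the pseudo connecting curves of $L-\eta_i-\varrho_i\bar\mu_i$ to cross $\Sigma_{c_i}$ inside $U$, where $\varrho_i\bar\mu_i$ contributes nothing to the Euler--Lagrange equation.
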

\begin{proof}
By the definition of $c-$equivalence, for each $c=\Gamma(s)$ $(s\in[0,1])$ on the path, there exists $\epsilon>0$ such that $\Gamma(s')-c\in\mathbb{V}_{\Gamma(s)}^{\bot}$ whenever $s'\in[0,1]$ and $\|s-s'\|<\epsilon$. Thus there exist a non-degenerately embedded $(n-1)-$dimensional torus $\Sigma_c$, a closed form $\bar{\mu}_c$ and a neighborhood $U$ of $\mathcal{N}(c)\cap\Sigma_c$ such that $[\bar{\mu}_c]=\Gamma(s')-c$ and supp$\bar{\mu}_c\cap U=\emptyset$. We can restrict $\Sigma_c$ into the elementary domain $\mathbb{T}^{j-1}\times[0,1]\times\mathbb{T}^{n-j}$ of $\overline{M}_j$ and let $\mathcal{B}(\Sigma_c,\delta)$ be the $\delta-$neighborhood of $\Sigma_c$ in it. Then if $\eta$ and $\bar{\mu}_c$ are closed $1-$forms such that $[\eta]=c$ and $[\eta+\bar{\mu}_c]=\Gamma(s')=c'$, we have
\[
\mathcal{B}(\Sigma_c,\delta)\cap\mathcal{B}(\mathcal{C}(L+\eta),\delta)\subset U,
\]
as long as $\delta>0$ is chosen sufficiently small. That's because $\mathcal{N}(c)=\mathcal{C}(L+\eta)$. Then from the upper semicontinuity of $\mathcal{C}(L)$ w.r.t $L$, we have
\[
\mathcal{B}(\Sigma_c,\delta)\cap\mathcal{B}(\mathcal{C}(L+\eta+\varrho_i\bar{\mu}_c),\delta)\subset U,
\]
as long as $\varrho_i\bar{\mu}_c$ is sufficiently small. This is available because of the definition of $c-$equivalence.

Besides, we can see that $\bar{\mu}_c$ can be chosen with its support disjoint from $U$. Then $\forall\bar{\gamma}\in\mathscr{C}(L+\eta+\varrho_i\bar{\mu}_c)$ is a solution of E-L equation determined by $L$, i.e. the term $\varrho_i\bar{\mu}_c$ has no contribution to the equation along $\bar{\gamma}$.

From the definition of $\mathscr{C}$ we get that the projection of $\bar{\gamma}$, which is denoted by $\gamma\in M$, satisfies that $\gamma\big{|}_{(-\infty,t_0]}$ is backward $\Gamma(s)-$semi static once $\bar{\gamma}\big{|}_{(-\infty,t_0]}$ falls entirely into $\bar{M}_j^{-}$. Similarly, we have $\gamma\big{|}_{[t_1,+\infty)}$ is forward $\Gamma(s')-$semi static once $\bar{\gamma}\big{|}_{[t_1,\infty)}$ falls entirely into $\bar{M}_j^{+}$. Therefore, $(\gamma(t),\dot{\gamma}(t))\rightarrow\tilde{\mathcal{A}}(\Gamma(s))$ as $t\rightarrow-\infty$ and $(\gamma(t),\dot{\gamma}(t))\rightarrow\tilde{\mathcal{A}}(\Gamma(s'))$ as $t\rightarrow+\infty$.

Because of the compactness of $[0,1]$, there are finitely many numbers $s_1,s_2,\cdots,s_k\in[0,1]$ such that above argument applies if $s$ and $s'$ are replaced respectively by $s_i$ and $s_{i+1}$. We just take $c_i=\Gamma(s_i)$.
\end{proof}

\begin{Cor}
Let $c_i$, $\eta_i$, $\bar{\mu}_i$ and $\varrho_i$ be evaluated as in above Theorem, and $U_i$ be an open neighborhood of $\mathcal{N}(c_i)\cap\Sigma_{c_i}$ such that $U_i\cap supp\bar{\mu}_i=\emptyset$. Then for large $K_i>0$, $T_i>0$, small $\delta>0$ and $\forall \bar{m},\bar{m}'\in\overline{M}_{j(i)}$ with $-K_i\leq\bar{m}_1\leq-K_i+1$, $K_i-1\leq\bar{m}'_1\leq K_i$, the quantity $h_{\eta_i,\mu_i}^T(\bar{m},\bar{m}')$ reaches its minimum at some $T<T_i$ and the corresponding minimizer $\bar{\gamma}_i(t,\bar{m},\bar{m}')$ satisfies the condition Image$(\bar{\gamma}_i)\cap\mathcal{B}(\Sigma_{c_i},\delta)$.
\end{Cor}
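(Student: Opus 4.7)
The plan is to combine the finiteness arguments developed in the previous Lemma about space-step Lagrangians with an upper-semicontinuity argument for the pseudo connecting curve set $\mathscr{C}(L_i)$.

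First I would establish that the infimum $\inf_{T>0} h^T_{\eta_i,\mu_i}(\bar m,\bar m')$ is attained at some finite $T=T(\bar m,\bar m')$. The lower bound $T>0$ is automatic by super-linearity, since $\bar m\neq \bar m'$ and the length of any connecting curve is bounded below. The upper bound comes from the Lemma proved earlier in subsection \ref{li xia}: the first component of the rotation vector of every ergodic minimizing measure of $L^\pm_i=L-\eta_i$ is positive (this is built into Definition \ref{li xia} of a space-step Lagrangian, and is preserved under the $c$-equivalence construction since $\Gamma(s)-c_i\in\mathbb V^\perp_{c_i}$ only tilts the cohomology in a direction annihilated by the rotation homology). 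Thus if we had a sequence $T_n\to\infty$ with $h^{T_n}(\bar m,\bar m')$ staying bounded, the same loop-closing argument used in the Lemma would produce a loop in $M$ whose average action is strictly less than $\min_{h_1=0}\beta_{L^\pm_i}(h)$, contradicting the definition of $\alpha^\pm=0$. Hence there is a uniform $T_i$, depending only on the bounds $-K_i\le \bar m_1\le -K_i+1$ and $K_i-1\le\bar m'_1\le K_i$, such that the minimum is attained for some $T<T_i$. Existence of the actual minimizer $\bar\gamma_i(t,\bar m,\bar m')$ then follows from Tonelli's theorem.

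Second I would show that the projected minimizer crosses $\mathcal{B}(\Sigma_{c_i},\delta)$ inside $U_i$. Suppose for contradiction that there is a sequence $K_i^{(n)}\to\infty$, endpoints $\bar m^{(n)},\bar m'^{(n)}$ satisfying the box constraints, and minimizers $\bar\gamma_i^{(n)}$ whose images meet $\mathcal{B}(\Sigma_{c_i},\delta)$ outside $U_i$. Using the uniform action bound from the previous step together with the pre-compactness argument from the proof of $\mathscr{G}(L)\neq\emptyset$, a subsequence converges in $C^1_{\rm loc}$ to a curve $\bar\gamma_\infty:\mathbb R\to\overline M_{j(i)}$. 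A diagonal surgery argument, identical in form to the one used to exhibit $\mathscr{C}(L_i)\neq\emptyset$, shows that $\bar\gamma_\infty$ belongs to $\mathscr{C}(L_i)$. By the preceding Theorem its projection is backward $c_i$-semi static before entering $\{0\le x_{j(i)}\le 1\}$, and in particular its first intersection with $\Sigma_{c_i}$ lies in $\mathcal{N}(c_i)\cap\Sigma_{c_i}$. By the choice of $U_i$, this intersection lies in $U_i$. Upper semicontinuity of $\bar\gamma\mapsto\mathrm{Image}(\bar\gamma)\cap\mathcal{B}(\Sigma_{c_i},\delta)$ then forces $\mathrm{Image}(\bar\gamma_i^{(n)})\cap\mathcal{B}(\Sigma_{c_i},\delta)\subset U_i$ for $n$ large, contradicting the hypothesis.

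The main obstacle is the second step: to make the upper-semicontinuity argument legitimate I need the limit curve $\bar\gamma_\infty$ to be a genuine pseudo connecting curve for $L_i$ (not merely an accumulation point of minimizers on bounded intervals), and I need the transversality of $c_i$-semi static orbits with $\Sigma_{c_i}$ to propagate to the limit so that the intersection $\mathrm{Image}(\bar\gamma_\infty)\cap\mathcal{B}(\Sigma_{c_i},\delta)$ is non-empty in the first place. The transversality follows from the non-degeneracy of the embedded torus $\Sigma_{c_i}$ assumed in the setup of $c$-equivalence in subsection \ref{c-equivalence}, while the pseudo-connecting property of the limit is established by the same argument that produced $\mathscr{C}(L_i)\neq\emptyset$, using the finite-surgery procedure to absorb any deficit of the minimizing inequality into a vanishing sequence $\Delta_n\to 0$.
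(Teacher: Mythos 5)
The paper states this Corollary without giving a proof (it is presented as a direct consequence of the preceding Theorem's proof), so there is no in-text argument to compare against; judged on its own, your proposal follows the standard Cheng--Yan route and its architecture is sound. Step 1 is correct: the crossing-time Lemma of the space-step subsection gives $h^T\to\infty$ both as $T\to 0$ and as $T\to\infty$, with the divergence rate $h^T/2T\geq\frac{1}{2}\min_{h_1=0}\beta_{L^{\pm}}(h)-2\epsilon>0$ uniform over the compact boxes of endpoints, while the Claim bounds $\inf_T h^T$ from above up to $2K''$; together these pin the minimizing time below a uniform $T_i$, and Tonelli gives the minimizer.

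The soft spot is in Step 2, and you partially flag it yourself but do not close it. The Theorem's localization statement, $\mathcal{B}(\Sigma_{c_i},\delta)\cap\mathcal{B}(\mathcal{C}(L_i),\delta)\subset U_i$, is about the set $\mathcal{C}(L_i)$ of \emph{pseudo connecting} curves, which are required to minimize over all Deck representatives ${\bf k}^{\mp}$ of their endpoints in $\bar{M}^{\mp}$. Your curves $\bar{\gamma}_i^{(n)}$ are minimizers with \emph{fixed} endpoints, so a $C^1_{loc}$ accumulation point lands a priori only in $\mathscr{G}(L_i)$, not in $\mathscr{C}(L_i)$; and the surgery argument you invoke from the proof that $\mathscr{C}(L_i)\neq\emptyset$ produces a \emph{different} curve rather than certifying the Deck-minimality of your limit. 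The repair is available within the paper's toolkit: the space-step analogue of the time-step Lemma containing formula (\ref{pseudo curve}) shows that accumulation points of minimizers whose endpoints recede to infinity do satisfy the Deck-minimizing identity (any persistent deficit would allow a surgery lowering the action of $\bar{\gamma}_i^{(n)}$ for large $n$ by a fixed amount, contradicting minimality with fixed endpoints), but this needs to be stated and used explicitly rather than waved at. A second, cosmetic inaccuracy: the limit curve's first intersection with $\Sigma_{c_i}$ need not lie \emph{in} $\mathcal{N}(c_i)\cap\Sigma_{c_i}$; what the Theorem gives, and what suffices, is that it lies in the $\delta$-neighborhood of $\mathcal{C}(L_i)$ near $\Sigma_{c_i}$, hence in $U_i$. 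With these two points tightened, your argument is complete and is the proof the authors evidently intend.
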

\begin{Rem}
If we take $L(x,\dot{x},t)$ as periodic Tonelli Lagrangian of $T\mathbb{T}^n\times\mathbb{S}^1$ and $\overline{M}=\mathbb{R}\times\mathbb{T}^n$ with $\theta=(t,x)\in\mathbb{S}^1\times\mathbb{T}^n$. Then the previous Theorem of $c-$equivalence is just the special case discovered by Bernard in \cite{B} and Cheng in \cite{CY1,CY2}.
\end{Rem}
\vspace{10pt}

With this approach, we can prove that $c-$equivalence of 2-resonance. 
\begin{The}{\bf ($c-$equivalence of Annulus of Incomplete Intersection)}
Let $\Gamma\subset\mathbb{A}\subset\alpha_{H}^{-1}(E)$ is the curve skirting around the flat $\mathbb{F}$, where $E\in(0,\Delta_0]$ and $H$ is the homogenized system of 2-resonance. $\forall c,c'\in\Gamma$ is $c-$equivalent with each other (see figure \ref{fig21}).
\end{The}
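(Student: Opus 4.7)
The plan is to verify the three conditions in the definition of $c$-equivalence from Subsection \ref{c-equivalence} for any pair $c,c'\in\Gamma$. Parametrize an arc of $\Gamma$ joining $c$ to $c'$ as a continuous path $\Gamma:[0,1]\to\mathbb{A}$; since by hypothesis $\Gamma$ lies on the level set $\alpha_H^{-1}(E)$, continuity of the path and constancy of $\alpha_H\circ\Gamma$ are immediate. All that remains is to show that for every $s_0\in[0,1]$ there exists $\delta>0$ together with a non-degenerately embedded circle $\Sigma_{\Gamma(s_0)}\subset\mathbb{T}^2$ transverse to every $\Gamma(s_0)$-semistatic orbit, such that $\Gamma(s)-\Gamma(s_0)\in\mathbb{V}_{\Gamma(s_0)}^\perp$ whenever $|s-s_0|<\delta$.

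Fix $s_0$ and set $c_0=\Gamma(s_0)$, so $\alpha_H(c_0)=E\in(0,\Delta_0]$. The final theorem of Section \ref{Annulus} already furnishes a loop section $\Gamma_{c_0}\subset\mathbb{T}^2$ such that every $c_0$-semistatic orbit crosses $\Gamma_{c_0}\times\{S\equiv0\}$ transversally, and such that $\mathcal{N}_H(c_0)\cap(\Gamma_{c_0}\times\{S\equiv0\})$ is contained in a finite disjoint union of open arcs $\bigcup_{i=1}^n U_i\subsetneq\Gamma_{c_0}$. I take $\Sigma_{c_0}:=\Gamma_{c_0}$, which settles the transversality requirement. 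To compute $\mathbb{V}_{c_0}$, choose a small tubular neighborhood $U\subset\mathbb{T}^2$ of $\bigcup_i U_i$; since each $U_i$ is an open interval (hence a contractible arc) sitting inside a fixed circle $\Gamma_{c_0}$ of $\mathbb{T}^2$, one may arrange $U$ to be a disjoint union of topological disks. Consequently the inclusion-induced map $i_\ast:H_1(U,\mathbb{R})\to H_1(\mathbb{T}^2,\mathbb{R})$ is the zero map, so $\mathbb{V}_{c_0}=0$ and
\[
\mathbb{V}_{c_0}^\perp=H^1(\mathbb{T}^2,\mathbb{R}).
\]
The local-flatness condition $\Gamma(s)-c_0\in\mathbb{V}_{c_0}^\perp$ is therefore satisfied automatically for every $s$ in a neighborhood of $s_0$.

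Compactness of $[0,1]$ reduces the verification to a finite cover of the parameter interval by such neighborhoods; the upper semicontinuity of $c\mapsto\widetilde{\mathcal{N}}(c)$ shown earlier ensures that the same section $\Sigma_{c_0}$ and the same neighborhood $U$ remain valid for every $c$ in a small open neighborhood of $c_0$ inside $\mathbb{A}$, so that the local charts glue correctly and the $c$-equivalence of $c$ and $c'$ follows.

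The step I expect to be the main obstacle is not any single topological computation above but the robustness of the construction: one must know that the arcs $U_i$ really are contractible (rather than wrapping non-trivially around $\mathbb{T}^2$) and that this remains so uniformly as $c$ varies along $\Gamma$. This in turn rests on the detailed $\alpha$-flat analysis of Subsection 5.1 together with the uniqueness of minimal homoclinic orbits in each homology class guaranteed by conditions \textbf{U6}--\textbf{U7}, and on the robustness of the autonomous model $\overline{H}$ under the small $\epsilon$-perturbation $R$ after taking $m\geq M$ sufficiently large, so that the qualitative structure of the flat $\mathbb{F}$ and of the incomplete-intersection annulus $\mathbb{A}$ is preserved from the uncoupled integrable picture $\overline{\overline{H}}$ all the way to the system $H$.
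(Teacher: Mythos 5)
Your overall strategy is the paper's: take the section produced at the end of Section \ref{Annulus}, use the fact that the Ma\~{n}\'e set meets it only in finitely many disjoint arcs, and read off the $c$-equivalence condition. The conclusion is right, but the homological computation is carried out in the wrong ambient space, and this matters for the machinery of Subsection \ref{c-equivalence} that the theorem is feeding into. The definition of $c$-equivalence used here (Li--Cheng) requires a non-degenerately embedded $(n-1)$-torus $\Sigma_c\subset\mathbb{T}^n$, and for the time-periodic system one must first pass to the autonomous picture on $\mathbb{T}^3$ with $\Theta=(X_1,X_2,S)$ (Remark \ref{uniform}). The paper accordingly takes $\Sigma_c=\{X_1=0\}$, a $2$-torus, and finds $\mathcal{N}(c)\cap\Sigma_c\subset\{X_1=0,\,X_2\in\cup_iI_{c,i}\}$ with no constraint on $S$; a neighborhood $U$ of this set retracts onto circles winding in the $S$-direction, so $\mathbb{V}_c=\mathrm{span}_{\mathbb{R}}\{(0,0,1)\}\neq 0$ and $\mathbb{V}_c^{\bot}=\mathrm{span}\{(1,0,0),(0,1,0)\}$. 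The condition $c'-c\in\mathbb{V}_c^{\bot}$ is then genuinely non-trivial --- it excludes classes with a nonzero $dS$-component --- and is satisfied only because the path $\Gamma$ lives in $H^1(\mathbb{T}^2,\mathbb{R})\subset H^1(\mathbb{T}^3,\mathbb{R})$. Your computation, performed with the codimension-two section $\Gamma_{c_0}\times\{S=0\}$ and neighborhoods in $\mathbb{T}^2$, yields $\mathbb{V}_{c_0}=0$ and declares the condition ``automatic''; this erases exactly the content that guarantees a representative $1$-form $\bar{\mu}$ with $[\bar{\mu}]=c'-c$ supported away from where the pseudo-connecting orbits cross the transition slab.

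Concretely, two things need repair. First, the closed form $\bar{\mu}$ must avoid a neighborhood of $\mathcal{N}(c)\cap\Sigma_c$ for \emph{all} times $S$, not just the slice $S=0$ controlled by the last theorem of Section \ref{Annulus}; one gets the uniform-in-$S$ containment from the upper semicontinuity of the Ma\~{n}\'e set relative to the autonomous model $\overline{H}$ (whose suspended Ma\~{n}\'e set is a product with $\mathbb{S}^1_S$), but this must be said, since a time-independent form on $\mathbb{T}^2$ supported off the $S=0$ trace could still meet crossings at other times. Second, the section must be taken codimension one in $\mathbb{T}^3$ so that the space-step construction of Subsection \ref{c-equivalence} (the cut-off $\varrho_i$ depending on a single coordinate across a slab around $\Sigma_c$) applies; with that correction the computation gives $\mathbb{V}_c=\mathrm{span}\{(0,0,1)\}$ rather than $0$, and the rest of your argument (compactness of $[0,1]$, constancy of $\alpha_H$ on $\Gamma$) goes through as in the paper.
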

\begin{proof}
Recall that we can consider the Lagrangian \ref{homogenized Lagrangian} as an autonomous one with $\Theta=(X_1,X_2,S)\in\mathbb{T}^3$. As $E>0$, $\forall c\in\Gamma$, we have $\omega(\mu_c)\neq0$. Without loss of generality, we can assume $\omega_1(\mu_c)>0$. Then $\Sigma_c=\{X_1=0\}$ is a $2-$dimensional section of $\mathbb{T}^3$ such that each $c-$semi static orbit intersects it transversally and 
\[
\mathcal{N}(c)\cap\Sigma_c\subset\{(X_1,X_2,S)\big{|}X_1=0,X_2\in\cup I_{c,i}\},
\]
where $\{I_{c,i}\}$ are finitely many intervals of $\mathbb{T}^1$ disjoint from each other. We just need to prove the equivalence for $c'$ sufficiently close to $c$.

Clearly, there exists $U\supset\mathcal{N}(c)\cap\Sigma_c$ such that $\mathbb{V}_c=i_{*}H_1(U,\mathbb{R})=span_{\mathbb{R}}\{(0,0,1)\}$. Then we have $\mathbb{V}_c^{\bot}=span_{\mathbb{R}}\{(1,0,0),(0,1,0)\}$. For each $c'\in\Gamma$ sufficiently close to $c$, one has $c'-c=(\Delta c_1,\Delta c_2,0)\in\mathbb{V}_c^{\bot}$. Thus, there exists  closed 1-form $\bar{\mu}$ with $[\bar{\mu}]=c'-c$ and 
\[
supp\bar{\mu}\cap\mathcal{N}(c)\cap\Sigma_c=\emptyset.
\]
Therefore, all classes along the curve $\Gamma$ are equivalent in this case.
\end{proof}
\vspace{10pt}
\subsection{Local connecting orbits of h-equivalence}
$\newline$

Another type of locally connecting orbits look like heteroclinic orbits. That's the reason we call them {\bf type-h}. This type orbits are mainly used to deal with the Diffusion problem of Arnold mechanism, which was firstly raised in \cite{CY1,CY2}. As the time-periodic Lagrangian is more convenient for our application, we won't consider it as an autonomous one in this subsection.

For a Tonelli Lagrangian $L(x,\dot{x},t):TM\times\mathbb{S}^1\rightarrow\mathbb{R}$ with $M=\mathbb{T}^2$ in our case of a form (\ref{1-resonant Lagrangian}) or (\ref{transitional Lagrangian}), we can assume $\vec{e}_1$ is a base vector of $H^1(M,N,\mathbb{Z})$ without loss of generality. Then we can take $\tilde{M}=2\mathbb{T}\times\mathbb{T}^{n-1}$ as a finitely covering manifold of $M$. Restricted to the uniform section of $\tilde{M}\times\{t=0\}$, $\mathcal{A}(L)$ will become two different connected components $\mathcal{A}_i$, $i=0,1$. With the approach of \cite{Con} we can connect $\mathcal{A}_0$ to $\mathcal{A}_1$ with a semi-static heteroclinic orbits $\gamma_{0,1}$ of $\tilde{M}$. Besides, we can find $N_i$ as the open neighborhood of $\mathcal{A}_i$, $i=0,1$ such that dist$(N_0,N_1)>0$. 
\begin{Lem}{\bf (Connecting Lemma)}
For $c\in\mathbb{W}_{g_j}$ with $j=1,2$ as in section 5, the Aubry set contains two different connected components $\mathcal{A}_i$ in $\tilde{M}$ and we can find $N_i$ open neighborhoods disjoint with each other containing them separately, $i=0,1$. If there exists one semi-static heteroclinic orbit connecting $\mathcal{A}_0(c)$ to $\mathcal{A}_1(c)$ which is disconnected to the others, then there exists some orbit $d\gamma'$ of $\phi_L^t$ connecting $\tilde{\mathcal{A}}_0(c)$ to $\tilde{\mathcal{A}}_1(c')$ for class $c'\in\mathbb{W}_{g_j}$ close to $c$. Here $\mathcal{A}_i(c')\subset N_0\cup N_1$, $i=0,1$.
\end{Lem}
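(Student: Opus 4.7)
The plan is to use the variational/semi-continuity framework developed in \cite{CY1,CY2} adapted to the finite cover $\tilde{M}$. Fix a closed 1-form $\eta_c$ on $M$ with $[\eta_c]=c$, and for each nearby $c'\in\mathbb{W}_{g_j}$ write $\eta_{c'}=\eta_c+\bar{\mu}$ with $[\bar{\mu}]=c'-c$ and $\mathrm{supp}\,\bar{\mu}\cap(N_0\cup N_1)=\emptyset$; set $L_{c'}\doteq L-\eta_{c'}$. Lifting to $\tilde{M}$, the two Aubry classes $\tilde{\mathcal{A}}_i(c)$ sit inside the lifts $\tilde{N}_i$ of the given disjoint neighborhoods. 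The assumption that the given semi-static heteroclinic $\gamma_{0,1}$ is disconnected from the other connecting orbits lets me choose compact transverse sections $\tilde{\Sigma}_i\subset\tilde{N}_i$ and a bounded open tube $\mathcal{O}\subset\tilde{M}\times\mathbb{S}^1$ containing $\gamma_{0,1}\big|_{[-T,T]}$ such that $\mathcal{O}\cap\tilde{\mathcal{N}}(c,\tilde{M})$ consists only of orbits in the heteroclinic class of $\gamma_{0,1}$.

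Next I would minimize the Mañé potential between the two sides in the cover. For $(\tilde{x}_-,\tilde{x}_+)\in\tilde{\Sigma}_0\times\tilde{\Sigma}_1$ and $T_\pm>0$, define
\[
A_{c'}^{T_-,T_+}(\tilde{x}_-,\tilde{x}_+)=\inf_{\substack{\tilde{\zeta}(-T_-)=\tilde{x}_-\\ \tilde{\zeta}(T_+)=\tilde{x}_+}}\int_{-T_-}^{T_+}L_{c'}(\tilde{\zeta},\dot{\tilde{\zeta}},t)\,dt+(T_-+T_+)\alpha_L(c),
\]
and $h_{c'}^{\infty}(\tilde{x}_-,\tilde{x}_+)=\liminf_{T_\pm\to\infty}A_{c'}^{T_-,T_+}$. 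Because $c\in\mathbb{W}_{g_j}$, at least one component of the rotation vector of every ergodic $c$-minimizing measure is nonzero, which places us in exactly the situation of Definition \ref{li xia}: the a priori compactness argument of Subsection 6.2 then gives a uniform bound $T_\pm\leq T_0$ on the minimizing times and uniform $C^1$-bounds on the minimizers. Hence the infimum $\inf_{\tilde{\Sigma}_0\times\tilde{\Sigma}_1}h_{c'}^{\infty}$ is attained at some $(\tilde{x}_-^{c'},\tilde{x}_+^{c'})$ by an actual minimizer $\tilde{\gamma}_{c'}$. Since $\mathrm{supp}\,\bar{\mu}\cap(N_0\cup N_1)=\emptyset$, the terms arising from $\bar{\mu}$ vanish along the Euler–Lagrange portion of the orbit, so $\tilde{\gamma}_{c'}$ is an honest $\phi_L^t$-orbit in $\tilde{M}$.

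I would then identify the limit sets via upper semi-continuity. By upper semi-continuity of $\tilde{\mathcal{A}}(\cdot)$ and $\tilde{\mathcal{N}}(\cdot)$ in $c$ (established in Subsection \ref{Mather theory} and reaffirmed for $\mathscr{C}(L_{c'})$ in Section 6), for $c'$ sufficiently close to $c$ we have $\tilde{\mathcal{A}}_i(c')\subset\tilde{N}_i$, $\alpha_L(c')$ close to $\alpha_L(c)$, and any $C^1$-accumulation point $\tilde{\gamma}$ of the family $\{\tilde{\gamma}_{c'}\}$ as $c'\to c$ must lie in $\tilde{\mathcal{N}}(c,\tilde{M})\cap\mathcal{O}$. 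By the isolation hypothesis this forces $\tilde{\gamma}$ to be in the heteroclinic class of $\gamma_{0,1}$, so for $c'$ close enough the backward asymptotics of $\tilde{\gamma}_{c'}$ lie in $\tilde{N}_0$ and the forward asymptotics in $\tilde{N}_1$. Since $\tilde{\gamma}_{c'}$ is backward $c$-semi-static on the side where $L_{c'}=L-\eta_c$ and forward $c'$-semi-static on the side where $L_{c'}=L-\eta_c-\bar{\mu}$ (both sides being Euler–Lagrange for $L$), its $\alpha$-limit lies in $\tilde{\mathcal{A}}_0(c)$ and its $\omega$-limit lies in $\tilde{\mathcal{A}}_1(c')$. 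Taking $d\gamma'=(\tilde{\gamma}_{c'},\dot{\tilde{\gamma}}_{c'})$ finishes the proof.

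The main obstacle is the global compactness step: one must rule out minimizing sequences escaping to infinity in the cover or being captured by a second (non-isolated) heteroclinic class. Escape to infinity is ruled out by the rotation-vector positivity of $c\in\mathbb{W}_{g_j}$ exactly as in the space-step argument of Subsection 6.2 (the bound (\ref{help}) together with superlinearity gives a uniform $T_0$), while capture by a competing class is precisely what the disconnectedness hypothesis on $\gamma_{0,1}$ and the tube $\mathcal{O}$ exclude via upper semi-continuity. These two ingredients reduce the Connecting Lemma to the standard minimization argument and yield the orbit $d\gamma'$ with the asserted endpoints.
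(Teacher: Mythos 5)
Your overall strategy (modified Lagrangian plus upper semi-continuity in the finite cover) is the right family of ideas, but there are two genuine gaps, both of which the paper resolves with ingredients absent from your write-up.

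First, the interpolation between $c$ and $c'$ cannot be the static closed form $\eta_{c'}=\eta_c+\bar{\mu}$ you use. A time-independent closed $1$-form does not change the Euler--Lagrange flow anywhere, so minimizers of $L-\eta_{c'}$ are simply $c'$-semi-static orbits: both their $\alpha$- and $\omega$-limit sets lie in Aubry sets of the \emph{same} class $c'$, not the mixed asymptotics $\tilde{\mathcal{A}}_0(c)\to\tilde{\mathcal{A}}_1(c')$ the lemma asserts. You implicitly acknowledge this when you speak of ``the side where $L_{c'}=L-\eta_c$,'' but no such side exists for your static modification. The paper instead uses the time-step form $\mu=\rho(t)\bar{\mu}$, whose exterior derivative $\rho'(t)\,dt\wedge\bar{\mu}$ is supported in $\{0<t<1\}\times\mathrm{supp}\,\bar{\mu}$. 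For the resulting minimizer to be a genuine $\phi_L^t$-orbit, the transition must occur while the curve sits in a region disjoint from $\mathrm{supp}\,\bar{\mu}$. Your condition $\mathrm{supp}\,\bar{\mu}\cap(N_0\cup N_1)=\emptyset$ is the wrong localization: during the transition the orbit is precisely \emph{between} $N_0$ and $N_1$, outside both. What is needed (and what the paper arranges) is a homologically trivial ball $O$ around $\gamma_{0,1}(0)$ on the section $\{t=0\}$ with $\mathrm{supp}\,\bar{\mu}\cap O=\emptyset$, which is possible exactly because $O$ is null-homologous.

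Second, nothing in your argument forces the connecting minimizer to shadow the \emph{isolated} heteroclinic $\gamma_{0,1}$ rather than one of the other semi-static connecting orbits (whose existence is implicit in the hypothesis ``disconnected to the others''). Upper semi-continuity only confines $\tilde{\gamma}_{c'}$ to a neighborhood of the whole Ma\~n\'e set $\mathcal{N}(c,\tilde{M})$; your claim that every accumulation point lies in the tube $\mathcal{O}$ is unsupported, and the global minimizer between $\tilde{\Sigma}_0$ and $\tilde{\Sigma}_1$ may well converge to a competing class. The paper's device is the bump function $\psi=\lambda\rho(t)f(x)$ with $f\equiv1$ on $O$ and $f=0$ off $\mathcal{B}(O,\delta_1)$: subtracting $\psi$ strictly lowers the action of curves passing through $O$, forcing the pseudo-connecting minimizer into $O$ near $t=0$ (hence, by isolation and upper semi-continuity, close to $\gamma_{0,1}$), while $df\equiv0$ on $O$ guarantees $\psi$ contributes nothing to the Euler--Lagrange equation along the resulting orbit. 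This same localization is what later yields the strict local minimality of type-$h$ needed to concatenate the local connecting orbits; without $\psi$ your construction delivers neither the selection of $\gamma_{0,1}$ nor that minimality.
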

\begin{proof}
This Lemma is also a direct cite of results of \cite{CY1,CY2,Ch}. So we just give the sketch for the proof. Assume $\gamma$ is the isolated heteroclinic orbit connecting $\mathcal{A}_0(c)$ to $\mathcal{A}_1(c)$. We consider the modified Lagrangian
\[
L_{\eta,\mu,\psi}=L-\eta-\mu-\psi,
\]
where $\eta$ is a closed 1-form on $\mathbb{T}^2$ with $[\eta]=c$, $\mu$ is a 1-form depending on $t$ variable in the way that $\mu\equiv0$ for $\{t\leq0\}$ and $\mu=\bar{\mu}$ for $\{t\geq1\}$, where $\bar{\mu}$ is a closed 1-form on $\mathbb{T}^2$ with $[\bar{\mu}]=c'-c$. $\psi(x,v):\mathbb{T}^2\times\mathbb{R}\rightarrow\mathbb{R}$ is a smooth function with $\psi(x,t)\equiv0$ for $t\in(-\infty,0]\cup[1,\infty)$. Let $m,m'$ be two points in $\tilde{M}=2\mathbb{T}^1\times\mathbb{T}^1$, we define 
\[
h^T_{\eta,\mu,\psi}(m,m')=\inf_{\substack{\gamma(-T)=m\\
\gamma(T)=m'}}\int_{-T}^{T}L_{\eta,\mu\psi}(d\gamma(t),t)dt+\alpha(c)dt,
\]
where $T\in\mathbb{Z}_{+}$. Note that $L_{\eta,\mu,\psi}$ can be considered as a time-step Lagrangian of $\tilde{M}$. Then from previous subsection of time-step Lagrangian we can define the sets $\mathscr{C}_{\eta,\mu,\psi}=\mathscr{C}(L_{\eta,\mu,\psi})$, $\mathcal{C}_{\eta,\mu,\psi}$ and $\tilde{\mathcal{C}}_{\eta,\mu,\psi}$. Recall that $\mathcal{C}_{\eta,\mu,\psi}$ is upper semicontinuous of $L$ and $\mathcal{C}_{\eta,0,0}=\mathcal{N}(c)$. Then $\gamma\in\mathcal{N}(c)$ and for $c'$ sufficiently close to $c$ and $\psi$ sufficiently small, there must be an orbit $\gamma'\in\mathscr{C}_{\eta,\mu,\psi}$ close to $\gamma$. We just need to show that $\gamma'$ is an orbit of $\phi_L^t$ by properly chosen $\psi$ and $\bar{\mu}$.

As $\gamma$ is an isolated semi-static heteroclinic orbit, we can find a open and homologically-trivial ball $O$ of it on the section $\{t=0\}$. Besides, no other semi-static heteroclinic orbits passing through $O$ (see figure \ref{fig26}). Then we can define a non-negative function $f\in C^r(\mathbb{T}^2,\mathbb{R})$ with
\[
f(x)=\left\{
\begin{array}{ccccccccc}
0&\quad& \text{ in }\mathcal{B}(O,\delta_1)^c,\\
1&\quad & \text{ in }O,\\
<1&\quad& \text{ elsewhere, }
\end{array}
\right.
\]
and $\psi(x,t)=\lambda\rho(t)f(x)$ with $\rho(t)$ is showed as figure \ref{fig11}. Here $\lambda$ is just a coefficient which can be chosen sufficiently small. On the other side, since $O$ is homologically trivial, we can choose $\bar{\mu}$ with support disjoint from $O$. Once again we use the upper semicontinuous of $\mathcal{C}_{\eta,\mu,\psi}$ of $L$ we can verified $\gamma'$ is a real orbit of $\phi_L^t$.
\end{proof}
\vspace{5pt}

The orbit $d\gamma'$ obtained in above Lemma is locally minimal in the sense we define in the following, which is crucial for the construction of globally connecting orbits.
\begin{defn}
$d\gamma:\mathbb{R}\rightarrow TM$ is called {\bf locally minimal orbit of type-h} connecting $\tilde{\mathcal{A}}(c)$ to $\tilde{\mathcal{A}}(c')$ if
\begin{itemize}
\item $d\gamma_{0,1}$ is an orbit of $\phi_L^t$, with the $\alpha-$limit and $\omega-$limit sets of it contained in $\tilde{\mathcal{A}}(c)$ and $\tilde{\mathcal{A}}(c')$ respectively, i.e. restricted on the section $\{t=0\}$, $\alpha(d\gamma_{0,1})\big{|}_{t=0}\subset TN_0$ and $\omega(d\gamma_{0,1})\big{|}_{t=0}\subset TN_1$;
\item there exist two open balls $V_0$, $V_1$ of $\tilde{M}$ and two positive integers $k^{-}, k^{+}$ such that $\bar{V}_0\subset N_0\setminus\mathcal{A}_0(c)$, $\bar{V}_1\subset N_1\setminus\mathcal{A}_1(c')$, $\gamma(-k^{-})\in V_0$, $\gamma(k^{+})\in V_1$ and
\begin{eqnarray}\label{locally minimal}
h_c^{\infty}(x^{-},m_0)&+&h^{k^{-},k^{+}}_{\eta,\mu,\psi}(m_0,m_1)+h_{c'}^{\infty}(m_1,x^{+})\nonumber\\
&-&\liminf_{\substack{k_i^{-}\rightarrow\infty\\
k_i^{+}\rightarrow\infty}}\int_{-k_i^{-}}^{k_i^{+}}L_{\eta,\mu,\psi}(d\gamma(t),t)dt-k_i^{-}\alpha(c)-k_i^{+}\alpha(c')>0
\end{eqnarray}
holds for all $(m_0,m_1)\in\partial(V_0\times V_1)$, $x^{-}\in N_0\cap\mathcal{A}_0(c)\big{|}_{t=0}$, $x^{+}\in N_1\cap\mathcal{A}_1(c')\big{|}_{t=0}$, where $k_i^{\pm}\in\mathbb{Z}^{+}$ are sequences such that $\gamma(-k_i^{-})\rightarrow x^{-}$ and $\gamma(k_i^{+})\rightarrow x^{+}$ as $i\rightarrow\infty$ (see figure \ref{fig26}).
\end{itemize}
\end{defn}
\begin{figure}
\begin{center}
\includegraphics[width=12cm]{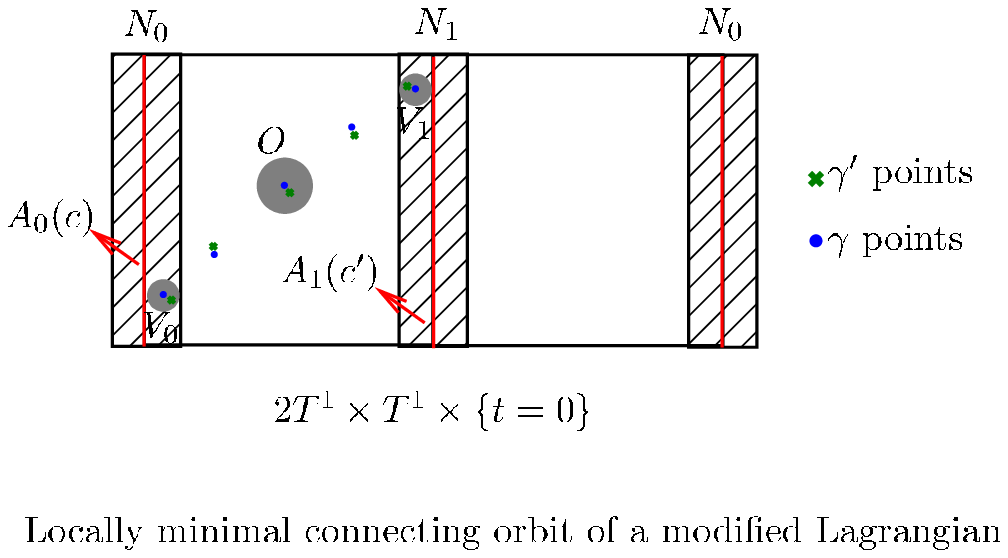}
\caption{ }
\label{fig26}
\end{center}
\end{figure}
\begin{Rem}
Inequality (\ref{locally minimal}) tells us that once a curve $\bar{\gamma}$ touches the boundary of $V_i$, the action of $L_{\eta,\mu,\psi}$ along it will be larger than the action along $\gamma$, $i=0,1$. As $V_i$ can be chosen arbitrarily small, it's reasonable to call it {\bf locally minimal}.
\end{Rem}
\vspace{10pt}

\subsection{Generalized transition chain}
$\newline$

Based on the discussion of locally connecting orbits, of c-type and h-type, now we can find a {\bf generalized transition chain} (GTC) to verify the existence of global connecting orbits, i.e. the diffusion orbits with large change of momentum variables.

Recall that the earliest definition of GTC was given by J. Mather in \cite{Mat3} for autonomous systems, then \cite{B, CY1, CY2} generalized it to the time-periodic case. From \cite{B} we know that if $c$, $c'\in H^1(M,\mathbb{R})$ is equivalent with each other and there exists a GTC connecting them, then both $c$ and $c'$ lie in a flat $\mathbb{F}$ of $\alpha_L$, where $L$ is an autonomous Lagrangian. This case is of no interest for us since $\tilde{\mathcal{A}}_L(c)\cap\tilde{\mathcal{A}}_L(c')\neq\emptyset$, this point was revealed by Massart in \cite{Ms}. 

In \cite{L}, they gave us a new way to get locally connecting orbits with a local surgery method in a proper covering space. This skill we have illustrated in the subsection \ref{c-equivalence}.

With these two branches of equivalent skills, globally connecting orbits then can be constructed shadowing these locally connecting orbits.

\begin{defn}
Let $c$, $c'$ be two classes in $H^1(M,\mathbb{R})$. We say that $c$ is joined with $c'$ by a GTC if a continuous curve $\Gamma:[0,1]\rightarrow H^1(M,\mathbb{R})$ exists such that $\Gamma(0)=c$, $\Gamma(1)=c'$ and for each $s\in[0,1]$ at least one of the following cases takes place:
\begin{itemize}
\item {\bf (h-type)} the Aubry set is contained in a domain $N\subset M$ with nonzero topological codimension.
There exist a certain finitely covering space $\tilde{M}$, two open domains $N_1,N_2$ with dist$(N_1,N_2)>0$ and small numbers $\delta_s, \delta'_s>0$ such that\\
\begin{itemize}
\item the Aubry set $\mathcal{A}_0(\Gamma(s))\cap N_1\neq\emptyset$, $\mathcal{A}_1(\Gamma(s))\cap N_2\neq\emptyset$ and $\mathcal{A}_i(\Gamma(s'))\cap(N_1\cup N_2)\neq\emptyset$ for each $s,s'$ with $\|s-s'\|\leq\delta_s$, $i=0,1$;\\
\item $\pi\mathcal{N}(\Gamma(s),\tilde{M})\setminus\mathcal{B}(\mathcal{A}_i,\delta'_s)\neq\emptyset$ and there exists at least one isolated orbit in it;\\
\end{itemize}
\item {\bf (c-type)} for each $s'\in(s-\delta_s,s+\delta_s)$, $\Gamma(s')$ is equivalent to $\Gamma(s)$. Namely, there exists a neighborhood of $\mathcal{N}(\Gamma(s))$, which is denoted by $U$, such that $\Gamma(s')-\Gamma(s)\in\ker i^*$.
\end{itemize}
\end{defn}

If $\Gamma:[0,1]\rightarrow H^1(M,\mathbb{R})$ is a GTC connecting $c$ and $c'$, then we can find a partition for it with $c_j=\{\Gamma(s_j)\}_{j=1}^k$. Here $s_j\in[0,1]$, $c=\Gamma(s_1)$, $c'=\Gamma(s_k)$ and 
\[
\{0,1,\cdots k\}=\underbrace{\{1,2,\cdots,i_1\}}_{\Gamma_{i_1}}\bigcup\underbrace{\{i_1+1,\cdots,i_2\}}_{\Gamma_{i_2-i_1}}\bigcup\cdots\bigcup\underbrace{\{i_{m-1}+1,\cdots,i_m\}}_{\Gamma_{i_m-i_{m-1}}}
\]
with $i_m=k$. $\tilde{\mathcal{A}}(c_l)$ and $\tilde{\mathcal{A}}(c_{l+1})$ can be connected by local connecting orbits of same type (h- or c-), as long as $c_i,c_{i+1}\in \Gamma_{i_j-i_{j-1}}$, $j\in\{1,2,\cdots,m\}$. This partition always can be found because $k$ can be chosen sufficiently large and $\widetilde{\mathcal{N}}(c)$ is upper semicontinuous of $c$.\\

For our case, the GTC should be chosen in the set $\bigcup_{i=M}^{\infty}(\mathbb{W}_{g_1}^i\cup\mathbb{A}^i\cup\mathbb{W}_{g_2}^i)$. Since our construction is self-similar, and uniform restrictions ensure that this set is toppologically connected, we can take $\Gamma=\bigcup_{i=M}^{\infty}\Gamma_i$ as a GTC and $\Gamma_i=\Gamma_{i_1}\cup\Gamma_{i_2}\cup\Gamma_{i_3}$ as a partition, where $\Gamma_{i_1}\subset\mathbb{W}_{g_2}^i$, $\Gamma_{i_2}\subset\mathbb{A}^i$ and $\Gamma_{i_3}\subset\mathbb{W}_{g_1}^i$. Recall that $M\gg1$ can be chosen {\it a posterior} large enough. $\Gamma_{i_1}$ and $\Gamma_{i_3}$ is of $h-$type and $\Gamma_{i_2}$ is of $c-$type, this point has been showed in previous subsections. But for the validity of {\bf Connecting Lemma}, we also need the existence of isolated semi-static orbits for all $\Gamma_{i_1}$ and $\Gamma_{i_3}$, $i=M,M+1,\cdots,\infty$. So we need the following regularity and genericity conditions of wNHICs of \cite{CY1,CY2}.

\begin{Lem}{\bf (Regularity\cite{CY1,CY2,Ch})}
For a fixed $i\in\{M,M+1,\cdots,\infty\}$, we can take 
\[
\Gamma_{i_1}=\{(0,c_2(s))\in\mathbb{W}_{g_2}^i\big{|} c_2(s)\neq c_2(s')\text{ for }s\neq s'\in[0,1]\}
\]
and 
\[
\Gamma_{i_3}=\{(c_1(s),0)\in\mathbb{W}_{g_1}^i\big{|} c_2(s)\neq c_2(s')\text{ for }s\neq s'\in[0,1]\}.
\]
Besides, we can introduce two area parameters $\sigma_j(s)$ which is one-to-one with $c_j(s)$, $s\in[0,1]$ and $j=1,2$. Then in the covering space $\tilde{M}$, we have 
\begin{equation}
\|u_{0,\sigma_j(s)}^{\pm}(x)-u_{1,\sigma_j(s')}^{\pm}(x)\|\leq c_{27}(\sqrt{\|\sigma_j(s)-\sigma_j(s')\|}+\|c_j(s)-c_j(s')\|),
\end{equation}
where $i=1,2$ and $s,s'\in[0,1]$.
\end{Lem}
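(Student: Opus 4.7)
The plan is to establish the estimate by splitting the difference $u_{0,\sigma_j(s)}^{\pm}-u_{1,\sigma_j(s')}^{\pm}$ into a contribution from changing the cohomology class (Lipschitz in $c_j$) and a contribution from varying the parameter along the same Aubry class (Hölder-$\tfrac12$ in $\sigma_j$). First I would check that the curves $\Gamma_{i_1},\Gamma_{i_3}$ admit simultaneous monotone parametrizations by both $c_j(s)$ and an area parameter $\sigma_j(s)$. On the wNHIC $N_j$ constructed in Section \ref{NHIC and Aubry}, every class $c\in\mathbb{W}_{g_j}^i$ picks out a unique periodic orbit $\gamma_c$ whose projection to the action coordinate bounds a region of symplectic area $\sigma_j(c)$; the area--action identity on an integrable section gives $d\sigma_j/dc_j=T_c\neq 0$ where $T_c$ is the period of $\gamma_c$, so $c_j\leftrightarrow\sigma_j$ is a $C^1$-diffeomorphism on the portion of $N_j$ surviving the lower cut-off $\sigma_j\geq\tfrac12\epsilon^d$.

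Next I would write
\[
u_{0,\sigma_j(s)}^{\pm}-u_{1,\sigma_j(s')}^{\pm}=\bigl(u_{0,\sigma_j(s)}^{\pm}-u_{0,\sigma_j(s')}^{\pm}\bigr)+\bigl(u_{0,\sigma_j(s')}^{\pm}-u_{1,\sigma_j(s')}^{\pm}\bigr).
\]
For the first bracket, choose an affine family of closed 1-forms $\eta_\tau$ with $[\eta_\tau]=c_j(\tau)$ and compute, along any backward calibrated characteristic $\gamma$ of $u_{0,c_j(s)}^{-}$ emerging from $\mathcal{A}_0(c_j(s),\tilde{M})$ and ending at $x$, the fundamental-theorem-of-calculus estimate
\[
\bigl|u_{0,c_j(s)}^{\pm}(x)-u_{0,c_j(s')}^{\pm}(x)\bigr|\leq\bigl|c_j(s)-c_j(s')\bigr|\cdot\mathrm{diam}(\tilde{M})+\mathcal{O}\bigl(|\alpha_L(c_j(s))-\alpha_L(c_j(s'))|\bigr),
\]
since the perturbation is linear in the cohomology and Mather's $\alpha$-function is Lipschitz. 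For the second bracket, both solutions share the same cohomology but correspond to different classes of $\mathcal{A}(c,\tilde{M})$; by formula (\ref{Barrier}) the difference is controlled by the barrier function $B_{c,0,1}(\cdot,\cdot)$ restricted to a neighborhood of $\mathcal{A}_0(c,\tilde{M})$. Using the fact that near the Aubry set $\mathcal{A}_j$ the Peierls barrier has a quadratic, non-degenerate minimum (this is the content of the normal-form analysis on $N_j$ carried out in Section \ref{NHIC and Aubry}), Fathi's classical square-root estimate for weak KAM solutions on the set of minimizers yields the $\sqrt{|\sigma_j(s)-\sigma_j(s')|}$ contribution. Reassembling the two parts and invoking the $C^1$-change of variables $\sigma_j\leftrightarrow c_j$ absorbs the mixed terms into a single constant $c_{27}$.

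The main obstacle I anticipate is ensuring that $c_{27}$ can be taken \emph{independent of} $i\in\{M,M+1,\dots\}$, because a deteriorating constant would destroy the self-similar inductive construction. The danger point is the bottom of the cylinder $N_j$, where the period $T_c\sim\tfrac{1}{\lambda_2}\ln(1/\sigma_j)$ blows up and the hyperbolic splitting weakens on exactly that time scale; this would in principle spoil both the Lipschitz constant (through $\mathrm{diam}$ in the universal cover of $N_j$) and the Hessian lower bound needed for the square-root barrier estimate. I would handle this exactly as in Section \ref{NHIC and Aubry}: truncate the cylinder at the cut-off $\sigma_j\geq\tfrac12\epsilon^d$ already in place, so that the effective diameter, the period $T_c$, and the transverse Lyapunov exponents are all uniformly controlled by quantities depending only on the $\mathcal{O}(1)$ constants $c_5,c_6,\lambda_1,\lambda_2$ fixed by the uniform restrictions \textbf{U2}--\textbf{U7}. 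Together with the $C^r$-smoothness of the normal form on the neighborhood of $N_j$ (which gives uniform control of the $C^2$-norm of $Z_1,Z_2,Z_3$ after homogenization), this produces a single $c_{27}$ valid for every $i$, as required for iterating the construction through all resonant transport steps $\{\Gamma_m^\omega\}_{m\in\mathbb{N}}$.
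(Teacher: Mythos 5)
The paper itself offers no proof of this lemma — it is quoted from \cite{CY1,CY2,Ch} — so your proposal must be measured against the argument in those references, and measured that way it has a structural gap. Your decomposition writes the difference as $(u^{\pm}_{0,\sigma_j(s)}-u^{\pm}_{0,\sigma_j(s')})+(u^{\pm}_{0,\sigma_j(s')}-u^{\pm}_{1,\sigma_j(s')})$ and assigns the square-root term to the second bracket. But that second bracket compares the elementary solutions of the two \emph{distinct} Aubry classes at the \emph{same} parameter $\sigma_j(s')$: it is essentially the barrier between $\mathcal{A}_0$ and $\mathcal{A}_1$ in $\tilde M$, it does not depend on $s$ at all, and it is not small, so it cannot be bounded by $\sqrt{\|\sigma_j(s)-\sigma_j(s')\|}$, which vanishes as $s\to s'$. (The displayed inequality has to be read with one and the same class index on both solutions; taken literally with indices $0$ and $1$, setting $s=s'$ would force $u_0^{\pm}=u_1^{\pm}$, which is false.) Conversely, your first bracket — same class, different parameters — is exactly where the square root is needed, and your fundamental-theorem-of-calculus estimate along a calibrated curve only captures the explicitly Lipschitz contribution of $L_{c}-L_{c'}=-\langle\eta_c-\eta_{c'},\dot\gamma\rangle$; it ignores that the two solutions are calibrated (and normalized) on \emph{different} Aubry sets, and the displacement of those sets is precisely the non-Lipschitz part.

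The missing idea is the one that explains why an \emph{area} parameter appears in the statement at all. The Aubry sets $\tilde{\mathcal A}(c_j(s))$ and $\tilde{\mathcal A}(c_j(s'))$ both lie on the (truncated) cylinder $N_j$ and are Lipschitz graphs over the base circle with a Lipschitz constant controlled uniformly by the normal hyperbolicity; the symplectic area they enclose on $N_j$ equals $\|\sigma_j(s)-\sigma_j(s')\|$; and two uniformly Lipschitz graphs enclosing area $A$ have Hausdorff distance $O(\sqrt{A})$ (a vertical gap $d$ persists over a horizontal interval of length comparable to $d$, so $A\gtrsim d^2$). This yields the $\sqrt{\cdot}$ control of $u^{\pm}$ and $du^{\pm}$ on the Aubry sets, which is then propagated to all of $\tilde M$ by comparing actions of calibrated curves, the change of the $1$-form contributing the additional Lipschitz term in $c_j$. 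Your appeal to a ``quadratic non-degenerate minimum of the Peierls barrier'' together with ``Fathi's square-root estimate'' is not a substitute for this area comparison, and as noted it is applied to a quantity that does not tend to zero. Your closing observation — that $c_{27}$ must be independent of $i$ and that the truncation $\sigma_j\geq\tfrac12\epsilon^d$ together with the uniform constants from \textbf{U2}--\textbf{U7} secures this — is correct and is indeed essential for the self-similar iteration, but it does not repair the core mechanism.
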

\begin{Rem}
Note that here $\Gamma_{i_1}$ and $\Gamma_{i_3}$ are not yet GTCs but only candidate ones. To avoid too many symbols involved, we still denote them by these without ambiguity.
\end{Rem}
With the help of this {\bf Regularity Lemma}, we can get the following genericity of isolated semi-static orbits, which is a skillful application of box-dimension.
\begin{Lem}{\bf (Genericity\cite{CY1,CY2,Ch})}\label{genericity}
For a fixed $i\in\{M,M+1,\cdots,\infty\}$, the system corresponding to $\mathbb{W}_{g_j}^i$ with $j=1,2$ is of a normal form $H=h+Z+R$. There exists an open and dense set $\mathcal{G}_i(R)$ contained in the domain $\mathcal{B}(0,c_{28})\subset C^r(TM\times\mathbb{S}^1,\mathbb{R})$ such that the system with $R$ in it satisfies the following:\\

{\bf For all $c_l(s)\in\Gamma_{i_l}$, there exists at least one heteroclinic orbit in $\mathcal{N}(c_j(s),\tilde{M})$ which is isolated.}\\

Here $l=1,3$, $s\in[0,1]$ and $c_{28}=c_{28}(i)$ is a proper constant depending on the underlying resonant line.
\end{Lem}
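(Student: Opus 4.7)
The plan is to apply a standard Baire-category strategy: I want to show that $\mathcal{G}_i(R) \subset \mathcal{B}(0, c_{28})$ is both open and dense. Openness is comparatively soft. If $R \in \mathcal{G}_i(R)$ then for each $s \in [0,1]$ the isolated heteroclinic orbit in $\mathcal{N}(c_l(s), \tilde{M})$ is persistent under $C^r$-small perturbations by hyperbolicity of the wNHIC together with upper semicontinuity of the Ma\~{n}\'e set. Compactness of $[0,1]$ and a uniform neighborhood argument then give openness in $R$. The real work is in density.

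First, I would parameterize the potential connections along $\Gamma_{i_l}$. For each $s \in [0,1]$ pick a codimension-one transversal $\Sigma_{c_l(s)}$ to the Aubry set in the finite cover $\tilde{M}$ and consider the Barrier function
\[
B_s(x) \doteq u^-_{c_l(s)}(x) - u^+_{c_l(s)}(x), \quad x \in \Sigma_{c_l(s)},
\]
restricted to a small neighborhood of the set where potential $(\mathcal{A}_0, \mathcal{A}_1)$-heteroclinics cross. By the characterization \eqref{Barrier}, the non-trivial zeros of $B_s$ are precisely the crossing points of semi-static heteroclinic orbits, and an isolated zero gives an isolated heteroclinic orbit. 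Thus the task becomes: produce a $C^r$-small perturbation $R$ of $R_0$ such that for every $s \in [0,1]$ the function $B_s$ (computed for the perturbed system) has an isolated zero in this neighborhood.

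Second, I would invoke the Regularity Lemma to obtain that $s \mapsto B_s$ is $1/2$-H\"older continuous as a map $[0,1] \to C^0(\Sigma)$. This regularity is the crucial ingredient: it forces the "bad" set
\[
\mathcal{Z} \doteq \{(s,x) \in [0,1] \times \Sigma : B_s(x) = \min_\Sigma B_s \text{ and } x \text{ is non-isolated}\}
\]
to be thin in a quantitative sense. A standard box-counting argument, covering $\mathcal{Z}$ by boxes of size $\delta$ in $s$ and $\sqrt{\delta}$ in $x$, yields an upper box-dimension bound for $\mathcal{Z}$ strictly less than the dimension of the full space of parameters--configuration variables. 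Consequently, for any $R_0$ I can find a $C^r$-small perturbation $\Delta R$, supported in a tubular neighborhood of the projected bad set and built out of a finite partition-of-unity of bump functions associated with an open cover $\{U_k\}$ of $[0,1]$, such that the perturbed barrier $B_s^{R_0 + \Delta R}$ has a strict, isolated minimizer at every $s \in [0,1]$.

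The main obstacle will be the last step: making a \emph{single} $C^r$-small perturbation that works uniformly across the continuum $s \in [0,1]$ of cohomology classes. The box-dimension estimate of $\mathcal{Z}$, which depends delicately on the $1/2$-H\"older bound from the Regularity Lemma, must beat the scaling of the support of the bumps, and one has to verify that bumps assigned to different $U_k$ do not cancel each other or destroy the isolation created elsewhere. This compatibility between the box-dimension of the bad locus and the $C^r$-norm of the cumulative perturbation is precisely where the genericity arguments of \cite{CY1, CY2, Ch} concentrate their effort, and is exactly the point I would defer to those papers once the setup above is in place.
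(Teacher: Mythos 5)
Your proposal follows essentially the same route the paper intends: the paper gives no proof of this lemma itself, deferring entirely to \cite{CY1,CY2,Ch} and describing the argument only as ``a skillful application of box-dimension'' built on the Regularity Lemma, and your sketch --- reducing isolation of heteroclinics to isolated minimizers of the barrier $u^-_{c}-u^+_{c}$ on a transversal section, using the $1/2$-H\"older modulus in the area parameter to bound the box-dimension of the bad locus, and then constructing one small bump-function perturbation valid for all $s\in[0,1]$ at once --- is precisely the mechanism of those references. You also correctly locate the genuinely hard step (a single perturbation beating the continuum of classes via the dimension count), which is exactly the part the paper itself leaves to the citations.
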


Based on all these preparations, we can finish our construction and get our main conclusion now.
\vspace{10pt}
\section{Proof of the Main Conclusion}
$\newline$

First, we can take proper $f(p,q,t)\in C^r(T^*\mathbb{T}^2\times\mathbb{S}^1,\mathbb{R})$ for system (\ref{2}), then transform it into (\ref{4}) with an exact symplectic transformation $\mathfrak{R}_f^{\infty}$ in a small neighborhood $\mathbb{D}_0$ of $\{0\}\times\mathbb{T}^2\times\mathbb{S}^1$. Here $f(q,p,t)$ can be chosen satisfying all the {\bf U*} and {\bf C*} conditions and denoted by $f_0=f(q,p,t)$. Then we get a connected cohomology set $\bigcup_{i=M}^{\infty}(\mathbb{W}_{g_1,f_0}^i\cup\mathbb{A}_{f_0}^i\cup\mathbb{W}_{g_2,f_0}^i)$, of which the wNHIC $N_{g_j,f_0}^i$ corresponding to $\mathbb{W}_{g_i,f_0}^i$ persists with certain length and the thickness of $\mathbb{A}_{f_0}^i$ can be uniformly estimated, $j=1,2$. This point is based on our analysis in section \ref{NHIC and Aubry} and \ref{Annulus}.\\

Second, we choose the candidate GTCs satisfying $\Gamma_{i_1,f_0}\subset\mathbb{W}_{g_2,f_0}^i$, $\Gamma_{i_2,f_0}\subset\mathbb{A}_{f_0}^i$ and $\Gamma_{i_3,f_0}\subset\mathbb{W}_{g_1,f_0}^i$. Then we can add a small perturbation $\epsilon_1\Delta f_1(p,q,t)$ to $f_0$ and $f_1=f_0+\epsilon_1\Delta f_1$. Here $\Delta f_1(q,p,t)\in\mathcal{B}^{r+r'}(0,1)\subset C^{r+r'}(T^*\mathbb{T}^2\times\mathbb{S}^1,\mathbb{R})$, $\epsilon_1 (0<\epsilon_1\ll1)$ is a small constant and $r'>0$ can be chosen arbitrarily large. From Lemma \ref{Fourier} we can see that the change of resonant term $Z_{\Delta f_1}$ of $H_1\doteq H+\epsilon\Delta f_1$ along the resonant plan $\mathfrak{P}_{f_1}$ will have much smaller Fourier coefficients, as long as $i\geq M\gg1$ and $r'$ is properly large. This point is very important for our construction: $H_1$ still satisfies the uniform restrictions {\bf U*} and {\bf C*}. Additionally, $\Gamma_{i_1,f_1}$ is now a GTC of $H_1$ from Lemma \ref{genericity}. Besides, the thickness of $\mathbb{A}_{f_1}^i$ can persist with just a $\epsilon_1\Delta_0$ decrease from Corollary \ref{overlap}. So $\Gamma_{i_2,f_1}\subset\mathbb{A}_{f_1}^i$ is also a GTC.\\

After above once perturbation with $\epsilon_1\Delta f_1$, we can then add another perturbation $\epsilon_2\Delta f_2$ to $H_1$ and get $H_2\doteq H_1+\epsilon_2\Delta f_2$. Here $0<\epsilon_2\ll\epsilon_1$ and $\Delta f_2\in\mathcal{B}^{r+r'}(0,1)$. Similarly, all the uniform restrictions are valid for system $H_2$ and $\Gamma_{i_1,f_2}\cup\Gamma_{i_2,f_2}\cup\Gamma_{i_3,f_2}$ is now a GTC contained in $\mathbb{W}_{g_1,f_2}^i\cup\mathbb{A}_{f_2}^i\cup\mathbb{W}_{g_2,f_2}^i$. So we have found a GTC for a whole transport process $\Gamma_i^{\omega}$.\\

Repeat above process and we get $H_{\infty}=H+\sum_{k=1}^{\infty}\epsilon_k\Delta f_k$. For this system $H_{\infty}$, $\bigcup_{i=M}^{\infty}(\Gamma_{i_1,f_{\infty}}\cup\Gamma_{i_2,f_{\infty}}\cup\Gamma_{i_3,f_{\infty}})$ then become a whole GTC and we finally find the asymptotic trajectories for $\mathcal{T}_{\omega}$ torus.

\section{Appendix}
\vspace{10pt}

\subsection{Introduction of Melnikov Method}
$\newline$

This part serves as a supplement of Proposition \ref{Melnikov}, which can be found of another version in \cite{Tr2}. But for the completeness of this paper, we list it in the following.\\

For a uncoupled system $H_0(x,q,y,p)=H_{0,1}(x,y)+H_{0,2}(q,p)$ with $(x,q,y,p)\in T^*\mathbb{T}^2$, we assume $(0,0,0,0)$ is the unique hyperbolic fixed point. Without loss of generality, we let $H_0(0,0,0,0)=0$. Let $H_{\epsilon}(x,q,y,p)=H_0+\epsilon H_1(x,q,y,p)$ be a perturbed system where $\epsilon\ll1$. Then we can still get a unique hyperbolic fixed point for $H_{\epsilon}$, which can be denoted by $(x^*(\epsilon),q^*(\epsilon),y^*(\epsilon),p^*(\epsilon))$. We can see that $(x^*(0),q^*(0),y^*(0),p^*(0))=(0,0,0,0)$. Also we can assume that $H_{\epsilon}(x^*(\epsilon),q^*(\epsilon),y^*(\epsilon),p^*(\epsilon))=0$.\\

As $H_0$ is a uncoupled system with two first integrals $H_{0,1}$ and $H_{0,2}$, we can find the generating functions $S^{u,s}_{0}(x,q)$ defined in a proper neighborhood $\Omega$ of $(0,0)\in\mathbb{T}^2$. Besides, we can assume that suspended in the universal covering space, the union of all the copies of $\Omega$ can cover $\mathbb{R}^2$. The trajectory with initial position $(x,q,\frac{\partial S^{u}_0(x,q)}{\partial x},\frac{S^u_0(x,q)}{\partial q})$ (or $(x,q,\frac{\partial S^{s}_0(x,q)}{\partial x},\frac{S^s_0(x,q)}{\partial q})$) will tend to $(0,0,0,0)$ as $t\rightarrow-\infty$ (or $t\rightarrow\infty$). Here $(q,x)\in\Omega$ is a fixed point. We can also see that the whole trajectory will lay on the graph $\{(q,x,\frac{\partial S^{u}_0(x,q)}{\partial x},\frac{S^u_0(x,q)}{\partial q})\big{|}(q,x)\in\Omega\}$ (or $\{(x,q,\frac{\partial S^{s}_0(x,q)}{\partial x},\frac{S^s_0(x,q)}{\partial q})\big{|}(q,x)\in\Omega\}$). \\

As $\epsilon\ll1$, we can still get the persistent generating functions $S_{\epsilon}^{u,s}$ for $H_{\epsilon}$ in the domain $\Omega$. Formally we take $S^{u,s}_{\epsilon}=S_0^{u,s}+\epsilon S_1^{u,s}+\mathcal{O}(\epsilon^2)$, then we have
\begin{eqnarray*}
0=&H_{\epsilon}(x,q,\nabla S^{u,s}_{\epsilon}(x,q))& \\
=&H_0(x,q,\nabla S_0^{u,s}+\epsilon\nabla S_1^{u,s})+\epsilon H_1(x,q,\nabla S_0^{u,s})+\mathcal{O}(\epsilon^2)\\
=&\epsilon(\frac{\partial H_0}{\partial y},\frac{\partial H_0}{\partial p})\big{|}_{(x,q,\nabla S_0^{u,s})}\cdot
\begin{pmatrix}
\frac{\partial S_1^{u,s}}{\partial x}\\\frac{\partial S_1^{u,s}}{\partial q}
\end{pmatrix}+\epsilon H_1(x,q,\nabla S_0^{u,s})+\mathcal{O}(\epsilon^2).\\
\end{eqnarray*}
We denote by $(x_0^{u,s}(t),q_0^{u,s}(t),y_0^{u,s}(t),p_0^{u,s}(t))$ the trajectory of $H_0$ with initial position $(x^{u,s}_0(0),q^{u,s}_0(0),y_0^{u,s}(0),p_0^{u,s}(0))=(x,q,\frac{\partial S_0^{u,s}}{\partial x},\frac{\partial S_0^{u,s}}{\partial q})$. As $H_0$ is uncoupled, these two trajectories can actually be joint into a whole $\gamma_0(t)=(x_0^{u,s}(t),q_0^{u,s}(t))$ with $t\in\mathbb{R}$. We can omit the superscript `u,s' for short.\\

The $\mathcal{O}(\epsilon)$ term of above formula is of a form:
\[
\frac{d}{dt}S_1^{u,s}(\gamma_0(t))+H_1(\gamma_0(t),\nabla S_0^{u,s}(\gamma_0(t)))=0.
\]
Then we take a path integral by
\[
\left\{
\begin{array}{cccccccccc}
S_1^u(x_0(t),q_0(t))\big{|}_{-\infty}^0=-\int_{-\infty}^0H_1(\gamma_0(t),\nabla S_0^{u,s}(\gamma_0(t)))dt,\vspace{5pt}\\
S_1^s(x_0(t),q_0(t))\big{|}^{\infty}_0=-\int_0^{\infty}H_1(\gamma_0(t),\nabla S_0^{u,s}(\gamma_0(t)))dt,\\
\end{array}
\right.
\]
where $(x_0(0),q_0(0))=(x,q)$. On the other side, we have $S_1^{u,s}(0,0)=0$. This is because we can make $S_{\epsilon}^{u,s}(x^*(\epsilon),q^*(\epsilon))\equiv0$ by adding a constant. Then we get 
\[
S_0^{u,s}(x^*(\epsilon),q^*(\epsilon))\sim\mathcal{O}(\epsilon^2),\quad\forall \epsilon\ll1,
\]
as $(x^*(\epsilon),q^*(\epsilon))=(0,0)+\epsilon(x_1^*,x_2^*)+\mathcal{O}(\epsilon^2)$ is formally valid. So we get the $\mathcal{O}(\epsilon)$ term $S_1^{u,s}(0,0)=0$.\\
 
With the help of above relationships, we get the Melnikov function by 
\[
M(x,q)\doteq S_1^u(x,q)=S_1^s(x,q)=-\int_{-\infty}^{\infty}H_1(\gamma_0(t),\nabla S_0^{u,s}(\gamma_0(t)))dt.
\]
We can easily see that this function is invariant with respect to the flow of $H_0$, i.e. $\nabla_{H_0}M\equiv0$. Since$H_0=H_{0,1}+H_{0,2}$, we then get $\nabla_{H_{0,1}}M\equiv-\nabla_{H_{0,2}}M$, $\forall (x,q)\in\mathbb{T}^2$.\\

For a fixed $(x,q)\in\mathbb{T}^2$ satisfying $\nabla_{H_{0,1}}M(x,q)=\nabla_{H_{0,2}}M(x,q)=0$, we can get a $2\times2$ matrix by $(\nabla_{H_{0,i}}\nabla_{H_{0,j}}M)_{i,j=1,2}$, whose rank is at most 1. If so, there must be a unique homoclinic point $(x(\epsilon),q(\epsilon))$ in a $\mathcal{O}(\epsilon)-$neighborhhod of $(x,q)$.

\subsection{A generalization of {\bf U2} condition}\label{U2}
$\newline$

In this subsection, we can generalize our condition {\bf U2} to a loose one. 
\begin{Lem}\cite{Ch}
Let $\{F_{\lambda}(x):\mathbb{T}\rightarrow\mathbb{R}\}$ be a family of $C^r-$functions ($r\geq4$) with $\lambda$ contained in $[\lambda_0,\lambda_1]$.  If $F_{\lambda}$ is Lipschitz continuous w.r.t. $\lambda$, then we can find an open dense set $\mathfrak{G}\subset C^r(\mathbb{T},\mathbb{R})$ such that $\forall\; V\in\mathfrak{G}$, the followings hold:
\begin{itemize}\vspace{5pt}
\item {\bf(ND)} $\forall\lambda\in[\lambda_0,\lambda_1]$, all the maximizers of $F_{\lambda}(x)$ are non-degenerate.\vspace{5pt}
\item $\forall\lambda\in[\lambda_0,\lambda_1]$, there exist at most two maximizers for $F_{\lambda}(x)$.\vspace{5pt}
\item there exist finitely many $\lambda_i\in(\lambda_0,\lambda_1)$ such that $F_{\lambda_i}(x)$ has two different maximizers.\vspace{5pt}
\end{itemize} 
\end{Lem}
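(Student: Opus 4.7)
The plan is to view the statement as a parametric transversality assertion: perturbing $F_\lambda$ by $V\in C^r(\mathbb{T},\mathbb{R})$, we aim to exhibit an open dense set of perturbations for which the family $\widetilde F_\lambda\doteq F_\lambda+V$ has, uniformly in $\lambda\in[\lambda_0,\lambda_1]$, only generic behavior of its global maxima. I would first decompose the property into three ``bad sets''
\begin{align*}
\mathcal{B}_1&=\{V: \exists\lambda, x^*\text{ global max of }\widetilde F_\lambda\text{ with }\partial_{xx}\widetilde F_\lambda(x^*)=0\},\\
\mathcal{B}_2&=\{V: \exists\lambda\text{ and }\geq 3\text{ distinct global maximizers of }\widetilde F_\lambda\},\\
\mathcal{B}_3&=\{V: \text{the set }\{\lambda: \widetilde F_\lambda\text{ has exactly }2\text{ global maximizers}\}\text{ is infinite}\},
\end{align*}
and prove that $\mathfrak{G}=(\mathcal{B}_1\cup\mathcal{B}_2\cup\mathcal{B}_3)^c$ is open and dense. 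Openness of each complement at a given $V$ follows from the uniform control provided by the $C^r$ norm together with compactness of $[\lambda_0,\lambda_1]\times\mathbb{T}$, plus the Lipschitz constant of $F_\lambda$ in $\lambda$ which is a fixed datum.

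For density, I would adapt the Sard–Smale / multi-jet transversality framework. Density away from $\mathcal{B}_1$ uses the following observation: at a degenerate global maximum of $\widetilde F_{\lambda_0}$, a perturbation of the form $V\mapsto V+\varepsilon \varphi$ with $\varphi$ a small bump supported near $x^*$ either lowers $\widetilde F_{\lambda_0}(x^*)$ below the value at some other point $y^*$ (so $x^*$ ceases to be a global max) or tilts the second derivative away from $0$. A generic choice of bump achieves one of the two. Density away from $\mathcal{B}_2$: if three points $x_1,x_2,x_3$ were simultaneously global maximizers at some $\lambda_0$, a perturbation by a bump supported in a tiny neighborhood of a single $x_i$ breaks the triple coincidence, and by openness this robustly persists after small $C^r$-motion. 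The codimension count is clean: ``three tied maxima'' imposes two height equalities, which is codimension $2$ in $V$-space; intersected with the $1$-parameter family in $\lambda$, the resulting bad set has positive codimension in $V$, so the complement is dense by the standard jet-transversality argument.

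Density away from $\mathcal{B}_3$ is the finiteness statement. Once (ND) and ``at most $2$ maximizers'' hold, the transitions where the global maximum switches between two branches correspond to a single transverse equation $\widetilde F_\lambda(x^{(1)}(\lambda))=\widetilde F_\lambda(x^{(2)}(\lambda))$ between two $C^r$ branches of local maxima (well-defined by the implicit function theorem applied to the non-degenerate equation $\partial_x\widetilde F_\lambda=0$ on each branch). For generic $V$, this height-difference function has only finitely many simple zeros on the compact interval $[\lambda_0,\lambda_1]$, and openness is again automatic. The three nowhere-dense sets are then countable unions of closed nowhere-dense pieces indexed by a locally finite cover of $[\lambda_0,\lambda_1]$, so Baire's theorem delivers the desired open dense $\mathfrak{G}$.

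The principal obstacle is that $F_\lambda$ is only Lipschitz in $\lambda$, so one cannot naively invoke $C^r$-jet transversality on $[\lambda_0,\lambda_1]\times\mathbb{T}$. I would circumvent this by approximating $F_\lambda$ in the Lipschitz topology by a family $F^n_\lambda$ that is smooth in $(\lambda,x)$, running the transversality arguments on $F^n_\lambda+V$, and then passing to the limit: since the three conditions are $C^2$-open in the function and the Lipschitz constant of $F_\lambda$ is fixed, for each fixed $V\in\mathfrak{G}^n$ the same $V$ belongs to $\mathfrak{G}$ for the original family once $n$ is large enough. A technical subtlety is that branches $x^{(i)}(\lambda)$ of local maxima might collide at saddle–node events; one must verify that such collisions do not involve a global maximum under generic $V$, which is again a codimension computation (a saddle-node on the global-max branch requires the collision value to equal the global max, imposing an extra equation).
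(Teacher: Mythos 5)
First, note that the paper does not actually prove this lemma: it is quoted verbatim from \cite{Ch} with no argument supplied, so the comparison is with the known proof there, which is a quantitative measure-and-covering (``box dimension'') argument of the same kind the paper invokes for its Genericity Lemma, not jet transversality. Your skeleton (bad-set decomposition, codimension heuristics, identifying the Lipschitz-only dependence on $\lambda$ as the obstruction) is sound, and the hypothesis $r\geq 4$ is itself a hint that a quantitative estimate on higher derivatives, rather than abstract transversality, is what is intended.

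There are two genuine gaps. (i) Your fix for the Lipschitz obstruction --- smooth the family in $\lambda$, get $\mathfrak{G}^n$ for $F^n_\lambda$, and claim each $V\in\mathfrak{G}^n$ lies in $\mathfrak{G}$ for $n$ large --- does not yield density of $\mathfrak{G}$: the quantitative constants (size of the spectral gap at the maximizers, height gap to the third-best critical value) with which a given $V$ satisfies the conditions for $F^n_\lambda$ are not uniform in $V$, so you cannot choose $n$ before choosing $V$, and choosing $V$ after $n$ leaves you with a diagonal sequence $V_n\in\mathfrak{G}^n$ none of which need lie in $\mathfrak{G}$. The standard repair is to work directly with the Lipschitz family: show that for a finite-dimensional family of perturbations the set of bad parameters for a \emph{single} $\lambda$ at non-degeneracy scale $\delta$ has measure $O(\delta^{a})$, that the property propagates over a $\lambda$-interval of length $\sim\delta^{b}$ via the Lipschitz constant, and that the exponents can be chosen so the union over the $O(\delta^{-b})$ intervals covering $[\lambda_0,\lambda_1]$ still has small measure. (ii) The finiteness of bifurcation values is asserted, not proved, and it is the hardest bullet: the height-difference $d_V(\lambda)=(F_\lambda+V)(x^{(1)}(\lambda))-(F_\lambda+V)(x^{(2)}(\lambda))$ between the two branches is only Lipschitz in $\lambda$, a Lipschitz function can vanish on an infinite (even uncountable) set, and you cannot make its zeros ``simple'' by a Sard argument in $\lambda$ because $V$ does not depend on $\lambda$. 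One must instead exploit the term $V(x^{(1)}(\lambda))-V(x^{(2)}(\lambda))$, i.e.\ perturb along the (separated) branches, and handle the degenerate case where a branch is locally constant in $\lambda$; none of this is in your sketch. The saddle--node caveat you raise at the end is real but is subsumed by (ND) once the quantitative argument is in place.
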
 
\begin{Rem}
We call such a $\lambda_i$ a {\bf bifurcation point}. If we modify the first bullet by the following:\\

{\bf (UND)} $\forall\lambda\in[\lambda_0,\lambda_1]$, all the maximizers of $F_{\lambda}(x)$ is uniformly non-degenerate with the eigenvalues not bigger than $-c^*<0$.\\

For our case, $p^{\sigma}[f]_1$ takes place of $F_{\lambda}$ with $p\in\mathscr{S}$ as a parameter. We can just take $c^*=\frac{c_5d_m^{\sigma}}{l^{m(r+2)}}$. Here `m'  reminds us of which resonant line we are considering. As we have showed that the set of functions satisfying {\bf U2} is non-empty, so the open dense property of above Lemma ensures that the functions satisfying these three bullets and {\bf (UND)} do exist.
\end{Rem}

\vspace{10pt}
$\newline$
{\bf Acknowledgement} This work is supported by NNSF of China (Granted 11171146, Granted 11201222), National Basic Research Program of China (973 Program, 2013CB834100) and Basic Research Program of Jiangsu Province (BK2008013).\\

The first author is grateful to C-Q Cheng for teaching his work in {\it a priori} stable Arnold Diffusion. Besides, he also thanks Ji Li for checking the details of this paper, and thanks Jinxin Xue for proposing the result of Wiggins to me, which does help me a lot in solving the 2-resonance difficulties. \\

Both the authors thank J. Cheng, W. Cheng, J. Yan, X. Cui, M. Zhou and all the other colleagues in Dynamical Systems seminar of Nanjing University. They really gave us several inspiring discussions in the process of this paper.

\end{document}